
\documentclass[11pt]{article}

\usepackage{etex}
\usepackage[margin={0.8 in}]{geometry}
\usepackage[titletoc]{appendix}

\parskip=5pt
\linespread{1.0}



\usepackage{tikz,eso-pic}
\usepackage{tikz-3dplot}
\usetikzlibrary{cd,matrix, calc, arrows,
decorations.pathreplacing, decorations.markings, decorations.pathmorphing,
backgrounds,fit,positioning,shapes.symbols,chains,shadings,fadings,intersections}
\usetikzlibrary{trees}
\newcommand{\nn}{node{$\bullet$}}

\tikzset{->-/.style={decoration={  markings,  mark=at position #1 with
    {\arrow{>}}},postaction={decorate}}}
\tikzset{-<-/.style={decoration={  markings,  mark=at position #1 with
    {\arrow{<}}},postaction={decorate}}}

\usepackage{eucal}
\usepackage{mathrsfs}
\usepackage{stmaryrd}


\usepackage{enumitem}
\makeatletter
\newcommand{\mylabel}[2]{#2\def\@currentlabel{#2}\label{#1}}
\makeatother

\usepackage{hyperref} 

\usepackage{rotating}

\usepackage{longtable}

\usepackage[sc]{mathpazo}
\usepackage{courier}

\usepackage[utf8]{inputenc}
\usepackage[T1]{fontenc}

\usepackage[english]{babel}
\usepackage{blindtext}

\usepackage{amsmath}

\usepackage{microtype}

\usepackage{float}

\usepackage{amsmath}
\usepackage{amssymb}
\usepackage{amsthm}
\usepackage{color}
\usepackage{latexsym,extarrows}

\usepackage{esint}


\usepackage[all]{xy}

 \setlength{\unitlength}{1mm}

\usepackage[stable,multiple]{footmisc}

\RequirePackage[font=small,format=plain,labelfont=bf,textfont=it]{caption}
\addtolength{\abovecaptionskip}{1ex}
\addtolength{\belowcaptionskip}{1ex}

\makeatletter
\newsavebox{\@brx}
\newcommand{\llangle}[1][]{\savebox{\@brx}{\(\m@th{#1\langle}\)}%
  \mathopen{\copy\@brx\kern-0.5\wd\@brx\usebox{\@brx}}}
\newcommand{\rrangle}[1][]{\savebox{\@brx}{\(\m@th{#1\rangle}\)}%
  \mathclose{\copy\@brx\kern-0.5\wd\@brx\usebox{\@brx}}}
\makeatother


\def\e#1\e{\begin{equation}#1\end{equation}}
\def\ea#1\ea{\begin{align}#1\end{align}}

\theoremstyle{plain}
\newtheorem{thm}{Theorem}[section]
\newtheorem{lem}[thm]{Lemma}
\newtheorem{lem-dfn}[thm]{Lemma/Definition}
\newtheorem{prop}[thm]{Proposition}
\newtheorem{cor}[thm]{Corollary}
\theoremstyle{definition}
\newtheorem{dfn}[thm]{Definition}
\newtheorem{ex}[thm]{Example}
\newtheorem{rem}[thm]{Remark}

\newcommand{\A}{\mathcal{A}}




\newcommand{\op}{\operatorname}
\newcommand{\C}{\mathbb{C}}

\newcommand{\R}{\mathbb{R}}

\newcommand{\Z}{\mathbb{Z}}
\renewcommand{\H}{\mathbf{H}}

\newcommand{\im}{\op{im}}


\newcommand{\abracket}[1]{\left\langle#1\right\rangle}
\newcommand{\bbracket}[1]{\left[#1\right]}
\newcommand{\fbracket}[1]{\left\{#1\right\}}
\newcommand{\bracket}[1]{\left(#1\right)}

\newcommand{\pa}{\partial}

\newcommand{\dbar}{\bar\pa}
\newcommand{\OO}{{\mathcal O}}

\newcommand{\RE}{{\mathcal{R}^{E}}}



\DeclareMathOperator{\Res}{Res}

\DeclareMathOperator{\Lie}{Lie}

\def\Xint#1{\mathchoice
{\XXint\displaystyle\textstyle{#1}}%
{\XXint\textstyle\scriptstyle{#1}}%
{\XXint\scriptstyle\scriptscriptstyle{#1}}%
{\XXint\scriptscriptstyle\scriptscriptstyle{#1}}%
\!\int}
\def\XXint#1#2#3{{\setbox0=\hbox{$#1{#2#3}{\int}$}
\vcenter{\hbox{$#2#3$}}\kern-.5\wd0}}

\def\dashint{\Xint-}


\numberwithin{equation}{section}

\makeatletter
\newcommand{\subjclass}[2][2010]{%
  \let\@oldtitle\@title%
  \gdef\@title{\@oldtitle\footnotetext{#1 \emph{Mathematics Subject Classification.} #2}}%
}
\newcommand{\keywords}[1]{%
  \let\@@oldtitle\@title%
  \gdef\@title{\@@oldtitle\footnotetext{\emph{Key words and phrases.} #1.}}%
}
\makeatother

\makeatletter
\let\orig@afterheading\@afterheading
\def\@afterheading{%
   \@afterindenttrue
  \orig@afterheading}
\makeatother

\begin{document}
\title{\mbox{\bf Regularized Integrals on Riemann Surfaces and Modular Forms}}
\author{Si Li and Jie Zhou}
\date{}
\maketitle

\begin{abstract}
We introduce a simple procedure to integrate differential forms with arbitrary holomorphic poles on Riemann surfaces. It gives rise to an intrinsic regularization of such singular integrals in terms of the underlying conformal geometry. Applied to products of Riemann surfaces, this regularization scheme establishes an analytic theory for integrals over configuration spaces, including Feynman graph integrals arising from two dimensional chiral quantum field theories. Specializing to elliptic curves, we show such regularized graph integrals  are almost-holomorphic modular forms that geometrically provide modular completions of the corresponding ordered $A$-cycle integrals. 

\end{abstract}

\setcounter{tocdepth}{2} \tableofcontents

\section{Introduction}

The present work aims to develop analytic tools for integrals on configuration spaces of Riemann surfaces arising from two dimensional chiral quantum field theories. 

\subsection*{Regularized integrals on Riemann surfaces}

Let $\Sigma$ be a closed Riemann surface without boundary.  Let $\omega$ be a $2$-form on $\Sigma$ which is smooth away from a finite subset $D\subset \Sigma$ but may admit holomorphic poles of arbitrary order\footnote{To be a little more precise,
$\omega$ is an element of 
$ \mathcal{A}^{1,1}({\Sigma}, \star D) $, see Section \ref{sec-curve-integral} for details.} 
along $D$. In this paper, we introduce the notion of \textbf{regularized integral} (Definition \ref{defn-RI})
$$
\dashint_{\Sigma}\omega
$$
as a recipe to integrate the singular form $\omega$ on $\Sigma$. This is defined as follows. 

We first decompose $\omega$ into  (Lemma \ref{lem-decomposition}) 
$$
\omega=\alpha+\pa \beta
$$
where $\alpha$ is a $2$-form with at most logarithmic pole along $D$, $\beta$ is a $(0,1)$-form with arbitrary order of poles along $D$, and $\pa$ is the holomorphic de Rham differential. Then we define
$$
\dashint_{\Sigma}\omega:= \int_{\Sigma}\alpha
$$
where the right hand side is absolutely integrable. 

Here are a few remarks. 
\begin{enumerate}
\item[(1)]  The integral $\int_{\Sigma}\alpha$ does not depend on the choice of the decomposition $\omega=\alpha+\pa \beta$ (Proposition \ref{prop-integration-noboundary}). Therefore it is reasonable to denote it by $\dashint_{\Sigma}\omega$.  
\item[(2)] The regularized integral is extended to the case when $\Sigma$ has boundary. Then
$$
\dashint_{\Sigma}\omega:=\int_{\Sigma}\alpha+ \int_{\pa \Sigma} \beta 
$$
which again does not depend on the choice of the decomposition (Theorem \ref{thm-integral}). 
\item[(3)] $\dashint_{\Sigma}$ is invariant under conformal transformations (Proposition \ref{prop-pullback}). 

\item[(4)] $\dashint_{\Sigma}$ gives an intrinsic meaning of the Cauchy principal value (Theorem \ref{thm-PV}). Such regularized integral can be generalized to the $n$-th Cartesian product $\Sigma^n$ (see discussions below), even though the meaning of Cauchy principal value is not clear there. 
\end{enumerate}

The above properties can be summarized as 
$$
\boxed{\text{the conformal geometry of $\Sigma$ gives an intrinsic regularization of the integral  $\int_\Sigma\omega$}}. 
$$
The regularized integral enjoys many other nice properties. For example,
\begin{itemize}
\item A version of Stokes Theorem holds (Theorem \ref{thm-de-Rham})
$$
\dashint_\Sigma d \alpha=-2\pi i \Res_\Sigma(\alpha)+\int_{\pa\Sigma}\alpha\,. 
$$ 
\item Riemann-Hodge  bilinear type relation holds (Proposition \ref{prop-bilinear}).  
\item A version of push-forward map exists which intertwines the holomorphic de Rham differential (Theorem \ref{thm-family}). 
\end{itemize}

\subsection*{Regularized integrals on configuration spaces}
Let $\Sigma$ be a compact Riemann surface without boundary and $\Sigma^n$ be the $n$-th Cartesian product of $\Sigma$. Let
$
\Delta_{ij}:= \{(z_1,\cdots, z_n)\in \Sigma^n| z_i=z_j\}
$
and $\Delta$ be the collection of all such diagonal divisors called the big diagonal
$$
\Delta= \bigcup_{1\leq i\neq j\leq n} \Delta_{ij}\,.
$$

Let $\omega$ now be a $2n$-form on $\Sigma^n$ which is smooth away from $\Delta$ but may admit holomorphic poles of arbitrary order along $\Delta$. Such $\omega$ defines a smooth $2n$-form on $\Sigma^n-\Delta$, which is the configuration space of $n$ points on $\Sigma$. We can decompose\footnote{
The technique of finding logarithmic representatives for top-degree cohomology classes is classical, see e.g., \cite{Leray:1959}.
} (Lemma \ref{lem-global-decomp})
$
\omega=\alpha+\pa \beta
$
where $\alpha$ has at most logarithmic pole along $\Delta$ (in the sense of Definition \ref{defn-log}) and thus is absolutely integrable on $\Sigma^n$. This allows us to  define the regularized integral
$$
\dashint_{\Sigma^n}\omega:=\int_{\Sigma^n} \alpha
$$
in the same fashion as above (Definition \ref{defn-integral-product}).

Unlike the $n=1$ case, it is not clear how to identify the above $\dashint_{\Sigma^n}\omega$ as an intrinsic Cauchy principal value in general.
Nevertheless 
it can be shown that such regularized integral is equal to the $n$-times iterated regularized integral on $\Sigma$ (Theorem \ref{thm-integral-product})
$$
\dashint_{\Sigma^n}\omega= \dashint_\Sigma \dashint_{\Sigma}\cdots \dashint_{\Sigma}\omega\,. 
$$
This can be viewed as a canonical regularization of $\int_{\Sigma^n}\omega$ via the conformal structure of $\Sigma$. 

Such $\omega$ arises in chiral quantum field theories on $\Sigma$, such as chiral bosons, chiral $\beta\gamma$-systems, chiral $bc$-systems and their  deformations. The diagonal singularities of $\omega$ come from the local behavior of propagators. The integrations on $\Sigma^n$ correspond to Feynman graph integrals. Due to the existence of diagonal singularities, the naive Feynman graph integrals on $\Sigma^n$ are problematic and require regularizations. In the particular case when $\Sigma$ is an elliptic curve,  integrals over product of disjoint $A$-cycles instead of over $\Sigma^n$ are often studied. Such $A$-cycle integrals are mathematically well-defined and are expected to capture essential aspects of the original chiral theories via the mechanism of contact terms \cite{Douglas:1995conformal, Dijkgraaf:1997chiral}. Our construction of  $\dashint_{\Sigma^n}\omega$ fills the gap by providing a geometric regularization of Feynman graph integrals in two dimensional chiral theories.  Furthermore, we build a precise link between our regularized integrals and the $A$-cycle integrals  in Theorem \ref{main-theorem} below. 


 \begin{rem}
The situation here is very different from the finiteness phenomenon of Feynman graph integrals in topological field theories considered by Kontsevich \cite{kontsevich1994feynman,kontsevich2003deformation}, Axelrod-Singer \cite{axelrod1993chern} and Getzler-Jones \cite{getzler1994operads}. There we usually have an integral
$$
\int_{X^n} \Omega
$$
which is convergent by the reason that $\Omega$ extends to a smooth form on a real version of the Fulton-MacPherson compactification of the configuration space of $n$ points on $X$. 
In our case, such extension is impossible since the naive integral $\int_{\Sigma^n}\omega$ is not absolutely convergent in general. Instead, the regularized integral $\dashint_{\Sigma^n}\omega$ can be viewed as the holomorphic counterpart in dimension two of the above construction in topological field theories. 
\end{rem}

\begin{rem}

A more algebraic approach in dealing with a singular integral of the form
$\int_{\sigma}\omega$, 
where $\sigma$ is a cycle intersecting with poles of an algebraic form $\omega$,
is to suitably interpret the integral as
the pairing between certain relative homology 
and relative cohomology. This involves studies of mixed Hodge structures on relative (co)homologies, as well as 
algebraic structures on graph complexes
and graph (co)homologies,
see for example \cite{Connes:2000renormalization, Bloch:2006motives,
Bloch:2007motives, Bloch:2008mixed, Bloch:2008motives,
Marcolli:2009feynman, Bloch:2015}.
To a large extent, this approach avoids having to integrate singular forms, but can bring in additional complicated combinatorics
and thus make exact computations difficult. This algebraic setting does not seem to apply directly to our case where $\omega$ is a smooth top-form with holomorphic singularities, and we hope to connect our analytic approach to this algebraic one in a future investigation.
\end{rem}

\subsection*{Quasi-modularity and geometric modular completion}

In this paper, we mainly apply the notion of regularized integrals to the case when 
$$
\Sigma= E_\tau=\C/\Lambda_{\tau}\,,\quad \Lambda_{\tau}:=\Z\oplus \Z \tau
$$
is an elliptic curve. Here $\tau$ is a point on  the upper-half plane $\H$. We shall show that regularized integrals lead to tremendous geometric constructions of modular objects. 

As a prototypical example, let $\Phi(z;\tau)$ be a meromorphic elliptic function on $\mathbb{C}\times \H$ which is modular of weight $k\in \mathbb{Z}$ (Definition \ref{dfnellipticityandmodularity}). Hence
$\Phi(-;\tau)$ defines a meromorphic function on $E_\tau$. Assume $\Phi(-;\tau)$ does not have residue at any pole, so 
$
 \varphi=\Phi(z;\tau)dz
$
defines a 2nd kind Abelian differential on $E_\tau$. Then the following holds (Proposition \ref{prop-modular-completion})
$$
\dashint_{E_{\tau}}{d^2z\over \im \tau}\,\Phi= \int_{A} dz\,\Phi(z;\tau) -{1\over 2 i \im \tau}\cdot  2\pi i\,\langle \varphi, dz\rangle_{\mathrm{P}}\,, \quad \quad d^2z :={i \over 2}dz\wedge d\bar z\,.
$$
Here $A$ is a representative of the $A$-cycle class that does not intersect the poles of $\Phi(-;\tau)$, and $\langle  \varphi, dz \rangle_{\mathrm{P}}$ is the Poincar\'e residue pairing. In this expression, 
$$
\dashint_{E_{\tau}}{d^2z\over \im \tau}\Phi\quad \text{is modular of weight $k$}
$$
while 
$$
 \int_{A} dz\,\Phi(z;\tau) \quad \text{is quasi-modular of weight $k$}\,.
$$
We see that the regularized integral gives the modular completion of the $A$-cycle integral. 

Such phenomenon actually occurs in great generality for integrals on configuration spaces. Let us first recall the following notion of holomorphic limit (Definition \ref{dfn-holomorphiclimit})
\begin{align*}
\lim_{\bar\tau\to \infty}: \OO_{\H}[{1\over \im \tau}]\to \OO_\H\,, 
  \qquad     f(\tau, \bar \tau)=\sum_{i=0}^N {f_i(\tau)\over (\im \tau)^i} &\to  f_0(\tau)\,. 
\end{align*}
Here the $f_i(\tau)$'s are holomorphic in $\tau$. It sends modular quantities to quasi-modular ones. See Appendix \ref{secmodularformsellipticfunctions} for the basics on modularity, quasi-modularity, and their relations. 

The main application for us in this paper is summarized in the following theorem. 

\begin{figure}[H]\centering
	\begin{tikzpicture}[scale=1]

\draw[cyan!23,fill=cyan!23](0,0)to(4,0)to(5,3)to(1,3)to(0,0);
\draw (0,0) node [below left] {$0$} to (4,0) node [below] {$1$} to (5,3) node [above right] {$1+\tau$} to (1,3) node [above left] {$\tau$} to (0,0);

\draw(1,3)coordinate(a1)(5,3)coordinate(a2) (0,0)coordinate(a3)(4,0)coordinate(a4);

\draw[ultra thick,blue,->-=.5,>=stealth]($(a3)!0.8!(a1)$)to ($(a4)!0.8!(a2)$);
\node [above] at ($($(a3)!0.8!(a1)$)!0.5!($(a4)!0.8!(a2)$)$) {$A_n$};

\draw[ultra thick,blue,->-=.5,>=stealth]($(a3)!0.6!(a1)$)to ($(a4)!0.6!(a2)$);
\node [above] at ($($(a3)!0.6!(a1)$)!0.5!($(a4)!0.6!(a2)$)$) {$A_{n-1}$};
\node [below] at ($($(a3)!0.6!(a1)$)!0.5!($(a4)!0.6!(a2)$)$) {$\vdots$};

\draw[ultra thick,blue,->-=.5,>=stealth]($(a3)!0.1!(a1)$)to ($(a4)!0.1!(a2)$);
\node [above] at ($($(a3)!0.1!(a1)$)!0.5!($(a4)!0.1!(a2)$)$) {$A_1$};

	\end{tikzpicture}
\begin{tikzpicture}[xscale=.5,yscale=.7]\clip(0,-3) rectangle (11,2);
\draw[black,very thick,fill=cyan!23](10,0).. controls +(90:2.5) and +(90:2.5) .. (1,0)
.. controls +(-90:2.5) and +(270:2.5) ..(10,0);
\draw[black,thick, fill=white](4,0)to[bend right](7,0);
\draw[black,thick, fill=white](4.2,0)to[bend left=15](6.8,0);
\draw[blue,thick,->-=.5,](6.5,-1.8)to[bend left=-75](5.8,-0.4);
\draw[blue,thick,dashed,-<-=.5,](6.5,-1.8)to[bend right=-75](5.8,-0.4);
\draw[blue,thick,->-=.5,](9,-1.2)to[bend left=-75](7,0);
\draw[blue,thick,dashed,-<-=.5,](9,-1.2)to[bend right=-75](7,0);
\draw[blue,thick,->-=.5,](4.0,-1.8)to[bend left=-75](4.2,-0.1);
\draw[blue,thick,dashed,-<-=.5,](4.0,-1.8)to[bend right=-75](4.2,-0.1);
\node [below] at (6.5,-1.8) {$A_{i}$};
\node [below] at (9,-1.4) {$A_{i+1}$};
\end{tikzpicture}
\caption{Ordered $A$-cycle representatives.}\label{fig:$A$-cycles}
\end{figure}
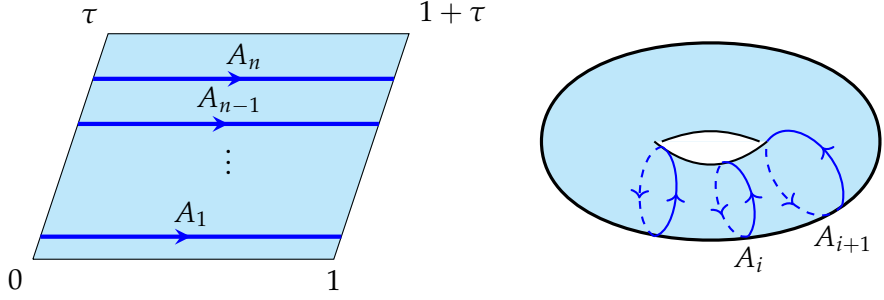

\begin{thm}[Theorem \ref{thm-2d}, Theorem \ref{thm-1d}]\label{main-theorem} Let $\Phi(z_1,\cdots, z_n;\tau)$ be a meromorphic  elliptic function on 
$\mathbb{C}^{n}\times \H$ (Definition  \ref{dfnellipticityandmodularity}) which is holomorphic away from diagonals (Definition  \ref{dfnholomorphicityawayfromdiagonals}).   Let $A_1, \cdots, A_n$ be  $n$ disjoint representatives of the $A$-cycle class on $E_\tau$ ordered as in Fig. \ref{fig:$A$-cycles}. Then
\begin{enumerate}
\item The regularized integral 
$$
 \dashint_{E_\tau^n} \bracket{\prod_{i=1}^n {d^2z_i\over \im \tau}}
  \Phi(z_1,\cdots, z_n;\tau)  \quad \text{lies in}\quad \OO_{\H}[{1\over \im \tau}]. 
$$
Its holomorphic limit is given by the average of ordered $A$-cycle integrals (Definition \ref{dfnorderedAcycleintegral})
$$
\boxed{\lim_{\bar\tau \to \infty} \dashint_{E_\tau^n} \bracket{\prod_{i=1}^n {d^2z_i\over \im \tau}} \Phi(z_1,\cdots, z_n;\tau)={1\over n!}\sum_{\sigma\in S_n} \int_{A_{1}} dz_{\sigma(1)}\cdots \int_{A_{n}} dz_{\sigma(n)} \Phi(z_1,\cdots, z_n;\tau) }. 
$$
\item If $\Phi$ is modular of weight $m$ on $\mathbb{C}^{n}\times \H$, then 
$$
\boxed{ \dashint_{E_\tau^n} \bracket{\prod\limits_{i=1}^n {d^2z_i\over \im \tau}}
 \Phi \quad \text{is modular of weight $m$ on $\H$}}
 $$
 and thus
 $$
{ {1\over n!}\sum_{\sigma\in S_n} \int_{A_{1}} dz_{\sigma(1)}\cdots \int_{A_{n}} dz_{\sigma(n)} \Phi(z_1,\cdots, z_n;\tau) \quad \text{is quasi-modular of weight $m$ on $\H$}}.
 $$
 
 \item If $\Phi$ is modular of weight $m$ on $\mathbb{C}^{n}\times \H$, then for any $\sigma\in S_{n}$ the ordered $A$-cycle integral
$$
{ \int_{A_{1}} dz_{\sigma(1)}\cdots \int_{A_{n}} dz_{\sigma(n)} \,\Phi(z_1,\cdots, z_n;\tau) \quad \text{is quasi-modular of mixed weight on $\H$}}
$$
with leading weight $m$. Moreover, there exists an explicit formula for each weight component.

 
 \end{enumerate}

\end{thm}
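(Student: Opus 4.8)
The statement falls into two packages: the ``two--dimensional'' assertions (1)--(2) about the regularized integral $\dashint_{E_\tau^n}$, and the ``one--dimensional'' assertion (3) about a single ordered $A$-cycle integral. For the first package the engine will be the iterated--integral identity $\dashint_{E_\tau^n}=\dashint_{E_\tau}\cdots\dashint_{E_\tau}$ (Theorem~\ref{thm-integral-product}), which reduces everything to the one--variable regularized integral $\dashint_{E_\tau}\tfrac{d^2z}{\im\tau}(-)$, together with conformal invariance (Proposition~\ref{prop-pullback}) to manufacture modularity. The first step for (1) is to prove a one--variable reduction lemma generalizing Proposition~\ref{prop-modular-completion}: for a meromorphic elliptic function $f(z;\tau)$ of weight $k$ carrying extra holomorphic parameters (the remaining $z_j$'s) and possibly with residues along diagonals, one has
\[
\dashint_{E_\tau}\frac{d^2z}{\im\tau}\,f \;=\; \int_A dz\, f \;+\; \frac{1}{\im\tau}\,\mathcal{R}(f),
\]
where $\mathcal{R}(f)$ is an explicit Poincar\'e--residue--pairing--type correction of strictly lower weight; as in the proof of Proposition~\ref{prop-modular-completion} this comes from cutting $E_\tau$ along an $A$- and a $B$-cycle, applying the boundary version of the construction, and using the quasi--periodicity of an antiderivative of $f\,dz$. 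The crucial feature to verify is that $\mathcal{R}$ preserves the class of $\OO_{\H}[\tfrac{1}{\im\tau}]$-valued meromorphic elliptic functions in the remaining variables, with polynomially bounded degree in $1/\im\tau$; this follows once one records the partial--fraction structure of meromorphic elliptic functions in terms of $\wp$ and its derivatives (equivalently the Kronecker--Eisenstein coefficients) and computes $\dashint_{E_\tau}\tfrac{d^2z}{\im\tau}$ on each building block. Iterating $n$ times then gives $\dashint_{E_\tau^n}\bracket{\prod_i\tfrac{d^2z_i}{\im\tau}}\Phi\in\OO_{\H}[\tfrac{1}{\im\tau}]$, equal modulo $1/\im\tau$ to an iterated $A$-cycle integral; applying $\lim_{\bar\tau\to\infty}$ (Definition~\ref{dfn-holomorphiclimit}) kills all $1/\im\tau$ terms. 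To recognize the survivor as the \emph{average} $\tfrac{1}{n!}\sum_\sigma$ of ordered $A$-cycle integrals I would use that $\dashint_{E_\tau^n}$ is manifestly symmetric under relabeling the $z_i$, so the fixed--order iteration may be run in all $n!$ orders — equivalently, whenever a residue along a diagonal forces an inner integrand to jump across its $A$-cycle, the regularized (principal--value) integral returns the mean of the two one--sided values, and summing these $2$-to-$1$ choices over the $n$ nested integrations produces exactly the symmetrization.

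For (2), the map $E_\tau\xrightarrow{\sim}E_{\gamma\tau}$, $z\mapsto z/(c\tau+d)$, is a biholomorphism, hence conformal, and pulls $\tfrac{d^2z}{\im(\gamma\tau)}$ back to $\tfrac{d^2z}{\im\tau}$ (both are the flat probability measure $dx\,dy$ in the coordinate $z=x+\tau y$). Taking $n$-fold products and invoking conformal invariance of $\dashint$ (Proposition~\ref{prop-pullback}, propagated through Theorem~\ref{thm-integral-product}), modularity of weight $m$ of $\Phi$ gives
\[
\dashint_{E_{\gamma\tau}^n}\bracket{\prod_i\tfrac{d^2z_i}{\im(\gamma\tau)}}\Phi \;=\; (c\tau+d)^m\,\dashint_{E_\tau^n}\bracket{\prod_i\tfrac{d^2z_i}{\im\tau}}\Phi,
\]
so by (1) the regularized integral is an almost--holomorphic modular form of weight $m$. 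Its holomorphic limit is then quasi--modular of weight $m$ by the standard almost--holomorphic/quasi--modular correspondence (Appendix~\ref{secmodularformsellipticfunctions}), and by (1) that limit is the averaged ordered $A$-cycle integral.

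For (3), fix $\sigma$. Reading the one--variable reduction lemma backwards as $\int_A dz\,f=\dashint_{E_\tau}\tfrac{d^2z}{\im\tau}f-\tfrac{1}{\im\tau}\mathcal{R}(f)$, I would peel the ordered $A$-cycle integral one cycle at a time, each step trading an $\int_{A_i}$ for a regularized integral minus a residue correction. Expanding, the ordered $A$-cycle integral becomes a finite sum indexed by the combinatorics of which integrations get converted and in which order the residue corrections are applied; the term with all integrations converted is, up to the symmetrization bookkeeping of (1), the weight-$m$ almost--holomorphic modular form of (2), while every correction term is a regularized integral over a lower Cartesian power $E_\tau^{k}$ of a Poincar\'e--residue--reduced integrand, hence by induction on $n$ — each correction strictly lowers both the number of variables and the weight — quasi--modular of weight $<m$. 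Collecting equal--weight pieces gives the mixed--weight quasi--modularity with leading weight $m$, and the bookkeeping just described \emph{is} the asserted explicit combinatorial formula for each weight component.

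The main obstacle is the one--variable reduction lemma \emph{with parameters and residues}, together with the verification that its correction $\mathcal{R}$ keeps one inside $\OO_{\H}[\tfrac{1}{\im\tau}]$-valued meromorphic elliptic functions of bounded degree in $1/\im\tau$: this is the genuinely new analytic input beyond the $n=1$ Proposition~\ref{prop-modular-completion}, and it is also where the jump/principal--value phenomenon responsible for the symmetrization $\tfrac{1}{n!}\sum_\sigma$ must be controlled.
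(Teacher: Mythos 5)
Your overall architecture — iterate the one-variable regularized integral via Theorem \ref{thm-integral-product}, get modularity from conformal invariance of $\dashint$ on fundamental parallelograms, and reduce part (3) to commutators of $A$-cycle integrals versus residues — matches the paper, and your argument for part (2) is essentially Proposition \ref{prop-iteration} and Theorem \ref{thm-modularity}. But there is a genuine gap at the heart of parts (1) and (3): your one-variable reduction lemma $\dashint_{E_\tau}\tfrac{d^2z}{\im\tau}f=\int_A f\,dz+\tfrac{1}{\im\tau}\mathcal{R}(f)$, with the correction suppressed by $\tfrac{1}{\im\tau}$, is false once $f$ has nonzero residues at its poles — which is exactly the generic situation along diagonals in the multi-variable setting. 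The correct formula (the paper's Lemma \ref{lem-integral}) produces correction terms weighted by $\bigl(\tfrac{\im w}{\im\tau}\bigr)^{k+1}$ where $w$ runs over pole locations; since $w$ is one of the remaining $z_j$'s (up to a lattice shift), $\im w/\im\tau$ is of order one, not $O(1/\im\tau)$, so these terms are \emph{not} killed by the holomorphic limit. This is precisely why the output of one integration is only almost-meromorphic (it acquires factors $\im z_j/\im\tau$, forcing the introduction of the class $\RE_n$), and why the holomorphic limit of the full iterated integral is the \emph{average} over orderings rather than the single ordered $A$-cycle integral your lemma would predict. Indeed, if your lemma were correct as stated, the limit would equal one fixed ordered $A$-cycle integral, contradicting part (3) of the very theorem you are proving (the ordered integrals differ from one another by lower-weight pieces).

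Your proposed patch — symmetry of $\dashint_{E_\tau^n}$ under relabeling plus a ``mean of the two one-sided values'' heuristic — does not close this. Symmetry only shows that the regularized integral equals the average of iterated \emph{regularized} integrals over all orderings, which is a tautology; it does not identify the holomorphic limit with the average of ordered \emph{$A$-cycle} integrals. The surviving residue terms must be tracked explicitly: in the paper they organize into a sum over normally marked rooted forests with rational weights $p(F)$ (Lemma \ref{Claim 2}), are rewritten as multi-commutators of ordered $A$-cycle integrals via Lemma \ref{lem-A-commutator} (Lemma \ref{Claim 3}), and are then shown to symmetrize by the free-Lie-algebra identity $\sum_{(F,\chi)\in\Gamma_n}p(F)\,x_{(F,\chi)}=\tfrac{1}{n!}\sum_{\sigma\in S_n}x_{\sigma(1)}\cdots x_{\sigma(n)}$ (Lemma \ref{Claim 4}, proved via Duhamel's formula in Appendix \ref{appendixalgebraicidentity}); a separate technical point you also omit is the choice of nested, variable-dependent parallelograms $\square_{c_i}$ needed so that previously integrated variables stay in the interior at each step. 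For part (3) the paper likewise does not ``peel'' cycles one at a time but invokes the Poincar\'e--Birkhoff--Witt decomposition $x_1\cdots x_n=\sum_k R_n^{(k)}(Y)$ and applies Theorem \ref{thm-2d} to each residue-reduced piece, each residue lowering the weight by one. None of this combinatorial content appears in your proposal, and it is the essential new input beyond Proposition \ref{prop-modular-completion}.
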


The 1st and 2nd statements of Theorem \ref{main-theorem} say that the average of ordered $A$-cycle integrals 
$$
 {1\over n!}\sum_{\sigma\in S_n} \int_{A_{1}} dz_{\sigma(1)}\cdots \int_{A_{n}} dz_{\sigma(n)} \Phi(z_1,\cdots, z_n;\tau) 
$$
for a modular $\Phi$
is quasi-modular of the same weight as $\Phi$. Its modular completion is precisely given by 
$$
 \dashint_{E_\tau^n} \bracket{\prod\limits_{i=1}^n {d^2z_i\over \im \tau}}
 \Phi\,. 
$$
This generalizes the previous result on the modular completion of a single $A$-cycle integral. 

The 3rd statement in Theorem \ref{main-theorem} says that in general the ordered $A$-cycle integral
$$
 \int_{A_{1}} dz_{\sigma(1)}\cdots \int_{A_{n}} dz_{\sigma(n)} \,\Phi(z_1,\cdots, z_n;\tau) 
$$
is quasi-modular of mixed weight. 
While the quasi-modularity property
has been discussed intensively in the literature \cite{Dijkgraaf:1995, Kaneko:1995, Roth:2009mirror, Li:2011mi, Boehm:2014tropical, Goujard:2016counting,Oberdieck:2018} (see also \cite{Bloch:2000, Eskin:2001asymptotics, Roth:2010mirror,  Zagier:2016partitions, Chen:2018quasimodularity, Chen:2020masur}),
the mixed-weight phenomenon was only recently discovered in \cite{Goujard:2016counting, Oberdieck:2018}. It is also proved in  \cite{Oberdieck:2018} by a different method that the average of ordered $A$-cycle integrals
gives a quasi-modular form of pure weight.

Our result offers a geometric origin of the mixed-weight quasi-modularity. In fact, unlike the ordered $A$-cycle integrals, where specifying the $A$-cycles breaks the symmetry under the action of the modular group, our regularized integral is over the \emph{whole elliptic curve} and therefore \emph{respects modularity}. Theorem \ref{main-theorem} offers a precise connection between regularized integrals and ordered  $A$-cycle integrals. It not only shows the mixed-weight quasi-modularity of each ordered $A$-cycle integral, but also provides  explicit combinatorial formulae for all the components of different weights.  Such formulae arise (see Section \ref{subsec:A-cycle integral}) from the Poincar\'{e}-Birkhoff-Witt Theorem applied to the standard algebraic fact 
$$
\text{tensor algebra\,=\,universal enveloping algebra of free Lie algebra}. 
$$
This fact tells one can express any ordered tensor into symmetric tensors of multi-commutators.  Explicit formulae can be obtained from the result in \cite{Solomon:1968}. The multi-commutators of $A$-cycle integrations lead to multi-residues, while symmetric tensors are organized into quasi-modular objects of pure weight by Theorem \ref{main-theorem}. Each residue operation decreases the modular weight by one, leading to the mixed-weight  phenomenon.

Here is an example (Example \ref{exn=3mixedweight}) with $n=3$. 
\begin{align*}
&\int_{A_1} dz_1 \int_{A_2} dz_2 \int_{A_3} dz_3 \,\Phi=
\lim_{\bar\tau\to \infty}\left\{\dashint_{E_\tau}{d^2z_1\over \im \tau} \dashint_{E_\tau}{d^2z_2\over \im \tau} \dashint_{E_\tau}{d^2z_3\over \im \tau} \,\Phi\right.\\
+&{1\over 2}\dashint_{E_\tau}{d^2z_1\over \im \tau} \dashint_{E_\tau}{d^2z_3\over \im \tau} \oint_{z_3} dz_2\, \Phi+{1\over 2}\dashint_{E_\tau}{d^2z_2\over \im \tau} \dashint_{E_\tau}{d^2z_3\over \im \tau} \oint_{z_3} dz_1 \,\Phi+{1\over 2}\dashint_{E_\tau}{d^2z_2\over \im \tau} \dashint_{E_\tau}{d^2z_3\over \im \tau} \oint_{z_2} dz_1\,\Phi\\
&\left.+ {1\over 3}\dashint_{E_\tau}{d^2z_3\over \im \tau} \oint_{z_3}dz_1 \oint_{z_3}dz_2\, \Phi-{1\over 6}\dashint_{E_\tau}{d^2z_3\over \im \tau} \oint_{z_3}dz_2 \oint_{z_3}dz_1 \,\Phi \right\}. 
\end{align*}
Terms in different lines of the above formula have different modular weights. By the 1st statement in Theorem \ref{main-theorem}, each term is given by 
an average of ordered $A$-cycle integrals.  

We also present an example  (Example \ref{exmixedweight})  with $n=4$ and 
$$\Phi(z_1,z_2,z_3,z_4;\tau)=\wp(z_{1}-z_{2};\tau)\wp(z_{2}-z_{3};\tau)\wp(z_{3}-z_{4};\tau)\wp(z_{4}-z_{1};\tau)\,.$$
Here $\wp(z;\tau)$ is the Weierstrass $\wp$-function.
All of the inequivalent ordered $A$-cycle integrals are explicitly computed
to be quasi-modular forms of mixed weight (here  $'={1\over 2\pi i}\partial_{\tau}$)
\begin{eqnarray*}
\int_{A_{1}}dz_{4}\int_{A_{2}}dz_{3}\int_{A_{3}}dz_{2}\int_{A_{4}}dz_{1}\,\Phi&=&
\left({\pi^{8}\over 3^{4}}
E_{2}^{4} 
-{2^{5}\pi^{8}\over 3^2}E_{2}'''\right)
+\left({2^{5}\pi^{8}\over 3^2\cdot  5 }E_{4}'\right)
\,,\\
\int_{A_{1}}dz_{3}\int_{A_{2}}dz_{4}\int_{A_{3}}dz_{2}\int_{A_{4}}dz_{1}\,\Phi &=&\left(
{\pi^{8}\over 3^{4}}
E_{2}^{4}-{2^{5}\pi^{8}\over 3^2}E_{2}'''\right)
+\left(-{2^{4}\pi^{8}\over 3^2 \cdot 5}E_{4}'\right)\,,\\
\int_{A_{1}}dz_{4}\int_{A_{2}}dz_{2}\int_{A_{3}}dz_{3}\int_{A_{4}}dz_{1}\,\Phi&=&\left(
{\pi^{8}\over 3^{4}}
E_{2}^{4}-{2^{5}\pi^{8}\over 3^2}E_{2}'''\right)
+\left(-{2^{4}\pi^{8}\over 3^2 \cdot 5}E_{4}'\right)
\,.
\end{eqnarray*}
 It is illuminating to  see directly here that averaging the ordered $A$-cycle integrals leads to cancellation of lower-weight terms and we find a quasi-modular form
 of pure weight
$$
{1\over 4!}\sum_{\sigma\in S_4} \int_{A_1}dz_{\sigma(1)}
\int_{A_2}dz_{\sigma(2)}\int_{A_3}dz_{\sigma(3)} \int_{A_4} dz_{\sigma(4)} \Phi
={\pi^{8}\over 3^{4}}E_{2}^{4}+{2\pi^{8}\over 3^{4}} (3 E_{2}^{2}E_{4} -4 E_{2}E_{6} +  E_{4}^{2})\,.
$$

\begin{rem}
Theorem \ref{main-theorem} clarifies mathematically several aspects of chiral deformations of two dimensional conformal field theories in the sense of \cite{Dijkgraaf:1997chiral}. The integral $ \dashint_{E_\tau^n} $
  can be viewed as a direct computation of correlation functions on the elliptic curve $E_\tau$ using Feynman rules, while the ordered $A$-cycle integrals can be viewed as computations from the operator formalism point of view. These two computations are not exactly the same in general but are related to each other by contact terms and the holomorphic limit $\bar \tau\to \infty$ \cite{rudd1994string, Douglas:1995conformal, Dijkgraaf:1995,Dijkgraaf:1997chiral}. This explains why the operator formalism usually leads to quasi-modularity and how it is related to the modularity inherited from the geometry of the elliptic curve. In the theory of chiral deformations of free boson \cite{Dijkgraaf:1995,Dijkgraaf:1997chiral,li2016vertex}, the appearance of $\bar\tau$-dependence is a two-dimensional example of the holomorphic anomaly in the context of Kodaira-Spencer gravity on Calabi-Yau manifolds  \cite{Bershadsky:1993cx,Costello:2012cy}. 
\end{rem}

\subsection*{Regularized v.s. $A$-cycle Feynman graph integrals: chiral boson example}
We consider Feynman graph integrals in chiral boson theory on $E_\tau$ (see Section \ref{subsecgraphintegrals}).  Let 
$$
\widehat{P}(z_1,z_2;\tau,\bar\tau):=\wp(z_1-z_2;\tau)+{\pi^2\over 3}\widehat{E}_{2}(\tau,\bar{\tau})\,, \quad 
\widehat{E}_{2}(\tau,\bar{\tau})=E_{2}(\tau)-{3\over \pi }{1\over \im \tau}\,.
$$
Here $\wp(z;\tau)$ is the Weierstrass $\wp$-function and $E_2$ is the 2nd Eisenstein series. Let
$$
{P}(z_1,z_2;\tau):=\wp(z_1-z_2;\tau)+{\pi^2\over 3}{E}_{2}(\tau)\,.
$$

Let $\Gamma$ be an oriented graph with no self-loops. Let $E(\Gamma)$ be its set of edges, and $V(\Gamma)$ be
its set of vertices with cardinality $n=|V(\Gamma)|$. We label the vertices by fixing an identification 
$$
V(\Gamma)\to \{1,2,\cdots, n\}\,. 
$$
Consider the following quantity associated to the labeled graph 
$$\Phi_{\Gamma}(z_1,\cdots,z_n;\tau, \bar\tau):=\prod_{e\in E(\Gamma)}\widehat{P}(z_{t(e)},z_{h(e)};\tau,\bar\tau)\,. 
$$
Here $h(e)$ is the head of the edge $e$ and $t(e)$ is the tail. Denote
$$
\lim\limits_{\bar\tau\to \infty}\Phi_{\Gamma}(z_1,\cdots,z_n;\tau, \bar\tau):=\prod_{e\in E(\Gamma)}{P}(z_{t(e)},z_{h(e)};\tau)\,. 
$$

The regularized Feynman graph integral on $E_\tau$ for $\Gamma$  in our context is  (Definition \ref{dfn-graph})
$$\widehat{I}_{\Gamma}:=\dashint_{E_{\tau}^{n}}  \bracket{\prod_{i=1}^n {d^2z_i\over \im \tau}}
  \Phi_{\Gamma}(z_1,\cdots, z_n;\tau,\bar{\tau}) \,. 
  $$

\begin{thm}[Theorem \ref{thm-Feynmangraphintegralsmodular}] \label{main-thm2}
$\widehat{I}_{\Gamma}$
  is an almost-holomorphic modular form
 of weight $2|E(\Gamma)|$.
  Its holomorphic limit is given by 
$$
\lim_{\bar\tau\to \infty}\widehat{I}_{\Gamma}={1\over n!}\sum_{\sigma\in S_n} \int_{A_{1}} dz_{\sigma(1)}\cdots \int_{A_{n}} dz_{\sigma(n)} \,\lim_{\bar{\tau}\rightarrow \infty}
\Phi_{\Gamma}(z_1,\cdots, z_n;\tau,\bar{\tau}) $$
which is a holomorphic quasi-modular form of the same weight. Here $A_1, \cdots, A_n$ are $n$ disjoint representatives of the $A$-cycle class on $E_\tau$. 
\end{thm}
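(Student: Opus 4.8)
The plan is to deduce Theorem~\ref{main-thm2} as a direct specialization of Theorem~\ref{main-theorem} applied to the graph function $\Phi_\Gamma$, so the bulk of the argument is checking that $\Phi_\Gamma$ satisfies the hypotheses there, together with a short computation identifying its weight. First I would verify that $\widehat P(z_1,z_2;\tau,\bar\tau)$ is, up to the additive $\bar\tau$-dependent constant $\tfrac{\pi^2}{3}\widehat E_2$, built from $\wp(z_1-z_2;\tau)$, which is elliptic in each variable, holomorphic away from the diagonal $z_1=z_2$, and modular of weight $2$; the correction term $\tfrac{\pi^2}{3}\widehat E_2(\tau,\bar\tau)$ is an almost-holomorphic modular form of weight $2$ with no $z$-dependence, hence leaves ellipticity and holomorphicity away from diagonals untouched and keeps the total weight equal to $2$. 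Taking the product over $e\in E(\Gamma)$, $\Phi_\Gamma$ is then a meromorphic (almost-holomorphic in $\tau$) elliptic function in $(z_1,\dots,z_n)$, holomorphic away from the big diagonal $\Delta$, and modular of weight $2|E(\Gamma)|$ in the sense of Definitions~\ref{dfnellipticityandmodularity} and~\ref{dfnholomorphicityawayfromdiagonals}. The one subtlety is that $\Phi_\Gamma$ is almost-holomorphic in $\tau$ rather than strictly holomorphic, so I would either note that Theorem~\ref{main-theorem} applies coefficient-by-coefficient in the polynomial expansion in $1/\im\tau$, or invoke the family/linearity properties of $\dashint$ (Theorem~\ref{thm-family}) to commute the regularized integral past the finite sum $\Phi_\Gamma=\sum_k (\im\tau)^{-k}\Phi_{\Gamma,k}$ with each $\Phi_{\Gamma,k}$ strictly holomorphic, meromorphic elliptic, and modular of weight $2|E(\Gamma)|-k$ in the appropriate graded sense.

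Granting this, part (1) of Theorem~\ref{main-theorem} immediately gives that $\widehat I_\Gamma\in\OO_\H[\tfrac{1}{\im\tau}]$, i.e.\ it is a polynomial in $\tfrac{1}{\im\tau}$ with holomorphic coefficients, which is precisely the statement that $\widehat I_\Gamma$ is an almost-holomorphic modular form once modularity is established; and part (2) gives that $\widehat I_\Gamma$ is modular of weight $2|E(\Gamma)|$, completing the first assertion. For the holomorphic limit, I would again apply part (1) of Theorem~\ref{main-theorem}: the holomorphic limit of $\widehat I_\Gamma$ equals $\tfrac{1}{n!}\sum_{\sigma\in S_n}\int_{A_1}dz_{\sigma(1)}\cdots\int_{A_n}dz_{\sigma(n)}$ applied to $\Phi_\Gamma$, and since $\lim_{\bar\tau\to\infty}$ is an algebra homomorphism sending $\widehat E_2\mapsto E_2$ (hence $\widehat P\mapsto P$ edge by edge), the holomorphic limit of $\Phi_\Gamma$ is exactly $\prod_{e\in E(\Gamma)}P(z_{t(e)},z_{h(e)};\tau)$. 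One must check that $\lim_{\bar\tau\to\infty}$ commutes with the finite iterated $A$-cycle integrations; this is elementary since the integrals are over compact cycles avoiding the poles and the $\bar\tau$-dependence enters only through the polynomial coefficients. Finally, the holomorphic limit of an almost-holomorphic modular form of weight $2|E(\Gamma)|$ is a quasi-modular form of the same weight (Appendix~\ref{secmodularformsellipticfunctions}), which yields the last sentence.

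The main obstacle I anticipate is purely bookkeeping: carefully setting up the grading so that the almost-holomorphic nature of $\Phi_\Gamma$ is handled correctly, in particular making sure that the ``weight'' attributed to each $(\im\tau)^{-k}$-component of $\Phi_\Gamma$ is consistent with the conventions under which Theorem~\ref{main-theorem}(2) is stated, and that summing the components reconstitutes an almost-holomorphic modular form of the single weight $2|E(\Gamma)|$ rather than something of mixed weight. Once this is reconciled — most cleanly by observing that $\widehat P$ is genuinely a weight-$2$ almost-holomorphic modular form and that Theorem~\ref{main-theorem} is compatible with the ring of such forms — there is no further analytic content, and the proof reduces to citing Theorems~\ref{main-theorem} and~\ref{thm-family} and the homomorphism property of the holomorphic limit.
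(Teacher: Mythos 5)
Your overall strategy --- expand $\Phi_\Gamma=\sum_m (\im\tau)^{-m}\Phi_{\Gamma,m}$ and apply Theorem \ref{thm-2d} to each meromorphic component, while handling modularity at the level of the full almost-meromorphic function $\widehat P$ --- is exactly the paper's route, and that part of your argument is fine. But there is a genuine gap at the end: you conclude that $\widehat I_\Gamma$ is an \emph{almost-holomorphic modular form} and that its holomorphic limit is a \emph{holomorphic quasi-modular form} directly from Theorem \ref{thm-2d} plus the Kaneko--Zagier correspondence. In the paper's conventions (Definitions \ref{dfnmodularquasimodular}--\ref{dfnquasimodularform}) those terms include the growth condition (iii) at the cusp $\tau=i\infty$; Theorem \ref{thm-2d} (equivalently Theorem \ref{thm-modularity}) only delivers modularity together with membership in $\OO_\H[\frac{1}{\im\tau}]$, i.e.\ a priori a \emph{weakly} holomorphic quasi-modular form after taking the holomorphic limit. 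The paper closes this gap with Lemma \ref{lem-regularity}, which is a nontrivial analytic input: one lifts to the Tate curve, expands $P=\wp+\frac{\pi^2}{3}E_2$ in the coordinate $u=e^{2\pi i z}$ on the annulus $|q|<|u|<1$, extracts the constant term of the product over edges, and shows the resulting $q$-series stays bounded as $q\to 0$. Without this step (or some substitute controlling the behaviour of the ordered $A$-cycle integrals at $i\infty$) you cannot assert the two boxed conclusions of the theorem as stated.

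A secondary caution: the components $\Phi_{\Gamma,m}$ are \emph{not} individually modular of a pure weight (since $1/\im(\gamma\tau)=(c\tau+d)(c\bar\tau+d)/\im\tau$ mixes the grading), so your primary suggestion of getting modularity ``coefficient-by-coefficient'' would not work as written; your fallback --- treating $\widehat P$ as a genuine weight-$2$ almost-holomorphic modular object and invoking the stability of that class under $\dashint_{E_\tau}$ --- is the correct move, and is what Proposition \ref{prop-iteration} provides. The decomposition into the $\Phi_{\Gamma,m}$ should be reserved for the statements that do behave componentwise, namely membership in $\OO_\H[\frac{1}{\im\tau}]$ and the computation of the holomorphic limit.
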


The corresponding ordered $A$-cycle integrals, especially those associated to trivalent graphs, have attracted a lot of attention in recent years
\cite{Roth:2009mirror, Li:2011mi, Boehm:2014tropical, Goujard:2016counting,Oberdieck:2018}
since their introduction \cite{Dijkgraaf:1995} to the studies of mirror symmetry for elliptic curves.

Theorem \ref{main-thm2} above
connects  regularized Feynman graph integrals to the corresponding ordered $A$-cycle integrals via the holomorphic limit $\lim\limits_{\bar\tau\to \infty}$.
In particular, it provides a very practical way to compute regularized Feynman graph integrals from ordered $A$-cycle integrals. This is demonstrated through several examples (Examples \ref{exFeymangraphintegral1}, Example \ref{exFeymangraphintegral2}, Example \ref{exFeymangraphintegral3}).
 
\begin{rem} There is another approach \cite{Li:2011mi} to study such graph integrals on $E_\tau$ using the heat kernel regularization, following the effective renormalization method developed in \cite{Costello:2011book}. 
We expect that the heat kernel regularization there and the regularization discussed in this paper produce
the same regularized graph integrals.
\end{rem}

\subsection*{Organization of the paper}

In Section \ref{secregularizedintegrals}
we introduce the notion of regularized integrals on Riemann surfaces and on 
configuration spaces of Riemann surfaces, and establish their main properties.

In Section \ref{sec-3}
we apply our theory to elliptic curves. We prove the modularity of regularized integrals on configuration spaces and relate it to the quasi-modularity of ordered $A$-cycle integrals. As a byproduct, we offer a geometric proof of the mixed-weight phenomenon of
ordered $A$-cycle integrals and provide concrete formulae for each weight component. We then illustrate our results through examples of Feynman graph integrals. 

In Appendix \ref{secmodularformsellipticfunctions}
we review the basics of modular forms and elliptic functions.
Some combinatorial arguments in proving our main results and details in evaluating certain regularized/ordered $A$-cycle integrals are relegated to 
Appendix \ref{appendixalgebraicidentity}
and Appendix  \ref{secstraightforwardevaluation}, respectively.

\subsection*{Acknowledgment}

The authors would like to thank Kevin Costello, Robbert Dijkgraaf, Yi-Zhi Huang, Noriko Yui, and Dingxin Zhang
 for helpful communications. 
 Thanks also go to the anonymous referees for carefully reading the manuscript and for providing very useful suggestions that have 
 improved the paper.
 S.~L. would like to thank Yunchen Li and Xinyi Li, whose growth have reformulated many aspects of the presentation of the current work.
 
 This work of S.~L. and J.~Z. is supported by the National Key R\&D Program of China  (NO. 2020YFA0713000). S.~L. is also partially supported by grant 11801300 of National Natural Science Foundation of China  and grant Z180003 of Beijing Municipal Natural Science Foundation.  J.~Z. is also partially supported by the start-up grant at Tsinghua University and the Young overseas high-level talents introduction plan of China. 
Part of this work was initiated while J.~Z. was a postdoctoral fellow and S.~L. was a visiting fellow  at Perimeter Institute for Theoretical Physics in 2017. 
We thank the institute for its hospitality and provision of excellent working environment.

\section{Regularized integrals on Riemann surfaces}
\label{secregularizedintegrals}

In this section we introduce the notion of regularized integrals on Riemann surfaces (Definition \ref{defn-RI}),
and on the configuration spaces of Riemann surfaces (Definition \ref{defn-integral-product})). We explore aspects of their geometric properties which will play important roles in Section \ref{sec-3} for our application.

The main goal is to give a meaning (see Definition \ref{defn-integral-product} and Theorem \ref{thm-integral-product}) to the divergent integral 
$$
\int_{\Sigma^n} \omega
$$
where $\omega$ is a differential form on the product $\Sigma^n$ with arbitrary meromorphic poles along the diagonals, and thus defines a smooth form on the $n$-point configuration space of $\Sigma$. These integrals arise naturally as correlation functions of non-local operators in a chiral CFT on $\Sigma$  through the form
$$
 \abracket{\int_{\Sigma}\OO_1(z_1)\cdots  \int_{\Sigma}\OO_n(z_n)}=\int_{\Sigma^n}  \abracket{\OO_1(z_1)\cdots \OO_n(z_n)}.
$$
Here the $\OO_i$'s are $2$-form valued operators on $\Sigma$. Then our notion of regularized integrals can be viewed as providing a mathematical tool to study such correlation functions of non-local operators.

\subsection{Regularized integrals}\label{sec-curve-integral}

\subsubsection*{Compact surface}

Let $\Sigma$ be a compact Riemann surface, possibly with  boundary $\pa\Sigma$. We will concentrate on regularized integrals on compact surfaces and briefly discuss modifications to non-compact surfaces at the end of this subsection. 

Let $\OO_\Sigma$ be the sheaf of holomorphic functions and $\Omega^\bullet_\Sigma$ be  the sheaf of holomorphic forms on $\Sigma$. We sometimes write $\OO, \Omega^\bullet$ for simplicity when $\Sigma$ is clear from the context. Let
$$
\A^{p,q}(\Sigma):= \A^{0,q}(\Sigma, \Omega^p)\,, \quad p,q=0,1
$$
be the space of smooth $(p,q)$-forms on $\Sigma$ and 
$$
  \A^k(\Sigma):= \bigoplus_{p+q=k}\A^{p,q}(\Sigma)
$$
be the space of smooth $k$-forms. 

Let $D\subset \Sigma$ be a finite subset of points which does not meet $\pa \Sigma$. Let
$$
\Omega_{\Sigma}^\bullet(\star D):=\bigcup_{n\geq 0} \Omega^\bullet_\Sigma(nD)
$$
be the sheaf of meromorphic forms which are holomorphic on $\Sigma-D$ but possibly with arbitrary order of poles along $D$. Let $\Omega^1_\Sigma(\log D)$ be the subsheaf of $\Omega^1_\Sigma(\star D)$ consisting of 1-forms that are  logarithmic along $D$. Since $\Sigma$ is complex one-dimensional, we have
$$
\Omega^1_\Sigma(\log D)=\Omega_\Sigma^1\otimes_{\OO_\Sigma} \OO_\Sigma(D)\,. 
$$
Let 
$$
\Omega_\Sigma^\bullet(\log D):= \Omega^0_\Sigma\oplus \Omega^1_\Sigma(\log D)\,.
$$

We use
$$
\A^{p,q}(\Sigma,\star D):= \A^{0,q}(\Sigma, \Omega^p(\star D))\,, \quad \A^{p,q}(\Sigma,\log D):= \A^{0,q}(\Sigma, \Omega^p(\log D)), \quad p,q=0,1
$$
for the corresponding $(p,q)$-forms with specified poles along $D$ and 
$$
  \A^k(\Sigma,\star D):= \bigoplus_{p+q=k}\A^{p,q}(\Sigma,\star D)\,,\quad   \A^k(\Sigma,\log D)=\bigoplus_{p+q=k}\A^{p,q}(\Sigma,\log D)\,. 
$$
By definition, elements of $\A^k(\Sigma,\star D)$ are $k$-forms $\omega$ which are smooth on $\Sigma-D$ and are of the form 
$$
\omega= {\alpha \over z^n} \quad \text{in a small neighborhood of $p\in D$}\,.
$$
Here $z$ is a local holomorphic coordinate with $z(p)=0$, $n$ is a non-negative integer and $\alpha$ is a smooth $k$-form around $p$. The form  $\omega$ lies in $\A^k(\Sigma,\log D)$ if it is locally of the form
$$
\omega = \alpha + \beta {dz\over z}
$$
where $\alpha, \beta$ are smooth $(0,\bullet)$-forms around $p$. 

The complex $\A^{\bullet,\bullet}(\Sigma,\star D)$ is a bi-graded complex with two natural differentials 
$$
\dbar: \A^{\bullet,\bullet}(\Sigma,\star D)\to \A^{\bullet,\bullet+1}(\Sigma,\star D)\,, \quad \pa: \A^{\bullet,\bullet}(\Sigma,\star D)\to \A^{\bullet+1,\bullet}(\Sigma,\star D)\,.
$$
Moreover, $\A^{\bullet, \bullet}(\Sigma,\log D)\subset \A^{\bullet,\bullet}(\Sigma,\star D)$ is a bi-graded subcomplex. The total differential
$$
d=\dbar+\pa
$$
is the de Rham differential. 

The goal of this subsection is to explain that the following integral
$$
\int_{\Sigma} \omega\,, \quad \omega \in \A^2(\Sigma,\star D)
$$
has a canonical meaning although $\omega$ may have poles along $D$. This will be called \textbf{regularized integral}. To avoid possible confusion, the regularized integral will be denoted by 
$$
\dashint_\Sigma \omega\,. 
$$
It will extend the usual integral for smooth forms, i.e., the following diagram is commutative 
$$
\xymatrix{
\A^2(\Sigma)\ar@{^{(}->}[rr]  \ar[dr]_{\int_\Sigma} && \A^2(\Sigma,\star D)\ar[dl]^{\dashint_\Sigma}\\
&\C&
}
$$

We start with two useful lemmas. 

\begin{lem}\label{lem-decomposition} Any $\omega \in \A^{1,\bullet}(\Sigma,\star D)$ can be written as
$$
\omega=\alpha+\pa \beta\,, \quad \text{where}\quad \alpha\in \A^{1,\bullet}(\Sigma, \log D)\,, \quad \beta\in \A^{0,\bullet}(\Sigma, \star D)\,. 
$$
The supports of $\alpha$ and $\beta$ can be chosen to be contained in the support of $\omega$. 
\end{lem}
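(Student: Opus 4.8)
The plan is to argue locally near each point of $D$, extract the pole part as $\pa$ of a $(0,\bullet)$-form, and then patch with a partition of unity. Concretely, let $\omega \in \A^{1,\bullet}(\Sigma,\star D)$. Away from $D$ the form $\omega$ is already smooth, so the whole issue is the behavior near each $p \in D$. Fix a local holomorphic coordinate $z$ centered at $p$, so that in a small disk $U_p$ around $p$ we may write $\omega = z^{-n}(f\, dz + g\, d\bar z)$ (when $\bullet = 1$) with $f, g$ smooth, or $\omega = z^{-n} f\, dz$ (when $\bullet = 0$), for some $n \ge 0$. The key local observation is that for $k \ge 2$ one has the identity
$$
\frac{dz}{z^k} = \pa\!\left( \frac{-1}{(k-1)} \frac{1}{z^{k-1}} \right),
$$
so a pure $dz$-pole of order $\ge 2$ is exactly $\pa$ of a meromorphic (hence smooth on $U_p - \{p\}$, with a pole) function; the residual term $\frac{dz}{z}$ is logarithmic and stays. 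Thus, after peeling off $\pa$ of an appropriate $\A^{0,\bullet}$-form, the $dz$-component of $\omega$ near $p$ becomes logarithmic along $D$.

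The remaining task is the $d\bar z$-component, i.e. a term of the form $z^{-n} g\, d\bar z$ with $g$ smooth. Here I would use the $\dbar$-Poincaré/Dolbeault solvability of the inhomogeneous Cauchy-Riemann equation: on the disk $U_p$ there exists a smooth function $h$ with $\pa_{\bar z} h = z^{-n} g$ (this is just $\dbar h = z^{-n} g\, d\bar z$ in degree reasons — note $z^{-n} g$ is smooth times the integrable function $z^{-n}$, so standard elliptic regularity / the Cauchy kernel gives a solution $h$ that is smooth on all of $U_p$). Then $z^{-n} g\, d\bar z = \dbar h$. But wait — we want $\pa$ of something, not $\dbar$ of something. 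The resolution is that in the $\bullet = 1$ case the relevant object $\beta$ is a $(0,1)$-form, and $\pa \beta$ for $\beta \in \A^{0,1}$ is a $(1,1)$-form, so it can only absorb the $(1,1)$-part $z^{-n} g\, dz \wedge d\bar z$ of $\omega$; the $(1,0)$-part $z^{-n} f\, dz$ must already be handled by the first (meromorphic) step above. So I would split according to bidegree: for $\bullet = 0$, only the meromorphic step is needed and $\beta \in \A^{0,0}(\Sigma,\star D)$; for $\bullet = 1$, the $(1,0)$-part of $\omega$ is fixed meromorphically, and for the $(1,1)$-part $z^{-n} g\, dz\wedge d\bar z = \pa(\,\text{(something)}\, d\bar z)$ I need a smooth (away from $D$) function $u$ on $U_p$ with $\pa_z u = z^{-n} g$; this is again solvable by integrating in $z$, with $u$ having at worst a pole of order $n-1$ along $D$, hence $u\, d\bar z \in \A^{0,1}(\Sigma,\star D)$.

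Finally I would globalize. Choose disjoint coordinate disks $U_p$ around the points of $D$ and a cutoff function $\rho_p$ supported in $U_p$ and equal to $1$ near $p$. Replace the local primitives constructed above by $\rho_p$ times them; then $\omega - \sum_p \pa(\rho_p \beta_p)$ differs from $\omega$ only by terms supported in the $U_p$'s, has no worse than logarithmic poles along $D$ (near each $p$), and is smooth elsewhere — so it lies in $\A^{1,\bullet}(\Sigma,\log D)$, giving $\alpha$. Setting $\beta = \sum_p \rho_p \beta_p$ completes the decomposition, and by construction $\mathrm{supp}(\alpha), \mathrm{supp}(\beta) \subseteq \mathrm{supp}(\omega)$ since all modifications are localized where $\omega$ is singular (and one can further multiply by a cutoff equal to $1$ on $\mathrm{supp}(\omega)$ to be safe). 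The main obstacle I anticipate is bookkeeping the bidegrees correctly so that the antiholomorphic pole part is genuinely killed by $\pa$ of a $(0,\bullet)$-form rather than by $\dbar$ of something — i.e. realizing that the $d\bar z$-pole must be written as $\pa_z(\text{pole function})\, d\bar z$, which forces the explicit $z$-integration rather than an appeal to $\dbar$-solvability; the $\dbar$-Poincaré lemma is a red herring here, and the actual input is the elementary fact that $z^{-n} g$, for $g$ smooth, has a smooth-times-meromorphic $z$-primitive on the disk.
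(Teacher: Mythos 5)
Your overall strategy --- kill the high\mbox{-}order part of the pole locally by subtracting $\pa$ of a form with a lower\mbox{-}order pole, keep the order\mbox{-}one (logarithmic) remainder in $\alpha$, and glue with a partition of unity --- is exactly the paper's proof. However, the step you rely on for the $(1,1)$\mbox{-}part is false as stated: you claim that $\pa_z u = z^{-n}g$ is solvable with $u$ having a pole of order $n-1$ along $D$. Writing $u=v/z^{n-1}$ with $v$ smooth, the equation becomes $z\pa_z v-(n-1)v=g$, and comparing Taylor coefficients of $z^{j}\bar z^{k}$ shows that the coefficient of $z^{n-1}$ in $g$ is an obstruction (allowing a pole of any other order $m$ in $u$ gives the same obstruction). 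For example $g=z^{n-1}$ gives $z^{-n}g\,dz\wedge d\bar z=z^{-1}dz\wedge d\bar z$, which is logarithmic and is not $\pa$ of anything in $\A^{0,1}(\Sigma,\star D)$. So the equation is only solvable \emph{modulo} a logarithmic remainder, and the clean way to see this --- which is what the paper does, and what you essentially wrote for the $(1,0)$\mbox{-}part --- is the purely algebraic identity
$$
\frac{dz}{z^{n}}\wedge g \;=\; -\frac{1}{n-1}\,\pa\!\left(\frac{g}{z^{n-1}}\right)+\frac{\pa g}{(n-1)\,z^{n-1}}\,,
$$
valid for any smooth $(0,\bullet)$\mbox{-}form $g$: it lowers the pole order by one at the cost of replacing $g$ by a new smooth coefficient, and iterating until the order is one produces the decomposition. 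No $\pa_z$\mbox{-}equation has to be solved and no integration in $z$ is performed; the ``$\dbar$\mbox{-}Poincar\'e'' discussion is indeed a red herring, as you note.

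The distinction matters for the support statement. In the algebraic construction, $\beta$ is a sum of terms of the form (derivatives of the coefficients of $\rho_p\,\omega$) divided by powers of $z$, so $\operatorname{supp}(\alpha),\operatorname{supp}(\beta)\subseteq\operatorname{supp}(\omega)$ comes for free. If instead $u$ is produced by genuinely integrating in $z$, its support can leave $\operatorname{supp}(\omega)$, and your proposed repair --- multiplying by a cutoff $\chi$ equal to $1$ on $\operatorname{supp}(\omega)$ --- does not restore it: $\chi$ is then supported on a set containing $\operatorname{supp}(\omega)$ rather than contained in it, and replacing $\beta$ by $\chi\beta$ changes $\alpha$ by $\pa\chi\wedge\beta$, which lives where $\pa\chi\neq 0$, possibly outside $\operatorname{supp}(\omega)$. (The initial bidegree slip --- writing the $\bullet=1$ case as $z^{-n}(f\,dz+g\,d\bar z)$ instead of $z^{-n}h\,dz\wedge d\bar z$ --- you correct yourself later, so I do not count it against you.)
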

\begin{proof} Let $D=\{p_1,\cdots, p_m\}$. Let $\{U_i\}$ be disjoint open subsets such that $p_i\in U_i$ for each $i$. Let $V_i\subset U_i$ be smaller open subsets with $\overline{V}_i\subset U_i$ such that
$$
U_0:= \Sigma-\cup_i \overline{V_i}\,, \quad U_1, \cdots, U_m
$$
define an open cover of $\Sigma$. Let 
$$
1=\rho_0+\rho_1+\cdots+\rho_m\,, \quad \overline{\text{Supp}(\rho_k)}\subset U_k \,,\quad k=0,\cdots,m
$$
be a partition of unity subordinate to this open cover. Let $\omega_j= \rho_j \omega$. We have
$$
\omega=\omega_0+\omega_1+\cdots+\omega_m\,. 
$$
Since $\omega_0$ is smooth, we only need to show that each $\omega_i$ can be written as
$$
\omega_i=\alpha_i+\pa \beta_i\,, \quad i=1,\cdots, m\,.
$$
Then we can choose
$$
   \alpha=\omega_0+ \sum_{i=1}^m \alpha_i\,, \quad \beta=\sum_{i=1}^m \beta_i\,. 
$$

The problem is local and we focus on the small neighborhood $U_i$ with local holomorphic coordinate $z$ such that $z(p_i)=0$. Assume
$$
\omega_i={ dz  \over z^n} \wedge g
$$
where $g$ is smooth  with compact support in $U_i$. If $n=1$, then we can choose
$$
\alpha_i= \omega_i\,, \quad \beta_i=0\,. 
$$
If $n>1$, then 
$$
\omega_i=-{1\over n-1} \pa\bracket{g \over z^{n-1}}+{\pa g \over (n-1)z^{n-1}}\,. 
$$
We repeat this process to reduce the order of pole and eventually find $\alpha_i, \beta_i$ as required. 
\end{proof}

\begin{lem}\label{lem-limit} Let $f$ be a smooth function around the origin $0\in \C$. Let $n$ be a positive integer. Then
$$
\lim_{\epsilon\to 0} \int_{|z|=\epsilon} {f d\bar z\over z^n}=0\,, \qquad \lim_{\epsilon\to 0} \int_{|z|=\epsilon} {f d z\over z^n}= {2\pi i  \over (n-1)!}\pa_z^{n-1} f(0)\,.
$$ 
Here the integration contour is counter-clockwise oriented. 
\end{lem}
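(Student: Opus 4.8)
The plan is to evaluate both limits directly. Parametrize the circle $|z|=\epsilon$ by $z=\epsilon e^{i\theta}$, $\theta\in[0,2\pi]$, so that $dz=i\epsilon e^{i\theta}\,d\theta$ and $d\bar z=-i\epsilon e^{-i\theta}\,d\theta$; then everything reduces to the elementary orthogonality relation $\int_0^{2\pi}e^{im\theta}\,d\theta=2\pi\,\delta_{m,0}$. The only point requiring attention is that the factor $z^{-n}$ produces $\epsilon^{-n}$ while the arc length contributes only one power of $\epsilon$, so the naive estimate $|f|\le C$ is useless for $n\ge 2$; instead one must peel off the Taylor expansion of $f$ at $0$ to sufficiently high order.

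Concretely, apply Taylor's theorem to the smooth function $f$ and regroup the result into Wirtinger monomials: near $0$,
$$
f(z,\bar z)=\sum_{j+k\le n}c_{jk}\,z^j\bar z^k+R(z,\bar z),\qquad |R(z,\bar z)|\le C\,|z|^{n+1},
$$
where $c_{j,0}=\tfrac{1}{j!}\pa_z^j f(0)$ are the holomorphic Taylor coefficients. For a single monomial the parametrization gives $\int_{|z|=\epsilon}z^j\bar z^k\,z^{-n}\,dz=2\pi i\,\epsilon^{2k}$ when $j=n-1+k$ and $0$ otherwise, while $\int_{|z|=\epsilon}z^j\bar z^k\,z^{-n}\,d\bar z=-2\pi i\,\epsilon^{2k+2}$ when $j=n+1+k$ and $0$ otherwise. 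Inside the Taylor sum the constraint $j+k\le n$ singles out exactly the term $k=0,\ j=n-1$ in the $dz$-integral, contributing $2\pi i\,c_{n-1,0}=\tfrac{2\pi i}{(n-1)!}\pa_z^{n-1}f(0)$, and no term at all in the $d\bar z$-integral. The remainder is harmless: on $|z|=\epsilon$ one has $|R\,z^{-n}|\le C\epsilon$, so both $\int_{|z|=\epsilon}R\,z^{-n}\,dz$ and $\int_{|z|=\epsilon}R\,z^{-n}\,d\bar z$ are $O(\epsilon^2)$ and vanish as $\epsilon\to0$. Letting $\epsilon\to0$ in the two decompositions then yields the two stated identities.

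I expect no genuine obstacle here: the argument is pure bookkeeping — choosing the Taylor order and tracking which powers of $\epsilon$ survive — resting only on Taylor's theorem and the orthogonality of the characters $e^{im\theta}$. As a sanity check and an alternative route to the first identity, one may observe that on $|z|=\epsilon$ one has $\bar z=\epsilon^2/z$ and $d\bar z=-\epsilon^2\,dz/z^2$, whence $\int_{|z|=\epsilon}f\,z^{-n}\,d\bar z=-\epsilon^2\int_{|z|=\epsilon}f\,z^{-n-2}\,dz$; the second identity shows the integral on the right converges as $\epsilon\to0$, so multiplication by $\epsilon^2$ forces the limit to be $0$.
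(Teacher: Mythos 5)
Your proof is correct and follows essentially the same route as the paper: Taylor-expand $f$ at the origin to order $n$, evaluate the resulting monomial integrals by the orthogonality of $e^{im\theta}$ on the circle, and kill the remainder by a power count. The only cosmetic differences are that you use the degree constraint $j+k\le n$ to isolate the single surviving term (where the paper instead lets $\epsilon^{2m}\to 0$), and your closing observation that $d\bar z=-\epsilon^2\,dz/z^2$ on $|z|=\epsilon$ is a nice independent check of the first identity not present in the paper.
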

\begin{rem}When $f$ is holomorphic, this reduces to the usual residue formula. 
\end{rem}
\begin{proof} Let
$$
  h=\sum_{\substack{k,m\geq 0\\ k+m\leq n}}a_{km}z^k \bar z^m\,, \quad a_{km}={1\over k!m!}\pa_z^k\pa_{\bar z}^m f |_{z=0}
$$
be the Taylor approximation of $f$ at $z=0$ up to order $n$. From the remainder estimate
$$
f= h+ o(|z|^n)\,,
$$
we have
$$
\lim_{\epsilon\to 0} \int_{|z|=\epsilon} {f d\bar z \over z^n}=\lim_{\epsilon\to 0} \int_{|z|=\epsilon}  {h d\bar z\over z^n}\,, \quad \lim_{\epsilon\to 0} \int_{|z|=\epsilon} {f dz \over z^n}=\lim_{\epsilon\to 0} \int_{|z|=\epsilon}  {h dz\over z^n}\,.
$$
Therefore we only need to analyze the case when $f=z^k\bar z^m$ is a single monomial. Then
\begin{eqnarray*}
\lim_{\epsilon\to 0} \int_{|z|=\epsilon} {z^k \bar z^m d\bar z \over z^n}&=&\lim_{\epsilon\to 0} \epsilon^{k+m+1-n} (-i)\int_{0}^{2\pi} e^{(k-m-1-n)i\theta}d\theta\\
&=&\lim_{\epsilon\to 0} \epsilon^{k+m+1-n} (-2\pi i)\delta_{k,m+1+n}=0\,.
\end{eqnarray*}
Similarly 
\begin{eqnarray*}
\lim_{\epsilon\to 0} \int_{|z|=\epsilon} {z^k \bar z^m d z \over z^n}&=&\lim_{\epsilon\to 0} \epsilon^{k+m+1-n} i\int_{0}^{2\pi} e^{(k-m+1-n)i\theta}d\theta\\
&=&2\pi i \lim_{\epsilon\to 0}  \epsilon^{k+m+1-n}\delta_{k,m-1+n}=2\pi i \delta_{m,0}\delta_{k,n-1}\,. 
\end{eqnarray*}

\end{proof}

\begin{thm}\label{thm-integral} Let $\omega \in \A^2(\Sigma, \star D)$. Then there exist $ \alpha\in \A^2(\Sigma, \log D),  \beta\in \A^{0,1}(\Sigma, \star D)$ such that 
$
\omega=\alpha+\pa \beta
$. The integral $
\int_\Sigma \alpha 
$
is absolutely convergent and the sum
$$
\int_{\Sigma}\alpha + \int_{\pa \Sigma} \beta
$$
does not depend on the choice of $\alpha, \beta$. 

\end{thm}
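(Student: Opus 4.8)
The plan is to establish the three assertions in turn: existence of the decomposition $\omega=\alpha+\pa\beta$, absolute convergence of $\int_\Sigma\alpha$, and independence of the sum $\int_\Sigma\alpha+\int_{\pa\Sigma}\beta$ of the chosen decomposition.

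Existence is immediate from Lemma \ref{lem-decomposition}: since $\A^2(\Sigma,\star D)=\A^{1,1}(\Sigma,\star D)$, applying that lemma with bottom degree $(0,1)$ produces $\alpha\in\A^{1,1}(\Sigma,\log D)=\A^2(\Sigma,\log D)$ and $\beta\in\A^{0,1}(\Sigma,\star D)$ with $\omega=\alpha+\pa\beta$. For absolute convergence, the statement is local near each $p\in D$; in a coordinate disk centered at $p$, a form in $\A^2(\Sigma,\log D)$ is, up to a smooth summand, of the shape $f\,\tfrac{dz}{z}\wedge d\bar z$ with $f$ smooth, so $|\alpha|\lesssim|z|^{-1}$, and $|z|^{-1}$ is integrable on a punctured planar disk because $\int_0 r^{-1}\cdot r\,dr<\infty$. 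Away from $D$ the form $\alpha$ is smooth on the compact $\Sigma$, so $\int_\Sigma\alpha$ is absolutely convergent.

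For independence, suppose $\omega=\alpha+\pa\beta=\alpha'+\pa\beta'$ with $\alpha,\alpha'\in\A^2(\Sigma,\log D)$ and $\beta,\beta'\in\A^{0,1}(\Sigma,\star D)$. Put $\gamma:=\beta'-\beta\in\A^{0,1}(\Sigma,\star D)$, so that $\pa\gamma=\alpha-\alpha'\in\A^2(\Sigma,\log D)$; it then suffices to prove
$$
\int_\Sigma\pa\gamma=\int_{\pa\Sigma}\gamma\,.
$$
Since $\gamma$ is of type $(0,1)$ on a curve, $\dbar\gamma$ is a $(0,2)$-form and hence vanishes, so $d\gamma=\pa\gamma$. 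Write $D=\{p_1,\dots,p_m\}$ and excise coordinate disks $D_\epsilon(p_i)$ of small radius $\epsilon$, disjoint from $\pa\Sigma$ because $D\cap\pa\Sigma=\varnothing$. Applying the ordinary Stokes theorem on $\Sigma-\bigcup_i D_\epsilon(p_i)$, where $\gamma$ is smooth, gives
$$
\int_{\Sigma-\cup_i D_\epsilon(p_i)}\pa\gamma=\int_{\pa\Sigma}\gamma-\sum_{i=1}^m\int_{|z-p_i|=\epsilon}\gamma\,,
$$
the small circles oriented counterclockwise. Letting $\epsilon\to0$, the left side converges to $\int_\Sigma\pa\gamma$ by the absolute integrability just established, while near $p_i$ the form $\gamma$ is locally $h\,d\bar z/z^{n}$ with $h$ smooth and $n\ge0$, so the $d\bar z$ case of Lemma \ref{lem-limit} forces $\lim_{\epsilon\to0}\int_{|z-p_i|=\epsilon}\gamma=0$. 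This yields $\int_\Sigma\pa\gamma=\int_{\pa\Sigma}\gamma$, as desired.

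The crux of the argument, and the main obstacle, is this last limiting step: the asymmetry in Lemma \ref{lem-limit} between the $d\bar z$-part of $\gamma$ (which contributes nothing in the limit) and a hypothetical $dz$-part (which would contribute residues) is precisely what makes the value independent of the decomposition. Two points deserve care: that the ambiguity is carried only by a $(0,1)$-form $\gamma$ — forced by the fact that $\pa$ raises the holomorphic degree, so $\beta$ must have type $(0,1)$ — and that the orientations of the excised circles, hence the sign in Stokes' theorem, are tracked correctly.
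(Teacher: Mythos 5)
Your proof is correct and follows essentially the same route as the paper's: existence via Lemma \ref{lem-decomposition}, absolute convergence from the logarithmic bound $|z|^{-1}$, and independence by excising $\epsilon$-disks, applying Stokes to $d\gamma=\pa\gamma$, and killing the small-circle contributions with the $d\bar z$ case of Lemma \ref{lem-limit}. The only cosmetic difference is that you package the ambiguity as $\gamma=\beta'-\beta$ and prove $\int_\Sigma\pa\gamma=\int_{\pa\Sigma}\gamma$, whereas the paper manipulates $\alpha-\alpha'$ directly; the content is identical.
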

\begin{proof} Such $\alpha, \beta$ exist by Lemma \ref{lem-decomposition}. $\int_\Sigma \alpha$ is absolutely convergent since $\alpha$ is logarithmic. $ \int_{\pa \Sigma} \beta$ is also well-defined since $D$ does not meet $\pa \Sigma$. Assume we have two expressions
$$
\omega=\alpha+\pa \beta=\alpha^\prime+\pa \beta^\prime\,. 
$$
Let $z_i$ be a local coordinate around $p_i\in D$ such that $z_i(p_i)=0$. Let $B_\epsilon^i=\{|z_i|\leq \epsilon\}$ be a small $\epsilon$-ball centered at $p_i$. Then 
\begin{align*}
\int_\Sigma(\alpha-\alpha^\prime)&=\lim_{\epsilon\to 0} \int_{\Sigma- \cup_i B_\epsilon^i} (\alpha-\alpha^\prime)=-\lim_{\epsilon\to 0}\int_{\Sigma- \cup_i B_\epsilon^i} \pa (\beta-\beta^\prime)\\
&=-\lim_{\epsilon\to 0}\int_{\Sigma- \cup_i B_\epsilon^i} d (\beta-\beta^\prime)=-\int_{\pa \Sigma}(\beta-\beta^\prime)+\sum_i \lim_{\epsilon\to 0}\int_{\pa B_\epsilon^i}(\beta-\beta^\prime)\\
&=-\int_{\pa \Sigma}(\beta-\beta^\prime)\,. 
\end{align*}
Here we have used Lemma \ref{lem-limit} in the last step. This proves the theorem. 
\end{proof} 

\begin{dfn}\label{defn-RI} We define the \textbf{regularized integral} 
$$
\dashint_\Sigma: \A^\bullet(\Sigma, \star D)\to \C
$$
by
$$
\dashint_{\Sigma}\omega:= \begin{cases}
0 & \text{if}\quad \omega \in \A^{\leq 1}(\Sigma, \star D)\,,\\
\int_{\Sigma}\alpha+ \int_{\pa \Sigma} \beta & \text{if}\quad \omega=\alpha+\pa \beta\in \A^2(\Sigma, \star D)\,. 
\end{cases}
$$
Here  $\alpha\in \A^2(\Sigma, \log D),  \beta\in \A^{0,1}(\Sigma, \star D)$.  
\end{dfn}

The regularized integral is well-defined by Theorem \ref{thm-integral}. It clearly extends  the usual integration of smooth forms
$$
\dashint_\Sigma\omega =\int_\Sigma \omega, \quad \text{for}\quad \omega \in \A^2(\Sigma)\,. 
$$

\begin{ex}\label{exregularizedintegralex}
Let $E_{\tau}=\mathbb{C}/(\mathbb{Z}\oplus \mathbb{Z}\tau),\mathrm{im}\,\tau>0$ be an elliptic curve and
$\wp(z;\tau)$ be the Weierstrass $\wp$-function. We now evaluate the regularized integral
$$\dashint_{E_\tau} \wp(z;\tau)  {i\over 2\,\mathrm{im}\tau}dz\wedge d\bar{z}\,.$$
It is a standard fact from the theory of elliptic functions (see Appendix \ref{secmodularformsellipticfunctions}) that
$$\widehat{P}(z;\tau):=\wp(z;\tau)+\eta_1+{-\pi\over \mathrm{im}\,\tau}=-\partial_{z}
\widehat{Z}\,,\quad
\widehat{Z}:=\zeta(z;\tau)-z \eta_1+{-\pi\over \mathrm{im}\,\tau} (\bar{z}-z)\,,$$
where $\eta_1={\pi^2\over 3}E_{2}$ is a semi-period, $\zeta(z;\tau)$ the Weierstrass $\zeta$-function.
Note that both $\widehat{P}$ and $\widehat{Z}$ are elliptic functions on $\mathbb{C}$ and thus descend to functions on $E_{\tau}$, while 
this is not so for $\zeta$.
It follows that
$$\widehat{P}(z;\tau){i\over 2\,\mathrm{im}\tau}dz\wedge d\bar{z}
=0-\partial \left( 
\widehat{Z}\cdot {i\over 2\,\mathrm{im}\,\tau}d\bar{z}
\right)\,.
 $$
Hence by Definition \ref{defn-RI} one has
$$\dashint_{E_\tau} \widehat{P}(z;\tau)  {i\over 2\, \mathrm{im}\tau}dz\wedge d\bar{z}
=0\,,$$
and thus
$$\dashint_{E_\tau} \wp(z;\tau)  {i\over 2\, \mathrm{im}\tau}dz\wedge d\bar{z}
=-\dashint_{E_\tau}( \eta_1+{-\pi\over \mathrm{im}\,\tau}) {i\over  2\, \mathrm{im}\tau}dz\wedge d\bar{z}
=-(\eta_1+{-\pi\over \mathrm{im}\,\tau})\,.$$

\end{ex}

\begin{prop}\label{prop-integration-noboundary}
Assume $\pa\Sigma=\emptyset$.  Then the regularized integral factors through the quotient
$$
\dashint_{\Sigma}: \quad  {\A^2(\Sigma, \star D)\over \pa \A^{0,1}(\Sigma, \star D)}\to \C\,. 
$$
\end{prop}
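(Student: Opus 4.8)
The plan is to verify two things, which together are exactly the assertion: first that $\dashint_\Sigma$ is $\C$-linear on $\A^2(\Sigma,\star D)$, and second that it annihilates the subspace $\pa\A^{0,1}(\Sigma,\star D)$; a linear map vanishing on a subspace descends to the quotient by that subspace.

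For linearity, I would observe that $\A^2(\Sigma,\log D)$ and $\A^{0,1}(\Sigma,\star D)$ are $\C$-vector spaces (the local normal forms $\alpha+\beta\,\tfrac{dz}{z}$ and $\alpha/z^n$ are manifestly stable under sums and scalars) and that $\pa$ is $\C$-linear. Hence if $\omega_j=\alpha_j+\pa\beta_j$ for $j=1,2$ are decompositions as in Theorem \ref{thm-integral}, then for $c_1,c_2\in\C$ one has $c_1\omega_1+c_2\omega_2=(c_1\alpha_1+c_2\alpha_2)+\pa(c_1\beta_1+c_2\beta_2)$, again an admissible decomposition in the sense of Definition \ref{defn-RI}. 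Since $\int_\Sigma$ and $\int_{\pa\Sigma}$ are linear, the formula defining $\dashint_\Sigma$ yields $\dashint_\Sigma(c_1\omega_1+c_2\omega_2)=c_1\dashint_\Sigma\omega_1+c_2\dashint_\Sigma\omega_2$, the values being independent of the chosen decompositions by Theorem \ref{thm-integral}.

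For the vanishing, let $\gamma\in\A^{0,1}(\Sigma,\star D)$. Then $\pa\gamma\in\A^2(\Sigma,\star D)$ admits the tautological decomposition $\pa\gamma=0+\pa\gamma$ with $0\in\A^2(\Sigma,\log D)$ and $\gamma\in\A^{0,1}(\Sigma,\star D)$, which is of the admissible type in Definition \ref{defn-RI}. By the independence statement in Theorem \ref{thm-integral} we may compute $\dashint_\Sigma\pa\gamma$ from this decomposition, obtaining $\dashint_\Sigma\pa\gamma=\int_\Sigma 0+\int_{\pa\Sigma}\gamma=0$, where the boundary term vanishes because $\pa\Sigma=\emptyset$ by hypothesis.

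Combining the two points, $\dashint_\Sigma\colon\A^2(\Sigma,\star D)\to\C$ is linear and kills $\pa\A^{0,1}(\Sigma,\star D)$, so it factors through $\A^2(\Sigma,\star D)/\pa\A^{0,1}(\Sigma,\star D)$. I do not expect any genuine obstacle: all the substance has already been absorbed into the well-definedness of $\dashint_\Sigma$ proved in Theorem \ref{thm-integral}, and the only point requiring a moment's care is to record that the trivial splitting $\pa\gamma=0+\pa\gamma$ is indeed an admissible decomposition, so that Definition \ref{defn-RI} applies to it directly.
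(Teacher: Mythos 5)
Your proof is correct and matches the paper's (one-line) argument, which simply notes that the claim follows by construction: any decomposition of $\omega$ yields one of $\omega+\pa\gamma$ with the same logarithmic part, so the two regularized integrals agree once the boundary term is absent. Your explicit verification of linearity and of the vanishing on $\pa\A^{0,1}(\Sigma,\star D)$ via the tautological decomposition $\pa\gamma=0+\pa\gamma$ is exactly the substance the paper leaves implicit.
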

\begin{proof} This follows by construction. 
\end{proof}

\begin{prop}\label{prop-pullback} Let $f: (\Sigma_1, \pa \Sigma_1)\to (\Sigma_2, \pa \Sigma_2)$ be a diffeomorphism which is bi-holomorphic. Let $D_1\subset \Sigma_1$ be a finite subset which does not meet $\pa \Sigma_1$ and $D_2=f(D_1)$. Let $\omega \in \A^2(\Sigma_2, \star D_2)$. Then the pull-back $f^*\omega \in \A^2(\Sigma_1, \star D_1)$ and
$$
\dashint_{\Sigma_1} f^*\omega= \dashint_{\Sigma_2} \omega\,. 
$$
\end{prop}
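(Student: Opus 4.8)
The plan is to reduce the statement to the ordinary change-of-variables formula, together with the observation that pullback along the bi-holomorphic $f$ respects every structure appearing in Definition \ref{defn-RI}. Since both sides vanish for forms of degree $\leq 1$, I may assume $\omega \in \A^2(\Sigma_2,\star D_2)$. First I would check that $f^*$ maps $\A^{p,q}(\Sigma_2,\star D_2)$ into $\A^{p,q}(\Sigma_1,\star D_1)$ and, more precisely, $\A^2(\Sigma_2,\log D_2)$ into $\A^2(\Sigma_1,\log D_1)$ and $\A^{0,1}(\Sigma_2,\star D_2)$ into $\A^{0,1}(\Sigma_1,\star D_1)$. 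This is a local statement near a point $p\in D_2$: if $z$ is a local holomorphic coordinate with $z(p)=0$, then because $f$ is bi-holomorphic, $w:=z\circ f$ is a local holomorphic coordinate near $q:=f^{-1}(p)\in D_1$ with $w(q)=0$, and $f^*$ sends a form $\alpha/z^n$ to $f^*\alpha/w^n$, and $\alpha+\beta\,{dz\over z}$ to $f^*\alpha+f^*\beta\,{dw\over w}$, with $f^*\alpha,f^*\beta$ again smooth $(0,\bullet)$-forms. In particular $f^*$ preserves the bi-grading (as $f$ is holomorphic), the order-of-pole filtration, and the logarithmic condition; I would also record that $f^*\circ\pa=\pa\circ f^*$, once more since $f$ is holomorphic.

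Next, given any decomposition $\omega=\alpha+\pa\beta$ as in Theorem \ref{thm-integral}, with $\alpha\in\A^2(\Sigma_2,\log D_2)$ and $\beta\in\A^{0,1}(\Sigma_2,\star D_2)$, applying $f^*$ yields $f^*\omega=f^*\alpha+\pa(f^*\beta)$, which by the previous paragraph is a legitimate decomposition of $f^*\omega$ of the required type. Hence $\dashint_{\Sigma_1}f^*\omega=\int_{\Sigma_1}f^*\alpha+\int_{\pa\Sigma_1}f^*\beta$, and it remains to establish the two identities $\int_{\Sigma_1}f^*\alpha=\int_{\Sigma_2}\alpha$ and $\int_{\pa\Sigma_1}f^*\beta=\int_{\pa\Sigma_2}\beta$. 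A bi-holomorphic map of Riemann surfaces is orientation-preserving and restricts to an orientation-preserving diffeomorphism $\pa\Sigma_1\to\pa\Sigma_2$; since $D_2$ does not meet $\pa\Sigma_2$, the form $\beta$ is smooth near $\pa\Sigma_2$, and the second identity is the usual change-of-variables formula. For the first, $\alpha$ has at worst logarithmic poles along $D_2$ and is therefore absolutely integrable; the cleanest route is to write $\int_{\Sigma_2}\alpha=\lim_{\epsilon\to 0}\int_{\Sigma_2-\cup_i B^i_\epsilon}\alpha$ with $B^i_\epsilon$ coordinate $\epsilon$-balls about the points of $D_2$, apply the ordinary change of variables on each smooth region (whose $f$-preimages exhaust $\Sigma_1-D_1$), and pass to the limit, using that $f^*\alpha$ is likewise absolutely integrable so the limit computes $\int_{\Sigma_1}f^*\alpha$. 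Combining the two identities gives $\dashint_{\Sigma_1}f^*\omega=\int_{\Sigma_2}\alpha+\int_{\pa\Sigma_2}\beta=\dashint_{\Sigma_2}\omega$.

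The only genuinely delicate point, and the step I would be most careful about, is the local analysis near $D$: one must verify that composing with the bi-holomorphic $f$ does not spoil the normal forms $\alpha/z^n$ and $\alpha+\beta\,{dz\over z}$ — i.e. that the smooth factors remain smooth, that no new poles appear, and that no $(1,1)$-type logarithmic terms are introduced along $D_1$. Everything after that is the definition of $\dashint$ and standard integration theory, so I do not expect any serious obstacle beyond bookkeeping.
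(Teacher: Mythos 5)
Your argument is correct and follows essentially the same route as the paper's proof: pull back a decomposition $\omega=\alpha+\pa\beta$, use holomorphy of $f$ to get $f^*\omega=f^*\alpha+\pa(f^*\beta)$ as a valid decomposition, and conclude by change of variables on the interior and boundary integrals. The paper states this more tersely, leaving implicit the local verifications near $D$ that you spell out; those checks are the right ones and present no difficulty.
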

\begin{proof} It is clear that $f^*\omega \in \A^2(\Sigma_1, \star D_1)$. Let $ \alpha\in \A^2(\Sigma_2, \log D_2),  \beta\in \A^{0,1}(\Sigma_2, \star D_2)$ such that 
$
\omega=\alpha+\pa \beta
$. Since $f$ is holomorphic, 
$$
f^*\omega=f^*\alpha +f^*(\pa \beta)  =f^*\alpha +\pa (f^*\beta)\,.
$$
Therefore
$$
\dashint_{\Sigma_1} f^*\omega=\int_{\Sigma_1} f^*\alpha+\int_{\pa \Sigma_1}f^*\beta=\int_{\Sigma_2}\alpha+\int_{\pa\Sigma_2}\beta=\dashint_{\Sigma_2} \omega\,. 
$$

\end{proof}

\subsubsection*{Cauchy principal value}

The regularized integral can be viewed as a version of Cauchy principal value (see e.g., \cite{Demailly:2012complex}). In other words, the conformal structure of the Riemann surface leads to a regularization scheme that defines intrinsically the principal value of integrals of forms with holomorphic poles. This is demonstrated by the following theorem.

\begin{thm}\label{thm-PV} Suppose
$\omega \in \A^2(\Sigma, \star D)$.  Let $z_i$ be a local holomorphic coordinate around $p_i\in D$ such that $z_i(p_i)=0$. Let $B_\epsilon^i=\{|z_i|\leq \epsilon\}$ be a small $\epsilon$-ball centered at $p_i$. Then
$$
\dashint_\Sigma \omega= \lim_{\epsilon\to 0} \int_{\Sigma- \cup_i B_\epsilon^i}  \omega\,. 
$$
In particular, the limit $\lim\limits_{\epsilon\to 0} \int_{\Sigma- \cup_i B_\epsilon^i}  \omega$ exists and does not depend on the choice of the local coordinate $z_i$'s. 
\end{thm}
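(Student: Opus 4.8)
The plan is to reduce the statement to the definition of $\dashint_\Sigma$ together with the decomposition $\omega = \alpha + \pa\beta$ provided by Lemma \ref{lem-decomposition}, and then control the boundary contributions using Lemma \ref{lem-limit}. First I would fix such a decomposition, with $\alpha \in \A^2(\Sigma,\log D)$ and $\beta \in \A^{0,1}(\Sigma,\star D)$, so that by Definition \ref{defn-RI} we have $\dashint_\Sigma\omega = \int_\Sigma \alpha + \int_{\pa\Sigma}\beta$. The first observation is that since $\alpha$ has at most logarithmic poles along $D$, it is absolutely integrable, and therefore $\int_\Sigma\alpha = \lim_{\epsilon\to 0}\int_{\Sigma - \cup_i B^i_\epsilon}\alpha$ simply by dominated convergence (the sets $\Sigma - \cup_i B^i_\epsilon$ exhaust $\Sigma - D$, which has full measure).

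Next I would handle the $\pa\beta$ term. On the open region $\Sigma - \cup_i B^i_\epsilon$, which avoids $D$, the form $\pa\beta$ is smooth and moreover $\pa\beta = d\beta$ there, since $\beta$ is a $(0,1)$-form and $\dbar\beta$ is a $(0,2)$-form, hence zero on a Riemann surface. So Stokes' theorem gives
$$
\int_{\Sigma - \cup_i B^i_\epsilon} \pa\beta = \int_{\Sigma - \cup_i B^i_\epsilon} d\beta = \int_{\pa\Sigma}\beta - \sum_i \int_{\pa B^i_\epsilon}\beta\,,
$$
where the sign on the $\pa B^i_\epsilon$ terms reflects the induced orientation on the boundary of the excised region (the small circles are traversed clockwise as seen from $\Sigma - \cup_i B^i_\epsilon$, i.e. opposite to the counter-clockwise convention in Lemma \ref{lem-limit}, giving the minus sign). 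Writing $\beta$ in a local coordinate $z_i$ near $p_i$ as a sum of terms $f\,d\bar z_i / z_i^n$ with $f$ smooth, Lemma \ref{lem-limit} tells us precisely that $\lim_{\epsilon\to 0}\int_{|z_i|=\epsilon} f\,d\bar z_i/z_i^n = 0$ for every $n\geq 1$; the smooth part of $\beta$ of course also contributes zero in the limit. Hence $\lim_{\epsilon\to 0}\sum_i\int_{\pa B^i_\epsilon}\beta = 0$, so $\lim_{\epsilon\to 0}\int_{\Sigma - \cup_i B^i_\epsilon}\pa\beta = \int_{\pa\Sigma}\beta$.

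Adding the two limits gives $\lim_{\epsilon\to 0}\int_{\Sigma - \cup_i B^i_\epsilon}\omega = \int_\Sigma\alpha + \int_{\pa\Sigma}\beta = \dashint_\Sigma\omega$, which is the claim; existence of the limit and its independence of the choice of local coordinates $z_i$ then follow automatically, since the right-hand side manifestly does not see them. The main subtlety to be careful about is not an obstacle so much as a bookkeeping point: one must check that the excised disks $B^i_\epsilon$ eventually become disjoint and sit inside the chosen coordinate charts, and that the orientation of $\pa B^i_\epsilon$ induced as the boundary of $\Sigma - \cup_i B^i_\epsilon$ is the clockwise one, so that the sign matches the counter-clockwise convention used in Lemma \ref{lem-limit}. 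Everything else is a direct assembly of Lemma \ref{lem-decomposition}, Lemma \ref{lem-limit}, and Stokes' theorem on the manifold-with-boundary $\Sigma - \cup_i B^i_\epsilon$.
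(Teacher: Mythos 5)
Your proposal is correct and follows essentially the same route as the paper's own proof: decompose $\omega=\alpha+\pa\beta$, note $\pa\beta=d\beta$ for type reasons, apply Stokes' theorem on $\Sigma-\cup_i B_\epsilon^i$, and kill the small-circle contributions with Lemma \ref{lem-limit}. Your additional remarks on orientation and on dominated convergence for the logarithmic part are just more explicit versions of steps the paper leaves implicit.
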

\begin{proof} Let $\omega=\alpha+\pa \beta$ where $\alpha\in \A^2(\Sigma, \log D),  \beta\in \A^{0,1}(\Sigma, \star D)$. Then $\omega=\alpha+d \beta$ also holds and
$$
\lim_{\epsilon\to 0} \int_{\Sigma- \cup_i B_\epsilon^i}  \omega=\lim_{\epsilon\to 0} \int_{\Sigma- \cup_i B_\epsilon^i} (\alpha+ d\beta)=\lim_{\epsilon\to 0} \int_{\Sigma- \cup_i B_\epsilon^i} \alpha + \int_{\pa\Sigma}\beta- \sum_i \lim_{\epsilon\to 0}\int_{\pa B_\epsilon^i}\beta= \int_\Sigma \alpha+\int_{\pa\Sigma}\beta\,. 
$$
Here $\lim\limits_{\epsilon\to 0}\int_{\pa B_\epsilon^i}\beta=0$ by Lemma \ref{lem-limit}. 
\end{proof}

\subsubsection*{Non-compact surface}

Let $\Sigma$ be a  non-compact Riemann surface without boundary.\footnote{For non-compact Riemann surfaces with boundaries, 
 the same formula in Definition \ref{defn-RI} defines the regularized integral for compactly support forms. } Let 
$$
\A^k_c(\Sigma, \star D)\subset \A^k(\Sigma, \star D)
$$
denote the space of forms with compact support, i.e., forms that vanish outside a compact subset in $\Sigma$. 

\begin{dfn} Given $\omega \in \A^2_c(\Sigma, \star D)$, we define the regularized integral by 
$$
\dashint_\Sigma \omega:=\int_\Sigma \alpha\,. 
$$
Here $\omega=\alpha+\pa \beta$ where $\alpha\in \A_c^2(\Sigma, \log D),  \beta\in \A_c^{0,1}(\Sigma, \star D)$. 
\end{dfn}

The existence of $\alpha, \beta$ is guaranteed by Lemma \ref{lem-decomposition}. The regularized integral  for compactly supported forms is similar to that on compact surfaces without boundary. It factors through 
$$
\dashint_\Sigma: {\A^2_c(\Sigma, \star D)\over \pa \A^{0,1}_c(\Sigma, \star D)}\to \C\,. 
$$

\subsection{Residues and Stokes theorem}

In this subsection, we establish a version of Stokes formula for regularized integrals. 

Let us first describe an extension of residue in our context. 

\begin{lem-dfn}\label{lem-residue} Let $\alpha \in \A^1(\Sigma, \star D)$. Let $p\in D$ and $z$ be a local holomorphic coordinate around $p$ such that $z(p)=0$. Then the following limit 
$$
\lim_{\epsilon\to 0}\int_{|z|=\epsilon} \alpha
$$
exists and does not depend on the choice of the local coordinate $z$. Here the integration contour is counter-clockwise oriented. We will denote this limit by 
$$
\oint_p \alpha:=\lim_{\epsilon\to 0}\int_{|z|=\epsilon} \alpha\,.
$$
Moreover, if $\alpha=\pa \beta$ for some $\beta\in \A^0(\Sigma, \star D)$ or $\alpha\in \A^{0,1}(\Sigma, \star D)$, then 
$
\oint_p \alpha=0. 
$

\end{lem-dfn}
\begin{proof} Let $z$ be a local holomorphic coordinate with $z(p)=0$. By Lemma \ref{lem-limit}, $\lim\limits_{\epsilon\to 0}\int_{|z|=\epsilon} \alpha$ exists and will vanish if $\alpha$ is a $(0,1)$-form. If  $\alpha=\pa \beta$ for some $\beta\in \A^0(\Sigma, \star D)$, then
$$
\lim_{\epsilon\to 0}\int_{|z|=\epsilon} \alpha=\lim_{\epsilon\to 0}\int_{|z|=\epsilon} \pa \beta=-\lim_{\epsilon\to 0}\int_{|z|=\epsilon} \dbar \beta=0\,. 
$$

We next show such limit is independent of the choice of the local coordinate $z$. We can assume $\alpha$ is a $(1,0)$-form
and work locally near $z=0$. As in the proof of Lemma
\ref{lem-limit}, it is enough to approximate $\alpha$ by a truncated Taylor approximation.
For such a polynomial, one then has
 the expansion 
$$\alpha=\alpha_0+\bar{z}\beta\,,$$
where $\alpha_0$ is a meromorphic $(1,0)$-form and $\beta$ is smooth with possibly holomorphic pole at $z=0$.
A similar calculation as in Lemma \ref{lem-limit} shows that 
$$\lim_{\epsilon\to 0} \int_{|z|={\epsilon}} \bar{z}\beta=0\,.$$
Hence
$$\lim_{\epsilon\to 0} \int_{|z|={\epsilon}} \alpha=\lim_{\epsilon\to 0} \int_{|z|={\epsilon}} \alpha_0\,.$$
Let $w$ be another local holomorphic coordinate around $p$ with $w(p)=0$. We have a similar decomposition as above
$$\alpha=\widetilde{\alpha}_0+\bar{w}\widetilde{\beta}\,.$$
Since the coordinate transformation $z\to w$ is holomorphic, by type reasons one has
$$
\alpha_0=\widetilde{\alpha}_0
$$
and they are closed 1-forms. By Stokes theorem, 
$$ \int_{|z|={\epsilon}} \alpha_0= \int_{|w|={\epsilon}} \alpha_0\,,$$
and this integral is independent of the sufficiently small $\epsilon$.
It follows that
$$
\lim_{\epsilon\to 0}\int_{|z|=\epsilon} \alpha=
\lim_{\epsilon\to 0} \int_{|z|={\epsilon}} \alpha_0
=
 \int_{|z|={\epsilon}} \alpha_0
= \int_{|w|={\epsilon}} \alpha_0
=\int_{|w|={\epsilon}} \widetilde{\alpha}_0
=
\lim_{\epsilon\to 0}\int_{|w|=\epsilon}  \widetilde{\alpha}_0\
=
\lim_{\epsilon\to 0}\int_{|w|=\epsilon} \alpha\,.
$$
\end{proof}

\begin{rem} When $\alpha$ is a closed 1-form, the value $\int_{|z|=\epsilon} \alpha$ does not depend on $\epsilon$ by Stokes theorem. In general, $\int_{|z|=\epsilon} \alpha$ will depend on $\epsilon$, and our notation $\oint_p \alpha$ only refers to the limit value as $\epsilon\to 0$. 
 
\end{rem}

\begin{dfn} Let $\alpha \in \A^1(\Sigma, \star D)$ and $p\in D$. We define the local residue of $\alpha$ at $p$ by 
$$
\Res_{p}\alpha:={1\over 2\pi i}\oint_p \alpha\,. 
$$
We define the global residue of $\alpha$ on $\Sigma$ by
$$
\Res_{\Sigma}\alpha:=\sum_{p\in D}\Res_p \alpha\,. 
$$
\end{dfn}

Lemma \ref{lem-limit} says that $\Res_p$ factors through 
$$
\Res_p:  {\A^1(\Sigma, \star D)\over \A^{0,1}(\Sigma, \star D)+\pa \A^0(\Sigma, \star D)}\to \C\,. 
$$
It coincides with the usual residue when $\alpha$ is a meromorphic $(1,0)$-form.

 The next theorem describes a version of Stokes formula for a regularized integral. It  generalizes the global residue theorem for meromorphic 1-forms on Riemann surfaces.

\begin{thm}\label{thm-de-Rham}  Let $\Sigma$ be a compact Riemann surface possibly with boundary $\pa\Sigma$. Let $\alpha \in \A^1(\Sigma, \star D)$. Then we have the following version of Stokes formula for the regularized integral
$$
\dashint_\Sigma d \alpha=-2\pi i \Res_\Sigma(\alpha)+\int_{\pa\Sigma}\alpha\,. 
$$
\end{thm}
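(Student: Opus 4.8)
The plan is to reduce the statement to the already-established Theorem \ref{thm-integral} and Lemma/Definition \ref{lem-residue} by decomposing $\alpha$ into a logarithmic part and an exact correction, then applying ordinary Stokes theorem on the complement of small disks around the points of $D$. First I would write $\alpha = \alpha_0 + \gamma$ where, locally near each $p_i\in D$, $\alpha_0$ is a meromorphic $(1,0)$-form (the principal part together with a holomorphic tail) and $\gamma$ is smooth with at most holomorphic poles; more efficiently, I would just apply Lemma \ref{lem-decomposition} to the $(1,\bullet)$-component of $d\alpha = \dbar\alpha + \pa\alpha$. The key point is that $d\alpha = \dbar\alpha + \pa\alpha$ and only the $(1,1)$-part $\dbar\alpha' $ — where $\alpha = \alpha' + \alpha''$ is the splitting into $(1,0)$ and $(0,1)$ pieces — contributes a nonzero $\A^2(\Sigma,\star D)$ term after we account for $\pa\alpha'' = 0$ on type grounds (a $(0,1)$-form has no $\pa$ image landing in degree two... actually $\pa\alpha''\in\A^{1,1}$, so it does contribute). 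So the cleaner route: $d\alpha \in \A^2(\Sigma,\star D)$, and I want to produce an explicit decomposition $d\alpha = \eta + \pa\xi$ with $\eta$ logarithmic so that $\dashint_\Sigma d\alpha = \int_\Sigma \eta + \int_{\pa\Sigma}\xi$ by Definition \ref{defn-RI}.

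The cleanest concrete approach, and the one I expect to carry out, is the $\epsilon$-ball argument directly, bypassing an explicit $\eta,\xi$. Using Theorem \ref{thm-PV} — which tells us $\dashint_\Sigma \omega = \lim_{\epsilon\to 0}\int_{\Sigma - \cup_i B^i_\epsilon}\omega$ for any $\omega\in\A^2(\Sigma,\star D)$ — apply it to $\omega = d\alpha$. Wait: Theorem \ref{thm-PV} is stated for $\A^2(\Sigma,\star D)$ and $d\alpha$ indeed lies there, so $\dashint_\Sigma d\alpha = \lim_{\epsilon\to 0}\int_{\Sigma-\cup_i B^i_\epsilon} d\alpha$. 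Now on $\Sigma - \cup_i B^i_\epsilon$, which is a compact manifold with boundary $\pa\Sigma \sqcup \bigsqcup_i (-\pa B^i_\epsilon)$ (the disk boundaries appearing with orientation opposite to the counter-clockwise one), ordinary Stokes theorem gives $\int_{\Sigma-\cup_i B^i_\epsilon} d\alpha = \int_{\pa\Sigma}\alpha - \sum_i \int_{\pa B^i_\epsilon}\alpha$. Taking $\epsilon\to 0$: the first term is already $\int_{\pa\Sigma}\alpha$, and $\lim_{\epsilon\to 0}\int_{\pa B^i_\epsilon}\alpha = \oint_{p_i}\alpha = 2\pi i\,\Res_{p_i}\alpha$ by Lemma/Definition \ref{lem-residue} and the definition of $\Res_{p_i}$. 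Summing over $i$ and identifying $\sum_i \Res_{p_i}\alpha = \Res_\Sigma(\alpha)$ yields
$$
\dashint_\Sigma d\alpha = \int_{\pa\Sigma}\alpha - 2\pi i\,\Res_\Sigma(\alpha),
$$
which is the claim.

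The main obstacle — really the only place needing care — is justifying that Theorem \ref{thm-PV} applies, i.e. confirming $d\alpha\in\A^2(\Sigma,\star D)$ (clear, since $d$ preserves the pole order type of local expressions $\alpha = g\,dz/z^n + (\text{smooth}\cdot d\bar z)/z^n$), together with the orientation bookkeeping on $\pa(\Sigma - \cup_i B^i_\epsilon)$: the induced boundary orientation on the small circles is clockwise, which is exactly why the $-\sum_i$ sign and hence the $-2\pi i\,\Res_\Sigma$ sign appear. I should also note that the limit $\lim_{\epsilon\to 0}\int_{\pa B^i_\epsilon}\alpha$ exists and is coordinate-independent precisely by Lemma/Definition \ref{lem-residue}, so the right-hand side is intrinsic. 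One subtlety worth a sentence: if $\Sigma$ is non-compact one restricts to $\alpha$ of compact support and the same argument goes through with $\pa\Sigma$ interpreted suitably; but since the theorem is stated for compact $\Sigma$ this is not needed here. Everything else is routine.
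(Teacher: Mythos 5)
Your final argument — apply Theorem \ref{thm-PV} to $d\alpha$, use the ordinary Stokes theorem on $\Sigma-\cup_i B_\epsilon^i$, and identify $\lim_{\epsilon\to 0}\int_{\pa B_\epsilon^i}\alpha=2\pi i\,\Res_{p_i}\alpha$ via Lemma/Definition \ref{lem-residue} — is exactly the paper's proof, and the orientation bookkeeping and the check that $d\alpha\in\A^2(\Sigma,\star D)$ are handled correctly. The exploratory first paragraph about decompositions is unnecessary, but the proof you actually carry out is correct and coincides with the paper's.
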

\begin{proof}   Let $D=\{p_1,\cdots, p_m\}$ and $z_i$ be a local coordinate around $p_i$ with $z_i(p_i)=0$. Let $B_\epsilon^i=\{|z_i|\leq \epsilon\}$ be a small $\epsilon$-ball centered at $p_i$. By Theorem \ref{thm-PV}, we have
$$
\dashint_\Sigma d \alpha=\lim_{\epsilon\to 0} \int_{\Sigma- \cup_i B_\epsilon^i}d \alpha= - \lim_{\epsilon\to 0} \sum_{i} \int_{|z_i|=\epsilon} \alpha+\int_{\pa \Sigma}\alpha=-2\pi i \Res_\Sigma(\alpha)+\int_{\pa\Sigma}\alpha\,.
$$
\end{proof}

\begin{rem} When $\Sigma$ is a compact surface without boundary and $\alpha$ is a meromorphic $(1,0)$-form so that $d\alpha=0$, Theorem \ref{thm-de-Rham} reduces to the usual global residue formula
$$
\Res_{\Sigma}\alpha=0\,. 
$$
\end{rem}

\begin{thm}\label{thm-de-Rham-noncompact}  Let $\Sigma$ be a non-compact Riemann surface. Let $\alpha \in \A^1_c(\Sigma, \star D)$. Then 
$$
\dashint_{\Sigma}d\alpha= -2\pi i \Res_\Sigma (\alpha). 
$$
\end{thm}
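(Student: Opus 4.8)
The plan is to mimic the proof of Theorem~\ref{thm-de-Rham}: excise small coordinate disks around the finitely many relevant poles, apply the ordinary Stokes theorem for compactly supported smooth forms on the resulting region, and pass to the limit. The compact-support hypothesis on $\alpha$ is exactly what makes the boundary-at-infinity contribution disappear, so no $\int_{\pa\Sigma}$ term survives.

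First I would isolate the poles that matter. Since $\alpha\in\A^1_c(\Sigma,\star D)$ has compact support $K$, only finitely many points $p_1,\dots,p_m$ of $D$ lie in $K$; at every other point of $D$ the form $\alpha$ vanishes identically, so $\Res_p\alpha=0$ there and hence $\Res_\Sigma(\alpha)=\sum_{i=1}^m\Res_{p_i}\alpha$. Fix local holomorphic coordinates $z_i$ with $z_i(p_i)=0$ and set $B^i_\epsilon=\{|z_i|\le\epsilon\}$. Next I would apply Lemma~\ref{lem-decomposition} to $d\alpha\in\A^2_c(\Sigma,\star D)$, writing $d\alpha=\gamma+\pa\beta$ with $\gamma\in\A^2_c(\Sigma,\log D)$ and $\beta\in\A^{0,1}_c(\Sigma,\star D)$, both supported inside $K$; by definition $\dashint_\Sigma d\alpha=\int_\Sigma\gamma$, an absolutely convergent integral since $\gamma$ has at most logarithmic poles. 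Because $\beta$ is a $(0,1)$-form on a Riemann surface, $\dbar\beta=0$, so $\pa\beta=d\beta$ and $\gamma=d\alpha-d\beta=d(\alpha-\beta)$, with $\alpha-\beta$ smooth on $\Sigma-D$ and compactly supported.

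Then I would run the limiting Stokes argument. For each $\epsilon>0$ the form $\alpha-\beta$ is smooth in a neighborhood of $\overline{\Sigma-\cup_i B^i_\epsilon}$ and has compact support, so
$$
\int_{\Sigma-\cup_i B^i_\epsilon}\gamma=\int_{\Sigma-\cup_i B^i_\epsilon}d(\alpha-\beta)=-\sum_{i=1}^m\int_{|z_i|=\epsilon}(\alpha-\beta),
$$
the minus sign reflecting the orientation the circles $|z_i|=\epsilon$ inherit as boundary components of $\Sigma-\cup_i B^i_\epsilon$, and there being no term from infinity because of the compact support. Letting $\epsilon\to 0$: the left side converges to $\int_\Sigma\gamma=\dashint_\Sigma d\alpha$ by absolute convergence; on the right, $\oint_{p_i}\beta=0$ since $\beta$ has type $(0,1)$ (Lemma/Definition~\ref{lem-residue}), while $\oint_{p_i}\alpha=2\pi i\,\Res_{p_i}\alpha$ by the same lemma. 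Assembling these yields $\dashint_\Sigma d\alpha=-2\pi i\sum_{i=1}^m\Res_{p_i}\alpha=-2\pi i\,\Res_\Sigma(\alpha)$.

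The argument is largely routine; the only place that genuinely uses the hypothesis is the vanishing of the boundary contribution near infinity, which is precisely the role of $\alpha\in\A^1_c$, together with correctly orienting the excised circles in Stokes' theorem. An equivalent, perhaps cleaner, route avoids the $\epsilon$-limit entirely: choose a relatively compact open subsurface $\Sigma_0$ with smooth boundary containing $K$, apply Theorem~\ref{thm-de-Rham} to $\overline{\Sigma_0}$ (on which $\int_{\pa\Sigma_0}\alpha=0$ because $\alpha$ vanishes near $\pa\Sigma_0$), and observe that $\dashint_{\overline{\Sigma_0}}d\alpha=\dashint_\Sigma d\alpha$ since the decomposition $d\alpha=\gamma+\pa\beta$ can be taken supported inside $\Sigma_0$, whence $\beta$ vanishes on $\pa\Sigma_0$.
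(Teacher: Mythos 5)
Your proof is correct and is essentially the argument the paper intends: the paper's proof of Theorem \ref{thm-de-Rham-noncompact} is just the remark ``similar to the proof of Theorem \ref{thm-de-Rham}'', i.e.\ the same excise-disks-and-apply-Stokes limiting argument, with the boundary term at infinity killed by compact support. You have simply written out the details (and your closing alternative, reducing to Theorem \ref{thm-de-Rham} on a relatively compact $\Sigma_0\supset K$, is an equally valid shortcut).
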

\begin{proof} Similar to the proof of Theorem \ref{thm-de-Rham}. 
\end{proof}

We end this subsection with a simple proposition that will be useful for computations. 

\begin{prop}\label{prop-residue} Let $\alpha \in \A^1(\Sigma, \star D), p \in D$ and $f$ be a holomorphic function on $\Sigma$.  Then
$$
\oint_p (\bar f \alpha)= \bar f(p) \oint_p \alpha\,. 
$$
Here $\bar f$ is the complex conjugate of $f$. 
\end{prop}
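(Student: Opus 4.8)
The plan is to reduce everything to a purely local computation near the point $p$, since both sides of the claimed identity depend only on the behavior of $\alpha$ in an arbitrarily small punctured neighborhood of $p$. Concretely, I would fix a local holomorphic coordinate $z$ with $z(p)=0$, work on a small disk $|z|\le \epsilon$ contained in the domain where $f$ is holomorphic, and recall from Lemma/Definition \ref{lem-residue} that $\oint_p(\bar f\alpha)=\lim_{\epsilon\to 0}\int_{|z|=\epsilon}\bar f\,\alpha$ and likewise for $\alpha$.

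Next I would write $\alpha = g\,dz/z^n + (\text{a }(0,1)\text{-form with a pole})$ locally, where $g$ is smooth and $n\ge 0$; by Lemma/Definition \ref{lem-residue} the $(0,1)$-part contributes nothing to either contour integral in the limit (and $\bar f$ times it is still a $(0,1)$-form with a pole), so we may assume $\alpha = g\,dz/z^n$ with $g$ smooth. Then $\bar f\alpha = (\bar f g)\,dz/z^n$, and applying the explicit formula of Lemma \ref{lem-limit} to both $\bar f g$ and to $g$ gives
\[
\oint_p(\bar f\alpha) = \frac{2\pi i}{(n-1)!}\,\partial_z^{\,n-1}\!\bigl(\bar f(z)\,g(z,\bar z)\bigr)\big|_{z=0}\,,\qquad \oint_p\alpha = \frac{2\pi i}{(n-1)!}\,\partial_z^{\,n-1}g(z,\bar z)\big|_{z=0}\,.
\]
Here the key point is that $f$ holomorphic means $\bar f(z)=\overline{f(z)}$ is anti-holomorphic, so $\partial_z\bar f \equiv 0$; hence the Leibniz rule collapses $\partial_z^{\,n-1}(\bar f g) = \bar f(0)\,\partial_z^{\,n-1}g$ plus terms each containing a factor $\partial_z^{k}\bar f$ with $k\ge 1$, which all vanish at $z=0$ (indeed everywhere). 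This immediately yields $\oint_p(\bar f\alpha)=\bar f(p)\oint_p\alpha$.

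I do not anticipate a serious obstacle here; the only mild care needed is the bookkeeping that passes from the general $\alpha\in\A^1(\Sigma,\star D)$ to the model form $g\,dz/z^n$ — that is, observing that the decomposition used in Lemma \ref{lem-limit} is compatible with multiplication by the smooth (indeed anti-holomorphic) function $\bar f$, and that truncating the Taylor expansion of $\bar f g$ is legitimate because $\bar f$ is smooth so the product $\bar f g$ is smooth. One could also phrase the argument without coordinates: decompose $\alpha = \alpha_0 + \bar z\beta$ near $p$ as in the proof of Lemma/Definition \ref{lem-residue}, with $\alpha_0$ meromorphic; then $\bar f\alpha = \bar f\alpha_0 + \bar f\bar z\beta$, the second term contributes $0$ to $\oint_p$, and $\bar f(p)\cdot(\cdot)$ falls out of $\oint_p(\bar f\alpha_0)$ by writing $\bar f = \bar f(p) + \overline{(f-f(p))}$ and noting $\overline{(f-f(p))}$ vanishes to order $\ge 1$ at $p$ while being anti-holomorphic, so $\overline{(f-f(p))}\,\alpha_0$ is again (smooth) $\times\,dz/z^{n-1}$ with no residue. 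Either route is short; the local residue formula of Lemma \ref{lem-limit} is doing all the work.
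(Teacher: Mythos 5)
Your proof is correct and follows exactly the route the paper intends: the paper's entire proof of this proposition is the single sentence ``This follows from Lemma \ref{lem-limit},'' and your argument — reduce to the local model $g\,dz/z^n$, discard the $(0,1)$-part, apply the explicit formula $\oint_p = \frac{2\pi i}{(n-1)!}\partial_z^{n-1}(\cdot)|_{z=0}$, and use $\partial_z\bar f\equiv 0$ to pull $\bar f(p)$ out of the Leibniz expansion — is precisely the omitted computation. Nothing to add.
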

\begin{proof} This follows from Lemma \ref{lem-limit}. 
\end{proof}

\subsection{Riemann-Hodge bilinear formula}

The Stokes formula allows one to express certain regularized integral via Riemann-Hodge bilinear type formula. We illustrate the basic idea in this subsection by computing
$$
\dashint_\Sigma \omega
$$
where $\Sigma$ is a compact genus-$g$ Riemann surface without boundary, and $\omega$ is of the form
$$
\omega=\varphi\wedge {\alpha}\,,\quad
\varphi \in \Omega^{1}(\Sigma, \star D)\,,\quad \alpha\in \A^{1}(\Sigma)\,,\quad d\alpha=0\,.
$$
That is, $\varphi$ is a meromorphic $(1,0)$-form with poles along $D$, and $\alpha$ is a smooth closed $1$-form.

We follow the method presented in  \cite{Griffiths:2014principles} to compute the above integral.  Fix once and for all a canonical basis of $H_{1}(\Sigma,\mathbb{Z})$.
Fix a reference point $p_0\in\Sigma$.
Let $\delta_{1},\cdots ,\delta_{2g}$
be cycles representing the canonical basis  that are issued from $p_0$: $\delta_1,\cdots,\delta_g$ correspond to \emph{$A$-cycles}, and $\delta_{g+1},\cdots,\delta_{2g}$ correspond to \emph{B-cycles}.  We choose these cycles such that they do not intersect $D$. The complement of these cycles on $\Sigma$ is a simply connected region $\Delta$ on $\Sigma$.

\begin{figure}[H]\centering
	\begin{tikzpicture}[scale=1]
	\foreach \j in {1,...,8} {
		\draw[cyan!23,fill=cyan!23](-22.5+\j*45:2)to(0,0)to(22.5+\j*45:2);}
	\draw[ultra thick,red,-<-=.5,>=stealth](22.5+30*4.5:2)to(-22.5+30*4.5:2);
	\draw[ultra thick,red,-<-=.5,>=stealth](67.5+30*4.5:2)to(90+22.5+30*4.5:2); \draw[ultra thick,green,-<-=.5,>=stealth](22.5+70*4.5:2)to(-22.5+70*4.5:2);
	\draw[ultra thick,green,-<-=.5,>=stealth](67.5+70*4.5:2)to(90+22.5+70*4.5:2);
	
	\draw[ultra thick,orange,-<-=.5,>=stealth](22.5+40*4.5:2)to(-22.5+40*4.5:2);
	\draw[ultra thick,orange,-<-=.5,>=stealth](67.5+40*4.5:2)to(90+22.5+40*4.5:2);
	\draw[ultra thick,blue,-<-=.5,>=stealth](22.5+80*4.5:2)to(-22.5+80*4.5:2);
	\draw[ultra thick,blue,-<-=.5,>=stealth](67.5+80*4.5:2)to(90+22.5+80*4.5:2);
	
	\node [above] at ($(67.5+80*4.5:2)!0.5!(90+22.5+80*4.5:2)$) {$\delta_i$};
	
	\node [right] at ($(22.5+80*4.5:2)!0.5!(-22.5+80*4.5:2)$) {$\delta_i$};
	
\node [below right] at ($(22.5+70*4.5:2)!0.5!(-22.5+70*4.5:2)$) {$\delta_{g+i}$};
		
\node [above right] at ($(67.5+70*4.5:2)!0.5!(90+22.5+70*4.5:2)$) {$\delta_{g+i}$};

	\node [below] at (1,0) {$p_m$};
	
	\draw[fill] (1,0) circle  [radius=1 pt];
	
		\node [below] at (-1,0) {$p_2$};
	
	\draw[fill] (-1,0) circle  [radius=1 pt];

		\node [below] at (0,1) {$p_1$};
	
	\draw[fill] (0,1) circle  [radius=1 pt];
	
		\node [below] at (0,-1) {$\cdots$};
	
   \node at (0,-2.5) {$D=\{p_1,p_2,\cdots, p_m\}$};
	
	\foreach \j in {1,...,8} {
		\draw[black](22.5+\j*45:2)\nn;}
	\end{tikzpicture}
	\caption{Riemann surface with cut locus.}\label{fig:RSwithcut}
\end{figure}
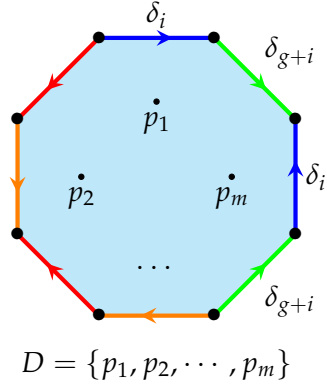

Let 
$$
\pi:\, \Delta\to \Sigma
$$
denote the inclusion. The divisor $D$ can be viewed as a finite subset of points lying in the interior of $\Delta$ via the quotient $\pi$. The pull-back under $\pi$ defines a map
$$
\pi^*: \,\A^k(\Sigma, \star D)\to \A^k(\Delta, \star D)\,. 
$$
It is straightforward to check (e.g., by using Theorem \ref{thm-PV}) that
$$
\dashint_\Delta \pi^* \omega =\dashint_\Sigma \omega\,, \quad \forall \,\omega \in \A^2(\Sigma, \star D)\,. 
$$

\begin{prop}\label{prop-bilinear} 
Let $\varphi$ be a meromorphic $(1,0)$-form with poles along $D$, and $\alpha$ be a smooth closed $1$-form. Let $u$ be the function on $\Delta$ defined by
$${u}(z)=\int_{p_0}^{z}\pi^*(\alpha)\,, \quad z\in \Delta\,. 
$$
Here the integral $\int_{p_0}^{z}$ is over an arbitrary curve inside $\Delta$ that connects $p_0$ to $z$. Then
\begin{equation*}\label{eqnbilinearformforsingularintegral}
\dashint_{\Sigma}\varphi\wedge {\alpha}
=\sum_{i=1}^{g}\left(\int_{\delta_{i}}\pi^*(\varphi) \int_{\delta_{g+i}}\pi^*({\alpha})-\int_{\delta_{i}}\pi^*({\alpha}) 
\int_{\delta_{g+i}}\pi^*(\varphi)  \right)+
2\pi i \sum_{p\in D}\Res_{p}\bracket{\pi^*(\varphi ) u}\,.
\end{equation*}

\end{prop}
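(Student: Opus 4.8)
The plan is to mimic the classical derivation of the Riemann bilinear relations on the cut surface $\Delta$, using the version of Stokes' theorem (Theorem \ref{thm-de-Rham}) for regularized integrals in place of the ordinary one. First I would work on $\Delta$, where $\pi^*(\varphi)\wedge\pi^*(\alpha) = d(u\,\pi^*(\varphi))$ since $\pi^*(\alpha) = du$, $d\alpha = 0$, and $\pi^*(\varphi)$ is a closed $(1,0)$-form away from $D$. Applying Theorem \ref{thm-de-Rham} to the $1$-form $u\,\pi^*(\varphi) \in \A^1(\Delta, \star D)$ on the surface-with-boundary $\Delta$ gives
\begin{equation*}
\dashint_{\Delta} d\bigl(u\,\pi^*(\varphi)\bigr) = -2\pi i\,\Res_{\Delta}\bigl(u\,\pi^*(\varphi)\bigr) + \int_{\pa\Delta} u\,\pi^*(\varphi)\,.
\end{equation*}
The left-hand side equals $\dashint_{\Delta}\pi^*(\varphi\wedge\alpha) = \dashint_{\Sigma}\varphi\wedge\alpha$ by the compatibility noted just before the proposition, and the residue term is $2\pi i\sum_{p\in D}\Res_p(\pi^*(\varphi)\,u)$ up to the sign (one must be careful: here the residue appears with the opposite sign of the statement, so I should track orientations — the statement's $+2\pi i\sum\Res_p$ will come out because $\dashint_\Delta d(-u\pi^*\varphi)$ or an orientation convention flips it; I would fix signs by comparing with the $D=\emptyset$ case, which must reproduce the ordinary bilinear relation).

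The main work is the boundary integral $\int_{\pa\Delta} u\,\pi^*(\varphi)$. Here I would invoke the standard combinatorics of the $4g$-gon: the boundary $\pa\Delta$ consists of the arcs $\delta_i, \delta_{g+i}, \delta_i^{-1}, \delta_{g+i}^{-1}$ traversed in the usual order, and each edge is glued to another edge on which $\pi^*(\varphi)$ takes the same value (since $\varphi$ descends to $\Sigma$) but $u$ differs by the jump in $\int\pi^*(\alpha)$ across the cut. The jump of $u$ across $\delta_i$ is $\int_{\delta_{g+i}}\pi^*(\alpha)$ and across $\delta_{g+i}$ is $-\int_{\delta_i}\pi^*(\alpha)$, exactly as in \cite{Griffiths:2014principles}. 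Collecting paired edges turns $\int_{\pa\Delta} u\,\pi^*(\varphi)$ into
\begin{equation*}
\sum_{i=1}^g\left(\int_{\delta_i}\pi^*(\varphi)\int_{\delta_{g+i}}\pi^*(\alpha) - \int_{\delta_i}\pi^*(\alpha)\int_{\delta_{g+i}}\pi^*(\varphi)\right)\,.
\end{equation*}
This is purely a matter of bookkeeping identical to the holomorphic case, and the presence of poles of $\varphi$ inside $\Delta$ does not affect it since the cycles $\delta_j$ avoid $D$.

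The one genuinely new point — and the step I expect to require the most care — is justifying that Theorem \ref{thm-de-Rham} applies verbatim to $u\,\pi^*(\varphi)$ on $\Delta$. Two subtleties: (i) $u$ is only continuous (indeed real-analytic) on $\Delta$ but is multivalued-looking near $\pa\Delta$ — however on the open simply connected $\Delta$ it is a genuine smooth function, so $u\,\pi^*(\varphi)$ is a legitimate element of $\A^1(\Delta,\star D)$, and near each $p\in D$ it has the form (smooth)$\cdot$(meromorphic $(1,0)$-form), so the local residue $\Res_p(u\,\pi^*\varphi)$ is well-defined by Lemma/Definition \ref{lem-residue}; (ii) $\Delta$ is a surface with corners rather than smooth boundary, but Theorem \ref{thm-de-Rham}'s proof via Theorem \ref{thm-PV} (excising $\epsilon$-balls around $D$ and taking limits) goes through unchanged since $d(u\,\pi^*\varphi) = \pi^*(\varphi\wedge\alpha)$ is absolutely integrable after the logarithmic decomposition and $\int_{\pa B_\epsilon^i} u\,\pi^*(\varphi) \to 0$ by Lemma \ref{lem-limit} (using that $u$ is bounded near $p$). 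Finally I would use Proposition \ref{prop-residue}-type reasoning only implicitly; the residue $\Res_p(\pi^*(\varphi)\,u)$ is computed with $u$ evaluated as a smooth function, and no further simplification is claimed. Assembling the three pieces — left side $=\dashint_\Sigma\varphi\wedge\alpha$, residue term, boundary term — and checking the overall sign against the $D=\emptyset$ specialization completes the proof.
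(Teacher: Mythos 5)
Your proposal is correct and follows essentially the same route as the paper: write $\pi^*(\varphi\wedge\alpha)=-d(u\,\pi^*(\varphi))$ on the cut surface $\Delta$, apply the regularized Stokes formula (Theorem \ref{thm-de-Rham}), and evaluate $\int_{\pa\Delta}u\,\pi^*(\varphi)$ by the classical $4g$-gon edge-pairing. The sign worry you flag disappears once you compute $d(u\,\pi^*(\varphi))=\pi^*(\alpha)\wedge\pi^*(\varphi)=-\pi^*(\varphi\wedge\alpha)$ directly, which simultaneously produces the $+2\pi i\sum_{p}\Res_p$ term and the stated sign of the bilinear sum.
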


\begin{rem} If $\alpha$ is holomorphic, then the regularized integral 
$$
\dashint_{\Sigma}\varphi\wedge {\alpha}
$$
is zero since $\varphi\wedge {\alpha}=0$ by type reasons. The formula above reduces  to the usual Riemann-Hodge bilinear relation. 
\end{rem}

\begin{proof}
Since $\dashint_{\Sigma}\varphi\wedge {\alpha}=\dashint_{\Delta}\pi^*(\varphi\wedge {\alpha})$, we shall work with $\Delta$. Observe that
$$\pi^*(\varphi \wedge {\alpha})=-d({u}\pi^*(\varphi ))\,.
$$
Applying Theorem \ref{thm-de-Rham}, we find 
$$
\dashint_{\Delta}\pi^*(\varphi\wedge {\alpha})=-\int_{\pa \Delta} {u}\pi^*(\varphi) +2\pi i \sum_{p\in D}\Res_p\bracket{\pi^*(\varphi )  u}. 
$$
Evaluating $\int_{\partial\Delta}
{u}\pi^*(\varphi) $ directly, one finds 
\begin{equation*}\label{eqnbilinearterm}
-\int_{\partial\Delta}
{u}\pi^*(\varphi) =\sum_{i=1}^{g}\left(\int_{\delta_{i}}\pi^*(\varphi) \int_{\delta_{g+i}}\pi^*({\alpha})-\int_{\delta_{i}}\pi^*({\alpha}) 
\int_{\delta_{g+i}}\pi^*(\varphi)  \right).
\end{equation*}
This leads to the desired formula. 
\end{proof}

As a consistent check, the bilinear formula  in Proposition \ref{prop-bilinear} is independent of 
the choices of cycles representing the canonical basis,
and the choice of canonical basis itself. In fact, although different choices can
give rise to different line integrals individually (differ by the residues of $\varphi$), 
the overall sum remains unchanged.
The formula is also independent of the choice of the reference point $p_0 $ by the global residue theorem.

In the case when $\alpha$ is an anti-holomorphic $1$-form, Proposition \ref{prop-bilinear} has an intrinsic formulation as follows. Consider the 1st homology $H_{1}(\Sigma-D)$ of the complement $\Sigma-D$ and the 1st relative homology $H_1(\Sigma,D)$. Lefschetz-Poincar\'e duality implies that we have a perfect pairing via intersection 
$$\cap:\,H_{1}(\Sigma-D)\times H_{1}(\Sigma, D)\rightarrow \mathbb{Z}\,.$$

\begin{prop} Let $\varphi$ be a meromorphic $(1,0)$-form with poles along $D$, and ${\bar\psi}$ be an anti-holomorphic $1$-form. Let $\{\gamma_i\}$ be a basis of $H_1(\Sigma-D)$ and $\{\gamma^i\}$ be the dual basis of $H_{1}(\Sigma, D)$ such that
$
  \gamma_i \cap \gamma^j=\delta_i^j. 
$ Then 
$$
\dashint_{\Sigma}\varphi\wedge {{\bar\psi}}=\sum_i \int_{\gamma_i} \varphi  \int_{\gamma^i}{\bar\psi}\,. 
$$
\end{prop}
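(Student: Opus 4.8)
The plan is to obtain this intrinsic identity as a repackaging of the explicit bilinear formula of Proposition \ref{prop-bilinear} with $\alpha=\bar\psi$ (any anti-holomorphic $1$-form is smooth and closed), matching its ``period sum'' against the handle cycles in a convenient basis and its residue term against small loops around the points of $D$.

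First I would note that the right-hand side is independent of the choice of dual bases: the element $\sum_i\gamma_i\otimes\gamma^i\in H_1(\Sigma-D)\otimes H_1(\Sigma,D)$ is the copairing canonically attached to the perfect intersection pairing $\cap$, hence unchanged if one replaces $\{\gamma_i\}$ by another basis and $\{\gamma^i\}$ by the corresponding dual basis, and $\sum_i\int_{\gamma_i}\varphi\int_{\gamma^i}\bar\psi$ is the value of the bilinear map $(a,b)\mapsto(\int_a\varphi)(\int_b\bar\psi)$ on it. So it is enough to check the formula for one convenient pair. I would take $\{\gamma_i\}$ to consist of a symplectic basis $A_1,\dots,A_g,B_1,\dots,B_g$ of $H_1(\Sigma)$, represented by cycles avoiding $D$ as in Figure \ref{fig:RSwithcut} and normalized by $A_i\cap B_j=\delta_{ij}$, together with small counterclockwise loops $\sigma_1,\dots,\sigma_{m-1}$ around $p_1,\dots,p_{m-1}$, where $D=\{p_1,\dots,p_m\}$ (recall $\sum_{p\in D}\sigma_p=0$ in $H_1(\Sigma-D)$, so these $2g+m-1$ classes form a basis). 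Its dual basis is then $B_1,\dots,B_g,-A_1,\dots,-A_g$ together with arcs $\eta_1,\dots,\eta_{m-1}$, with $\eta_j$ running from $p_m$ to $p_j$ \emph{inside the simply connected cut region} $\Delta$; one checks $A_i\cap\eta_j=B_i\cap\eta_j=0$ since $\eta_j\subset\Delta$ misses the handle cycles, and $\sigma_k\cap\eta_j=\delta_{kj}$ for $1\le k\le m-1$ with these orientations.

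Then I would apply Proposition \ref{prop-bilinear}. Its first sum is $\sum_{i=1}^g\big(\int_{A_i}\varphi\int_{B_i}\bar\psi-\int_{B_i}\varphi\int_{A_i}\bar\psi\big)$, which is precisely the handle-cycle contribution to $\sum_i\int_{\gamma_i}\varphi\int_{\gamma^i}\bar\psi$. For the residue term $2\pi i\sum_{p\in D}\Res_p(\pi^*(\varphi)\,u)$, I would use that $u(z)=\int_{p_0}^z\pi^*\bar\psi$ has $\partial u=0$, so locally $u=\bar v$ with $v$ holomorphic; Proposition \ref{prop-residue} then gives $\oint_p(\pi^*(\varphi)u)=u(p)\oint_p\pi^*\varphi$, so the residue term equals $\sum_{p\in D}u(p)\int_{\sigma_p}\varphi$. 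To see this is $\sum_{j=1}^{m-1}\int_{\sigma_j}\varphi\int_{\eta_j}\bar\psi$ I would invoke the global residue theorem $\sum_{p\in D}\Res_p\varphi=0$ (the closed, holomorphic case of Theorem \ref{thm-de-Rham}) to recenter the base point, writing $\sum_{p\in D}u(p)\Res_p\varphi=\sum_{j=1}^{m-1}\big(u(p_j)-u(p_m)\big)\Res_{p_j}\varphi$, and then $u(p_j)-u(p_m)=\int_{\eta_j}\bar\psi$ because $\bar\psi$ is closed and any two paths in the simply connected $\Delta$ with the same endpoints are homotopic rel endpoints. Combining, the identity holds for the chosen basis, hence for all bases by the first step.

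I expect the only real obstacle to be bookkeeping, not conceptual: fixing the orientation conventions so that $\{B_i,-A_i,\eta_j\}$ really is the $\cap$-dual of $\{A_i,B_i,\sigma_j\}$, and insisting that the arcs $\eta_j$ lie in $\Delta$ so that $\int_{\eta_j}\bar\psi$ is unambiguous and equal to the potential difference $u(p_j)-u(p_m)$. The use of the global residue theorem is essential: it is what makes the residue term of Proposition \ref{prop-bilinear} independent of the auxiliary base point $p_0$ and lets the contributions of the punctures reorganize into arc integrals of $\bar\psi$.
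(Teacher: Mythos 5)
Your proposal is correct and follows essentially the same route as the paper's own proof: reduce to one convenient basis (handle cycles plus small loops around $p_1,\dots,p_{m-1}$, with dual basis the complementary handle cycles plus arcs from $p_m$ to $p_j$), apply Proposition \ref{prop-bilinear}, factor the residue term via Proposition \ref{prop-residue}, and use $\sum_{p}\Res_p\pi^*(\varphi)=0$ to recenter the base point so the puncture contributions become $\sum_j\int_{c_j}\varphi\int_{b_j}\bar\psi$. The only difference is presentational: you justify basis-independence via the canonical copairing and spell out the intersection-number checks, where the paper simply invokes linearity.
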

\begin{proof}
It is enough to show this for a particular choice of basis. Then linearity implies the results for all the other choices. We present one choice inside $\Delta$ as follows. Let $D=\{p_1, \cdots, p_m\}$ which lie in the interior of $\Delta$. For each $1\leq i\leq m-1$, let $c_i$ be a small counter-clockwise oriented loop around $p_i$, and let $b_i$ be a path in $\Delta$ that start from $p_m$ and ends at $p_i$. We require all $c_i$'s do not intersect, and $b_i$ only intersects $c_i$ at one point.  Then 
$$
\{\gamma_i\}=\{\delta_1, \dots, \delta_g, \delta_{g+1}, \cdots, \delta_{2g}, c_1, \cdots, c_{m-1}\}
$$
is a basis of $H_1(\Sigma-D)$, and 
$$
\{\gamma^i\}=\{\delta_{g+1}, \cdots, \delta_{2g}, -\delta_1,\cdots, -\delta_g, b_1, \cdots, b_{m-1}\}
$$
is the dual basis of $H_1(\Sigma, D)$.  Let
$$
{u}(z)=\int_{p_0}^{z}\pi^*({{\bar\psi}}). 
$$

By Proposition \ref{prop-residue},
$$
\Res_{p}\bracket{\pi^*(\varphi ) u}=\bracket{\Res_{p}\pi^*(\varphi )} \bracket{ \int_{p_0}^{p}\pi^*({{\bar\psi}})}. 
$$
Since $
 \sum\limits_{i=1}^m\Res_{p_i}\pi^*(\varphi)=0
$, we can write 
\begin{align*}
2\pi i \sum_{p\in D}\Res_{p}\bracket{\pi^*(\varphi ) u}
=2\pi i \sum_{i=1}^{m-1}\bracket{\Res_{p_i}\pi^*(\varphi )}\cdot \int_{p_{m}}^{p_i}\pi^*({{\bar\psi}})=\sum_i \int_{c_i} \pi^*(\varphi )  \int_{b_i} \pi^*({{\bar\psi}}). 
\end{align*}
This leads to the desired formula (cf. the ``Double Copy Formula" in \cite[Corollary 3.19]{Brown:2021})
\begin{align*}
\dashint_{\Sigma}\varphi\wedge {{\bar\psi}}&=\sum_{i=1}^{g}\left(\int_{\delta_{i}}\pi^*(\varphi) \int_{\delta_{g+i}}\pi^*({{\bar\psi}})-\int_{\delta_{i}}\pi^*({{\bar\psi}}) 
\int_{\delta_{g+i}}\pi^*(\varphi)  \right)+ \sum_{i=1}^{m-1} \int_{c_i} \pi^*(\varphi )  \int_{b_i} \pi^*({{\bar\psi}}) \\
&=\sum_i \int_{\gamma_i} \varphi  \int_{\gamma^i}{\bar\psi}. 
\end{align*}
\end{proof}

 \subsubsection*{Application: prototypical example on $A$-cycle integral and quasi-modularity}
 
 As an application, we discuss the quasi-modularity of certain $A$-cycle integrals on elliptic curves. Systematic studies and generalizations will be presented in Section \ref{sec-3}. 

  Let $\tau$ be a point on the upper half-plane $\H$. Let 
\begin{equation*}\label{eqnuniformization}
 E_\tau=\C/\Lambda_{\tau}\,,\quad \Lambda_{\tau}:=\Z+ \Z \tau
 \end{equation*}
 be the corresponding elliptic curve. We will use $z$ as the linear holomorphic coordinate on the universal cover $\C$. We consider the following action of $\gamma =\begin{pmatrix}
 a& b\\
 c&d
 \end{pmatrix}  \in \mathrm{SL}_{2}(\mathbb{Z})$ on $\mathbb{C}\times \H$  by
 \begin{eqnarray*}
 \gamma :
 \mathbb{C}\times \H &\rightarrow & \mathbb{C}\times \H\,,\\
 (z;\tau)&\mapsto &
 (\gamma z;\gamma \tau):= ({z\over c\tau+d} ;{a\tau+b\over c\tau+d})\,.
   \end{eqnarray*}

Let $\Phi(z;\tau)$ be a meromorphic function on $\mathbb{C}\times \H$ which is 
 \begin{equation*}
 \text{elliptic}: \quad \Phi( z+\lambda; \tau)=\Phi(z;\tau)\,,
 \quad
 \forall \lambda\in  \Lambda_{\tau}\,,
 \end{equation*}
and modular of weight $k\in \mathbb{Z}$
 \begin{equation*}
 \Phi(\gamma z;\gamma \tau)=(c\tau+d)^{k}\Phi(z;\tau)\,,
 \quad
 \forall \gamma \in \mathrm{SL}_{2}(\Z)\,.
 \end{equation*}
Then $\Phi(-;\tau)$ defines a meromorphic function on $E_\tau$ for each $\tau\in \H$. 

\begin{prop}\label{propmodularityofregularizedintegral} The regularized integral 
$$
f(\tau)=\dashint_{E_\tau}{d^2z\over \im \tau} \Phi(z;\tau)\, , \quad d^2z :={i \over 2}dz\wedge d\bar z
$$
is modular of weight $k$ as a function of $\tau\in \H$, i.e., 
$
f(\gamma \tau)=(c\tau +d)^k f(\tau)\,, \ \forall\, \gamma\in  \mathrm{SL}_{2}(\Z)\,.
$
\end{prop}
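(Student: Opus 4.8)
The plan is to realize the substitution $\tau\mapsto\gamma\tau$ geometrically as a biholomorphism of elliptic curves and then invoke the conformal invariance of the regularized integral, Proposition \ref{prop-pullback}. For $\gamma=\begin{pmatrix} a& b\\ c&d\end{pmatrix}\in\mathrm{SL}_2(\Z)$ and fixed $\tau\in\H$, consider the map $z\mapsto z/(c\tau+d)$ on $\C$. It carries $\Lambda_\tau=\Z+\Z\tau$ onto $\Lambda_{\gamma\tau}=\Z+\Z\tfrac{a\tau+b}{c\tau+d}$, because $(c\tau+d)\Lambda_{\gamma\tau}=\Z(c\tau+d)+\Z(a\tau+b)=\Z+\Z\tau=\Lambda_\tau$ precisely since $\gamma\in\mathrm{SL}_2(\Z)$. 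Hence it descends to a biholomorphism $\psi_\gamma\colon E_\tau\to E_{\gamma\tau}$. The poles of $\Phi(-;\gamma\tau)$ form a finite subset $D_{\gamma\tau}\subset E_{\gamma\tau}$, so $\psi_\gamma^{-1}(D_{\gamma\tau})$ is finite and, as in Proposition \ref{prop-pullback}, $\psi_\gamma^*$ maps $\A^2(E_{\gamma\tau},\star D_{\gamma\tau})$ into $\A^2(E_\tau,\star\,\psi_\gamma^{-1}(D_{\gamma\tau}))$; note also $\im(\gamma\tau)>0$, so $f(\gamma\tau)$ makes sense.

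Next I would compute the pullback of the integrand. Writing $w=\psi_\gamma(z)=z/(c\tau+d)$ gives $dw\wedge d\bar w=|c\tau+d|^{-2}\,dz\wedge d\bar z$, hence $d^2w=|c\tau+d|^{-2}\,d^2z$; combined with the classical identity $\im(\gamma\tau)=\im\tau/|c\tau+d|^2$ this yields the invariance
\[
\psi_\gamma^*\!\left(\frac{d^2w}{\im(\gamma\tau)}\right)=\frac{d^2z}{\im\tau}\,.
\]
By the assumed modularity of weight $k$,
\[
\psi_\gamma^*\big(\Phi(w;\gamma\tau)\big)=\Phi\!\left(\tfrac{z}{c\tau+d};\gamma\tau\right)=\Phi(\gamma z;\gamma\tau)=(c\tau+d)^k\,\Phi(z;\tau)\,,
\]
so $\psi_\gamma^*\!\left(\tfrac{d^2w}{\im(\gamma\tau)}\,\Phi(w;\gamma\tau)\right)=(c\tau+d)^k\,\tfrac{d^2z}{\im\tau}\,\Phi(z;\tau)$.

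Finally, observe that $\tfrac{d^2z}{\im\tau}\Phi(z;\tau)\in\A^2(E_\tau,\star D_\tau)$, since near a pole of $\Phi$ it equals a smooth $2$-form divided by a power of the local coordinate, so the regularized integral $f(\tau)$ is defined; likewise $f(\gamma\tau)$. Applying Proposition \ref{prop-pullback} to $\psi_\gamma$,
\[
f(\gamma\tau)=\dashint_{E_{\gamma\tau}}\frac{d^2w}{\im(\gamma\tau)}\,\Phi(w;\gamma\tau)=\dashint_{E_\tau}\psi_\gamma^*\!\left(\frac{d^2w}{\im(\gamma\tau)}\,\Phi(w;\gamma\tau)\right)=(c\tau+d)^k\dashint_{E_\tau}\frac{d^2z}{\im\tau}\,\Phi(z;\tau)=(c\tau+d)^k f(\tau)\,,
\]
which is the asserted weight-$k$ modularity. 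The steps that require care are purely bookkeeping: verifying $\psi_\gamma$ is well defined on the quotient (the lattice computation above), checking that $\psi_\gamma^*$ preserves the ``holomorphic poles along a finite set'' condition so that Proposition \ref{prop-pullback} applies, and tracking the factor $|c\tau+d|$ consistently. I do not expect a genuine obstacle here: the analytic content has already been absorbed into the conformal invariance of $\dashint$, and the remainder is the standard transformation behavior of $d^2z/\im\tau$ and of $\Phi$. One could alternatively reduce to the generators $S=\begin{pmatrix} 0&-1\\ 1&0\end{pmatrix}$ and $T=\begin{pmatrix} 1&1\\ 0&1\end{pmatrix}$ of $\mathrm{SL}_2(\Z)$, but the argument above handles all $\gamma$ uniformly.
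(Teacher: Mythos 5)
Your proof is correct and follows essentially the same route as the paper: both arguments rest on the conformal invariance of the regularized integral (Proposition \ref{prop-pullback}), the invariance of $d^2z/\im\tau$ under $z\mapsto z/(c\tau+d)$ together with $\im(\gamma\tau)=\im\tau/|c\tau+d|^2$, and the weight-$k$ transformation of $\Phi$. The only difference is presentational: you descend the map $z\mapsto z/(c\tau+d)$ to a biholomorphism of the quotient tori $E_\tau\to E_{\gamma\tau}$, whereas the paper compares fundamental parallelograms $\square_c$ and $\gamma\square_c$ (chosen with poles off the boundary) and applies the same pullback proposition there.
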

\begin{proof} Given a fixed $\tau$, we choose a parallelogram $\square_c$ in $\C$ with vertices $\{c,c+1,c+1+\tau, c+\tau\}$ such that the poles $D_\tau$ of $\Phi(-;\tau)$  do not lie on the boundary of $\square_c$. 

Then $\square_c$ is a fundamental domain for $E_\tau$. We have
$$
\dashint_{E_\tau}{d^2z\over \im \tau} \Phi(z;\tau) =\dashint_{\square_c}{d^2z\over \im \tau} \Phi(z;\tau)\,. 
$$
Let $\gamma \square_c$ be the image of $\square_c$ under the $\gamma$-action.
Then  $\gamma \square_c$ is a fundamental domain for $E_{\gamma \tau}$ whose boundary does not intersect poles of $\Phi(z;\gamma \tau)$. Hence
\begin{align*}
f(\gamma \tau)&=\dashint_{E_{\gamma\tau}}{d^2z\over \im (\gamma\tau)} \Phi(z;\gamma\tau)=\dashint_{\gamma \square_{\tau}}{d^2z\over \im (\gamma\tau)} \Phi(z;\gamma\tau)=\dashint_{\square_c} {d^2(\gamma z)\over \im (\gamma \tau)} \Phi(\gamma z;\gamma \tau)\\
&=(c\tau+d)^{k}\dashint_{E_\tau} {d^2z\over \im \tau} \Phi(z;\tau)=(c\tau+d)^{k} f(\tau). 
\end{align*}
Here we have used Proposition \ref{prop-pullback} in the third equality.

\end{proof}

As an example, we consider the case when $\Phi(-;\tau)$ has no residue at any pole. Then
$$
\varphi= \Phi(z;\tau)dz
$$
defines a 2nd kind Abelian differential on $E_\tau$. Let $\{A, B\}\in H_1(E_\tau, \Z)$ be the basis with representatives as in the figure below. We assume such representatives do not intersect the poles $D$ of $\varphi$. Otherwise we perturb the chosen representatives slightly to avoid poles. 

\begin{figure}[H]\centering
	\begin{tikzpicture}[scale=1]

\draw[cyan!23,fill=cyan!23](0,0)to(4,0)to(5,3)to(1,3)to(0,0);
\draw (0,0) node [below left] {$0$} to (4,0) node [below] {$1$} to (5,3) node [above right] {$1+\tau$} to (1,3) node [above left] {$\tau$} to (0,0);

\draw[ultra thick,blue,->-=.5,>=stealth](1,3)to(5,3);

\node [above] at (3,3) {$A$};

\draw[ultra thick,red,->-=.5,>=stealth](4,0)to(5,3);

\node [right] at (4.5,1.5) {$B$};

	\end{tikzpicture}
\end{figure}

Since $\varphi$ has no residues, the $A$-cycle integral 
$$
\int_A \varphi
$$
does not depend on the representative $A$ of its cycle class. By Proposition \ref{prop-bilinear}, 
\begin{align*}
\dashint_{E_{\tau}}{d^2z\over \im \tau}\Phi&=\dashint_{E_{\tau}}\varphi \wedge { d \im z \over \im \tau}=\int_A \varphi \int_B { d \im z \over \im \tau}- \int_B \varphi \int_A { d \im z \over \im \tau}
+2\pi i \sum_{p\in D}\Res_{p}\bracket{\varphi \im z \over \im \tau}.
\end{align*}
Since $\Res_p(\varphi)=0$, we have 
$$
\Res_p(\varphi \bar z)=0\,, \quad  \sum_{p\in D}\Res_p(\varphi z)=-\langle \varphi, dz \rangle_{\mathrm{P}}\,,
$$
where $\langle  \varphi, dz \rangle_{\mathrm{P}}$ is the Poincar\'e residue pairing. It follows that 
$$
\dashint_{E_{\tau}}{d^2z\over \im \tau}\Phi= \int_{A}\varphi -{1\over 2 i \im \tau}\cdot  2\pi i\,\langle \varphi, dz\rangle_{\mathrm{P}}\,. 
$$

The following proposition regarding the quasi-modularity
 (in the sense of \cite{Kaneko:1995}, see Definition \ref{dfnmodularquasimodular})
 of $\int_{A}\varphi$
  is immediate.
\begin{prop}\label{prop-modular-completion}Assume $\Phi$ is modular of weight $k$ and $\varphi=\Phi dz$ is a 2nd kind Abelian differential on $E_\tau$. Then the $A$-cycle integral 
$$
\int_A \varphi
$$
is quasi-modular of weight $k$ whose ``modular completion" (see Appendix \ref{secmodularformsellipticfunctions}) is given by the regularized integral 
$$
\dashint_{E_{\tau}}{d^2z\over \im \tau}\Phi= \int_{A}\varphi -{1\over 2 i \im \tau}\cdot  2\pi i\,\langle \varphi, dz\rangle_{\mathrm{P}}\,. 
$$

\end{prop}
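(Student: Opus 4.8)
The plan is to read the statement off from the identity
$$
\dashint_{E_{\tau}}{d^2z\over \im \tau}\Phi= \int_{A}\varphi -{1\over 2 i \im \tau}\cdot  2\pi i\,\langle \varphi, dz\rangle_{\mathrm{P}}
$$
already derived above from the Riemann--Hodge bilinear formula (Proposition \ref{prop-bilinear}), together with the modularity of its left-hand side established in Proposition \ref{propmodularityofregularizedintegral}. In other words, the substance has been done; what remains is to package the output in the language of quasi-modularity and modular completion from Appendix \ref{secmodularformsellipticfunctions}.

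First I would observe that both terms on the right are holomorphic in $\tau\in\H$. The $A$-cycle integral $\int_A\varphi=\int_A\Phi(z;\tau)\,dz$ is residue-free, hence independent of the chosen representative, and depends holomorphically on $\tau$: since $\Phi$ is meromorphic on $\C\times\H$, the poles move holomorphically, so locally in $\tau$ one can pick a representative avoiding them. Likewise $\langle\varphi,dz\rangle_{\mathrm P}=-\sum_{p}\Res_p\!\bracket{z\,\Phi(z;\tau)\,dz}$ is holomorphic in $\tau$, being an intrinsic (hence well-defined) sum of residues whose local expressions are holomorphic in $\tau$. Therefore the displayed identity exhibits $\dashint_{E_\tau}\frac{d^2z}{\im\tau}\Phi$ as a degree-one polynomial in $1/\im\tau$ with holomorphic coefficients whose constant term is $\int_A\varphi$; by Proposition \ref{propmodularityofregularizedintegral} this polynomial transforms with weight $k$ under $\mathrm{SL}_2(\Z)$, so it is an almost-holomorphic modular form of weight $k$. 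By the standard dictionary between almost-holomorphic modular forms and quasi-modular forms (Definition \ref{dfnmodularquasimodular} and the surrounding discussion in Appendix \ref{secmodularformsellipticfunctions}), its holomorphic part $\int_A\varphi$ is then quasi-modular of weight $k$, with modular completion exactly $\dashint_{E_\tau}\frac{d^2z}{\im\tau}\Phi$, which is the claim.

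As a consistency check — and to pin down that the depth is at most one — one can verify directly that $\langle\varphi,dz\rangle_{\mathrm P}$ is modular of weight $k-2$: substituting $z\mapsto z/(c\tau+d)$ in $\Res_p\!\bracket{z\,\Phi(z;\tau)\,dz}$ and using $\Phi(\gamma z;\gamma\tau)=(c\tau+d)^{k}\Phi(z;\tau)$ yields the factor $(c\tau+d)^{k-2}$ (here Proposition \ref{prop-residue} controls the behaviour of the residue symbol). Combining this with $\im(\gamma\tau)=\im\tau/|c\tau+d|^{2}$ and $(c\tau+d)-\overline{(c\tau+d)}=2ic\,\im\tau$ in the displayed identity reproduces the depth-one transformation law $\int_A\varphi\big|_{\gamma\tau}=(c\tau+d)^{k}\int_A\varphi-2\pi i\,c\,(c\tau+d)^{k-1}\langle\varphi,dz\rangle_{\mathrm P}$, confirming quasi-modularity of weight $k$ intrinsically.

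The main obstacle: there is essentially none of substance, since the genuine work was front-loaded into Propositions \ref{propmodularityofregularizedintegral} and \ref{prop-bilinear}. The only points demanding any care are the verification that $\int_A\varphi$ and $\langle\varphi,dz\rangle_{\mathrm P}$ are honestly holomorphic in $\tau$ — needed so that one may legitimately identify $\dashint_{E_\tau}\frac{d^2z}{\im\tau}\Phi$ as an almost-holomorphic modular form and $\int_A\varphi$ as its holomorphic part — and the correct invocation of the almost-holomorphic/quasi-modular correspondence recalled in the appendix.
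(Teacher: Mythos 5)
Your proposal is correct and follows essentially the same route as the paper: it rests on the bilinear identity already derived from Proposition \ref{prop-bilinear}, the modularity of the regularized integral from Proposition \ref{propmodularityofregularizedintegral}, the modularity (weight $k-2$) of $\langle\varphi,dz\rangle_{\mathrm{P}}$, and the Kaneko--Zagier correspondence between almost-holomorphic modular forms and quasi-modular forms. Your explicit depth-one transformation law in the consistency check is just an unwinding of the same three ingredients the paper cites, so there is no substantive difference.
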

\begin{proof}
By Proposition \ref{propmodularityofregularizedintegral},
the  regularized integral $\dashint_{E_{\tau}}{d^2z\over \im \tau}\Phi$
is modular.
One can prove similarly that $\langle \varphi, dz\rangle_{\mathrm{P}}$,
as a meromorphic function in $\tau$,  is also modular.
Quasi-modularity of $\int_{A}\varphi$
then follows from the modularity of $\dashint_{E_{\tau}}{d^2z\over \im \tau}\Phi$
, the modularity of $\langle \varphi, dz\rangle_{\mathrm{P}}$,
and the relation between  $\dashint_{E_{\tau}}{d^2z\over \im \tau}\Phi$ and
$\int_{A}\varphi$
 above.
\end{proof}

\begin{rem}
The integral
$\int_{A}\varphi$ being quasi-modular instead of modular is due to the fact that 
specifying the $A$-cycle breaks
the symmetry under the action of the modular group.
Our geometric perspective offers a natural way to obtain its modular completion in terms of regularized integral on the whole elliptic curve. This phenomenon will be vastly generalized in Section \ref{sec-3} (Theorem \ref{thm-2d}). 
\end{rem}

\begin{ex}\label{expmoments} 

Consider $\Phi=\wp^{m}$ with $\varphi=\wp^{m}dz$, where $m\geq 1$ and $\wp$ is the Weierstrass $\wp$-function.  
The computations of integrals $\int_{A}\wp^{m}dz$ 
are standard, see e.g., \cite{Silverman:2009},
by using the Weierstrass equation \eqref{eqn-functionalfieldofellipticcurve} 
and the relation \eqref{eqn-quasiperiods}.

For example, for $m=1$ one has from  \eqref{eqn-quasiperiods}
$$\int_{A}\wp(z;\tau) dz=-{\pi^{2}\over 3}E_{2}(\tau)\,.$$
By the Weierstrass equation \eqref{eqn-functionalfieldofellipticcurve} we have
$2\wp''=12\wp^2-{4\over 3}\pi^4 E_{4}$ and hence 
$$ \wp^2 dz={1\over 6}d\wp'+ {1\over 9}\pi^4 E_{4} dz\,.$$
This gives
\begin{align*}
\int_A \wp^{2}(z;\tau)dz =\int_A 
\left(
{1\over 6}d\wp'+ {1\over 9}\pi^4 E_{4} dz
\right)={\pi^{4}\over 9}E_{4} \,.
\end{align*}
Similarly, 
for the $m=3$ case we multiply the relation $2\wp''=12\wp^2-{4\over 3}\pi^4 E_{4}$ by $\wp$ and obtain
\begin{eqnarray*}
12\wp^3 dz-{4\over 3}\pi^4 E_{4}\wp dz
&=&2 \wp\wp''
dz=
2\, d(\wp \wp')-2(\wp')^2 dz\\
&=&
2 \, d(\wp \wp')-2 (4\wp^3 dz-{4\over 3}\pi^{4}E_{4} \wp dz-{8\over 27}\pi^{6}E_{6}dz)\,.
\end{eqnarray*}
It follows that
$$\int_{A}\wp^{3}dz={1\over 20}\int_{A} ( 4\pi^{4}E_{4}\wp+{16\over 27} \pi^6 E_{6})dz
=-{1\over 15}\pi^{6} E_{2}E_{4}+{4\over 5\cdot 27} \pi^{6}E_{6}\,.$$

Using  \eqref{eqn-g2g3}  and \eqref{eqn-wponC}, one has
$$\langle \wp dz, dz \rangle_{\mathrm{P}}=-1\,,
\quad
\langle \wp^{2} dz, dz \rangle_{\mathrm{P}}=0\,,
\quad
\langle \wp^{3} dz, dz \rangle_{\mathrm{P}}=- {3\over 20}g_{2}=-{\pi^{4}\over 5}E_{4}\,.
$$

Therefore, we obtain (cf. Example \ref{exregularizedintegralex})
\begin{eqnarray*}
\int_{A}\wp(z;\tau) dz -{1\over 2 i \im \tau}\cdot  2\pi i\,\langle \wp(z;\tau) dz, dz\rangle_{\mathrm{P}}&=&-{\pi^{2}\over 3}E_{2}(\tau)
+{\pi\over \im\tau}=-{\pi^{2}\over 3}\widehat{E}_{2}(\tau)\,,\\
\int_A \wp^{2}(z;\tau)dz -{1\over 2 i \im \tau}\cdot  2\pi i\,\langle \wp^2(z;\tau) dz, dz\rangle_{\mathrm{P}}&=&{\pi^{4}\over 9}E_{4} \,,\\
\int_A \wp^{3}(z;\tau)dz -{1\over 2 i \im \tau}\cdot  2\pi i\,\langle \wp^3(z;\tau) dz, dz\rangle_{\mathrm{P}}&=&-{1\over 15}\pi^{6} \widehat{E}_{2}E_{4}+{4\over 5\cdot 27} \pi^{6}E_{6} \,.
\end{eqnarray*}

\end{ex}

\begin{rem}\label{remcomputationontatecurve}
One can also 
evaluate these $A$-cycle integrals by first
lifting the function $\wp(z)$ along the Picard uniformization 
\begin{equation*}\label{eqn-Picarduniformization}
\mathbb{C}^{*}\rightarrow E_{\tau}= \mathbb{C}^{*}/q^{\mathbb{Z}}\,,\quad q=e^{2\pi i \tau}\,.
\end{equation*}
The resulting integrand is expressed in term of the holomorphic coordinate $u$ on the cover $\mathbb{C}^{*}$ that is related to the coordinate $z$ on the universal cover $\mathbb{C}$  by
$u=\exp(2\pi i z)$.
Then one computes
 the degree zero term
 in the $u$-expansion on $\mathbb{C}^{*}$. 
The formulae in \eqref{eqn-FourierexpansionsE2k} allow us
 to express the resulting $q$-series in terms of the Eisenstein series.
 See
\cite{Roth:2009mirror, Boehm:2014tropical, Goujard:2016counting} for discussions along these lines. Using the ``computing twice" trick, one can then produce many nontrivial identities
between $q$-series and quasi-modular forms.

For example, the $m=2$ case leads to the identity
\begin{equation*}
(2\pi i)^4 \cdot 2 \sum_{k\geq 1}{k^{2}q^{k}\over (1-q^{k})^2}={1\over 9}\pi^{4} (E_{4}-E_{2}^2)\,,
\end{equation*}
which can be proved directly by 
using \eqref{eqn-FourierexpansionsE2k} and the Ramanujan identities
\eqref{eqn-Ramanujanidentities}.

The $m=3$ case leads to 
$$(2\pi i)^{6}
\sum_{k,\ell\geq 1}  k\ell(k+\ell){q^{k+\ell} \over (1-q^{k}) (1-q^{\ell}) (1-q^{k+\ell})}
={\pi^{6}\over 2^{6}3^{4} 5} (5E_{2}^3-3E_{2} E_{4}-2E_{6})\,,$$
which seems to be less easy to prove directly.
\end{rem}

\subsection{Variation property}\label{sec-family}
In this subsection, we establish several variation properties of regularized integrals. 

We assume $\Sigma$ is a compact Riemann surface without boundary. Let $X$ be a complex manifold. Let $\{s_i: X\to \Sigma\}$ be a set of distinct holomorphic maps. Their graphs define a set of smooth divisors
$$
D_i=\{(z, x)\in \Sigma\times X~|~ z=s_i(x)\}\subset \Sigma\times X\,. 
$$
The intersection
$$
D_i\cap D_j
$$
can be viewed as a divisor $D_{ij}$ on $X$ under the projection $\pi: \Sigma\times X\to X$. Equivalently, $D_{ij}$ is given by the fiber product
$$
\xymatrix{
D_{ij}\ar[r]\ar[d] & X\ar[d]^{s_i\times s_j}\\
 \Sigma\ar[r]^{\delta} & \Sigma\times \Sigma
}
$$
where $\delta: \Sigma\to \Sigma\times \Sigma$ is the diagonal map.  Let us denote
$$
D=\bigcup\limits_i D_i\,, \quad D^{(2)}=\bigcup\limits_{i\neq j}D_{ij}\,. 
$$
Here $D$ can be viewed as a family of divisors on $\Sigma$ parametrized by $X$.

We can similarly define the spaces
$$
\A^{\bullet}(\Sigma\times X, \star D)\,, \quad \A^\bullet(X, \star D^{(2)})
$$
consisting of forms with arbitrary order of poles along the specified divisors. Locally, in a small open subset $U\times V\subset \Sigma\times X$ such that each $D_i$ is defined by $z=s_i(x)$, we have
$$
\A^{\bullet}(U\times V, \star D)=\A^\bullet(U\times V)[(z-s_i(x))^{-1}]\,, \quad \A^\bullet(V,\star D^{(2)})=\A^\bullet(V)[(s_i(x)-s_j(x))^{-1}]\,. 
$$

\begin{lem}\label{lem-family} Let 
$$
\omega= {f(z,y)dz\wedge d\bar z\over (z-z_0(y))^n }
$$
be a family of $2$-forms on $z\in \C$ parametrized by  $y\in \mathbb{R}$. Here $z_0(y)$ is smooth in $y$; $f(z,y)$ is smooth in $z,y$ and we have omitted the $\bar z$-dependence for convenience; $f(-,y)$ has compact support along $z\in \C$ for any fixed $y$. Then the regularized integral $\dashint_\C \omega$ is smooth in $y$ and 
$$
\pa_{y}\dashint_{\C} \omega=\dashint_{\C} \pa_{y}\omega+\pa_y \overline{z_0(y)} \oint_{z_0(y)}  {f(z,y)  \over (z-z_0(y))^n}dz\,.
$$
In particular, if $t$ is a complex variable, $z_0(t)$ is holomorphic in $t$, and $\omega= {f(z,t)dz\wedge d\bar z\over (z-z_0(t))^n }
$, then 
$$
\pa_{t}\dashint_{\C} \omega=\dashint_{\C} \pa_{t}\omega\,, \quad \pa_{\bar t}\dashint_{\C} \omega=\dashint_{\C} \pa_{\bar t}\omega+\overline{\pa_{t} z_0(t)} \oint_{z_0(t)} {f(z,t)  \over (z-z_0(t))^n}dz\,.
$$
\end{lem}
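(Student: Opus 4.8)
The plan is to reduce everything to a purely local statement near $z = z_0(y)$ and then differentiate under the integral sign on a punctured domain, picking up a boundary term from the moving puncture. First I would use the decomposition from Lemma \ref{lem-decomposition}: write $\omega = \alpha + \pa\beta$ with $\alpha \in \A^2_c(\C,\log D)$ and $\beta \in \A^{0,1}_c(\C,\star D)$, where $D$ is the (moving) divisor $\{z = z_0(y)\}$. The catch is that the decomposition produced by the lemma depends on the parameter $y$; I want to carry it out in a way that is manifestly smooth in $y$. Since $z_0(y)$ is smooth and $f$ is smooth with compact support in $z$, the reduction-of-pole-order recursion in the proof of Lemma \ref{lem-decomposition} can be applied with all coefficients depending smoothly on $y$, so one obtains $\alpha, \beta$ smooth in $(z,y)$ (and with poles only along $z = z_0(y)$). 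Then $\dashint_\C \omega = \int_\C \alpha$, and since $\alpha$ is only logarithmic it is absolutely integrable, uniformly in $y$ on compacta, so $y \mapsto \int_\C \alpha$ is smooth and one may differentiate under the integral: $\pa_y \dashint_\C \omega = \int_\C \pa_y \alpha$.

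Next I would relate $\int_\C \pa_y\alpha$ back to $\omega$. Applying $\pa_y$ to $\omega = \alpha + \pa\beta$ gives $\pa_y\omega = \pa_y\alpha + \pa(\pa_y\beta)$, which is almost the decomposition of $\pa_y\omega$ — except that $\pa_y\beta$ may now have one higher order of pole along $z = z_0(y)$ (because $\pa_y$ hits the factors $(z - z_0(y))^{-1}$, producing $\pa_y\overline{z_0}$ times nothing on the $(1,0)$ side but $\pa_y z_0$ on the holomorphic side — and indeed $\pa_y z_0$ need not vanish unless $z_0$ is anti-independent of $y$). So $\pa_y\omega = \pa_y\alpha + \pa\gamma$ with $\gamma = \pa_y\beta \in \A^{0,1}_c(\C,\star D)$, and hence $\dashint_\C \pa_y\omega = \int_\C \pa_y\alpha$ by definition of the regularized integral. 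Comparing, $\pa_y \dashint_\C \omega = \dashint_\C \pa_y\omega$ — but this is \emph{not} what we want; the extra term is missing. The resolution is that I was too cavalier: the decomposition $\omega = \alpha + \pa\beta$ cannot be chosen smoothly in $y$ while keeping $\alpha$ \emph{logarithmic} for all $y$ simultaneously, because as $y$ varies the pole location $z_0(y)$ moves and the "logarithmic part" $\alpha$ is pinned to it. The honest route is to use Theorem \ref{thm-PV} instead.

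So the real plan: by Theorem \ref{thm-PV}, $\dashint_\C \omega = \lim_{\eps\to 0}\int_{\C \setminus B^\eps(y)}\omega$ where $B^\eps(y) = \{|z - z_0(y)| \le \eps\}$ is the $\eps$-ball around the \emph{moving} center. For each fixed $\eps > 0$ the integrand is smooth on the (moving) domain, so I differentiate using the Leibniz/Reynolds transport rule for a domain with moving boundary: $\pa_y \int_{\C\setminus B^\eps(y)}\omega = \int_{\C\setminus B^\eps(y)} \pa_y\omega + (\text{flux of }\omega\text{ through }\pa B^\eps(y)\text{ as it moves})$. Writing $\omega = g(z,y)\,\D z \wedge \D\bar z$ with $g = f/(z-z_0)^n$, the boundary contribution from the circle $|z - z_0(y)| = \eps$ moving with velocity $\pa_y z_0$ in $z$ and $\pa_y\overline{z_0}$ in $\bar z$ is, after contracting $\D z \wedge \D \bar z$ with the velocity field, an integral over $|z - z_0| = \eps$ of $g$ against $(\pa_y\overline{z_0}\,\D z - \pa_y z_0\,\D\bar z)$ with the correct sign from the inward-pointing boundary orientation. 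Now take $\eps \to 0$: by Lemma \ref{lem-limit}, $\lim_{\eps\to 0}\int_{|z-z_0|=\eps} g\,\D\bar z = 0$, so the $\pa_y z_0$ piece drops out, while $\lim_{\eps\to 0}\int_{|z-z_0|=\eps} g\,\D z = \oint_{z_0(y)} \frac{f(z,y)}{(z-z_0(y))^n}\,\D z$ survives; the bulk term converges to $\dashint_\C \pa_y\omega$ (again by Theorem \ref{thm-PV}, noting $\pa_y\omega$ still lies in $\A^2_c(\C,\star D)$). This yields exactly
\[
\pa_y \dashint_\C \omega = \dashint_\C \pa_y\omega + \pa_y\overline{z_0(y)}\,\oint_{z_0(y)} \frac{f(z,y)}{(z-z_0(y))^n}\,\D z.
\]
For the holomorphic-parameter specialization $y = t$ with $z_0(t)$ holomorphic: $\pa_t\overline{z_0} = \overline{\pa_{\bar t}z_0} = 0$, so the extra term vanishes and $\pa_t \dashint_\C\omega = \dashint_\C\pa_t\omega$; while $\pa_{\bar t}\overline{z_0} = \overline{\pa_t z_0}$, giving the stated formula for $\pa_{\bar t}$.

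\textbf{Main obstacle.} The delicate point is justifying the transport rule rigorously in the presence of the pole: one must confirm that for each fixed $\eps>0$ the integrand and its $y$-derivative are integrable and that the boundary-flux term is correctly identified — in particular that the contraction of $\D z \wedge \D\bar z$ with the boundary velocity produces precisely the $1$-form $g(\pa_y\overline{z_0}\,\D z - \pa_y z_0\,\D\bar z)$ with the orientation conventions consistent with the counter-clockwise convention in Lemma \ref{lem-limit} and the "remove a ball" convention in Theorem \ref{thm-PV}. A clean way to sidestep the transport rule altogether is the change-of-variables trick: substitute $w = z - z_0(y)$ so that the domain $\C\setminus B^\eps(y)$ becomes the fixed domain $\C\setminus\{|w|\le\eps\}$, differentiate under the now-fixed integral, and then change back — the $\pa_y z_0$ and $\pa_y\overline{z_0}$ terms appear from the chain rule acting on $g(w + z_0(y), \overline{w + z_0(y)})$, and the only subtlety becomes the interchange of $\pa_y$ with $\lim_{\eps\to 0}$, which is controlled because all the $\eps$-circle integrals converge (by Lemma \ref{lem-limit}) uniformly in $y$ on compacta. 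I would write the proof this second way.
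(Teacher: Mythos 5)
Your proposal is correct and, in the route you ultimately recommend, essentially identical to the paper's proof: the paper also substitutes $w=z-z_0(y)$ to fix the pole at the origin (invoking Proposition \ref{prop-pullback}), differentiates under the integral, lets the chain rule produce the $\pa_y z_0$, $\pa_y f$, and $\pa_y\overline{z_0}\,\pa_{\bar z}f$ terms, and identifies the last as the residue term by writing it as $-\dbar\bigl(f\,dz/(z-z_0)^n\bigr)$ and applying Theorem \ref{thm-de-Rham-noncompact}. The only cosmetic difference is how differentiation under the integral is justified: the paper first reduces to a first-order pole via the explicit decomposition $(\dagger)$, so the integral is absolutely convergent and dominated convergence applies directly, whereas you propose controlling the interchange of $\pa_y$ with $\lim_{\epsilon\to0}$ by uniform convergence of the $\epsilon$-truncated integrals — both work, and your instinct to discard the naive $y$-dependent logarithmic decomposition (which would lose the extra term) is exactly right.
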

\begin{proof} A change of coordinate $z\to z+z_0(y)$ by shifting and Proposition \ref{prop-pullback} imply
$$
\dashint_{\C}\omega= \dashint_{\C} {f(z+z_0(y),y) dz\wedge d\bar z\over z^n}\,. 
$$
Let $\pa$ denote the holomorphic de Rham differential in $z$. We can write 
\begin{equation*}
{f(z+z_0(y),y) dz\wedge d\bar z\over z^n}= {1\over (n-1)!}{\pa_z^{n-1}f(z+z_0(y),y) dz\wedge d\bar z\over z}+ \pa \beta \tag{$\dagger$}
\end{equation*}
for some $(0,1)$-form $\beta$. It follows that 
\begin{align*}
\dashint_{\C} {f(z+z_0(y),y) dz\wedge d\bar z\over z^n}={1\over (n-1)!} \int_{\C} {\pa_z^{n-1}f(z+z_0(y),y) dz\wedge d\bar z\over z}\,.
\end{align*}
This immediately implies the smoothness of $\dashint_{\C}\omega$ in $y$. 

We can apply $\pa_y$ to equation ($\dagger$) and use the fact that $\pa_y$ commutes with $\pa$ to find
\begin{align*}
\pa_{y}\dashint_{\C}\omega
&= \dashint_{\C}\pa_y\bracket{{f(z+z_0(y),y) dz\wedge d\bar z\over z^n}}\\
&=\dashint_{\C} {(\pa_y z_0(y))\pa_zf(z+z_0(y),y) dz\wedge d\bar z\over z^n}+\dashint_{\C} {\pa_y f(z+z_0(y),y) dz\wedge d\bar z\over z^n}\\
&\quad +\dashint_{\C} {(\pa_y \overline{z_0(y)})\pa_{\bar z}f(z+z_0(y),y) dz\wedge d\bar z\over z^n}\,.
\end{align*}

The first two terms on the last expression give
\begin{align*}
&(\pa_y z_0(y))\dashint_{\C} {\pa_zf(z,y) dz\wedge d\bar z\over (z-z_0(y))^n}+\dashint_{\C} {\pa_y f(z,y) dz\wedge d\bar z\over (z-z_0(y))^n}\\
=&(\pa_y z_0(y))\dashint_{\C} \bracket{\pa\bracket{f(z,y) d\bar z\over (z-z_0(y))^n}+ {nf dz\wedge d\bar z\over (z-z_0(y))^{n+1}}}+\dashint_{\C} {\pa_y f(z,y) dz\wedge d\bar z\over (z-z_0(y))^n}\\
=& \dashint_{\C}{n (\pa_y z_0(y)) f dz\wedge d\bar z\over (z-z_0(y))^{n+1}}+\dashint_{\C} {\pa_y f(z,y) dz\wedge d\bar z\over (z-z_0(y))^n}
=\dashint_{\C}\pa_y \omega\,.
\end{align*}

The last term is 
\begin{align*}
&\dashint_{\C} {(\pa_y \overline{z_0(y)})\pa_{\bar z}f(z+z_0(y),y) dz\wedge d\bar z\over z^n}\\
=&(\pa_y \overline{z_0(y)})\dashint_{\C} {\pa_{\bar z}f(z,y) dz\wedge d\bar z\over (z-z_0(y))^n}=-(\pa_y \overline{z_0(y)})\dashint_{\C} \dbar {f(z,y) dz \over (z-z_0(y))^n}\\
=&  (\pa_y \overline{z_0(y)} )\oint_{z_0(y)}  { f(z,y)  \over (z-z_0(y))^n}dz\,. 
\end{align*}
Here we have used Theorem \ref{thm-de-Rham-noncompact} in the last step. 
\end{proof}

Given a form 
$
\omega \in \A^{\bullet}(\Sigma\times X, \star D) 
$, 
we can perform the regularized integral 
$$
\dashint_{\Sigma}\omega
$$ 
along $\Sigma$ first to end up with a form on $X$. By Lemma \ref{lem-family} and a use of partition of unity,  $\dashint_{\Sigma}\omega$ is smooth on $X-D^{(2)}$. We next analyze the singularity of $\dashint_{\Sigma}\omega$ around $D^{(2)}$. 

\begin{prop}\label{prop-push-poles} Let $\omega \in \A^{\bullet}(\Sigma\times X, \star D)$. Then
$$
\dashint_{\Sigma}\omega\in \A^{\bullet-2}(X, \star D^{(2)})\,. 
$$
In other words, $\dashint_{\Sigma}\omega$ has holomorphic poles along $D^{(2)}$. 
\end{prop}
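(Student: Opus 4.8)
The plan is to localize on $X$. Away from $D^{(2)}$ the smoothness of $\dashint_\Sigma\omega$ is already known from Lemma \ref{lem-family} together with a partition of unity on $\Sigma$, so the only point is to control the singularity of $\dashint_\Sigma\omega$ as $x$ approaches $D^{(2)}$. Fix $x_0\in D^{(2)}$ and let $p_1,\dots,p_r\in\Sigma$ be the distinct points at which two or more of the graphs $D_i$ meet over $x_0$, i.e. the values among $\{s_i(x_0)\}$ attained by at least two indices. I would choose pairwise disjoint coordinate disks $U_{p_a}\subset\Sigma$ around the $p_a$, disjoint disks $U_i$ around the remaining values $s_i(x_0)$, and let $U_0$ be the complement of the closures of all of these. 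For $x$ in a sufficiently small neighbourhood $V$ of $x_0$, each $s_i(x)$ lies in the disk attached to $s_i(x_0)$ and those disks stay disjoint, so a fixed ($x$-independent) partition of unity $\{\rho_c\}$ on $\Sigma$ subordinate to $\{U_0\}\cup\{U_{p_a}\}\cup\{U_i\}$ produces a splitting $\omega=\sum_c\rho_c\omega$ valid over all of $\Sigma\times V$.

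First I would dispose of the harmless pieces. Since $\dashint_\Sigma$ annihilates any form whose fibrewise vertical degree is $\le 1$ (Definition \ref{defn-RI}), only the part of $\omega$ proportional to $dz\wedge d\bar z$ contributes, with coefficient a form on $X$; thus it suffices to treat locally
$$
\omega=\frac{g(z,\bar z,x)\,dz\wedge d\bar z\wedge\eta(x)}{\prod_i(z-s_i(x))^{n_i}},
$$
with $g$ smooth and $\eta$ a form on $X$. The piece $\rho_0\omega$ is smooth on $\Sigma\times V$, so $\dashint_\Sigma\rho_0\omega$ is smooth on $V$; each $\rho_{U_i}\omega$ is compactly supported in $z$ and has a pole only along the single smooth divisor $D_i$, so $\dashint_\Sigma\rho_{U_i}\omega$ is smooth on $V$ by Lemma \ref{lem-family}. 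It remains to analyse $\rho_{U_{p_a}}\omega$, which involves the factors $(z-s_i(x))^{n_i}$ for $i$ in the cluster $C_a=\{i:s_i(x_0)=p_a\}$.

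For such a term I would use partial fractions in the fibre variable. Writing $a_i=s_i(x)$ for $i\in C_a$, the standard decomposition
$$
\frac{1}{\prod_{i\in C_a}(z-a_i)^{n_i}}=\sum_{i\in C_a}\sum_{l=1}^{n_i}\frac{c_{il}(x)}{(z-a_i)^{l}}
$$
has coefficients $c_{il}$ that are universal polynomials in the $a_j$ divided by powers of the differences $a_i-a_j$; composing with the holomorphic map $x\mapsto(s_j(x))_{j\in C_a}$ shows each $c_{il}$ is holomorphic on $V$ away from $\bigcup_{i\ne j\in C_a}\{s_i=s_j\}\subset D^{(2)}$ with at worst poles along that divisor. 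Since $\rho_{U_{p_a}}g$ is smooth and compactly supported in $z$, Lemma \ref{lem-family} gives that each $\dashint_\Sigma\!\bigl(\rho_{U_{p_a}}g\,dz\wedge d\bar z/(z-s_i(x))^{l}\bigr)$ is smooth on $V$. Hence $\dashint_\Sigma\rho_{U_{p_a}}\omega$ is a finite sum of (smooth form on $V$) times (holomorphic function with poles only along $D^{(2)}$) times $\eta$, so it lies in $\A^{\bullet-2}(V,\star D^{(2)})$. Summing over $c$ yields $\dashint_\Sigma\omega\in\A^{\bullet-2}(V,\star D^{(2)})$ near $x_0$; as $x_0$ was arbitrary and this is precisely the local description of $\A^{\bullet-2}(X,\star D^{(2)})$, the proposition follows.

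I expect the main technical point to be the uniform choice of partition of unity: one must shrink $V$ so that the clustering of the moving sections $s_i(x)$ is governed by their values at $x_0$, which is exactly what lets a single $x$-independent partition of unity handle the whole family and reduces everything to the one-divisor Lemma \ref{lem-family} plus the purely algebraic partial-fraction bookkeeping that isolates the $D^{(2)}$-poles. The rest is routine.
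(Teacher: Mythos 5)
Your proof is correct and follows essentially the same route as the paper's: localize near a point of $D^{(2)}$, separate the poles in the fibre variable via the partial-fraction identity $\tfrac{1}{(z-s_i)(z-s_j)}=\tfrac{1}{s_i-s_j}\bigl(\tfrac{1}{z-s_i}-\tfrac{1}{z-s_j}\bigr)$, and reduce to the single-pole smoothness statement of Lemma \ref{lem-family}. The only difference is bookkeeping --- the paper first multiplies by $g=\prod_{i\neq j}(s_i-s_j)^N$ so that the intermediate integrand is smooth and divides by $g$ at the end, whereas you keep the meromorphic partial-fraction coefficients $c_{il}(x)$ throughout; your treatment of the localization and partition of unity is somewhat more explicit but the substance is identical.
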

\begin{proof} The problem is local. Let us assume we are in a neighborhood $V$ of $x$ with $s_1(x)=\cdots =s_k(x)$. Let 
$$
g= \prod_{1\leq i\neq j \leq k} (s_i(x)-s_j(x))^N
$$
where $N$ is a sufficient large integer to be determined later. Since $g$ does not depend on $\Sigma$, 
$$
\dashint_\Sigma \omega= {1\over g} \dashint_\Sigma g \omega\,. 
$$
Locally $\omega$ has the form
$$
 \omega = {dz\over \prod\limits_{i=1}^k(z-s_i(x))^{m_i}}\varphi
$$
where $\varphi$ is smooth. By a repeated use of the relation
$$
{1\over (z-s_i(x))(z-s_j(x))}={1\over (s_i(x)-s_j(x))}\bracket{{1\over z-s_i(x)}-{1\over z-s_j(x)}}
$$
we can find a large enough $N$ such that 
$$
g\omega=\sum_i {dz\over (z-s_i(x))^{m_i}} \varphi_i
$$
with $\varphi_i$ smooth. By Lemma \ref{lem-family}, $\dashint_\Sigma g\omega$ is smooth. It follows that
$$
\dashint_\Sigma \omega={1\over g}\dashint_\Sigma g\omega\in \A^{\bullet-2}(X, \star D^{(2)})\,. 
$$
\end{proof}

It follows that the regularized integral defines a push-forward map
$$
\dashint_\Sigma: \A^{\bullet}(\Sigma\times X, \star D)\to \A^{\bullet-2}(X,\star D^{(2)})\,. 
$$


\begin{rem} A version of regularized integral on $\C P^1\times \R^2$ was defined in \cite{Benini:2020}  for the 4d Chern-Simons action in a similar fashion. It  can be viewed as $\int_{\R^2} \dashint_{\C P^1}$ in our terminology. 

\end{rem}

\begin{thm}\label{thm-family} Assume $\Sigma$ is a compact Riemann surface without boundary. Then the push-forward map $
\dashint_\Sigma: \A^{\bullet}(\Sigma\times X, \star D)\to \A^{\bullet-2}(X,\star D^{(2)})
$ intertwines the holomorphic de Rham differential
$$
\dashint_\Sigma \pa_{\Sigma\times X} \omega=\pa_{X} \dashint_{\Sigma} \omega\,. 
$$
Here $\pa_{\Sigma\times X}, \pa_X$ are the holomorphic de Rham differentials on $\Sigma\times X$ and $X$, respectively.  
\end{thm}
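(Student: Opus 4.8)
The plan is to reduce to the local model handled in Lemma~\ref{lem-family}. Write $\pa_{\Sigma\times X}=\pa_\Sigma+\pa_X$ where $\pa_\Sigma$ is the holomorphic de Rham differential along the $\Sigma$ direction and $\pa_X$ along $X$; since the total holomorphic differential on a product splits this way, it suffices to check the two pieces separately. The $\pa_X$ part should be the easier one: if $\omega$ has a component involving $dx_j$ (a holomorphic coordinate on $X$), then integrating along $\Sigma$ leaves the $dx_j$ factor untouched, so one needs that $\dashint_\Sigma$ commutes with $\pa_{x_j}$, which is exactly the holomorphic-variable case of Lemma~\ref{lem-family} (the ``in particular'' clause, $\pa_t\dashint_\C\omega=\dashint_\C\pa_t\omega$) combined with a partition of unity to pass from the local picture on $\C$ to all of $\Sigma$. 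The point is that along the $t=x_j$ variable the divisor locations $s_i(x)$ move holomorphically, so there is no boundary/residue correction.

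For the $\pa_\Sigma$ part the claim becomes $\dashint_\Sigma \pa_\Sigma\omega=0$ for appropriate $\omega$; but this already appears implicitly: if $\omega=\alpha+\pa_\Sigma\beta$ is the decomposition of Lemma~\ref{lem-global-decomp}/Lemma~\ref{lem-decomposition} relative to the $\Sigma$-direction, then $\pa_\Sigma\omega=\pa_\Sigma\alpha$, and since $\Sigma$ is compact without boundary Definition~\ref{defn-RI} together with Proposition~\ref{prop-integration-noboundary} gives $\dashint_\Sigma\pa_\Sigma\omega=0$ directly (the target here is a form on $X$, so one runs this argument fiberwise over $X$; smoothness in $x$ of the resulting $0$-form is automatic). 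One must be a little careful that the fiberwise decomposition can be chosen to depend smoothly on $x$, which follows from the construction in Lemma~\ref{lem-decomposition} since the partition of unity and the local reduction of pole order are done uniformly in $x$ on $X-D^{(2)}$; Proposition~\ref{prop-push-poles} then guarantees the answer extends across $D^{(2)}$ with holomorphic poles.

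Concretely I would organize the write-up as: (i) localize via a partition of unity subordinate to a cover of $\Sigma\times X$ adapted to the divisors $D_i$, so that on each chart $\omega$ is a finite sum of terms $f(z,x)\,dz\wedge d\bar z\wedge(\text{form in }x)/(z-s_i(x))^n$ together with honestly smooth terms; (ii) split $\pa_{\Sigma\times X}=\pa_\Sigma+\pa_X$; (iii) for $\pa_\Sigma$, observe $\dashint_\Sigma\pa_\Sigma(\cdot)=0$ as above; (iv) for $\pa_X$, apply the holomorphic-parameter case of Lemma~\ref{lem-family} termwise, noting the correction term $\overline{\pa_t z_0(t)}\oint_{z_0}(\cdots)$ vanishes because we differentiate in a holomorphic $X$-coordinate while $s_i$ is holomorphic, hence $\pa_{x_j}\overline{s_i(x)}$ is not what appears — rather $\pa_{x_j}$ is holomorphic so there is no $\bar\pa$-type term at all and Lemma~\ref{lem-family} gives plain commutation; (v) reassemble. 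The main obstacle I anticipate is bookkeeping: making sure the partition-of-unity pieces recombine correctly (the cutoff functions are not holomorphic, so $\pa_X$ hitting them produces extra terms that must cancel in the sum), and checking that the ``fiberwise Stokes = $0$'' step is legitimate when $\beta$ is only defined away from $D^{(2)}$ — this is where Proposition~\ref{prop-push-poles} and the smoothness assertion of Lemma~\ref{lem-family} do the real work. Everything else is the routine local computation already carried out in Lemma~\ref{lem-family}.
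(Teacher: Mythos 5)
Your proposal is correct and follows essentially the same route as the paper: split $\pa_{\Sigma\times X}=\pa_\Sigma+\pa_X$, kill the $\pa_\Sigma$ piece using that $\Sigma$ is compact without boundary (Proposition \ref{prop-integration-noboundary}), and handle the $\pa_X$ piece via the holomorphic-parameter case of Lemma \ref{lem-family} together with a partition of unity, the key point being that the pole locations $s_i(x)$ vary holomorphically so no residue correction appears. The extra care you flag about smooth dependence of the decomposition and behavior near $D^{(2)}$ is exactly what Lemma \ref{lem-family} and Proposition \ref{prop-push-poles} supply in the paper's argument.
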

\begin{proof}  Let us write $\pa_{\Sigma\times X}=\pa_\Sigma+ \pa_X$.  Then
$$
\dashint_{\Sigma}  \pa_{\Sigma\times X} \omega=\dashint_{\Sigma} \pa_\Sigma \omega +\dashint_{\Sigma}\pa_X \omega. 
$$
Since $\Sigma$ has no boundary, $\dashint_{\Sigma} \pa_\Sigma \omega=0$. The pole locations $s_i(x)$ vary holomorphically. Using a partition of unity and Lemma \ref{lem-family}, we find  $\dashint_{\Sigma}\pa_X \omega=\pa_{X} \dashint_{\Sigma} \omega$. The theorem follows. 
\end{proof}

\subsection{Integrals on configuration spaces} \label{sec-configuration}

Let $\Sigma$ be a compact Riemann surface without boundary.  Let $\Sigma^n$ be the $n$-th Cartesian product of $\Sigma$. 
 Fix once and for all a global enumeration $1,2,\cdots ,n$ for the factors of $\Sigma^{n}$.
 Given an index subset $I=\{i_1,\cdots,i_k\}\subset \{1,\cdots,n\}$, let
$$
\Delta_I:= \{(z_1,\cdots, z_n)\in \Sigma^n|  z_{i_1}=\cdots =z_{i_k}\}\,. 
$$
The collection of all such diagonal divisors, denoted by
$$
\Delta= \bigcup_{1\leq i\neq j\leq n} \Delta_{ij}\,,
$$
is called the big diagonal.
 
 In this subsection, we will generalize the notion of regularized integrals on $\Sigma$ to define integration of forms on $\Sigma^n$ with arbitrary holomorphic poles along $\Delta$. 
 
Let 
$$
\Omega^\bullet_{\Sigma^n}(\star \Delta) \quad \text{or simply}\quad \Omega^\bullet(\star \Delta)
$$
denote the sheaf of meromorphic forms which are holomorphic on $\Sigma^n-\Delta$ but with arbitrary order of poles along $\Delta$. Then $
\Omega^\bullet_{\Sigma^n}(\star \Delta)
$
is a sheaf of differential graded (dg) algebra where the dg structure is given by the holomorphic de Rham differential $\pa$. Let
$$
\Omega^\bullet_{\Sigma^n}(\log \Delta_{ij}) \subset \Omega^\bullet_{\Sigma^n}(\star \Delta)
$$
denote the subsheaf of logarithmic forms along the smooth divisor $\Delta_{ij}$. 

\begin{dfn}\label{defn-log} We define $\Omega^\bullet_{\Sigma^n}(\log \Delta)$ (or simply $\Omega^\bullet(\log \Delta)$) to be the sheaf of subalgebra of $\Omega^\bullet_{\Sigma^n}(\star \Delta)
$ 
generated by all $\Omega^\bullet_{\Sigma^n}( \log \Delta_{ij})$'s. 
\end{dfn}

In a local neighborhood $U$ of a point $(z_1, \cdots, z_n)$ with $z_i=z_j$ for $i,j\in I\subset \{1,\cdots, n\}$, we have
$$
\Omega^\bullet_U(\log \Delta)=\Omega^\bullet_U\bbracket{dz_{i}-dz_j\over z_i-z_j}_{i,j\in I, i\neq j}. 
$$
Here $\Omega^\bullet_U$ is the sheaf of holomorphic forms over $U$. 
\begin{rem}
The divisor
$\Delta$ is not normal crossing, so $\Omega^\bullet_{\Sigma^n}(\log \Delta)$ is not the usual log-forms associated to normal crossing divisors. We still use the notation $\log \Delta$ to illustrate its explicit meaning as above and hope it will not cause confusion. 
\end{rem}

We also consider smooth forms. Let
$$
\A^{p,q}(\Sigma^n, \star \Delta):= \A^{0,q}(\Sigma^n, \Omega^p(\star \Delta))\,,\quad \A^{p,q}(\Sigma^n, \log \Delta):= \A^{0,q}(\Sigma^n, \Omega^{p}(\log \Delta))
$$
and
$$
\A^{k}(\Sigma^n, \star \Delta)=\bigoplus_{p+q=k}\A^{p,q}(\Sigma^n, \star \Delta)\,, \quad \A^{k}(\Sigma^n, \log \Delta)=\bigoplus_{p+q=k}\A^{p,q}(\Sigma^n, \log \Delta)\,.
$$

Our first goal in this subsection is to define a regularized integral 
$$
\dashint_{\Sigma^n}: \A^{2n}(\Sigma^n, \star \Delta)\to \C
$$
in the same fashion as in Section \ref{sec-curve-integral}. As we will prove later (Theorem \ref{thm-integral-product}), such defined integral will be equal to the iterated regularized integral over the factors of $\Sigma^n$:
$$
\dashint_{\Sigma^n}=\dashint_{\Sigma_1}\cdots \dashint_{\Sigma_n}
$$
where the right hand side is well-defined by Proposition \ref{prop-push-poles}.  This would imply a Fubini type theorem for regularized integrals on $\Sigma^n$ (see Corollary \ref{cor-Fubini} below). 

\subsubsection*{Local theory}
Let us first work locally in $U^n$ where $U$ is a small open disk around the origin in $\C$. Let $(z_1,\cdots,z_n)$ be the holomorphic coordinates on $U^n$. Denote
$$
z_{ij}:= z_i-z_j\,. 
$$
Then
$$
\Omega^\bullet_{U^n}(\star \Delta)=\Omega^\bullet_{U^n}[z_{ij}^{-1}]\,,\quad \Omega^\bullet_{U^n}(\log \Delta)=\Omega^\bullet_{U^n}\bbracket{{dz_{ij}\over z_{ij}}}\,.
$$
and
$$
\A^\bullet(U^n, \star \Delta)=\A^\bullet(U^n)[z_{ij}^{-1}]\,,\quad \A^\bullet(U^n, \log \Delta)=\A^\bullet(U^n)\bbracket{{dz_{ij}\over z_{ij}}}.
$$
For simplicity, we also denote 
$$
\theta_{ij}={dz_{ij}\over z_{ij}}\,. 
$$
It is useful to observe that 
$$
{1\over z_{ij}z_{jk}}+{1\over z_{jk}z_{ki}}+{1\over z_{ki}z_{ij}}=0\,, \quad \text{for $i,j,k$ distinct}\,. 
$$
This implies that the $1$-forms  $\{\theta_{ij}\}$ satisfy the Arnold relation
$$
\theta_{ij}\theta_{jk}+\theta_{jk}\theta_{ki}+\theta_{ki}\theta_{ij}=0\,, \quad \text{for $i,j,k$ distinct}\,. 
$$

\begin{lem}\label{lem-basis}A basis  of $\C[\theta_{ij}]$ is 
$$
\fbracket{\theta_{i_1 j_1}\theta_{i_2j_2}\cdots \theta_{ i_{k}j_k}~|~ i_1<i_2<\cdots<i_k \ \text{and}\ i_1<j_1,i_2<j_2,\cdots,i_k<j_k}. 
$$
\end{lem}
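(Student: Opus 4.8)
The plan is to show the proposed set both spans $\C[\theta_{ij}]$ and is linearly independent, by an induction on $n$ that exploits the Arnold relation to rewrite any monomial into the claimed normal form, and then count dimensions to conclude independence.

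\emph{Spanning.} First I would argue that every monomial $\theta_{a_1b_1}\cdots\theta_{a_kb_k}$ (with each $a_\ell<b_\ell$, which we may assume by antisymmetry $\theta_{ji}=\theta_{ij}$) can be rewritten as a linear combination of monomials in the claimed basis. The key move is: if two factors $\theta_{i j}$ and $\theta_{i' j'}$ share a smaller index, say $i=i'$ but $j\neq j'$, apply the Arnold relation in the form $\theta_{ij}\theta_{ij'} = \theta_{ij}\theta_{jj'}+\theta_{jj'}\theta_{ij'}$ (rearranged from $\theta_{ij}\theta_{jj'}+\theta_{jj'}\theta_{j'i}+\theta_{j'i}\theta_{ij}=0$ after fixing signs), trading one repeated smallest index for monomials in which the multiset of smallest indices is lexicographically smaller. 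Iterating, one reaches monomials in which all the smallest indices $i_1,\dots,i_k$ are distinct; reordering the (commuting, since these are $2$-forms... wait, $1$-forms, so they anticommute — reorder with signs) factors so that $i_1<i_2<\cdots<i_k$ puts the monomial in the claimed form. I would set up a well-founded order on monomials (e.g. the sorted tuple of smallest indices, compared lexicographically, with ties broken by total degree or similar) to guarantee the rewriting terminates; checking that each Arnold substitution strictly decreases this order is the crux of the spanning half.

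\emph{Independence via dimension count.} The cleanest route is to observe that the claimed basis has cardinality $n!$: choosing, for each $i=1,\dots,n-1$ in turn, whether to include some $\theta_{ij}$ with $i<j$ and which $j\in\{i+1,\dots,n\}$ to use (noting the indices $i_1<\cdots<i_k$ are an arbitrary subset of $\{1,\dots,n-1\}$ and for each chosen $i_\ell$ there are $n-i_\ell$ choices of partner) gives $\prod_{i=1}^{n-1}\bigl(1+(n-i)\bigr)=\prod_{i=1}^{n-1}(n-i+1)=n!$ elements. On the other hand, $\C[\theta_{ij}]$ is the Orlik--Solomon algebra of the braid arrangement (equivalently, $H^\bullet$ of the configuration space of $n$ points in $\C$), whose total dimension is well known to be $n!$ — this can itself be proved by the same recursion (fibering the configuration space of $n$ points over that of $n-1$ points with fiber $\C$ minus $n-1$ points, giving $\dim = (n-1)!\cdot n$). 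Since the claimed set spans a space of dimension at most... hmm, I want the reverse. Since the claimed set spans $\C[\theta_{ij}]$ and has exactly $n!$ elements while $\dim_\C \C[\theta_{ij}] = n!$, it must be a basis. Alternatively, and more self-containedly, I would prove independence directly by the fibration/recursion: writing a putative linear relation, group monomials by whether they contain a factor $\theta_{1j}$, use that $\C[\theta_{ij}]$ is free as a module over the subalgebra generated by $\{\theta_{ij}: 2\le i<j\le n\}$ with basis $\{1\}\cup\{\theta_{12},\theta_{13},\dots,\theta_{1n}\}$ (this is the algebraic shadow of the fibration), and induct on $n$.

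\emph{Main obstacle.} I expect the genuine work to be in the independence/freeness step: establishing that $\C[\theta_{ij}]$ is free over the ``forget the index $1$'' subalgebra with the stated rank-$n$ basis, which amounts to proving no further relations among the $\theta_{1j}$ beyond linearity are forced. One handles this by realizing $\theta_{1j}$'s as pullbacks of the standard generator of $H^1(\C\setminus\{0\})$ under the maps $z\mapsto z-z_j$ and invoking the Künneth/Leray structure of the fibration $\mathrm{Conf}_n\to\mathrm{Conf}_{n-1}$, or by an explicit residue argument: the residue along $z_1=z_j$ kills monomials not divisible by $\theta_{1j}$ and detects the coefficient of those that are. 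The spanning step, by contrast, is a finite combinatorial rewriting that I'm confident terminates once the right monomial order is pinned down; the bookkeeping of signs from anticommuting $1$-forms is the only fiddly part there.
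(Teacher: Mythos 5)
Your proposal is correct in outline, but it is worth knowing that the paper does not actually prove this lemma: it simply cites Arnold's classical computation of the cohomology of the configuration space of $n$ points in $\C$, for which the displayed monomials are the standard basis. What you have written is therefore a genuine reconstruction rather than a match of the paper's argument. Your spanning step --- repeatedly applying the Arnold relation in the form $\theta_{ij}\theta_{ij'}=\theta_{ij}\theta_{jj'}+\theta_{jj'}\theta_{ij'}$ to eliminate repeated smaller indices, with termination controlled by a monotone invariant such as $\sum_\ell \min(a_\ell,b_\ell)$, which strictly \emph{increases} (not decreases, as you wrote) and is bounded --- is exactly parallel to the rewriting the paper does carry out for the companion statement, Lemma \ref{lem-basis-2}, on $\C[z_{ij}^{-1}]$; and spanning is in fact the only half of the lemma the paper ever uses (in Lemmas \ref{cor-convergent} and \ref{lem-local-structure}). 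Your independence argument is the part the paper outsources to Arnold: the count $\prod_{i=1}^{n-1}(1+(n-i))=n!$ is right, and either the comparison with $\dim H^\bullet(\mathrm{Conf}_n(\C))=n!$ or the residue argument (take the residue along $z_1=z_j$, which annihilates normal-form monomials without a $\theta_{1j}$ factor and sends $\theta_{1j}\cdot(\text{rest})$ to $(\text{rest})$, then induct on $n$) closes the gap; the residue route is the more self-contained of the two, since the dimension count implicitly assumes the form-level algebra injects into cohomology. Two cosmetic slips: $\theta_{ji}=\theta_{ij}$ is symmetry, not antisymmetry (both $dz_{ij}$ and $z_{ij}$ change sign), and your hesitation about commutativity resolves correctly --- the $\theta$'s are $1$-forms and anticommute, which only introduces signs when sorting the distinct smaller indices.
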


Lemma \ref{lem-basis} is well-known \cite{Arnol1969The} and the list above gives a basis of the cohomology of configuration space of $n$ points on $\C$.

\begin{lem}\label{lem-basis-2} Any element in $\C[z_{ij}^{-1}]$ can be written as a linear combination of 
$$
\fbracket{\left. {1\over z^{m_1}_{i_1 j_1}z^{m_2}_{i_2j_2}\cdots z^{m_k}_{ i_{k}j_k}}\right | i_1<i_2<\cdots<i_k \ \text{and}\ i_1<j_1,i_2<j_2,\cdots,i_k<j_k, \  m_1, \cdots, m_k\geq 0}. 
$$
\end{lem}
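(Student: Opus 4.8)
The plan is to prove Lemma~\ref{lem-basis-2} by an explicit partial-fraction reduction whose termination is controlled by a monovariant. Since $\C[z_{ij}^{-1}]$ is generated as a $\C$-algebra by the elements $z_{ij}^{-1}$, and $z_{ji}^{-1}=-z_{ij}^{-1}$, every element is a $\C$-linear combination of monomials $M=\prod_{a<b}z_{ab}^{-m_{ab}}$ with $m_{ab}\geq 0$, so it suffices to rewrite a single such $M$. I would call an edge $\{a,b\}$ with $a<b$ \emph{active in $M$} if $m_{ab}>0$, and call $M$ \emph{admissible} if no index $a$ is the smaller endpoint of two distinct active edges. Reordering the active edges so that their smaller endpoints increase shows that admissible monomials are exactly those listed in the statement, so the goal becomes: every $M$ is a $\C$-linear combination of admissible monomials.

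The elementary input is the identity $z_{ac}=z_{ab}+z_{bc}$ for indices $a<b<c$, equivalently
$$\frac{1}{z_{ab}\,z_{ac}}=\frac{1}{z_{ab}\,z_{bc}}-\frac{1}{z_{ac}\,z_{bc}}\,.$$
If $M$ is not admissible I would pick an index $a$ that is the smaller endpoint of two active edges $\{a,b\}$ and $\{a,c\}$, say with $a<b<c$, peel off one factor $z_{ab}^{-1}z_{ac}^{-1}$ from $M$, and apply this identity. This rewrites $M$ as a difference of two monomials of the same total degree $d=\sum_{a<b}m_{ab}$: the first obtained from $M$ by lowering the exponent of $z_{ac}$ by one and raising that of $z_{bc}$ by one, the second (up to sign) by lowering the exponent of $z_{ab}$ by one and raising that of $z_{bc}$ by one. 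Iterating on whichever children are still inadmissible produces a binary tree of monomials, and, granting termination, $M$ equals the alternating sum of the admissible monomials appearing at its leaves — this finishes the proof.

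The only real issue is termination, and I expect the clean way to handle it is the monovariant $U(M):=\sum_{a<b}m_{ab}\cdot a$, the sum over active edges (counted with multiplicity) of their smaller endpoint. Each reduction step, in each of the two children, deletes one factor $z_{a\bullet}^{-1}$ and inserts one factor $z_{bc}^{-1}$ with $a<b$, so $U$ strictly increases, by $b-a\geq 1$. On the other hand the total degree $d$ is preserved throughout the tree and every smaller endpoint is at most $n-1$, so $U\leq (n-1)d$ at every node. Hence every branch has length at most $(n-1)d$, the tree is finite, and its leaves — monomials admitting no further reduction — are precisely the admissible ones. No confluence statement is needed, since we only claim a spanning set rather than a basis, so the choices of which inadmissibility to resolve are immaterial. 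The combinatorics here is parallel to (and slightly simpler than) the Arnold-relation argument behind Lemma~\ref{lem-basis}, with the partial-fraction identity above playing the role of the Arnold relation $\theta_{ij}\theta_{jk}+\theta_{jk}\theta_{ki}+\theta_{ki}\theta_{ij}=0$.
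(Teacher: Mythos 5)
Your proof is correct and follows essentially the same route as the paper's: both rest on the partial-fraction identity $\frac{1}{z_{ab}z_{ac}}=\frac{1}{z_{ab}z_{bc}}-\frac{1}{z_{ac}z_{bc}}$ for $a<b<c$ and iterate it until every index occurs as the smaller endpoint of at most one distinct factor. The only difference is organizational: the paper controls termination by induction on the number of variables (clearing the smallest index first, since each application strictly decreases the total degree at that index), whereas you use the single global monovariant $U(M)=\sum_{a<b}m_{ab}\,a$, strictly increasing and bounded by $(n-1)d$ with $d$ the conserved total degree --- a slightly more explicit but equally valid termination argument.
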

\begin{proof} We prove by induction on the number $n$ of variables. 

Any monomial ${1\over f}$ in $\C[z_{ij}^{-1}]$ has the form
$$
{1\over f}=  {1\over z^{m_1}_{k j_1}\cdots z^{m_s}_{k j_s}  g  }\,, \quad m_1, \cdots, m_s>0
$$
where $k<j_1<\cdots<j_s$ and $g$ contains only factors of $z_{ab}$ with $a, b>k$. 

If $s=1$, we apply the induction hypothesis to ${1\over g}$. Assume $s>1$. We can use the relation 
$$
{1\over z_{k j_1}z_{kj_2}}={1\over z_{kj_1}z_{j_1j_2}}-{1\over z_{kj_2}z_{j_1 j_2}}
$$
to write ${1\over f}$ as a linear combination of terms with either $m_1$ or $m_2$ being decreased. Repeating this process, we will eventually arrive at the situation $s=1$. Then the induction applies. 

\end{proof}

\begin{lem}\label{cor-convergent} Let $\omega\in \A_c^{2n}(U^n,\log \Delta)$ be a top-form with compact support. Then the integral
$$
\int_{U^n} \omega
$$
is absolutely convergent. 
\end{lem}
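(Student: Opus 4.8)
The plan is to reduce the claim to a local convergence statement near the big diagonal and then to use the explicit basis from Lemma \ref{lem-basis} together with Lemma \ref{lem-basis-2} to see that in suitable coordinates the singular factor is a product of terms of the form $\theta_{ij} = dz_{ij}/z_{ij}$, each of which contributes only a logarithmic (hence integrable) singularity. Since $\omega$ has compact support in $U^n$, away from a neighborhood of $\Delta$ the form $\omega$ is smooth with compact support and there is nothing to prove; so by a partition of unity it suffices to bound $\int_W \omega$ for $W$ a small polydisk around a point $(z_1^0,\dots,z_n^0)$ with coincidence pattern given by the partition of $\{1,\dots,n\}$ into blocks on which the coordinates agree.

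First I would fix such a point and such a block decomposition, say $I_1,\dots,I_r$, and introduce new coordinates adapted to the diagonal: within each block pick one representative index and use the original coordinate there, and for every other index $i$ in that block use the difference $w_i = z_i - z_{\text{rep}}$ (equivalently, use the $z_{ij}$'s internal to each block). In these coordinates $\Delta$ restricted to $W$ is (a union of) coordinate hyperplanes $\{w_i = w_j\}$ or $\{w_i = 0\}$, and by definition $\omega \in \A^{2n}(U^n,\log\Delta)$ means that on $W$ the form $\omega$ is an $\A^\bullet(W)$-linear combination of products $\theta_{i_1j_1}\cdots\theta_{i_kj_k}$ (Lemma \ref{lem-basis}), wedged with a smooth top-degree-complement. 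The point is that each such product, written out in the adapted coordinates, is a constant-coefficient combination of forms $\dfrac{dw_{a_1}}{\ell_1(w)}\wedge\cdots\wedge\dfrac{dw_{a_k}}{\ell_k(w)}$ where each $\ell_m$ is a coordinate difference, so the whole integrand is bounded in absolute value by $C \cdot \prod_m |\ell_m(w)|^{-1}$ times a smooth compactly supported density.

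Next I would carry out the actual convergence estimate. After the partition of unity and the change of coordinates, it reduces to showing that an integral of the shape $\int_{|w|\le \delta} \dfrac{|\text{smooth}|}{|\ell_1(w)|\cdots|\ell_k(w)|}\, \prod_j (\tfrac{i}{2}dw_j\wedge d\bar w_j)$ is finite, where the $\ell_m$ are distinct (up to scalar) complex-linear forms in the $w$'s. Because the $\theta_{ij}$ appearing in a single basis monomial involve distinct indices $i_1<\dots<i_k$ (by Lemma \ref{lem-basis}), one can, after a linear change of variables, arrange that the $m$-th factor $\ell_m$ is literally the coordinate $w_{i_m}$ (or a difference that becomes a coordinate after the substitution); then the integral factors as a product over the singular coordinates of $\int_{|u|\le\delta} |u|^{-1}\,\tfrac{i}{2}du\wedge d\bar u$, times a finite integral over the remaining smooth directions. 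Each factor $\int_{|u|\le\delta}|u|^{-1}\,\tfrac{i}{2}du\wedge d\bar u = 2\pi\int_0^\delta dr < \infty$ in polar coordinates, which is the desired bound.

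The main obstacle, as I see it, is the linear-algebra/coordinates bookkeeping in the previous paragraph: the linear forms $\ell_m$ occurring in a given basis monomial need not be ``independent coordinates'' in an obvious way once one also fixes a global enumeration, so one must verify that the special structure of the Arnold basis (each $\theta_{i_mj_m}$ with the $i_m$ strictly increasing) really does let one triangulate the change of variables so that no point gets blown up worse than logarithmically in any single direction — in other words, that one never produces a factor like $|w_1|^{-2}$ from two genuinely different $\theta$'s sharing a variable. This is exactly what the Arnold relation and Lemma \ref{lem-basis} are there to prevent, and I would invoke them to justify that within one basis monomial the singular hyperplanes can be simultaneously straightened. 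An alternative, more robust route that sidesteps the careful coordinate choice is to induct on $n$: integrate out one variable first using Lemma \ref{lem-family}/Proposition \ref{prop-push-poles} (the regularized fiber integral of a $\log$-form along a single $\Sigma$-factor is $\A^\bullet$ with only $\log$/holomorphic poles along the residual diagonal), observe that the one-variable slice integrand $\theta$-times-smooth is absolutely integrable by the $\tfrac{i}{2}du\wedge d\bar u /|u|$ computation, and apply Fubini together with the induction hypothesis; I would present the direct estimate as the main argument and mention the inductive variant as a remark.
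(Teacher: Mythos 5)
Your proposal is correct and follows essentially the same route as the paper: reduce to a single Arnold basis monomial $\alpha\,\theta_{i_1j_1}\cdots\theta_{i_kj_k}$ via Lemma \ref{lem-basis}, use the strict increase of the $i_m$'s to see that $z_{i_1j_1},\dots,z_{i_kj_k}$ are linearly independent and hence extend to coordinates, and conclude from the local integrability of $|u|^{-1}$ in two real dimensions. The extra partition-of-unity step and the inductive variant you sketch are harmless but not needed, since the lemma is already local.
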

\begin{proof} By Lemma \ref{lem-basis}, we can assume $\omega$ has the form
$$
\omega = \alpha \theta_{i_1 j_1}\theta_{i_2j_2}\cdots \theta_{ i_{k}j_k}
$$
where $i_1<i_2<\cdots<i_k, \ i_1<j_1,i_2<j_2,\cdots,i_k<j_k$, and $\alpha$ is a smooth $(2n-k)$-form. We can consider a linear change of coordinate such that $z_{i_1 j_1},\cdots, z_{i_k j_k}$ are part of the new coordinates. Then $\omega$ has only logarithmic pole so the integral is  absolutely convergent. 
\end{proof}

\begin{lem}\label{lem-local-structure} Any $\omega \in  \A^{2n}(U^n,\star \Delta)$ can be expressed as
$$
\omega= \alpha +\pa \beta
$$
where $\alpha \in \A^{2n}(U^n,\log \Delta)$ and $\beta \in \A^{n-1,n}(U^n, \star \Delta)$. The supports of $\alpha$ and $\beta$ can be chosen to be contained in the support of $\omega$. 
\end{lem}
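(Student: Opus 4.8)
The plan is to imitate the proof of the $n=1$ decomposition (Lemma \ref{lem-decomposition}), lowering the order of poles one diagonal coordinate at a time, after first using Lemma \ref{lem-basis-2} to put $\omega$ in a standard form.

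Since $U^n$ has complex dimension $n$, any $\omega\in\A^{2n}(U^n,\star\Delta)$ is automatically of type $(n,n)$; writing $\A^{2n}(U^n,\star\Delta)=\A^{2n}(U^n)[z_{ij}^{-1}]$ and invoking Lemma \ref{lem-basis-2} together with linearity, it suffices to treat
$$
\omega=\frac{\gamma}{z_{i_1j_1}^{m_1}\cdots z_{i_kj_k}^{m_k}},\qquad i_1<i_2<\cdots<i_k,\ \ i_\ell<j_\ell,\ \ m_\ell\geq 0,
$$
where $\gamma$ is a smooth $(n,n)$-form supported in $\operatorname{supp}\omega$. The linear forms $z_{i_1j_1},\dots,z_{i_kj_k}$ are linearly independent: in any relation $\sum_\ell c_\ell z_{i_\ell j_\ell}=0$, the coefficient of $z_{i_1}$ forces $c_1=0$ because $i_1$ is the smallest of all indices involved and $i_\ell<j_\ell$, and one then induces on $\ell$. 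Hence we may complete $w_\ell:=z_{i_\ell j_\ell}$, $\ell=1,\dots,k$, to a linear coordinate system $(w_1,\dots,w_n)$ on $\C^n$, and (the Jacobian being a nonzero constant) $\omega=\tilde g\,w_1^{-m_1}\cdots w_k^{-m_k}\,dw_1\wedge\cdots\wedge dw_n\wedge d\bar w_1\wedge\cdots\wedge d\bar w_n$ with $\tilde g$ smooth and supported in $\operatorname{supp}\omega$.

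Next I would reduce pole orders one variable at a time. To lower the order in $w_1$, factor $\omega=\tfrac{dw_1}{w_1^{m_1}}\wedge g_1$, where $g_1=\tilde g\,w_2^{-m_2}\cdots w_k^{-m_k}\,dw_2\wedge\cdots\wedge dw_n\wedge d\bar w_1\wedge\cdots\wedge d\bar w_n$ contains every $dw_j$ with $j\geq 2$; then, exactly as in Lemma \ref{lem-decomposition},
$$
\omega=-\frac{1}{m_1-1}\,\pa\!\left(\frac{g_1}{w_1^{m_1-1}}\right)+\frac{1}{m_1-1}\,\frac{\pa g_1}{w_1^{m_1-1}}\,.
$$
In $\pa g_1$, the $dw_j$-components with $j\geq 2$ wedge to zero against $g_1$, and $\pa_{w_j}$ annihilates $w_1^{-(m_1-1)}$ for $j\geq 2$, so the remainder term $\tfrac{\pa g_1}{(m_1-1)w_1^{m_1-1}}$ has the same pole data as $\omega$ except that the order along $w_1$ has dropped by one, with no new poles created. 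Iterating brings the $w_1$-order down to $\leq 1$; one then repeats with $w_2$ (factoring out $dw_2$), and since that step only differentiates in $w_2$ and its $dw_1$-component kills against the $dw_1$ still present, the $w_1$-order stays $\leq 1$; continuing through $w_k$ produces $\omega=\alpha+\pa\beta$ with $\alpha$ a sum of smooth forms wedged with products of the $\tfrac{dw_\ell}{w_\ell}$'s and $\beta\in\A^{n-1,n}(U^n,\star\Delta)$, both supported in $\operatorname{supp}\omega$.

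Finally I would transfer back to the coordinates $z_i$: the operator $\pa$ is coordinate-independent, $\tfrac{dw_\ell}{w_\ell}=\tfrac{dz_{i_\ell j_\ell}}{z_{i_\ell j_\ell}}=\theta_{i_\ell j_\ell}$, and a linear change of holomorphic coordinates preserves smoothness, so $\alpha\in\A^{2n}(U^n)[\theta_{ij}]=\A^{2n}(U^n,\log\Delta)$, while the $w_\ell$-poles of $\beta$ become poles along $z_{i_\ell j_\ell}$, so $\beta\in\A^{n-1,n}(U^n,\star\Delta)$; the support statement is then clear. The only content beyond the $n=1$ case — and the step I expect to require the most care — is the bookkeeping showing that clearing the pole in $w_\ell$ does not reintroduce poles in the variables already handled; this is precisely why at the $\ell$-th stage one factors out exactly $dw_\ell$ (rather than a larger monomial of $dw$'s), so that the surviving remainder only involves $\pa_{w_\ell}$ of the smooth coefficient.
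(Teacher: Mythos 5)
Your proposal is correct and follows essentially the same route as the paper's proof: reduce to the standard monomials of Lemma \ref{lem-basis-2}, note that the $z_{i_\ell j_\ell}$ extend to a linear coordinate system, and integrate by parts in one diagonal coordinate at a time to lower each pole order to one, checking that the surviving remainder does not raise the pole order in the variables already handled. The only additions are your explicit verification of the linear independence of the $z_{i_\ell j_\ell}$ (which the paper merely asserts) and the tracking of supports, both of which are fine.
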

\begin{proof} By Lemma \ref{lem-basis-2}, we can assume $\omega$ has the form
$$
\omega={{dz_{i_1 j_1}\over z^{m_1}_{i_1 j_1}} \cdot {dz_{i_2 j_2}\over z^{m_2}_{i_2 j_2}}\cdots {d z_{ i_{k}j_k}\over z^{m_k}_{ i_{k}j_k}}} \varphi\,, \quad \text{$\varphi$ is a smooth $(2n-k)$-form}
$$
where $i_1<i_2<\cdots<i_k$ and $i_1<j_1,i_2<j_2,\cdots,i_k<j_k, \  m_1, \cdots, m_k> 0$. Notice that $z_{i_1 j_1},\cdots, z_{i_k j_k}$ can be extended to become part of a set of linear coordinates. 

If $m_1>1$, then we can write 
\begin{align*}
\omega&=-{1\over (m_1-1)} \pa\bracket{{{1\over z^{m_1-1}_{i_1 j_1}} \cdot {dz_{i_2 j_2}\over z^{m_2}_{i_2 j_2}}\cdots {d z_{ i_{k}j_k}\over z^{m_k}_{ i_{k}j_k}}} \varphi}-(-1)^k {{1\over z^{m_1-1}_{i_1 j_1}} \cdot {dz_{i_2 j_2}\over z^{m_2}_{i_2 j_2}}\cdots {d z_{ i_{k}j_k}\over z^{m_k}_{ i_{k}j_k}}}\pa \varphi\\
&=-{1\over (m_1-1)} \pa\bracket{{{1\over z^{m_1-1}_{i_1 j_1}} \cdot {dz_{i_2 j_2}\over z^{m_2}_{i_2 j_2}}\cdots {d z_{ i_{k}j_k}\over z^{m_k}_{ i_{k}j_k}}} \varphi}+{{dz_{i_1 j_1}\over z^{m_1-1}_{i_1 j_1}} \cdot {dz_{i_2 j_2}\over z^{m_2}_{i_2 j_2}}\cdots {d z_{ i_{k}j_k}\over z^{m_k}_{ i_{k}j_k}}} \tilde\varphi
\end{align*}
where $\tilde \varphi$ is another smooth $(2n-k)$-form. Repeating this process, we can reduce $\omega$ to a form with $m_1=m_2=\cdots=m_k=1$ up a $\pa$-exact term. This proves the lemma. 
\end{proof}

\subsubsection*{Global theory}

We first have the analogue of Lemma \ref{lem-decomposition}. 
\begin{lem}\label{lem-global-decomp}Any $\omega \in  \A^{2n}(\Sigma^n,\star \Delta)$ can be expressed as
$$
\omega= \alpha +\pa \beta
$$
where $\alpha \in \A^{2n}(\Sigma^n,\log \Delta)$ and $\beta \in \A^{n-1,n}(\Sigma^n, \star \Delta)$.

\end{lem}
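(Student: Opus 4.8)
The plan is to reduce the statement to the local Lemma~\ref{lem-local-structure} by a partition of unity adapted to the diagonal geometry, in the same spirit in which Lemma~\ref{lem-decomposition} is used in the single-surface case. Away from $\Delta$ there is nothing to do since $\omega$ is smooth, so all the work is concentrated near the various strata of $\Delta$.

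First I would construct a suitable finite cover of $\Sigma^n$. Given a point $p=(p_1,\dots,p_n)\in\Sigma^n$, partition the index set $\{1,\dots,n\}$ into the fibers of $k\mapsto p_k$; thus $i$ and $j$ lie in the same block $I_s$ exactly when $p_i=p_j$. For each block choose one coordinate disk centered at the common value of the corresponding $p_k$'s, and let $U_k$ denote that disk, so that $U_i$ and $U_j$ are \emph{literally the same} disk whenever $p_i=p_j$. After shrinking we may assume $U_k$ and $U_l$ have disjoint images in $\Sigma$ whenever $p_k\neq p_l$; then on the product chart $W_p=U_1\times\cdots\times U_n$ the only components of $\Delta$ meeting $W_p$ are the $\Delta_{ij}$ with $i,j$ in a common block, and in the evident product coordinates these are precisely the loci $z_i=z_j$. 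By compactness of $\Sigma^n$, finitely many such charts $W_1,\dots,W_M$ cover $\Sigma^n$.

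Next I would run the local argument on each $W_a$. Let $I_1,\dots,I_r$ be the index blocks attached to $W_a$. Exactly as in the discussion preceding Lemma~\ref{lem-local-structure} one has
$$
\A^\bullet(W_a,\star\Delta)=\A^\bullet(W_a)\bigl[\,z_{ij}^{-1}\;:\;i,j\text{ in a common block}\,\bigr],\qquad
\A^\bullet(W_a,\log\Delta)=\A^\bullet(W_a)\bigl[\,\theta_{ij}\;:\;i,j\text{ in a common block}\,\bigr],
$$
and the variables $z_{ij}$ occurring here extend to part of a linear coordinate system on $W_a$. Moreover this family of ``diagonal'' variables is closed under the substitution ${z_{kj_1}^{-1}z_{kj_2}^{-1}}=z_{j_1j_2}^{-1}\bracket{z_{kj_1}^{-1}-z_{kj_2}^{-1}}$ used in the proof of Lemma~\ref{lem-basis-2}, since lying in a common block is an equivalence relation. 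Hence the proofs of Lemma~\ref{lem-basis-2} and Lemma~\ref{lem-local-structure} apply verbatim — working only with the subset of variables that actually occur — and produce $\omega|_{W_a}=\alpha_a+\pa\beta_a$ with $\alpha_a\in\A^{2n}(W_a,\log\Delta)$, $\beta_a\in\A^{n-1,n}(W_a,\star\Delta)$ whose supports lie in the support of $\omega|_{W_a}$.

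Finally I would glue. Fix a partition of unity $1=\sum_{a=1}^M\rho_a$ subordinate to $\{W_a\}$ and apply the previous step to each $\rho_a\omega\in\A^{2n}(W_a,\star\Delta)$. Since $\alpha_a$ and $\beta_a$ are supported inside the support of $\rho_a\omega$, hence inside $W_a$, extension by zero makes them global sections $\alpha_a\in\A^{2n}(\Sigma^n,\log\Delta)$ and $\beta_a\in\A^{n-1,n}(\Sigma^n,\star\Delta)$ (the pole orders along $\Delta$ are untouched, as these forms vanish outside their supports). Then $\alpha:=\sum_a\alpha_a$ and $\beta:=\sum_a\beta_a$ satisfy $\omega=\sum_a\rho_a\omega=\alpha+\pa\beta$ with the required bidegrees. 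The one point that genuinely needs care — the main obstacle here, as opposed to in Lemma~\ref{lem-decomposition} — is the first step: arranging the cover so that the restriction of the non-normal-crossing divisor $\Delta$ to each chart is exactly the model union of diagonals of Lemma~\ref{lem-local-structure}. Once the block combinatorics is set up this is routine, but it is precisely where the geometry of $\Delta$ enters.
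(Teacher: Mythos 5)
Your proposal is correct and follows the same route as the paper, which proves the lemma by combining the local Lemma \ref{lem-local-structure} with a partition of unity; you have simply spelled out the construction of the adapted product charts (grouping indices by coincidence of coordinates so that each chart sees only the model diagonals) that the paper leaves implicit. No gaps.
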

\begin{proof}This follows from Lemma \ref{lem-local-structure} and a use of partition of unity. 
\end{proof}

Given $\omega \in  \A^{\bullet}(\Sigma^n,\star \Delta)$, we consider its push-forward along a factor of $\Sigma$ by performing a regularized integration.  Proposition \ref{prop-push-poles} implies that
$$
\dashint_{\Sigma}: \A^{\bullet}(\Sigma^n,\star \Delta)\to \A^{\bullet-2}(\Sigma^{n-1}, \star \Delta)\,. 
$$
By Theorem \ref{thm-family}, we have 
$$
\dashint_{\Sigma} \pa \alpha=\pa \dashint_{\Sigma}\alpha\,, \quad \text{for}\quad \alpha \in \A^{\bullet}(\Sigma^n,\star \Delta)\,. 
$$
Here $\pa$ is the holomorphic de Rham differential on the corresponding space. 

\begin{thm}\label{thm-integral-product} Let $\omega \in  \A^{2n}(\Sigma^n,\star \Delta)$. Choose a decomposition (as guaranteed by Lemma \ref{lem-global-decomp})
$$
\omega= \alpha +\pa \beta\,, \quad \text{where}\quad \alpha \in \A^{2n}(\Sigma^n,\log \Delta), \beta \in \A^{n-1,n}(\Sigma^n, \star \Delta)\,. 
$$
Then the integral 
$
\int_{\Sigma^n}\alpha
$
is absolutely convergent and is equal to the iterated regularized integral
$$
\int_{\Sigma^n}\alpha= \dashint_\Sigma \dashint_{\Sigma}\cdots \dashint_{\Sigma}\omega\,. 
$$
In particular,  the value $\int_{\Sigma^n}\alpha$ does not depend on the choice of $\alpha$ and $\beta$. 
\end{thm}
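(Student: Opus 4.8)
The plan is to reduce the statement to two facts: (a) for every $\alpha\in\A^{2n}(\Sigma^n,\log\Delta)$ the integral $\int_{\Sigma^n}\alpha$ converges absolutely, and (b) for any decomposition $\omega=\alpha+\pa\beta$ as in the statement, $\int_{\Sigma^n}\alpha=\dashint_\Sigma\dashint_\Sigma\cdots\dashint_\Sigma\,\omega$. The right-hand side of (b) is defined directly from $\omega$ — it is a legitimate iterated push-forward because each $\dashint_\Sigma$ maps $\A^\bullet(\Sigma^k,\star\Delta)$ into $\A^{\bullet-2}(\Sigma^{k-1},\star\Delta)$ by Proposition \ref{prop-push-poles} — so once (b) is known, the independence of $\int_{\Sigma^n}\alpha$ on the choice of $\alpha,\beta$ is immediate.

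For (a) I would localize with a partition of unity near $\Delta$. Away from $\Delta$ the form is smooth and integrated over a compact region; near a point of $\Delta$, after a linear change in the relevant coordinates a logarithmic form becomes a form with only first-order poles, and Lemma \ref{cor-convergent} (together with Tonelli for the remaining variables) gives absolute integrability. Summing the finitely many pieces yields absolute convergence of $\int_{\Sigma^n}\alpha$.

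For (b) I would write $\dashint_\Sigma\cdots\dashint_\Sigma\,\omega=\dashint_\Sigma\cdots\dashint_\Sigma\,\alpha+\dashint_\Sigma\cdots\dashint_\Sigma\,\pa\beta$ and treat the two terms separately. For the exact term I would induct on $n$: the identity $\dashint_\Sigma(\pa\gamma)=\pa\bigl(\dashint_\Sigma\gamma\bigr)$ (a consequence of Theorem \ref{thm-family}, recorded just before the statement) rewrites $\dashint_\Sigma\cdots\dashint_\Sigma\,\pa\beta$ as the iterated regularized integral over $n-1$ factors of the $\pa$-exact form $\pa\bigl(\dashint_\Sigma\beta\bigr)$, which vanishes by the inductive hypothesis; the base case $n=1$ is Proposition \ref{prop-integration-noboundary}. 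For the $\alpha$-term I would invoke (a): since $\int_{\Sigma^n}\alpha$ is absolutely convergent, Fubini computes it as an iterated ordinary integral taken one factor at a time. At each stage the integrand is a form on a curve with finitely many holomorphic poles which, on almost every partial fiber, is absolutely integrable (this is exactly what Fubini delivers), and for such a form the regularized integral equals the ordinary integral — by Theorem \ref{thm-PV}, which presents $\dashint_\Sigma$ as $\lim_{\epsilon\to0}\int_{\Sigma\setminus\bigcup B^i_\epsilon}$, together with dominated convergence — while the regularized push-forward is by construction the fiberwise regularized integral (Lemma \ref{lem-family}). A downward induction on the factors then identifies the iterated ordinary integral with $\dashint_\Sigma\cdots\dashint_\Sigma\,\alpha$, giving $\dashint_\Sigma\cdots\dashint_\Sigma\,\alpha=\int_{\Sigma^n}\alpha$ and hence the theorem.

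I expect the $\alpha$-term of (b) to be the main obstacle. The tempting shortcut ``$\dashint_\Sigma=\int_\Sigma$ at each stage because the integrand stays logarithmic'' fails: after the first push-forward the form acquires higher-order poles along the smaller diagonals and is no longer logarithmic. The correct route is to carry absolute integrability through the iterated Fubini and to use Theorem \ref{thm-PV} to recover $\dashint_\Sigma=\int_\Sigma$ wherever integrability is available; managing the ``almost everywhere'' qualifiers and the compatibility between the push-forward of Proposition \ref{prop-push-poles} and fiberwise integration is the delicate part.
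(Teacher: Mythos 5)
Your argument is correct, and for two of its three ingredients it coincides with the paper's: absolute convergence of $\int_{\Sigma^n}\alpha$ comes from Lemma \ref{cor-convergent} plus a partition of unity, and the exact term is killed by induction using $\dashint_\Sigma\pa=\pa\dashint_\Sigma$ (Theorem \ref{thm-family}) with Proposition \ref{prop-integration-noboundary} as the base case. The genuine divergence is in the $\alpha$-term, and there the reason you give for abandoning the ``tempting shortcut'' is mistaken: that shortcut is precisely the paper's proof, and it works. The paper's (lightly sketched) claim is that the fiberwise integral $\int_\Sigma\alpha$ of $\alpha\in \A^{2n}(\Sigma^n,\log \Delta)$ again lies in $\A^{2n-2}(\Sigma^{n-1},\log \Delta)$; no higher-order poles are created. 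Indeed, a basis monomial $\theta_{i_1j_1}\cdots\theta_{i_kj_k}$ from Lemma \ref{lem-basis} corresponds to a forest on $\{1,\dots,n\}$ (each vertex is the smaller endpoint of at most one edge), the factors involving the integrated variable $z_n$ are $\theta_{a_1n},\dots,\theta_{a_sn}$ with the $a_t$ distinct, and the partial-fraction identity
$$
\prod_{t=1}^{s}\frac{1}{z_{a_t}-z_n}=\sum_{t=1}^{s}\Bigl(\prod_{t'\neq t}\frac{1}{z_{a_t}-z_{a_{t'}}}\Bigr)\frac{1}{z_{a_t}-z_n}
$$
leaves a single simple pole in $z_n$ (whose fiberwise integral is smooth by Lemma \ref{lem-family}) while creating only simple poles along pairs $(a_t,a_{t'})$. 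Since the $a_t$ lie in distinct components of the forest once the vertex $n$ is deleted, these new edges reconnect components without closing a cycle, so the push-forward is again a forest monomial with smooth coefficients, i.e.\ logarithmic. Hence $\dashint_\Sigma=\int_\Sigma$ tautologically at every stage (no $\pa\beta$ correction is ever needed), and ordinary Fubini finishes the proof.

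Your measure-theoretic substitute for this step is nevertheless valid: Tonelli propagates absolute integrability of $\alpha$ to almost every partial fiber, the intermediate regularized push-forwards of Proposition \ref{prop-push-poles} agree a.e.\ with the ordinary fiberwise integrals and are dominated by partial integrals of $|\alpha|$, and Theorem \ref{thm-PV} plus dominated convergence gives $\dashint_\Sigma=\int_\Sigma$ wherever absolute integrability is available. What this buys is independence from the combinatorics of Lemma \ref{lem-basis}; what it costs is exactly the a.e.\ bookkeeping you flag as delicate. Both routes are sound; the paper's is shorter because the algebraic lemma you dismissed is in fact true.
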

\begin{proof} By Corollary \ref{cor-convergent}, $\alpha$ is absolutely integrable on $\Sigma^n$. It is logarithmic along any factor of $\Sigma$. The push-forward along a factor of $\Sigma$ gives 
$$
\dashint_\Sigma \omega= \int_\Sigma \alpha + \dashint_{\Sigma} \pa \beta= \int_\Sigma \alpha + \pa \dashint_\Sigma \beta \in \A^{2n-2}(\Sigma^{n-1},\star \Delta)\,. 
$$
The form $\int_\Sigma \alpha$ again lies in $\A^{2n-2}(\Sigma^{n-1},\log \Delta)$. This can be proved by the same method as in the proof of Proposition \ref{prop-push-poles}. Iterating this process, we eventually arrive at
$$
 \dashint_\Sigma \dashint_{\Sigma}\cdots \dashint_{\Sigma}\omega= \int_\Sigma \int_{\Sigma}\cdots \int_{\Sigma}\alpha\,.
$$
It follows that
$$
\int_{\Sigma^n}\alpha=  \dashint_\Sigma \dashint_{\Sigma}\cdots \dashint_{\Sigma}\omega\,. 
$$
In particular, this value only depends on $\omega$, but not on the choice of the decomposition. 
\end{proof}

\begin{dfn}\label{defn-integral-product} We define the \emph{regularized integral} 
$$
\dashint_{\Sigma^n}: \A^{2n}(\Sigma^n, \star \Delta)\to \C\quad \text{by}\quad \dashint_{\Sigma^n}\omega:= \int_{\Sigma^n}\alpha
$$
where $\omega=\alpha+\pa \beta$ for $\alpha \in \A^{2n}(\Sigma^n,\log \Delta)$ and $\beta \in \A^{n-1,n}(\Sigma^n, \star \Delta)$.  
\end{dfn}

Such regularized integral is well-defined by Theorem \ref{thm-integral-product}. 

\begin{cor}\label{cor-Fubini} Let $\sigma$ be any permutation of $\{1,2,\cdots, n\}$. Then the iterated regularized integral
$$
  \dashint_{\Sigma_{\sigma(1)}} \dashint_{\Sigma_{\sigma(2)}}\cdots \dashint_{\Sigma_{\sigma(n)}}\omega, \quad \omega \in \A^{2n}(\Sigma^n, \star \Delta)
$$
does not depend on the choice of $\sigma$. 

\end{cor}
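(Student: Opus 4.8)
\noindent\emph{Proof proposal.} The plan is to reduce everything to Theorem \ref{thm-integral-product}: I will show that for \emph{any} permutation $\sigma$ the iterated regularized integral $\dashint_{\Sigma_{\sigma(1)}}\dashint_{\Sigma_{\sigma(2)}}\cdots\dashint_{\Sigma_{\sigma(n)}}\omega$ equals $\int_{\Sigma^n}\alpha$, where $\omega=\alpha+\pa\beta$ is one fixed decomposition with $\alpha\in\A^{2n}(\Sigma^n,\log\Delta)$ and $\beta\in\A^{n-1,n}(\Sigma^n,\star\Delta)$ provided by Lemma \ref{lem-global-decomp}. Since the right-hand side, which by Definition \ref{defn-integral-product} is $\dashint_{\Sigma^n}\omega$, does not depend on $\sigma$, this proves the corollary. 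Concretely, this amounts to re-running the proof of Theorem \ref{thm-integral-product} with the factors of $\Sigma^n$ integrated out in the order $\sigma(1),\dots,\sigma(n)$ instead of $1,\dots,n$, and then invoking the classical Fubini theorem.

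First I would push $\omega$ forward along the factor $\Sigma_{\sigma(1)}$. By Proposition \ref{prop-push-poles} this push-forward is well-defined with values in $\A^{2n-2}(\Sigma^{n-1},\star\Delta)$, and by Theorem \ref{thm-family} it intertwines the holomorphic de Rham differential, so $\dashint_{\Sigma_{\sigma(1)}}\omega=\int_{\Sigma_{\sigma(1)}}\alpha+\pa\,\dashint_{\Sigma_{\sigma(1)}}\beta$. The argument used to prove Proposition \ref{prop-push-poles} (with the coordinates relabelled so that $z_{\sigma(1)}$ plays the distinguished role) shows moreover that $\int_{\Sigma_{\sigma(1)}}\alpha\in\A^{2n-2}(\Sigma^{n-1},\log\Delta)$ — i.e. the fibrewise integral of a logarithmic form is again logarithmic, in particular absolutely integrable over the remaining factors by Corollary \ref{cor-convergent}. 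Thus $\dashint_{\Sigma_{\sigma(1)}}\omega=\alpha_1+\pa\beta_1$ is again a decomposition of the same shape, now on $\Sigma^{n-1}$. Iterating along $\sigma(2),\dots,\sigma(n)$, and using at each step that every factor $\Sigma$ is boundaryless so that the $\pa$-exact piece integrates to $0$, we arrive at $\dashint_{\Sigma_{\sigma(1)}}\cdots\dashint_{\Sigma_{\sigma(n)}}\omega=\int_{\Sigma_{\sigma(n)}}\cdots\int_{\Sigma_{\sigma(1)}}\alpha$, an ordinary iterated integral of the absolutely integrable form $\alpha$.

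Finally, since $\alpha$ is absolutely integrable on $\Sigma^n$ (Corollary \ref{cor-convergent} together with a partition of unity, exactly as in the proof of Theorem \ref{thm-integral-product}), the classical Fubini theorem gives $\int_{\Sigma_{\sigma(n)}}\cdots\int_{\Sigma_{\sigma(1)}}\alpha=\int_{\Sigma^n}\alpha$ for every $\sigma$, which is the asserted independence. I do not expect a genuine obstacle here; the only two points requiring attention are (i) that integrating a logarithmic form along one factor again yields a logarithmic (hence absolutely integrable) form on the remaining factors — this is precisely the step already granted in the proof of Theorem \ref{thm-integral-product} via the method of Proposition \ref{prop-push-poles} — and (ii) orientations when reordering the integrations, which is harmless since along each factor one integrates the even-degree $(1,1)$-part in that single variable and $2$-forms commute past one another without sign. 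An essentially equivalent alternative is to prove invariance under an adjacent transposition $\sigma\leftrightarrow\sigma\cdot(k,k+1)$, which is the two-factor identity $\dashint_{\Sigma_{k}}\dashint_{\Sigma_{k+1}}=\dashint_{\Sigma_{k+1}}\dashint_{\Sigma_{k}}$ applied after integrating out the other $n-2$ variables, and then to note that such transpositions generate $S_n$.
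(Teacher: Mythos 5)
Your proposal is correct and follows essentially the same route as the paper: the paper's proof of this corollary is the one-line observation that, by Theorem \ref{thm-integral-product} (whose proof never privileges a particular ordering of the factors), every iterated regularized integral equals $\int_{\Sigma^n}\alpha=\dashint_{\Sigma^n}\omega$. You have simply spelled out the details of re-running that proof for an arbitrary order and closing with classical Fubini for the absolutely integrable logarithmic form $\alpha$, which is exactly what the paper's argument implicitly does.
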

\begin{proof} By Theorem \ref{thm-integral-product}, all such iterated regularized integrals are equal to $\dashint_{\Sigma^n}\omega$. 

\end{proof}

\section{Application: regularized integrals and modular forms}\label{sec-3}

In this section, we apply our theory to elliptic curves and construct a large class of modular objects, including those coming from Feynman graph integrals. We obtain a precise connection between quasi-modular forms arising from $A$-cycle integrals and regularized integrals on configuration spaces of elliptic curves (Theorem \ref{thm-2d}). This leads to a simple geometric proof of the mixed weight quasi-modularity of $A$-cycle integrals, as well as novel combinatorial formulae for all the components of different weights (Theorem \ref{thm-1d}).

We recall and fix some notations.  Let $\tau$ be a point on the upper half-plane $\H$. Let 
\begin{equation*}\label{eqnuniformization}
 E_\tau=\C/\Lambda_{\tau}\,,\quad \Lambda_{\tau}:=\Z+ \Z \tau
 \end{equation*}
 be the corresponding elliptic curve. 
 Recall that 
 $z$ is linear holomorphic coordinate on the universal cover $\C$ in terms of which
one has 
 $$
 \int_{E_\tau} {d^2z\over \im \tau}=1\,,
\quad
d^2z :={i \over 2}dz\wedge d\bar z\,.
 $$

We will fix a basis  $\{A,B\}$ for $H_{1}(E_{\tau},\mathbb{Z})$. In the universal cover $\mathbb{C}\rightarrow E_{\tau}$, $A$ is represented by the segment $[\tau,\tau+1]$ and $B$ is  represented by the segment $[1,1+\tau]$. Such $A,B$ will be called the \emph{canonical representatives}. The fundamental domains and $A, B$-cycles on both the universal cover and the Picard uniformization $\mathbb{C}^{*}\rightarrow E_{\tau}$
are displayed in Fig. \ref{fig:fundamentaldomains} below.

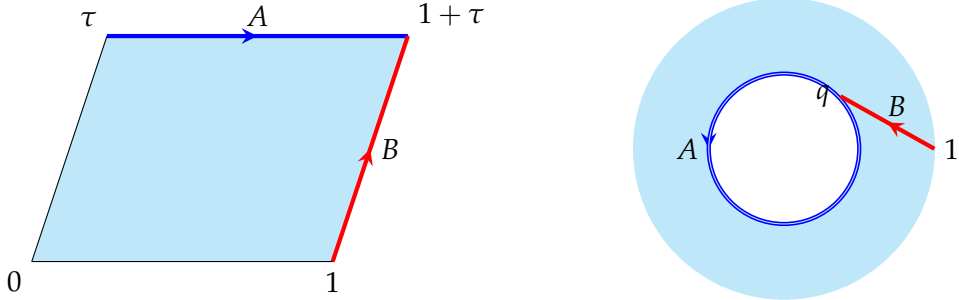
\begin{figure}[H]\centering
	\begin{tikzpicture}[scale=1.0]
\draw[cyan!23,fill=cyan!23](0,0)to(4,0)to(5,3)to(1,3)to(0,0);
\draw (0,0) node [below left] {$0$} to (4,0) node [below] {$1$} to (5,3) node [above right] {$1+\tau$} to (1,3) node [above left] {$\tau$} to (0,0);

\draw[ultra thick,blue,->-=.5,>=stealth](1,3,0)to(5,3);

\node [above] at (3,3) {$A$};

\draw[ultra thick,red,->-=.5,>=stealth](4,0)to(5,3);

\node [right] at (4.5,1.5) {$B$};

\fill[cyan!23, even odd rule](10,1.5)circle[radius=2] circle[radius=1];
\draw[cyan!23](10,1.5)circle(2); 
\node [left] at (9,1.5) {$A$};
\draw[ultra thick,blue,->-=.5,>=stealth](10,1.5)circle(1);

\draw[cyan!23](10,1.5)circle(1); 
\draw[ultra thick,red,->-=.5,>=stealth](12,1.5)to(10.75,2.2);
\node [above] at (11.5,1.8) {$B$};
\node [right] at (12,1.5) {$1$};
\node [left] at (10.75,2.2) {$q$};


	\end{tikzpicture}
	\caption{Fundamental domains and canonical representatives.
	  }
	  \label{fig:fundamentaldomains}
\end{figure}

  The following notion of holomorphic limit plays an important role in this work.

\begin{dfn}\label{dfn-holomorphiclimit}
 Let $\OO_{\H}[{1\over \im \tau}]$ denote functions $f(\tau, \bar \tau)$ on the upper half-plane $\H$ of the form
$$
f(\tau, \bar \tau)=\sum_{i=0}^N {f_i(\tau)\over (\im \tau)^i}\,, \quad N<\infty.
$$
Here the $f_i(\tau)$'s are holomorphic in $\tau$.  We define the \textbf{holomorphic limit}, denoted by
$$
\lim_{\bar\tau \to \infty}: \, \OO_{\H}[{1\over \im \tau}]\to \OO_{\H}\,,
$$
as
$$
\lim_{\bar\tau \to \infty} f(\tau, \bar \tau):= f_0(\tau)\,. 
$$
We say $f$ is \textbf{almost-holomorphic} on $\H$ if $f\in  \OO_{\H}[{1\over \im \tau}]$. 
\end{dfn}
The notion of holomorphic limit can be generalized straightforwardly to the space $\mathfrak{M}_{\H}[{1\over \im \tau}]$ consisting of 
polynomials in ${1/ \im\tau}$ with coefficients being meromorphic functions in $\tau$. We still call it holomorphic limit by abuse of language.

In Appendix \ref{secmodularformsellipticfunctions}, we collect basics of modular forms and elliptic functions that will be frequently used in this section. 

\subsection{Regularized integrals v.s. $A$-cycle integrals
 }\label{subsec:A-cycle integral}
 
\subsubsection*{Regularized integrals and modularity}

We consider the following action of $ \gamma =\begin{pmatrix}
 a& b\\
 c&d
 \end{pmatrix}  \in \mathrm{SL}_{2}(\mathbb{Z})$ on $\mathbb{C}^{n}\times \H$ ($n\geq 0$) by
 \begin{eqnarray*}
 \gamma:
 \mathbb{C}^{n}\times \H &\rightarrow & \mathbb{C}^{n}\times \H\,,\\
 (z_1,\cdots, z_{n} ;\tau)&\mapsto & (\gamma z_1,\cdots, \gamma z_{n} ; \gamma \tau):=
 ({z_1\over c\tau+d},\cdots ,{z_{n}\over c\tau+d} ;{a\tau+b\over c\tau+d})\,.
   \end{eqnarray*}

\begin{dfn}\label{dfnellipticityandmodularity}
 A function $\Phi(z_1,\cdots, z_{n};\tau)$ on $\mathbb{C}^{n}\times \H$ is \textbf{modular of weight} $k\in \mathbb{Z}$ if
 \begin{equation*}
 \Phi(\gamma z_1,\cdots, \gamma z_{n};\gamma \tau)=(c\tau+d)^{k}\Phi(z_1,\cdots, z_{n};\tau)\,,
 \quad
 \forall \,\gamma \in \mathrm{SL}_{2}(\Z)\,.
 \end{equation*}
 It is said to be \textbf{elliptic} if 
 \begin{equation*}
 \Phi( z_1+\lambda_{1},\cdots,  z_{n}+\lambda_{n}; \tau)=\Phi(z_1,\cdots, z_{n};\tau)\,,
 \quad
 \forall \,(\lambda_{1},\cdots, \lambda_{n})\in  \Lambda_{\tau}^{n}\,.
 \end{equation*}
 \end{dfn}
 
  An elliptic function $\Phi$ defines a function $\Phi(-;\tau)$ on $E_\tau^n$ for generic fixed $\tau$. We will use the same symbol $\Phi$ to denote such a function on $\mathbb{C}^{n}\times \H$ and the induced function on $E_\tau^n$ when the meaning is clear from the context. 

\begin{dfn}\label{dfnholomorphicityawayfromdiagonals} A meromorphic function $\Phi(z_1,\cdots, z_{n};\tau)$ on $\mathbb{C}^{n}\times \H$ is said to  be \textbf{holomorphic away from diagonals} if  the poles of $\Phi$ are contained in the union of all the following divisors 
$$
   \{z_i-z_j-\lambda=0\}\subset \mathbb{C}^{n}\times \H\,, \quad 1\leq i\neq j\leq n\,,\quad{\lambda\in \Lambda_\tau}\,. 
$$
\end{dfn}

 Let $\Phi(z_1,\cdots, z_n;\tau)$ be a meromorphic elliptic function on $\mathbb{C}^{n}\times \H$ which is holomorphic away from diagonals. Then the meromorphic function $\Phi(-;\tau)$ on $E_\tau^n$ has 
possible poles only along the big diagonal
 $$
\Delta= \bigcup_{1\leq i\neq j\leq n} \Delta_{ij}\subset E_\tau^n. 
 $$
So $\Phi(-;\tau)$ defines a holomorphic function on the configuration space of $n$ points on $E_\tau$. 
 
 We are interested in the following regularized integral 
 $$
 \dashint_{E_\tau^n} \bracket{\prod_{i=1}^n {d^2z_i\over \im \tau}} \Phi(z_1,\cdots, z_n;\tau)
 $$
 which is defined by Definition \ref{defn-integral-product}. By Theorem \ref{thm-integral-product}, this integral can be expressed as an iterated regularized integral on $E_\tau$
  $$
 \dashint_{E_\tau^n} \bracket{\prod_{i=1}^n {d^2z_i\over \im \tau}} \Phi(z_1,\cdots, z_n;\tau)=\dashint_{E_\tau} {d^2z_{i_1}\over \im \tau} \cdots \dashint_{E_\tau}  {d^2z_{i_n}\over \im \tau} \Phi(z_1,\cdots, z_n;\tau)\,,
 $$
where $\{i_1,\cdots, i_n\}$ is an arbitrary permutation of $\{1,\cdots, n\}$. By Corollary \ref{cor-Fubini}, its value does not depend on the choice of the ordering for integration,  i.e., the choice of $i_1,\cdots, i_n$.

The main theorem for our application in this section is the following. 

\begin{thm}\label{thm-2d}  Let $\Phi(z_1,\cdots, z_n;\tau)$ be a meromorphic  elliptic function on 
$\mathbb{C}^{n}\times \H$ which is holomorphic away from diagonals.  Then
\begin{itemize}
\item [(1)] The regularized integral 
$$
 \dashint_{E_\tau^n} \bracket{\prod_{i=1}^n {d^2z_i\over \im \tau}}
  \Phi(z_1,\cdots, z_n;\tau)  \quad \text{lies in}\quad \OO_{\H}[{1\over \im \tau}]\,. 
$$
\item[(2)] Let $A_1, \cdots, A_n$ be  $n$ disjoint representatives of the $A$-cycle class on $E_\tau$. Then
$$
\lim_{\bar\tau \to \infty} \dashint_{E_\tau^n} \bracket{\prod_{i=1}^n {d^2z_i\over \im \tau}} \Phi(z_1,\cdots, z_n;\tau)={1\over n!}\sum_{\sigma\in S_n} \int_{A_{1}} dz_{\sigma(1)}\cdots \int_{A_{n}} dz_{\sigma(n)} \Phi(z_1,\cdots, z_n;\tau) \,. 
$$
\item[(3)] If $\Phi$ is modular of weight $m$ on $\mathbb{C}^{n}\times \H$, then $ \dashint_{E_\tau^n} \bracket{\prod\limits_{i=1}^n {d^2z_i\over \im \tau}}
 \Phi$ is modular of weight $m$ on $\H$. 
 
 \end{itemize}

\end{thm}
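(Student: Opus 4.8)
The plan is to reduce all three parts to the one‑variable theory of Section~\ref{secregularizedintegrals}. By Theorem~\ref{thm-integral-product} and Corollary~\ref{cor-Fubini}, the regularized integral on $E_\tau^n$ equals the iterated one‑variable regularized integral
$$
\dashint_{E_\tau^n}\Bigl(\prod_{i=1}^n{d^2z_i\over \im\tau}\Bigr)\Phi
=\dashint_{E_\tau}{d^2z_{\sigma(1)}\over \im\tau}\cdots\dashint_{E_\tau}{d^2z_{\sigma(n)}\over \im\tau}\,\Phi
$$
for every $\sigma\in S_n$, so the whole problem is governed by the effect of a single regularized integration over one factor of $E_\tau$. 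Part (3) is then the $n$‑fold analogue of Proposition~\ref{propmodularityofregularizedintegral}. First I would assemble a product version of the conformal‑invariance statement Proposition~\ref{prop-pullback} by composing it factor by factor through the iterated‑integral description; since $\gamma\in\mathrm{SL}_2(\Z)$ acts diagonally and bi‑holomorphically on $\C^n\times\H$ and preserves the big diagonal $\Delta$, this yields, after choosing a product of fundamental parallelograms whose boundary avoids the poles of $\Phi(-;\tau)$, the identity $\dashint_{E_{\gamma\tau}^n}\bigl(\prod d^2z_i/\im(\gamma\tau)\bigr)\Phi(z;\gamma\tau)=\dashint_{E_\tau^n}\bigl(\prod d^2(\gamma z_i)/\im(\gamma\tau)\bigr)\Phi(\gamma z;\gamma\tau)$. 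Since $d^2(\gamma z)=|c\tau+d|^{-2}d^2z$ and $\im(\gamma\tau)=\im\tau/|c\tau+d|^2$, the measure is invariant, $d^2(\gamma z_i)/\im(\gamma\tau)=d^2z_i/\im\tau$, while $\Phi(\gamma z;\gamma\tau)=(c\tau+d)^m\Phi(z;\tau)$, so $f(\gamma\tau)=(c\tau+d)^m f(\tau)$, which is (3).

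For (1) I would run an induction on $n$ that peels off one regularized integration at a time. The single step is governed by the Riemann--Hodge-type formula of Proposition~\ref{prop-bilinear} with $\alpha=d\,\im z/\im\tau$, together with its higher analogues obtained from the Stokes formula Theorem~\ref{thm-de-Rham} applied to primitives of $(\im z)^k\,\psi\,dz$. Writing $\im z=\im(z-z_j)+\im z_j$ at each pole $z=z_j$ and using Proposition~\ref{prop-residue}, one checks that integrating out one variable sends a form of the shape $\bigl(\prod_j(\im z_j)^{a_j}\bigr)\psi\,\prod_i\tfrac{d^2z_i}{\im\tau}$ --- with $\psi$ elliptic, holomorphic off the diagonals and holomorphic in $\tau$ --- to a finite sum of forms of the same shape: a ``bilinear/$A$-cycle'' term, and ``contact'' terms which are residues of $\psi$ along the diagonals $z=z_j$, decorated by extra factors $\im z_j$ and $1/\im\tau$. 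One then checks that the quantity (number of $1/\im\tau$ factors) minus (number of $\im z_j$ factors) grows by at most one at each step; here holomorphicity in $\tau$ is preserved because the pole locations $z_j+\Lambda_\tau$ vary holomorphically in $\tau$, so that Lemma~\ref{lem-family} contributes no anti‑holomorphic $\tau$-derivative. After all $n$ variables have been integrated, no $\im z_j$ factors survive, so the scalar answer is a polynomial of degree $\le n$ in $1/\im\tau$ with coefficients holomorphic in $\tau$; this proves (1).

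For (2) I would take the holomorphic limit of the same iterated computation, carried out in the order $z_{\sigma(1)},\dots,z_{\sigma(n)}$. At each step the ``$A$-cycle'' branch $\int_{A_k}dz_k(\cdots)$ is already holomorphic in $\tau$ and feeds the next step, while the ``contact'' branches carry residues; these do not merely drop out, because a contact term carrying $\im z_j$ can, once $z_j$ is integrated at a later step, recombine with a new $(\im z_j)$-primitive to produce a term of $1/\im\tau$-degree zero --- precisely the mechanism that makes the individual ordered $A$-cycle integrals mixed‑weight. However, the left‑hand side $\dashint_{E_\tau^n}(\cdots)\Phi$ is permutation‑independent, so $\dashint_{E_\tau^n}(\cdots)\Phi=\tfrac1{n!}\sum_{\sigma}\dashint_{E_\tau}\tfrac{d^2z_{\sigma(1)}}{\im\tau}\cdots\dashint_{E_\tau}\tfrac{d^2z_{\sigma(n)}}{\im\tau}\Phi$; expanding each ordered iterate in the holomorphic limit as the plain ordered $A$-cycle integral $\int_{A_1}dz_{\sigma(1)}\cdots\int_{A_n}dz_{\sigma(n)}\Phi$ plus its contact corrections and then averaging over $\sigma$, I would show that all the contact corrections cancel, leaving exactly the boxed formula in (2). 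This last cancellation is the combinatorial heart of the argument: the ordered $A$-cycle integrations assemble into a word in the one‑variable regularized‑integral operation and the residue operations, and, as previewed in the Introduction, the factorization tensor algebra $=$ universal enveloping algebra of the free Lie algebra (Poincar\'e--Birkhoff--Witt, with Solomon's explicit idempotent) shows that symmetrization over $\sigma$ projects onto the commutator‑free term, which is $\lim_{\bar\tau\to\infty}\dashint_{E_\tau^n}(\cdots)\Phi$. Pinning down this cancellation uniformly in $\tau$ --- the precise match between the symmetrized contact terms and the PBW/Solomon projection --- is the step I expect to be the main obstacle; by comparison, (1) is a long but mechanical induction and (3) is a direct change of variables.
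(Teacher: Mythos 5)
Your reduction of all three parts to iterated one-variable regularized integrals, the change-of-variables argument for (3), and the induction for (1) via the Stokes formula with the splitting $\im z=\im(z-z_j)+\im z_j$ at each pole all coincide with the paper's route (Proposition \ref{prop-iteration}, Theorem \ref{thm-modularity}); parts (1) and (3) of your plan are essentially the paper's proof and are in order.

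The genuine gap is in (2), and it is precisely the step you flag as ``the main obstacle'': the cancellation of the contact corrections after averaging over $\sigma$ does not follow from citing Poincar\'e--Birkhoff--Witt and Solomon --- establishing that match \emph{is} the theorem, and the paper spends all of Section \ref{sec-proof} plus Appendix \ref{appendixalgebraicidentity} on it. Two concrete pieces are missing. First, before the one-variable formula can be iterated at all, the fundamental parallelogram for the $i$-th integration must be positioned so that the previously integrated points $z_1,\dots,z_{i-1}$ lie in its interior; otherwise the residue terms at each step are ambiguous and the $A$-cycle representatives produced by successive steps have no definite relative ordering. The paper handles this with the nested parallelograms $\square_{c_i}$, $c_i=-c_0+\epsilon_1w_1+\cdots+\epsilon_{i-1}w_{i-1}$, and Lemma \ref{Claim 1}; this construction is what makes the holomorphic limit land on \emph{ordered} $A$-cycle integrals in the first place, and your proposal never addresses it. Second, once the iteration is set up, the holomorphic limit is a specific weighted sum over rooted forests, $\sum_{(F,\chi)\in\Gamma_n}p(F)\int_{(F,\chi)}\Phi$ (Lemma \ref{Claim 2}), where the rational weights $p(F)$ are generated analytically by the $\tfrac{1}{k+1}$ factors in Lemma \ref{lem-integral}. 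After rewriting the nested residues as iterated commutators of ordered $A$-cycle integrals (Lemma \ref{lem-A-commutator}, Lemma \ref{Claim 3}), statement (2) reduces to the algebraic identity $\sum_{(F,\chi)}p(F)\,x_{(F,\chi)}=\tfrac{1}{n!}\sum_{\sigma}x_{\sigma(1)}\cdots x_{\sigma(n)}$ (Lemma \ref{Claim 4}); proving that these analytically produced coefficients are exactly the PBW/Solomon coefficients requires the recursion $p(T)=\tfrac{1}{|V(T)|}\prod_i p(T_{v_i})$ of Lemma \ref{lem-p}, the adjoint-substitution induction of Lemma \ref{lem-ad}, and the identity $\sum_{\sigma}x_{\sigma}/|\sigma|=\tfrac{1}{n!}\sum_{\sigma}x_{\sigma(1)}\cdots x_{\sigma(n)}$ proved with Duhamel's formula in Proposition \ref{prop-identity}. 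Without these your part (2) is a correct plan, not a proof.
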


The proof of Theorem \ref{thm-2d} will be deferred to Section \ref{sec-proof} after we have developed several techniques in the next subsections. 

\subsubsection*{$A$-cycle integrals and quasi-modularity}

\begin{dfn}\label{dfnorderedAcycleintegral}
Let $f(z_1,\cdots, z_k)$ be a meromorphic function on $E_\tau^k$ which is holomorphic when all $z_i$'s are distinct. 
Let $\{i_1,\cdots,i_k\}$ be a permutation of $\{1,2,\cdots,k\}$. We define the  \textbf{ordered $A$-cycle integral} to be the following integral
$$
\int_A dz_{i_1}\cdots \int_A dz_{i_k} \,f:=\int_{A_1} dz_{i_1}\cdots \int_{A_k} dz_{i_k}\, f\,.
$$
Here $A_1, A_2, \cdots, A_k$ are representatives of the $A$-cycle class on the fundamental domain of $E_\tau$  that are ordered and oriented by
$$
A_i=\text{interval from $\epsilon_i \tau$ to $\epsilon_i \tau+ 1$}\,,
$$
where 
\begin{equation*}\label{eqn-orderingchi0}
  0< \epsilon_1 <\epsilon_2<\cdots< \epsilon_k<1\,.
\end{equation*}

$$
	\begin{tikzpicture}[scale=1]

\draw[cyan!23,fill=cyan!23](0,0)to(4,0)to(5,3)to(1,3)to(0,0);
\draw (0,0) node [below left] {$0$} to (4,0) node [below] {$1$} to (5,3) node [above right] {$1+\tau$} to (1,3) node [above left] {$\tau$} to (0,0);

\draw(1,3)coordinate(a1)(5,3)coordinate(a2) (0,0)coordinate(a3)(4,0)coordinate(a4);

\draw[ultra thick,blue,->-=.5,>=stealth]($(a3)!0.8!(a1)$)to ($(a4)!0.8!(a2)$);
\node [above] at ($($(a3)!0.8!(a1)$)!0.5!($(a4)!0.8!(a2)$)$) {$A_k$};

\draw[ultra thick,blue,->-=.5,>=stealth]($(a3)!0.6!(a1)$)to ($(a4)!0.6!(a2)$);
\node [above] at ($($(a3)!0.6!(a1)$)!0.5!($(a4)!0.6!(a2)$)$) {$A_{k-1}$};
\node [below] at ($($(a3)!0.6!(a1)$)!0.5!($(a4)!0.6!(a2)$)$) {$\vdots$};

\draw[ultra thick,blue,->-=.5,>=stealth]($(a3)!0.1!(a1)$)to ($(a4)!0.1!(a2)$);
\node [above] at ($($(a3)!0.1!(a1)$)!0.5!($(a4)!0.1!(a2)$)$) {$A_1$};



	\end{tikzpicture}
$$
\end{dfn}
\begin{rem}
Since $f$ is meromorphic,  the integral $
\int_{A_1} dz_{i_1}\cdots \int_{A_k} dz_{i_k} \,f$
does not depend on the precise values of $\epsilon_i$'s. So we write the ordered $A$-cycle integral  as $
\int_{A} dz_{i_1}\cdots \int_{A} dz_{i_k} \,f
$ without specifying the locations. It is also invariant under the cyclic permutation
$$
\int_{A} dz_{i_1}\cdots \int_{A} dz_{i_k} \,f =\int_{A} dz_{i_2}\cdots \int_{A} dz_{i_k} \int_A dz_{i_1} \,f\,. 
$$
\end{rem}

Observe that switching the order of an iterated $A$-cycle 
integral
results in a difference related to the residue. This is illustrated by deforming the integration contour as in Fig. \ref{fig:ordering}. 
$$
\int_A dz_2 \int_A dz_1(\cdots)- \int_A dz_1 \int_A dz_2(\cdots)=  \int_A dz_1 \oint_{z_1}dz_2(\cdots)\,. 
$$

\begin{figure}[H]\centering
\begin{tikzpicture}[scale=1]

\draw[cyan!23,fill=cyan!23](0,0)to(4,0)to(5,3)to(1,3)to(0,0);
\draw (0,0)  to (4,0)  to (5,3)  to (1,3)  to (0,0);

\draw(1,3)coordinate(a1)(5,3)coordinate(a2) (0,0)coordinate(a3)(4,0)coordinate(a4);

\node at ($(a4)!0.46!(a1)$) {$\bullet z_1$};

\draw[ultra thick,blue,->-=.5,>=stealth] ($(a4)!0.6!(a2)$) to ($(a3)!0.6!(a1)$);
\node [above] at ($($(a3)!0.6!(a1)$)!0.5!($(a4)!0.6!(a2)$)$) {$\int dz_2$};

\draw[ultra thick,blue,->-=.5,>=stealth]($(a3)!0.4!(a1)$)to ($(a4)!0.4!(a2)$);
\node [above] at ($($(a3)!0.15!(a1)$)!0.5!($(a4)!0.15!(a2)$)$) {$\int dz_2$};

	\end{tikzpicture}
	\begin{tikzpicture}[scale=1]
	\node at (-0.5,1.5) {=};
		
\draw[cyan!23,fill=cyan!23](0,0)to(4,0)to(5,3)to(1,3)to(0,0);
\draw (0,0)  to (4,0)  to (5,3)  to (1,3)  to (0,0);

\draw(1,3)coordinate(a1)(5,3)coordinate(a2) (0,0)coordinate(a3)(4,0)coordinate(a4);

\draw [ultra thick,blue,->-=.5,>=stealth] ($(a4)!0.5!(a1)$)  circle [radius=0.5];

\node at ($(a4)!0.46!(a1)$) {$\bullet z_1$};

\node [above] at ($($(a3)!0.6!(a1)$)!0.5!($(a4)!0.6!(a2)$)$) {$\oint dz_2$};
	\end{tikzpicture}
	\caption{Commutator of ordered $A$-cycle integrations.}\label{fig:ordering}
\end{figure}
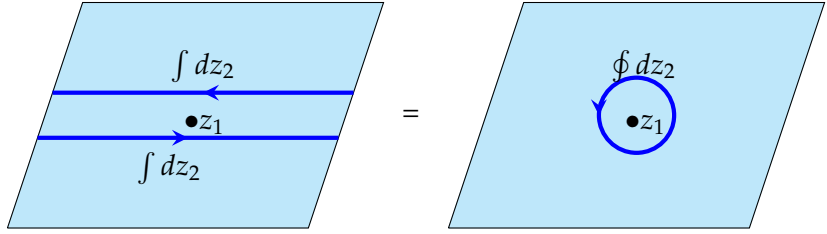

Therefore we find the following identity for two different integral operations 
$$
 \int_A dz_1 \oint_{z_1}dz_2=\bbracket{\int_A dz_2, \int_A dz_1}
$$
where the right hand side is the commutator of two ordered $A$-cycle integrals defined above. 

The same consideration proves the following lemma that expresses commutators of ordered $A$-cycle integrals in terms of residue operations. 
\begin{lem}\label{lem-A-commutator} Let $f(z_1,\cdots, z_k)$ be a meromorphic function on $E_\tau^k$ which is holomorphic when all $z_i$'s are distinct. Then
$$
 \int_A dz_1 \oint_{z_1}dz_2 \oint_{z_1} dz_3 \cdots \oint_{z_1}dz_k f(z_1,\cdots, z_k)=\bbracket{\int_A dz_2, \bbracket{\int_A dz_{3},\cdots, \bbracket{\int_A dz_k, \int_A dz_1}}} f(z_1,\cdots, z_k).
$$
\end{lem}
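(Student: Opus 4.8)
The plan is to prove the identity by induction on $k$, peeling off the outermost residue operation and invoking the $k=2$ case established just above through the contour deformation of Fig.~\ref{fig:ordering}. To lighten notation write $I_j := \int_A dz_j$ and $R_j := \oint_{z_1} dz_j$, so that the $k=2$ identity is $I_1 R_2 \Phi = \bbracket{I_2, I_1}\Phi$; since that deformation argument is purely local in the $(z_1,z_2)$--plane, it remains valid when $\Phi$ carries additional spectator variables. The statement to be proved, call it $(\ast_k)$, is then
$$
I_1 R_2 R_3 \cdots R_k\, f \;=\; \bbracket{I_2, \bbracket{I_3, \cdots, \bbracket{I_k, I_1}}}\, f
$$
for any meromorphic $f$ on $E_\tau^k$ holomorphic away from the diagonals; $(\ast_2)$ is exactly the identity recalled above, and by relabelling $(\ast_{k-1})$ holds for any ordered $(k-1)$--tuple of variables with $z_1$ taken as the distinguished one.

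For the inductive step, suppose $(\ast_{k-1})$ holds. First I would set $g := R_3 \cdots R_k f$, a meromorphic function of $z_1, z_2$ that is holomorphic for $z_1 \ne z_2$ (each $R_j$ is a small circle around $z_1$ in the variable $z_j$). Applying $(\ast_2)$ to $g$ gives
$$
I_1 R_2 R_3 \cdots R_k f = \bbracket{I_2, I_1}\bracket{R_3 \cdots R_k f} = I_2\,I_1 R_3 \cdots R_k f \;-\; I_1 I_2\, R_3 \cdots R_k f.
$$
In the first term, $(\ast_{k-1})$ applied to $f$ with variables $z_1, z_3, \ldots, z_k$ yields $I_1 R_3 \cdots R_k f = C f$ where $C := \bbracket{I_3, \cdots, \bbracket{I_k, I_1}}$, so that term equals $I_2 C f$. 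In the second term, $I_2$ commutes with each of $R_3, \ldots, R_k$ (disjoint variables), so it equals $I_1 R_3 \cdots R_k (I_2 f)$; since $I_2 f = \int_A dz_2\, f$ is again meromorphic in $z_1, z_3, \ldots, z_k$ and holomorphic off their diagonals, $(\ast_{k-1})$ applies once more and gives $C (I_2 f)$. Subtracting, $I_1 R_2 \cdots R_k f = I_2 C f - C I_2 f = \bbracket{I_2, C} f$, which is $(\ast_k)$. (The case $k=3$ is already the full argument: $I_1 R_2 R_3 f = I_2 \bbracket{I_3, I_1} f - \bbracket{I_3, I_1} I_2 f = \bbracket{I_2, \bbracket{I_3, I_1}} f$.)

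The only step requiring genuine care is justifying that $I_2 = \int_A dz_2$ commutes with the residue operations $R_3, \ldots, R_k$, together with the book-keeping that every intermediate object — the function $g$, the partial residues $R_j \cdots R_k f$, and $I_2 f$ — still lies in the class of meromorphic functions holomorphic off the diagonals, so that $(\ast_2)$ and the inductive hypothesis genuinely apply to them. Both are handled by a compatible choice of contours: take the ordered $A$--cycle representatives $A_1, A_2, \ldots$ pairwise disjoint, and take the residue circles around $z_1$ of radii much smaller than the distances between these cycles; then Fubini's theorem and the holomorphy of the meromorphic integrands on the relevant product of contours give all the required interchanges, and the residue along $z_j=z_1$ of a function holomorphic off the diagonals is again of that type. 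Everything else is the algebra of nested commutators carried out above.
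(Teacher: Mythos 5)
Your proof is correct and is essentially the paper's own argument made precise: the paper simply asserts that ``the same consideration'' (the contour deformation of Fig.~\ref{fig:ordering} giving the $k=2$ identity) proves the general case, and your induction on $k$ --- peeling off $\oint_{z_1}dz_2$ via the $k=2$ identity, commuting $\int_A dz_2$ past the remaining residues in disjoint variables, and applying the inductive hypothesis to both resulting terms --- is the natural rigorous formalization of exactly that, with the contour/ordering bookkeeping handled as you indicate.
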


Let $\Phi$ be a function as in Theorem \ref{thm-2d}. Then Theorem \ref{thm-2d} can be equivalently described in terms of the ordered $A$-cycle integrals as 
$$
\lim_{\bar\tau \to \infty} \dashint_{E_\tau^n} \bracket{\prod_{i=1}^n {d^2z_i\over \im \tau}} \Phi={1\over n!} \sum_{\sigma\in S_n}\int_A dz_{\sigma(1)}\cdots \int_A dz_{\sigma(n)}\Phi. 
$$
This actually leads to a similar description for any ordered $A$-cycle integral as follows.

Let $V=\R^n$ be a real vector space of dimension $n$. Let 
$$
T(V)=\bigoplus\limits_{k\geq 0} V^{\otimes k}
$$
denote the tensor algebra over $\R$. Let $\Lie(V)$ denote the free Lie algebra over $\R$ generated by $V$. We equip $T(V)$ with the Lie algebra structure whose Lie bracket is the commutator
$$
[a,b]:= a\otimes b- b\otimes a, \quad a, b\in T(V)\,. 
$$
Then $\Lie(V)$ can be viewed as the Lie subalgebra of $T(V)$ generated by $V$ and the bracket operation $[-,-]$. It is a classical result that the tensor algebra $T(V)$
$$
T(V)=\mathcal U(\Lie(V))
$$
is the universal enveloping algebra of $\Lie(V)$.  By the Poincar\'{e}-Birkhoff-Witt Theorem, we can identify
$$
T(V)=\mathcal S(\Lie(V))
$$
as vector spaces, where $\mathcal S$ refers to symmetric tensors. This identity gives a natural way to connect any ordered $A$-cycle integrals to regularized integrals. 

Explicitly, let $x_1, \cdots, x_n$ denote a basis of $V$. Then $T(V)$ can be identified as the free $\R$-algebra generated by $x_i$'s
$$
T(V)=\R\abracket{x_1, \cdots, x_n}\,. 
$$
Let $\{Y^{(s)}_I\}_{I,s}$ be a basis of $\Lie(V)$ such that each $Y_I^{(s)}$ is of the form
$$
  [x_{i_1},[x_{i_2},\cdots,[x_{i_{s-1}},x_{i_{s}}] \cdots ]]\,,\quad
  I=(i_{1},i_{2},\cdots, i_{s})\,. 
$$
 
Using $T(V)=\mathcal S(\Lie(V))$, we can write
$$
x_1x_2\cdots x_n= \sum_{k=1}^n R^{(k)}_n(Y).  
$$ 
Here $R^{(k)}_n(Y)$ is a degree-$k$ polynomial (viewed as symmetric tensor) in $Y_I^{(\leq n)}$'s.
Explicit formula can be obtained from the result in \cite{Solomon:1968} applied to $\Lie(V)$. 

\begin{ex}\label{ex-n=3} Here is the example when $n=3$ (see \cite{Solomon:1968}). We choose 
\begin{align*}
   Y^{(1)}&=\{x_1, x_2, x_3\}\,,\\
   Y^{(2)}&=\{[x_1,x_2],[x_1,x_3], [x_2,x_3]\}\,,\\
   Y^{(3)}&=\{[x_1,[x_2,x_3]],[x_2,[x_1, x_3]]\}\,. 
\end{align*}
Then  
\begin{align*}
x_1x_2x_3=&{1\over 6}\bracket{x_1x_2x_3+x_1x_3x_2+x_2x_1x_3+x_2x_3x_1+x_3x_1x_2+x_3x_2x_1}\\
&+{1\over 4}\bracket{x_1[x_2,x_3]+[x_2,x_3]x_1}+{1\over 4}\bracket{x_2[x_1,x_3]+[x_1,x_3]x_2}+{1\over 4}\bracket{x_3[x_1,x_2]+[x_1,x_2]x_3}\\
&+{1\over 3}[x_1,[x_2, x_3]]-{1\over 6}[x_2,[x_1,x_3]]\,. 
\end{align*}
We have
\begin{align*}
R^{(3)}_3&=Y^{(1)}_1 Y^{(1)}_2  Y^{(1)}_3 \,,\\
R^{(2)}_3&={1\over 2}Y^{(1)}_1Y^{(2)}_{23}+{1\over 2} Y^{(1)}_2 Y^{(2)}_{13}+{1\over 2} Y^{(1)}_3 Y^{(2)}_{12}\,, \\
R^{(1)}_3&={1\over 3}Y^{(3)}_{123} -{1\over 6}Y^{(3)}_{213}\,. 
\end{align*}
\end{ex}

For each 
$$
Y^{(s)}_I=  [x_{i_1},[x_{i_2},\cdots,[x_{i_{s-1}},x_{i_{s}}] \cdots ]]\,,
$$
we associate an operation 
$$
\oint_{Y^{(s)}_I}:=
\begin{cases}
\oint_{z_{i_s}} dz_{i_1} \oint_{z_{i_s}} dz_{i_2}\cdots \oint_{z_{i_s}} dz_{i_{s-1}} & \quad I=(i_{1},i_{2},\cdots, i_{s})\,,\quad s\geq 2
\,,\\
\text{identity operator} & \quad I=(i)\,,\quad s=1\,.
\end{cases}
$$
Then for each $R^{(k)}_n(Y)$ that appears in the above decomposition for $x_{1}x_2\cdots x_{n}$, we write
$$
R^{(k)}_n(\oint_{Y})
$$
for the operation that replaces each $Y^{(s)}_I$ by $\oint_{Y^{(s)}_I}$ in $R^{(k)}_n$. 
 Therefore, one can write
 $$R_n^{(k)}(\oint_{Y}) \Phi=\sum\limits_{I=(i_1,\cdots, i_k)}\phi^{(k)}_I(z_{i_1},\cdots,z_{i_k};\tau)$$
 for some functions $\phi^{(k)}_{I}$.
 
 \begin{thm}\label{thm-1d}  Let $\Phi(z_1,\cdots, z_n;\tau)$ be a  meromorphic elliptic function on 
$\mathbb{C}^{n}\times \H$ which is holomorphic away from diagonals.  Let 
$$
R_n^{(k)}(\oint_{Y}) \Phi=\sum\limits_{I=(i_1,\cdots, i_k)}\phi^{(k)}_I(z_{i_1},\cdots,z_{i_k};\tau)\,. 
$$
\begin{itemize}
\item [(1)] The ordered $A$-cycle integral is given by the holomorphic limit
$$
\int_{A} dz_1\cdots \int_{A} dz_n \Phi(z_1,\cdots, z_n;\tau)=\lim_{\bar\tau \to \infty}\sum_{k=1}^n \sum\limits_{I=(i_1,\cdots, i_k)}\dashint_{E_\tau^k} \bracket{\prod_{j=1}^k {d^2 z_{i_j}\over \im \tau}} \phi^{(k)}_I(z_{i_1},\cdots,z_{i_k};\tau).
$$
\item [(2)] If $\Phi$ is modular of weight $m$ on $\mathbb{C}^{n}\times \H$, then each
$$
\dashint_{E_\tau^k} \bracket{\prod_{j=1}^k {d^2 z_{i_j}\over \im \tau}} \phi^{(k)}_I(z_{i_1},\cdots,z_{i_k};\tau)
$$
is modular of weight $m+k-n$. In particular, the ordered $A$-cycle integral
$$
\int_{A} dz_{1}\cdots \int_{A} dz_{n} \Phi(z_1,\cdots, z_n;\tau)
$$
is quasi-modular of mixed weight with each weight $\leq m$, and the leading weight-$m$ component is 
$\lim\limits_{\bar\tau \to \infty}
 \dashint_{E_\tau^n} \bracket{\prod\limits_{i=1}^n {d^2z_i\over \im \tau}}
  \Phi. 
$
\end{itemize}
\end{thm}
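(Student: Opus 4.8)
The strategy is to combine the algebraic identity $x_1\cdots x_n=\sum_{k=1}^{n}R_n^{(k)}(Y)$ in $T(V)=\mathcal S(\Lie(V))$ with the ordered $A$-cycle/residue dictionary (Fig.~\ref{fig:ordering}, Lemma~\ref{lem-A-commutator}) and with Theorem~\ref{thm-2d}. The first step is to view ordered $A$-cycle integration as a linear representation $\rho$ of the tensor algebra on meromorphic functions on powers of $E_\tau$: put $\rho(x_{j_1}\otimes\cdots\otimes x_{j_k})=\int_{A_1}dz_{j_1}\cdots\int_{A_k}dz_{j_k}$, so that $\int_A dz_1\cdots\int_A dz_n\,\Phi=\rho(x_1\otimes\cdots\otimes x_n)\,\Phi$. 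The contour deformation of Fig.~\ref{fig:ordering} says precisely that $\rho$ intertwines the commutator bracket with residue operations, and iterating it as in Lemma~\ref{lem-A-commutator} gives, for the Lie basis element $Y^{(s)}_I=[x_{i_1},[x_{i_2},\cdots,[x_{i_{s-1}},x_{i_s}]\cdots]]$, the operator identity $\rho\bigl(Y^{(s)}_I\bigr)=\int_A dz_{i_s}\circ\oint_{Y^{(s)}_I}$.

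To prove (1), apply $\rho$ term by term to $x_1\cdots x_n=\sum_k R_n^{(k)}(Y)$. By Poincar\'e--Birkhoff--Witt, a symmetric monomial $Y_{I_1}\cdots Y_{I_k}$ occurring in $R_n^{(k)}(Y)$ --- whose index blocks $I_1,\dots,I_k$ partition $\{1,\dots,n\}$, with $|I_j|=s_j$ and $\sum_j s_j=n$ --- corresponds in $T(V)$ to $\frac1{k!}\sum_{\pi\in S_k}Y_{I_{\pi(1)}}\otimes\cdots\otimes Y_{I_{\pi(k)}}$; writing $\ell_j$ for the last index of $I_j$, this gives
\[
\rho\bigl(Y_{I_1}\cdots Y_{I_k}\bigr)\Phi=\frac1{k!}\sum_{\pi\in S_k}\Bigl(\int_A dz_{\ell_{\pi(1)}}\oint_{Y_{I_{\pi(1)}}}\Bigr)\circ\cdots\circ\Bigl(\int_A dz_{\ell_{\pi(k)}}\oint_{Y_{I_{\pi(k)}}}\Bigr)\Phi.
\]
Distinct blocks involve disjoint sets of variables, so all the residue operators commute with one another and with the $A$-cycle integrations of other blocks; one may therefore first perform all residues, producing a function $\psi=\oint_{Y_{I_1}}\cdots\oint_{Y_{I_k}}\Phi$ of the $k$ surviving variables $z_{\ell_1},\dots,z_{\ell_k}$, and then the $k$ remaining $A$-cycle integrations, the $\pi$-sum becoming $\frac1{k!}$ times the sum over all $k!$ assignments of $z_{\ell_1},\dots,z_{\ell_k}$ to the ordered cycles $A_1,\dots,A_k$ --- exactly the $S_k$-average of Theorem~\ref{thm-2d}(2). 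Granting that $\psi$ is again meromorphic, elliptic and holomorphic away from the diagonals in its $k$ variables (iterated residues along diagonals preserve these properties, by a short direct check), Theorem~\ref{thm-2d}(2) identifies this average with $\lim_{\bar\tau\to\infty}\dashint_{E_\tau^k}\bigl(\prod_j\frac{d^2z_{\ell_j}}{\im\tau}\bigr)\psi$. Summing over all monomials in all the $R_n^{(k)}(Y)$ and recalling the notation $R_n^{(k)}(\oint_Y)\Phi=\sum_{I=(i_1,\dots,i_k)}\phi^{(k)}_I$ yields the claimed formula for $\int_A dz_1\cdots\int_A dz_n\,\Phi$, after pulling the linear operation $\lim_{\bar\tau\to\infty}$ out of the finite double sum.

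For (2), let $\Phi$ be modular of weight $m$. The volume form $\prod_j d^2z_{i_j}/\im\tau$ is $\mathrm{SL}_2(\Z)$-invariant, so Theorem~\ref{thm-2d}(3) gives that $\dashint_{E_\tau^k}(\prod_j\frac{d^2z_{i_j}}{\im\tau})\phi^{(k)}_I$ is modular of weight equal to the weight of $\phi^{(k)}_I$. Each elementary residue $f\mapsto\oint_{z_a}dz_b\,f$ is $2\pi i$ times the Poincar\'e residue along $\{z_b=z_a\}$; since under $z\mapsto z/(c\tau+d)$ both $dz_b$ and $z_b-z_a$ scale by $(c\tau+d)^{-1}$, this operation lowers the modular weight by exactly $1$. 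As $\phi^{(k)}_I$ is obtained from $\Phi$ by $\sum_j(s_j-1)=n-k$ such residues, it is modular of weight $m+k-n$, whence $\dashint_{E_\tau^k}(\prod_j\frac{d^2z_{i_j}}{\im\tau})\phi^{(k)}_I$ is modular of weight $m+k-n\le m$, and its holomorphic limit is quasi-modular of that same weight (Appendix~\ref{secmodularformsellipticfunctions}). By (1) the ordered $A$-cycle integral is thus a finite sum of quasi-modular forms of weights $m-n+1,\dots,m-1,m$, i.e.\ quasi-modular of mixed weight with every weight $\le m$. Only $k=n$ contributes to the top weight: then every $Y^{(s)}_I$ in $R_n^{(n)}(Y)$ has $s=1$, no residues occur, $R_n^{(n)}(\oint_Y)\Phi=\Phi$, and the weight-$m$ component equals $\lim_{\bar\tau\to\infty}\dashint_{E_\tau^n}(\prod_i\frac{d^2z_i}{\im\tau})\Phi$.

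The hardest part is the combinatorial bookkeeping underlying (1): making the representation $\rho$ precise (so that composition of ordered $A$-cycle integrations, with the heights $\epsilon_i$ nested correctly, matches the product of $T(V)$ and reduces on Lie words to Lemma~\ref{lem-A-commutator}), and checking that the PBW factors $1/k!$ combine with the $k!$ orderings of the surviving variables to reproduce exactly the $S_k$-averaged integrals of Theorem~\ref{thm-2d}(2) --- the $n=3$ computation of Example~\ref{exn=3mixedweight} is the natural consistency check. The only other technical input is the lemma that iterated residues along diagonals of a meromorphic elliptic function holomorphic away from the diagonals are again of this type in the remaining variables, which is what lets Theorem~\ref{thm-2d} be applied to each $\phi^{(k)}_I$; given it, the weight count in (2) is immediate.
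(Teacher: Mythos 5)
Your proposal is correct and follows essentially the same route as the paper's proof: the PBW identity $x_1\cdots x_n=\sum_k R_n^{(k)}(Y)$, the dictionary between commutators of ordered $A$-cycle integrals and residues (Lemma \ref{lem-A-commutator}), and Theorem \ref{thm-2d} applied to the iterated residues $\phi^{(k)}_I$, with each residue lowering the modular weight by one. You have merely made explicit two points the paper leaves implicit --- that the PBW symmetrization factor $1/k!$ matches the $S_k$-average in Theorem \ref{thm-2d}(2), and that iterated diagonal residues remain elliptic, meromorphic and holomorphic away from diagonals --- both of which are correct and worth noting.
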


\begin{rem}\label{remotherorderings}  If $\Phi$ is modular of weight $m$ on $\mathbb{C}^{n}\times \H$,  then for any $\sigma\in S_n$, 
$$
\int_{A} dz_{\sigma(1)}\cdots \int_{A} dz_{\sigma(n)} \Phi(z_1,\cdots, z_n;\tau)
$$
is quasi-modular of mixed weight  with each weight $\leq m$ \cite{Goujard:2016counting, Oberdieck:2018}. This follows by applying Theorem \ref{thm-1d} to $\Phi_{\sigma}(z_1, \cdots, z_n;\tau):=\Phi(z_{\sigma^{-1}(1)},\cdots, z_{\sigma^{-1}(n)};\tau)$. Theorem \ref{thm-2d} says that
averaging all such ordered $A$-cycle integrals leads to cancellation of all lower-weight components.  Such cancellation phenomenon was also proved in \cite{Oberdieck:2018} using a different method. 
\end{rem}

\begin{proof}[Proof of Theorem \ref{thm-1d}] The algebraic identity $x_1x_2\cdots x_n= \sum\limits_{k=1}^n R^{(k)}_n(Y)$ together with Lemma \ref{lem-A-commutator} and Theorem \ref{thm-2d}  imply
$$
\int_{A} dz_1\cdots \int_{A} dz_n \Phi(z_1,\cdots, z_n;\tau)=\lim_{\bar\tau \to \infty}\sum_{k=1}^n \sum\limits_{I=(i_1,\cdots, i_k)}\dashint_{E_\tau^k} \bracket{\prod_{j=1}^k {d^2 z_{i_j}\over \im \tau}} \phi^{(k)}_I(z_{i_1},\cdots,z_{i_k};\tau)\,. 
$$

Assume $\Phi$ is modular of weight $m$ on $\mathbb{C}^{n}\times \H$. Each $\phi^{(k)}_I$ is obtained from $\Phi$ by applying residue  $(n-k)$ times. Since each residue map along a diagonal decreases the weight\footnote{This can be seen from the modular transformation on the variables $z_1,\cdots,z_n$ in Definition \ref{dfnellipticityandmodularity}.} by $1$, $\phi^{(k)}_I$ is modular of weight $m+k-n$. By Theorem \ref{thm-2d}, 
$$
\dashint_{E_\tau^k} \bracket{\prod_{j=1}^k {d^2 z_{i_j}\over \im \tau}} \phi^{(k)}_I(z_{i_1},\cdots,z_{i_k};\tau)
$$
is modular of weight $m+k-n$. 


\end{proof}
\begin{ex}
\label{exn=3mixedweight}
 In the case $n=3$ as in Example \ref{ex-n=3}, Theorem \ref{thm-1d} implies
\begin{align*}
&\int_A dz_1 \int_A dz_2 \int_A dz_3 \Phi=
\lim_{\bar\tau\to \infty}\left\{\dashint_{E_\tau}{d^2z_1\over \im \tau} \dashint_{E_\tau}{d^2z_2\over \im \tau} \dashint_{E_\tau}{d^2z_3\over \im \tau} \Phi\right.\\
+&{1\over 2}\dashint_{E_\tau}{d^2z_1\over \im \tau} \dashint_{E_\tau}{d^2z_3\over \im \tau} \oint_{z_3} dz_2 \Phi+{1\over 2}\dashint_{E_\tau}{d^2z_2\over \im \tau} \dashint_{E_\tau}{d^2z_3\over \im \tau} \oint_{z_3} dz_1 \Phi+{1\over 2}\dashint_{E_\tau}{d^2z_2\over \im \tau} \dashint_{E_\tau}{d^2z_3\over \im \tau} \oint_{z_2} dz_1 \Phi\\
&\left.+ {1\over 3}\dashint_{E_\tau}{d^2z_3\over \im \tau} \oint_{z_3}dz_1 \oint_{z_3}dz_2 \Phi-{1\over 6}\dashint_{E_\tau}{d^2z_3\over \im \tau} \oint_{z_3}dz_2 \oint_{z_3}dz_1 \Phi \right\}. 
\end{align*}

\end{ex}

\begin{ex}\label{exmixedweight}

Consider the  $n=4$ case, with 
$$\Phi(z_1,z_2,z_3,z_4;\tau)=\wp(z_{1}-z_{2};\tau)\wp(z_{2}-z_{3};\tau)\wp(z_{3}-z_{4};\tau)\wp(z_{4}-z_{1};\tau)\,.$$
The function $\Phi$ is invariant under the dihedral group action on the $4$ arguments $z_1,z_2,z_3,z_4$. Hence among the $4!=24$
ordered $A$-cycles integrals it suffices to consider
the following $3$ integrals:
$$\int_{A}dz_{4}\int_{A}dz_{3}\int_{A}dz_{2}\int_{A}dz_{1}\,\Phi\,,
\quad
\int_{A}dz_{3}\int_{A}dz_{4}\int_{A}dz_{2}\int_{A}dz_{1}\,\Phi\,,
\quad
\int_{A}dz_{4}\int_{A}dz_{2}\int_{A}dz_{3}\int_{A}dz_{1}\,\Phi\,.$$
Following the method outlined in Remark
\ref{remcomputationontatecurve}, we obtain
\begin{eqnarray*}
\int_{A}dz_{4}\int_{A}dz_{3}\int_{A}dz_{2}\int_{A}dz_{1}\,\Phi&=&(2\pi i)^{8}\sum_{k\geq 1}k^{4}{q^{k}+q^{3k}\over (1-q^{k})^4}+({\pi^{2}\over 3}E_{2})^{4}\,,
\\
\int_{A}dz_{3}\int_{A}dz_{4}\int_{A}dz_{2}\int_{A}dz_{1}\,\Phi
&=&(2\pi i)^{8}\sum_{k\geq 1}k^{4}{2q^{2k}\over (1-q^{k})^4}+({\pi^{2}\over 3}E_{2})^{4}\,,\\
\int_{A}dz_{4}\int_{A}dz_{2}\int_{A}dz_{3}\int_{A}dz_{1}\,\Phi&=&(2\pi i)^{8}\sum_{k\geq 1}k^{4}{2q^{2k}\over (1-q^{k})^4}+({\pi^{2}\over 3}E_{2})^{4}\,.
\end{eqnarray*}
On the other hand, 
using \eqref{eqn-FourierexpansionsE2k}, \eqref{eqn-Ramanujanidentities}, we obtain
\begin{eqnarray*}
\sum_{k\geq 1} k^{4} {q^{k}+4q^{2k}+q^{3k}\over (1-q^{k})^{4}}&=&-{1\over 24}E_{2}'''={1\over 2^{7}3^{3}}(3E_{2}^{2}E_{4}-4E_{2}E_{6}+E_{4}^2)
\,,\\
\sum_{k\geq 1} k^{4} {q^{k}\over (1-q^{k})^{2}}
&=&{1\over 240}E_{4}'={1\over 2^{4} 3^{2}5}(E_{4}E_{2}-E_{6})\,,
\end{eqnarray*}
where $'={1\over 2\pi i}{\partial\over \partial \tau}$.
Combining the above two sets of relations, we see that the above $3$ ordered $A$-cycle integrals
are holomorphic quasi-modular forms of mixed weight
$$\int_{A}dz_{4}\int_{A}dz_{3}\int_{A}dz_{2}\int_{A}dz_{1}\,\Phi=
{\pi^{8}\over 3^{4}}
E_{2}^{4} 
+2^{8}\pi^{8}\left( {1\over 3}\cdot -{1\over 24}E_{2}'''+{2\over 3}\cdot {1\over 240}E_{4}'\right)
\,,
$$
$$\int_{A}dz_{3}\int_{A}dz_{4}\int_{A}dz_{2}\int_{A}dz_{1}\,\Phi=
{\pi^{8}\over 3^{4}}
E_{2}^{4}+2^{8}\pi^{8}\left({1\over 3}\cdot -{1\over 24}E_{2}'''-{1\over 3}\cdot {1\over 240}E_{4}'\right)\,,
$$
$$\int_{A}dz_{4}\int_{A}dz_{2}\int_{A}dz_{3}\int_{A}dz_{1}\,\Phi=
{\pi^{8}\over 3^{4}}
E_{2}^{4}+2^{8}\pi^{8}\left({1\over 3}\cdot -{1\over 24}E_{2}'''-{1\over 3}\cdot {1\over 240}E_{4}'\right)\,.
$$
It follows that
\begin{eqnarray*}
{1\over 4!}\sum_{\sigma\in S_4} \int_{A}dz_{\sigma(1)}
\cdots \int_{A} dz_{\sigma(4)} \Phi
&=&
{\pi^{8}\over 3^{4}}E_{2}^{4}
+2^{8}\pi^{8}{1\over 3}\cdot -{1\over 24}E_{2}'''\\
&=&
{\pi^{8}\over 3^{4}}E_{2}^{4}+{2\pi^{8}\over 3^{4}} (3 E_{2}^{2}E_{4} -4 E_{2}E_{6} +  E_{4}^{2})\,.
\end{eqnarray*}

It is straightforward to compute the iterated residues.
For example, we have (here $'=\partial_{z}$)
\begin{eqnarray*}
\Res_{z_{2}=z_{3}}\Res_{z_{1}=z_{2}}\Phi&=&
\wp(z_{3}-z_{4}) \wp''(z_{3}-z_{4})\,,\\
\Res_{z_{2}=z_{4}}\Res_{z_{1}=z_{2}}\Phi&=&-\wp(z_{3}-z_{4}) \wp''(z_{3}-z_{4})
\,,\\
\Res_{z_{3}=z_{4}}\Res_{z_{1}=z_{2}}
\Phi
&=&\wp'(z_{2}-z_{4})\wp'(z_{4}-z_{2})\,,\\
\Res_{z_{4}=z_{3}}\Res_{z_{1}=z_{2}}\Phi
&=&-\wp''(z_{3}-z_{2})\wp(z_{2}-z_{3})\,.
\end{eqnarray*}
The result
$$\int_{A}\wp^{3}dz=-{1\over 15} \pi^{6} E_{2}E_{4}+{2^{2}\over 3^{3} 5} \pi^{6}E_{6}
$$
from Example \ref{expmoments}  implies that
$$
\int_{A}\wp \wp'' dz
=-\int_{A}\wp'\wp' dz
={2^{3}\over 3^{2}5}\pi^6 (-E_{4}E_{2}+E_{6})\,.
$$
By Proposition \ref{prop-push-poles}, $\Res\Res\Res\Res \Phi=0$.
By the pure-weight reason in Theorem \ref{thm-modularity}, the 
iterated regularized integrals of $\Res \Phi$ and $\Res\Res\Res \Phi$
are almost-holomorphic modular forms of odd weight and hence
vanish. Both of these two claims can be confirmed
in the current example from direct computations.
Applying Theorem \ref{thm-1d} to the ordered $A$-cycle integral
$\int_{A}dz_{4}\int_{A}dz_{3}\int_{A}dz_{2}\int_{A}dz_{1}\,\Phi$,
a tedious calculation 
shows that the holomorphic limits of the $\Res\Res\Phi$ terms in Theorem \ref{thm-1d} combine to 
$${2^{5}\over 3^{2} 5}\pi^{8}E_{4}'\,.$$
These match the above results for 
$\int_{A}dz_{4}\int_{A}dz_{3}\int_{A}dz_{2}\int_{A}dz_{1}\,\Phi$
and 
${1\over 4!}\sum\limits_{\sigma\in S_4} \int_{A}dz_{\sigma(1)}
\cdots \int_{A} dz_{\sigma(4)} \Phi$.

\end{ex}

\subsection{Modularity of regularized integrals}

In this subsection we establish statement (1) and (3) of Theorem \ref{thm-2d}. 

\begin{dfn}\label{defn-almost-meromorphic}  We say a function $\Psi$ on $\mathbb{C}^{n}\times \H$ is \textbf{almost-meromorphic} if $\Psi$ 
can be written as a finite sum
$$
  \Psi(z_1, \cdots, z_n;\tau)=\sum_{k_1,\cdots,k_n,m\geq 0} \Psi_{k_1,\cdots,k_n;m}(z_1, \cdots, z_n;\tau) \bracket{\im z_1\over \im \tau}^{k_1}\cdots \bracket{\im z_n\over \im \tau}^{k_n} \bracket{1\over \im \tau}^{m}\,,
$$
where each $\Psi_{k_1,\cdots,k_n;m}(z_1, \cdots, z;\tau)$ is a meromorphic function on $\mathbb{C}^{n}\times \H$. 
\end{dfn}

\begin{dfn}\label{dfnalmostmeromorphicelliptic}
 Let $\RE_n$ denote the space of functions $\Psi$ on $\mathbb{C}^{n}\times \H$ such that
\begin{itemize}
\item $\Psi$ is elliptic and almost-meromorphic. 
\item Each component $\Psi_{k_1,\cdots,k_n;m}$ as in Definition \ref{defn-almost-meromorphic} is holomorphic away from diagonals. 
\end{itemize}
\end{dfn}


Let $\Psi \in \RE_n$. Then $\Psi(-;\tau)$ defines a function on $E_\tau^n$ with  possible poles only along all the diagonals of $E_\tau^n$. 
We consider the following regularized integral 
$$
\dashint_{E_\tau} {d^2 z_n\over \im \tau} \Psi
$$
which is a well-defined function on $\mathbb{C}^{n-1}\times \H$ by Proposition \ref{prop-push-poles}. 

\begin{prop}\label{prop-iteration} The regularized integral defines a map 
$$
\dashint_{E_\tau} {d^2 z_n\over \im \tau}: \RE_n\to \RE_{n-1}
\,. 
$$
If  $\Psi \in \RE_n$ is modular of weight $k$, then $\dashint_{E_\tau} {d^2 z_n\over \im \tau}\Psi$ is also modular of weight $k$. 

\end{prop}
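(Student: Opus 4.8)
The plan is to decompose the claim into two parts: first, that $\dashint_{E_\tau}\tfrac{d^2z_n}{\im\tau}$ preserves the class $\RE_\bullet$ (i.e. lands in $\RE_{n-1}$), and second, that it respects modular weight. For the first part I would fix a representative $\Psi\in\RE_n$ and write it in the form of Definition \ref{defn-almost-meromorphic}, as a finite sum of terms $\Psi_{k_1,\dots,k_n;m}\cdot\bigl(\tfrac{\im z_1}{\im\tau}\bigr)^{k_1}\cdots\bigl(\tfrac{\im z_n}{\im\tau}\bigr)^{k_n}\bigl(\tfrac1{\im\tau}\bigr)^m$. Since the regularized integral is $\C$-linear in the integrand and the variable $z_n$ only enters the factors $\Psi_{k_1,\dots,k_n;m}$ and $(\im z_n/\im\tau)^{k_n}$, it suffices to analyze $\dashint_{E_\tau}\tfrac{d^2z_n}{\im\tau}\,\Psi_{k_1,\dots,k_n;m}\,(\im z_n)^{k_n}$ for each term, with all the other (anti)holomorphic $z_i$-prefactors and powers of $\tfrac1{\im\tau}$ pulled outside. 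Using $\im z_n = \tfrac1{2i}(z_n-\bar z_n)$, expand $(\im z_n)^{k_n}$ binomially so that each piece is a product of a meromorphic function (holomorphic away from diagonals, by the hypothesis on $\Psi$ and the fact that $z_n^j$ is entire) times $\bar z_n^{\,\ell}$. Now I invoke Proposition \ref{prop-push-poles}: the regularized integral along $E_\tau$ of a form in $\A^\bullet(E_\tau\times X,\star D)$ lands in $\A^{\bullet-2}(X,\star D^{(2)})$, which here means the result has at worst holomorphic poles along the remaining diagonals $z_i - z_j - \lambda = 0$ for $i,j\le n-1$. To see that the result is again \emph{almost-meromorphic} of the required shape — polynomial in $\im z_i/\im\tau$ ($i\le n-1$) and $1/\im\tau$ with meromorphic coefficients — I would use the explicit formula behind Proposition \ref{prop-push-poles}'s proof together with Lemma \ref{lem-family}: writing the decomposition $\omega = \alpha + \pa\beta$ with $\alpha$ logarithmic, the regularized integral equals $\int_{E_\tau}\alpha$, and the presence of the factor $\bar z_n^{\,\ell}$ can be handled by integration by parts in $z_n$ (repeatedly using $\dbar\bar z_n = d\bar z_n$ and Stokes/residue, i.e. Theorem \ref{thm-de-Rham-noncompact} locally, or Proposition \ref{prop-residue}) to reduce the power of $\bar z_n$; each such step either produces a residue term (a boundary integral over a small loop, contributing a meromorphic function of the surviving variables, possibly with an extra $\bar z_i$ or $1/\im\tau$ from the prefactors) or a genuine absolutely convergent integral over $E_\tau$, which is a smooth function of $\tau,\bar\tau$. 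The ellipticity of the output in $z_1,\dots,z_{n-1}$ follows because the integrand is elliptic in those variables and $\dashint_{E_\tau}$ commutes with translations in them (a special case of Proposition \ref{prop-pullback}). Finally one checks that the surviving $\bar\tau$-dependence organizes into powers of $1/\im\tau$ with meromorphic coefficients: this is where one needs to know that $\int_{E_\tau}$ of a logarithmic form depending meromorphically on parameters, after the above reductions, yields such an expansion — this should follow from the fact that every $\bar z_n$ got eliminated and the only source of $\bar\tau$ is the explicit $\tfrac1{\im\tau}$ normalization together with factors like $\bar z_i$, whose integrals against meromorphic kernels are handled inductively.

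For the modular-weight statement, I would mimic the proof of Proposition \ref{propmodularityofregularizedintegral}. Fix $\gamma = \left(\begin{smallmatrix} a&b\\ c&d\end{smallmatrix}\right)\in\mathrm{SL}_2(\Z)$ and choose, for the given $\tau$, a fundamental parallelogram $\square_c$ for $E_\tau$ whose boundary avoids the poles of $\Psi(z_1,\dots,z_{n-1},-;\tau)$ for the relevant values of $z_1,\dots,z_{n-1}$. Then $\dashint_{E_\tau}\tfrac{d^2z_n}{\im\tau}\Psi = \dashint_{\square_c}\tfrac{d^2z_n}{\im\tau}\Psi$, and $\gamma\square_c$ is a fundamental domain for $E_{\gamma\tau}$. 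Performing the change of variables $z_n\mapsto \gamma z_n = z_n/(c\tau+d)$ and using Proposition \ref{prop-pullback} (the map $z_n\mapsto z_n/(c\tau+d)$ is biholomorphic on the relevant domains and maps poles to poles), together with $d^2(\gamma z_n) = |c\tau+d|^{-2} d^2 z_n$ and $\im(\gamma\tau) = |c\tau+d|^{-2}\im\tau$, one finds the Jacobian factors cancel exactly as in Proposition \ref{propmodularityofregularizedintegral}, so that
\begin{align*}
\Bigl(\dashint_{E_\tau}\tfrac{d^2z_n}{\im\tau}\Psi\Bigr)(\gamma z_1,\dots,\gamma z_{n-1};\gamma\tau)
&= \dashint_{E_\tau}\tfrac{d^2z_n}{\im\tau}\,\Psi(\gamma z_1,\dots,\gamma z_{n-1},\gamma z_n;\gamma\tau)\\
&= (c\tau+d)^k\,\dashint_{E_\tau}\tfrac{d^2z_n}{\im\tau}\,\Psi(z_1,\dots,z_n;\tau),
\end{align*}
using in the last line that $\Psi$ is modular of weight $k$ on $\C^n\times\H$. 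This is exactly the statement that $\dashint_{E_\tau}\tfrac{d^2z_n}{\im\tau}\Psi$ is modular of weight $k$ on $\C^{n-1}\times\H$.

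\textbf{Main obstacle.} The delicate point is the first part — verifying that the output is again \emph{almost-meromorphic of the prescribed polynomial type} in the $\im z_i/\im\tau$ and $1/\im\tau$ variables, rather than merely smooth with holomorphic poles. Proposition \ref{prop-push-poles} gives the pole structure for free, and Lemma \ref{lem-family} gives smoothness and a handle on $\bar\tau$-derivatives, but turning this into an explicit finite polynomial expansion requires carefully tracking how the factors $(\im z_n/\im\tau)^{k_n}$ interact with the $\pa$-primitive in the decomposition $\omega = \alpha + \pa\beta$: each integration by parts in $z_n$ that lowers a power of $\bar z_n$ must be shown not to generate uncontrolled $\bar\tau$-dependence, and one must confirm that residue contributions (which involve evaluating at $z_n = z_i + \lambda$ and hence can reintroduce $\im z_i$ and $\im\tau$ through the lattice shift $\lambda = p + q\tau$) still produce only meromorphic coefficients times bounded powers of $1/\im\tau$. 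I expect this is handled by an inductive argument on the total power of the $\bar z_i$'s and on $n$, possibly cleanest if one first proves the $n=1$ case (which is essentially Example \ref{expmoments} and Proposition \ref{prop-modular-completion} territory) and then bootstraps using Theorem \ref{thm-integral-product} to reduce higher $n$ to iterated one-variable integrals.
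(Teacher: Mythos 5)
Your second half (the modular-weight statement) is correct and is exactly the paper's argument: choose a fundamental parallelogram $\square_c$ avoiding the poles in $z_n$, note that $\gamma\square_c$ is a fundamental domain for $\Psi(\gamma z_1,\dots,\gamma z_{n-1},z_n;\gamma\tau)$ in $z_n$, and pull back via Proposition \ref{prop-pullback} so the Jacobian factors cancel against $\im(\gamma\tau)=|c\tau+d|^{-2}\im\tau$.

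The first half, however, contains a genuine gap, and you have correctly located it yourself in your ``main obstacle'' paragraph: everything except the claim that the output is again \emph{almost-meromorphic of the prescribed polynomial shape} is routine (ellipticity and the pole locations via Proposition \ref{prop-push-poles} are fine), but your proposed route --- binomially expanding $(\im z_n)^{k_n}$ into powers of $z_n$ and $\bar z_n$ and then ``integrating by parts to reduce the power of $\bar z_n$'', with an unspecified induction --- is never carried out, and as stated it does not obviously terminate in an expression of the required form. The paper's resolution is a single clean computation that you should be able to reproduce: write $\Psi=\sum_k\Psi_k\,(\im z_n/\im\tau)^k$ with $\Psi_k$ meromorphic in $z_n$, observe that $\Psi\,\tfrac{d^2z_n}{\im\tau}=-d\bigl(dz_n\sum_k\tfrac{\Psi_k}{k+1}(\tfrac{\im z_n}{\im\tau})^{k+1}\bigr)$ (an explicit $\bar z_n$-antiderivative, not an iterated reduction), and apply the Stokes formula for regularized integrals (Theorem \ref{thm-de-Rham}) on $\square_c$. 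The two $B$-sides cancel by $z_n\mapsto z_n+1$ periodicity; the two $A$-sides combine, using ellipticity under $z_n\mapsto z_n+\tau$, into a single $A$-cycle integral of $\sum_k\tfrac{\Psi_k}{k+1}\bigl((\tfrac{\im(c+\tau)}{\im\tau})^{k+1}-(\tfrac{\im c}{\im\tau})^{k+1}\bigr)$; and the interior contributions are residues $\oint_p$ at the poles $p=z_i+\lambda$. Expanding $(\im z_n)^{k+1}=(\im(z_n-p)+\im p)^{k+1}$ binomially and using Proposition \ref{prop-residue} (which lets you evaluate the antiholomorphic factor at $p$), each residue term is a meromorphic function of $z_1,\dots,z_{n-1},\tau$ times powers of $\tfrac{\im p}{\im\tau}=\tfrac{\im z_i}{\im\tau}+\tfrac{\im\lambda}{\im\tau}$ and $\tfrac{1}{\im\tau}$ --- exactly the structure defining $\RE_{n-1}$. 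This is the step your proposal gestures at but does not supply; without the explicit antiderivative and the bookkeeping of the residues at $p=z_i+\lambda$, the claim that only bounded powers of $1/\im\tau$ and $\im z_i/\im\tau$ with meromorphic coefficients survive remains unproven. (Your suggestion to bootstrap via Theorem \ref{thm-integral-product} is also circular here, since the proposition is precisely the single-variable integration step that makes the iterated integral well behaved.)
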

\begin{proof} Let $\Psi \in \RE_n$. It is clear that $\dashint_{E_\tau} {d^2 z_n\over \im \tau}\Psi$ is elliptic. By Proposition \ref{prop-push-poles}, it also has the required location of poles. We need to show that $\dashint_{E_\tau} {d^2 z_n\over \im \tau}\Psi$ is almost-meromorphic. Let
$$
f(z_1,\cdots, z_{n-1};\tau)=\dashint_{E_\tau} {d^2 z_n\over \im \tau}\Psi(z_1, \cdots, z_n;\tau)\,. 
$$

Given $z_1, \cdots, z_{n-1}$, we choose a parallelogram $\square_c$ in $\C$ with vertices $\{c,c+1,c+1+\tau, c+\tau\}$ such that the poles $D$ of $\Psi(z_1,\cdots,z_{n-1},-;\tau)$ as a function of $z_n$ do not lie on the boundary of $\square_c$. Let $A_c$ denote the interval from $c+\tau$ to $c+1+\tau$, and $B_c$ denote the interval from $c+1$ to $c+1+\tau$, as illustrated in
Fig. \ref{fig:parallelogramsquarec} below. 

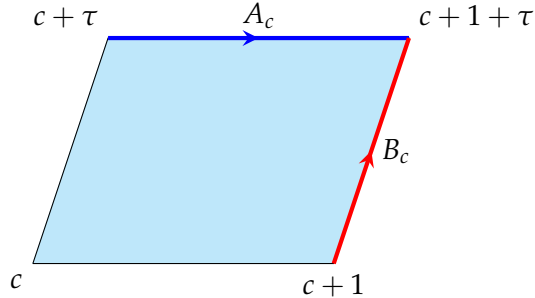
\begin{figure}[H]\centering
	\begin{tikzpicture}[scale=1]

\draw[cyan!23,fill=cyan!23](0,0)to(4,0)to(5,3)to(1,3)to(0,0);
\draw (0,0) node [below left] {$c$} to (4,0) node [below] {$c+1$} to (5,3) node [above right] {$c+1+\tau$} to (1,3) node [above left] {$c+\tau$} to (0,0);

\draw[ultra thick,blue,->-=.5,>=stealth](1,3)to(5,3);

\node [above] at (3,3) {$A_c$};

\draw[ultra thick,red,->-=.5,>=stealth](4,0)to(5,3);

\node [right] at (4.5,1.5) {$B_c$};

	\end{tikzpicture}
	\caption{Parallelogram $\square_c$.}\label{fig:parallelogramsquarec}
\end{figure}

Then we have 
$$
f(z_1,\cdots, z_{n-1};\tau)=\dashint_{\square_c} {d^2 z_n\over \im \tau}\Psi(z_1, \cdots, z_n;\tau)\,,
$$
and its value does not depend on the choice of $c$ by the translation invariance of the regularized integral
as shown in Proposition \ref{prop-pullback}.   Let
$$
  \Psi=\sum_{k} \Psi_{k}\bracket{\im z_n\over \im \tau}^{k}\,,
$$
where $\Psi_{k}$ is meromorphic in $z_n$ and almost-meromorphic in $\{z_1,\cdots, z_{n-1},\tau\}$. Using Theorem \ref{thm-de-Rham}, we find
\begin{align*}
&\dashint_{\square_c} {d^2 z_n\over \im \tau}\Psi(z_1, \cdots, z_n;\tau)
=\dashint_{\square_c} {dz_n\wedge d(\im z_n)\over \im \tau}\Psi(z_1, \cdots, z_n;\tau)\\
=&- \dashint_{\square_c} d\bracket{  dz_n \sum_{k}  {\Psi_{k}\over k+1}\bracket{\im z_n\over \im \tau}^{k+1}}\\
=&\int_{A_c}  dz_n \sum_{k}  {\Psi_{k}(-,z_n;\tau)\over k+1}\bracket{\im z_n\over \im \tau}^{k+1}-\int_{A_c}  dz_n \sum_{k}  {\Psi_{k}(-,z_n-\tau;\tau)\over k+1}\bracket{\im (z_n-\tau)\over \im \tau}^{k+1}\\
& + \sum_{p\in D} \oint_p dz_n \sum_{k}  {\Psi_{k}\over k+1}\bracket{\im z_n\over \im \tau}^{k+1}\\
=&\int_{A_c}  dz_n \sum_{k}  {\Psi_{k}(-,z_n;\tau)\over k+1}\bracket{\bracket{\im (c+\tau)\over \im \tau}^{k+1}-\bracket{\im c \over \im \tau}^{k+1}}
 + \sum_{p\in D} \oint_p dz_n \sum_{k}  {\Psi_{k}\over k+1}\bracket{\im z_n\over \im \tau}^{k+1}. 
\end{align*}
Here the $B$-cycle integration is cancelled out by the periodicity of $\Psi$ under $z_n\mapsto z_n+1$. 

Observe that at a pole $p\in D$, 
\begin{align*}
\oint_p dz_n \Psi_k\bracket{\im (z_n-p)+ \im p \over \im \tau}^{k+1}&= \sum_{a+b=k+1}\binom{k+1}{a}\oint_p dz_n \Psi_k\bracket{\im (z_n-p) \over \im \tau}^{a}\bracket{ \im p \over \im \tau}^{b}\\
&= \sum_{a+b=k+1}\binom{k+1}{a}\bracket{ \im p \over \im \tau}^{b}\oint_p dz_n \Psi_k\bracket{ z_n-p \over 2i \im \tau}^{a}.
\end{align*}
Here in the last step we have used Proposition \ref{prop-residue}. Since all the poles $p$ inside $\square_c$ are of the form $p= z_i+\lambda$  for some $\lambda\in \Lambda_\tau$, ${\im p\over \im \tau}$ has the form ${\im z_i\over \im \tau}+{\im \lambda \over \im \tau}$. It follows from the above expression that $f(z_1,\cdots, z_{n-1};\tau)$ is almost-meromorphic. 

Now let us assume $\Psi$ is modular of weight $k$ and $\gamma\in \mathrm{SL}_{2}(\mathbb{Z})$. Let $\gamma \square_c$ be the image of $\square_c$ under the $\gamma$-action. Then $\gamma \square_c$ is a fundamental domain for $\Psi(\gamma z_1,\cdots,\gamma z_{n-1},z_n;\gamma \tau)$ regarded as a function of $z_n$. Therefore
$$
f(\gamma z_1,\cdots, \gamma z_{n-1};\gamma \tau)=\dashint_{\gamma \square_c} {d^2 z_n\over \im (\gamma \tau)}\Psi(\gamma z_1, \cdots, \gamma z_{n-1},z_n;\gamma \tau).
$$
Using Proposition \ref{prop-pullback} and the modularity of $\Psi$, this is equal to
\begin{align*}
&\dashint_{\square_c} {d^2 (\gamma z_n)\over \im (\gamma \tau)}\Psi(\gamma z_1, \cdots, \gamma z_{n-1},\gamma z_n;\gamma \tau)=\dashint_{\square_c} {d^2 z_n\over \im \tau}\Psi(\gamma z_1, \cdots, \gamma z_{n-1},\gamma z_n;\gamma \tau)\\
=&(c\tau+d)^{k} \dashint_{\square_c} {d^2 z_n\over \im \tau}\Psi(z_1, \cdots, z_n;\tau)=(c\tau+d)^{k} f(z_1,\cdots,z_{n-1};\tau). 
\end{align*}
\end{proof}

 \begin{thm}\label{thm-modularity}
 Let $\Phi(z_1,\cdots, z_n;\tau)$ be a meromorphic elliptic  function on 
$\mathbb{C}^{n}\times \H$ which is holomorphic away from diagonals and modular of weight $k$.
Then
$$
\dashint_{E_\tau^n} \bracket{\prod_{i=1}^n {d^2z_i\over \im \tau}} \Phi
$$
is modular of weight $k$ and almost-holomorphic on $\H$.
 Its holomorphic limit 
 $$
\lim_{\bar\tau\to \infty} \dashint_{E_\tau^n} \bracket{\prod_{i=1}^n {d^2z_i\over \im \tau}} \Phi
 $$
 is quasi-modular of weight $k$ and holomorphic on $\H$.
 \end{thm}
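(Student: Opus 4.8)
The plan is to prove Theorem \ref{thm-modularity} by induction on $n$, reducing everything to the one-variable iteration map studied in Proposition \ref{prop-iteration}. First I would recall that by Theorem \ref{thm-integral-product} the regularized integral on $E_\tau^n$ is the $n$-fold iterated regularized integral
$$
\dashint_{E_\tau^n} \bracket{\prod_{i=1}^n {d^2z_i\over \im \tau}} \Phi
=\dashint_{E_\tau}{d^2z_1\over \im \tau}\cdots \dashint_{E_\tau}{d^2z_n\over \im \tau}\,\Phi\,,
$$
so it suffices to understand one application of $\dashint_{E_\tau}{d^2z_n/ \im \tau}$ at a time. The hypothesis on $\Phi$ (meromorphic, elliptic, holomorphic away from diagonals, modular of weight $k$) says exactly that $\Phi \in \RE_n$ in the sense of Definition \ref{dfnalmostmeromorphicelliptic} and is modular of weight $k$; in fact $\Phi$ is itself meromorphic, i.e.\ the ``almost'' part is trivial at the start.

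The core of the argument is Proposition \ref{prop-iteration}, which gives that $\dashint_{E_\tau}{d^2z_n/ \im \tau}$ maps $\RE_n$ to $\RE_{n-1}$ and preserves modularity of weight $k$. Applying this $n$ times, starting from $\Phi \in \RE_n$, we successively obtain elements of $\RE_{n-1}, \RE_{n-2}, \dots, \RE_1$, each modular of weight $k$, and finally after the $n$-th iteration an element of $\RE_0$ that is modular of weight $k$. Now $\RE_0$ is by definition the space of almost-meromorphic functions on $\H$ with no $z$-variables, i.e.\ finite sums $\sum_m \Psi_m(\tau)(\im\tau)^{-m}$ with $\Psi_m$ meromorphic in $\tau$; since each intermediate regularized integral has no actual poles in the $\tau$-plane (the original $\Phi$ is holomorphic in $\tau$ and the residues that appear in the computation in Proposition \ref{prop-iteration} are holomorphic in $\tau$ too), the final answer in fact lies in $\OO_\H[1/\im\tau]$. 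Combined with modularity of weight $k$, this is precisely the statement that $\dashint_{E_\tau^n}(\prod_i d^2z_i/\im\tau)\,\Phi$ is an almost-holomorphic modular form of weight $k$.

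Finally I would extract the statement about the holomorphic limit. By Definition \ref{dfn-holomorphiclimit}, $\lim_{\bar\tau\to\infty}$ keeps the $(\im\tau)^0$-coefficient $f_0(\tau)$, which is holomorphic on $\H$; it remains to see that $f_0$ is quasi-modular of weight $k$. This follows from the general fact (reviewed in Appendix \ref{secmodularformsellipticfunctions}) that the holomorphic limit of an almost-holomorphic modular form of weight $k$ is a quasi-modular form of weight $k$ — indeed the space of almost-holomorphic modular forms of weight $k$ is isomorphic, via $\tau \mapsto \frac{1}{\im\tau}$ substitution / $\lim_{\bar\tau\to\infty}$, to the space of quasi-modular forms of weight $k$. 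So I would simply invoke that correspondence.

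The main obstacle is entirely packaged into Proposition \ref{prop-iteration}, which is already proved in the excerpt; given that, Theorem \ref{thm-modularity} is a clean induction plus a citation to the almost-holomorphic/quasi-modular dictionary. The one point that deserves a sentence of care is checking that no spurious poles in $\tau$ are introduced along the way, so that the output genuinely lies in $\OO_\H[1/\im\tau]$ and not merely in $\mathfrak{M}_\H[1/\im\tau]$; this is immediate since $\Phi$ is holomorphic in $\tau$ and the residue operations along diagonals of the form $z_i = z_j + \lambda$ commute with holomorphic dependence on $\tau$, so holomorphy in $\tau$ is preserved at each stage of the iteration.
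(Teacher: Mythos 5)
Your proposal is correct and follows essentially the same route as the paper's own proof: rewrite the regularized integral as an iterated one via Theorem \ref{thm-integral-product}, apply Proposition \ref{prop-iteration} $n$ times to get modularity and membership in $\RE_0$, and invoke the Kaneko--Zagier correspondence for the quasi-modularity of the holomorphic limit. Your extra sentence checking that no poles in $\tau$ arise (so the output lands in $\OO_\H[1/\im\tau]$ rather than merely $\mathfrak{M}_\H[1/\im\tau]$) is a point the paper leaves implicit, and is a welcome clarification rather than a deviation.
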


\begin{proof} By Theorem \ref{thm-integral-product}, we have 
$$
\dashint_{E_\tau^n} \bracket{\prod_{i=1}^n {d^2z_i\over \im \tau}} \Phi = \dashint_{E_\tau}{d^2 z_1\over \im \tau}\cdots \dashint_{E_\tau}{d^2 z_n\over \im \tau} \Phi. 
$$
The modularity follows by applying Proposition \ref{prop-iteration} $n$ times. 
The last statement on the quasi-modularity follows from a general fact
about holomorphic limit in the theory of modular forms \cite{Kaneko:1995}.
\end{proof}
 \begin{rem}
 In the language of the theory of modular forms,
 the holomorphic limits of such iterated regularized integrals are called 
 weakly holomorphic quasi-modular forms, see Definition \ref{dfnquasimodularform}.
 \end{rem}

 \subsection{Regularized Feynman graph integrals}
 \label{subsecgraphintegrals}

One of the main motivation of this paper is to develop analytic methods for 2d chiral quantum field theories. Perturbative correlation functions of such theories are given by  sums of Feynman graph integrals, which are integrals on product of Riemann surfaces of differential forms with holomorphic poles on the big diagonal. When the theory is put on elliptic curves, Theorem \ref{thm-2d}  provides a powerful tool both for theoretical constructions and for practical computations. We illustrate this by the example of chiral boson. The same method applies to other theories such as chiral $bc$-systems and chiral $\beta\gamma$-systems. 

Let $\wp(z;\tau)$ be the Weierstrass $\wp$-function
$$
\wp(z;\tau)={1\over z^2}+\sum_{\lambda\in \Lambda_{\tau}-(0,0)} \bracket{{1\over (z+\lambda)^2} -{1\over \lambda^2}}. 
$$
Let 
$$
\widehat{P}(z_1,z_2;\tau,\bar\tau):=\wp(z_1-z_2;\tau)+{\pi^2\over 3}\widehat{E}_{2}(\tau,\bar{\tau}), \quad 
\widehat{E}_{2}(\tau,\bar{\tau})=E_{2}(\tau)-{3\over \pi }{1\over \im \tau}\,,
$$
where $E_2$ is the 2nd Eisenstein series. 
See Appendix
\ref{secmodularformsellipticfunctions} for more details.
Here we have specified the $\bar\tau$-dependence in $\widehat{P}(z_1,z_2;\tau,\bar\tau)$ which defines an elliptic function on $\C^2\times \H$ and is modular of weight $2$. As a meromorphic function on $E_\tau^2$, $\widehat{P}(-;\tau)$ has order 2 pole along the diagonal.

\begin{rem} Let $\phi$ be the field of free chiral boson. Then $\widehat{P}$ is the two-point function on $E_\tau$ (see e.g., \cite{Douglas:1995conformal,Dijkgraaf:1997chiral})
$$
\widehat{P}(z_1,z_2;\tau,\bar\tau)=\abracket{\pa \phi(z_1) \pa \phi(z_2)}_{E_\tau}. 
$$
Mathematically it is known as the Schiffer kernel \cite{Tyurin:1978, Takhtajan:2001} and is essentially given by the 2nd derivative of the Green's function associated to the flat metric on $E_{\tau}$. Its holomorphic limit gives the Bergman kernel associated to our canonical marking $\{A,B\}$ on $E_{\tau}$.
\end{rem}

Let $\Gamma$ be an oriented\footnote{The data of orientation on the graph is not strictly necessary for the Feynman graph integrals here, since
 $\widehat{P}$ is an even function. We however reserve this notion for potential generalizations such
as chiral $bc$-systems and chiral $\beta\gamma$-systems.} graph with no self-loops. Let $E(\Gamma)$ be its set of edges, and $V(\Gamma)$ be
its set of vertices with cardinality $n=|V(\Gamma)|$. We label the vertices by fixing an identification 
$$
V(\Gamma)\to \{1,2,\cdots, n\}\,. 
$$

The Feynman rule assigns to the graph $\Gamma$ a quantity
$$\Phi_{\Gamma}(z_1,\cdots,z_n;\tau, \bar\tau):=\prod_{e\in E(\Gamma)}\widehat{P}(z_{t(e)},z_{h(e)};\tau,\bar\tau) \,,
$$
where $h(e)$ is the head of the edge $e$ and $t(e)$ the tail. It is clear that $\Phi_{\Gamma}$ can be written as
$$
\Phi_{\Gamma}(z_1,\cdots,z_n;\tau, \bar\tau)=\sum_{m=0}^{|E(V)|} {\Phi_{\Gamma, m}(z_1,\cdots,z_n;\tau)\over (\im \tau)^m}\,,
$$
where $\Phi_{\Gamma, m}(z_1,\cdots,z_n;\tau)$'s are meromorphic functions on $\C^n\times \H$. Let us denote
$$
\lim_{\bar\tau\to \infty} \Phi_{\Gamma}:=\Phi_{\Gamma, 0}\,. 
$$
\begin{dfn}\label{dfn-graph}
We define the regularized Feynman graph integral $\widehat{I}_{\Gamma}$ for $\Gamma$ to be
$$\widehat{I}_{\Gamma}:=\dashint_{E_{\tau}^{n}}  \bracket{\prod_{i=1}^n {d^2z_i\over \im \tau}}
  \Phi_{\Gamma}(z_1,\cdots, z_n;\tau,\bar{\tau}) \,. 
  $$
By Corollary \ref{cor-Fubini}, $\widehat{I}_{\Gamma}$ does not depend the choice of the labeling on $V(\Gamma)$. 
\end{dfn}

The next lemma shows that these graph integrals satisfy a regularity condition at  $\tau=i\infty$.

 \begin{lem}[Regularity]\label{lem-regularity}Let  $f(x_{ij},y_{ij})\in \C[x_{ij},y_{ij}]_{1\leq i<j\leq n}$ be a polynomial. Let
 $$
 \Phi(z_1,\cdots, z_n;\tau)=f(\wp(z_{i}-z_{j};\tau),{1\over 2\pi i}\wp'(z_{i}-z_{j};\tau))\,.
 $$
 Then for any $\sigma\in S_{n}$ and disjoint representatives $A_1, \cdots, A_n$ of $A$-cycles, the $A$-cycle integral
$$
\int_{A_1}dz_{\sigma(1)}\cdots \int_{A_n}dz_{\sigma(n)} \,\Phi(z_1,\cdots, z_n;\tau)
$$
is holomorphic on $\H$ and extends to $\tau=i\infty$ by
$$
\lim_{\tau\to i\infty}\int_{A_1}dz_{\sigma(1)}\cdots \int_{A_n}dz_{\sigma(n)} \,\Phi=f(x_{ij}=-{\pi^2\over 3},y_{ij}=0)\,.
$$

 \end{lem}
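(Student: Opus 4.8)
The plan is to compute the ordered $A$-cycle integral explicitly using the Fourier expansion of $\wp$ and its derivatives around $\tau = i\infty$, i.e.\ on the Tate curve via the coordinate $u = e^{2\pi i z}$. First I would recall the standard $u$-expansions: with $q = e^{2\pi i \tau}$ one has
$$
\wp(z;\tau) = -\frac{\pi^2}{3} + (2\pi i)^2\sum_{m\in\Z}\frac{u\,q^m}{(1-u\,q^m)^2} - (2\pi i)^2\sum_{m\geq 1}\frac{2m\,q^m}{(1-q^m)^2},
$$
and similarly for $\frac{1}{2\pi i}\wp'(z;\tau)$. The key structural point is that each factor $\wp(z_i - z_j;\tau)$ in $\Phi$ is, as $\tau \to i\infty$, a convergent power series in $q$ whose $q^0$-term equals $-\frac{\pi^2}{3} + (2\pi i)^2\frac{u_{ij}}{(1-u_{ij})^2}$, where $u_{ij} = e^{2\pi i (z_i - z_j)}$, and this $q^0$-term is a rational function of the $u_i = e^{2\pi i z_i}$ with poles only along the diagonals. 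The analogous statement holds for $\frac{1}{2\pi i}\wp'$. Thus $\lim_{\tau\to i\infty}\Phi$ exists as a rational function of the $u_i$'s, and the ordered $A$-cycle integral at $q=0$ becomes an iterated contour integral on $\C^\ast$.

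Next I would interchange limit and integration. The $A_k$-cycle, pushed to the Tate curve, is a circle $|u_{\sigma(k)}| = r_k$ with radii ordered according to the $\epsilon_k$'s (concretely $r_k = e^{-2\pi\epsilon_k \im\tau}$, so $1 > r_1 > r_2 > \cdots > r_n > 0$ after accounting for orientation, or the reverse depending on conventions). The integrand $\Phi$ is jointly continuous in $\tau$ (for $\im\tau$ large) and in the $z_i$'s on the product of these fixed circles, which avoid the diagonals; the $q$-series converges uniformly there. Hence $\lim_{\tau\to i\infty}$ commutes with the iterated integral, giving
$$
\lim_{\tau\to i\infty}\int_{A_1}dz_{\sigma(1)}\cdots\int_{A_n}dz_{\sigma(n)}\Phi = \int_{A_1}dz_{\sigma(1)}\cdots\int_{A_n}dz_{\sigma(n)}\lim_{\tau\to i\infty}\Phi,
$$
where on the right the $A_j$'s are now honest nested circles in the $u$-coordinate. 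It follows in particular that the $A$-cycle integral is holomorphic on $\H$ — indeed it extends over the puncture $q=0$ — so the claimed holomorphicity is part of the same argument.

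Finally I would evaluate the right-hand iterated residue integral. Writing $g(u_{ij}) := -\frac{\pi^2}{3} + (2\pi i)^2\frac{u_{ij}}{(1-u_{ij})^2}$ for the $\wp$-factors and $h(u_{ij})$ for the $\wp'$-factors (both rational with $g(1) = \infty$, and crucially $g(u)\to -\frac{\pi^2}{3}$, $h(u)\to 0$ as $u\to 0$), the function $\lim_{\tau\to i\infty}\Phi$ is a rational function in $u_1,\dots,u_n$ whose only poles are on diagonals $u_i = u_j$. Integrating out the variables in the prescribed nested order: the innermost integral $\frac{1}{2\pi i}\oint_{|u_{\sigma(n)}|=r_n}\frac{du_{\sigma(n)}}{u_{\sigma(n)}}(\cdots)$ — since on $A_n$ the variable $z_{\sigma(n)}$ ranges over an interval giving $dz_{\sigma(n)} = \frac{1}{2\pi i}\frac{du_{\sigma(n)}}{u_{\sigma(n)}}$ — picks out, because $r_n$ is the smallest radius so no diagonal pole $u_{\sigma(n)} = u_{\sigma(j)}$ is encircled, simply the value at $u_{\sigma(n)} = 0$, which replaces every factor involving $z_{\sigma(n)}$ by $g(0) = -\frac{\pi^2}{3}$ or $h(0) = 0$. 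Iterating inward, each successive integral sees only the origin in its own variable and evaluates the remaining factors at the corresponding limit. The net effect is to substitute $u_{ij} = 0$ (equivalently $\wp = -\frac{\pi^2}{3}$, $\wp' = 0$) in every factor, yielding $f(x_{ij} = -\frac{\pi^2}{3}, y_{ij} = 0)$. The main obstacle is bookkeeping the nesting of the contours correctly — verifying that with the chosen ordering $0 < \epsilon_1 < \cdots < \epsilon_n < 1$ the innermost variable's circle is small enough (or, in the $u$-picture after the exponential map, positioned) so that no diagonal singularity lies between it and the origin at each stage — and handling the edge cases where a factor $\wp'(z_i - z_j)$ rather than $\wp(z_i - z_j)$ appears, for which the $q^0$-term and its value at the origin must both be checked; everything else is a routine residue computation.
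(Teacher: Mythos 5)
Your argument is correct and follows essentially the same route as the paper's proof: both pass to the Picard uniformization $u=e^{2\pi i z}$, expand $\wp$ (equivalently $P=\wp+\tfrac{\pi^2}{3}E_2$) as a $u$-series with $q$-dependent coefficients, and observe that as $q\to 0$ the nested contour integral localizes at $u=0$ in each successively integrated variable, yielding $f(x_{ij}=-\tfrac{\pi^2}{3},\,y_{ij}=0)$. The one point to state explicitly --- as the paper does --- is that the circles $|u_{\sigma(k)}|=|q|^{\epsilon_k}$ shrink as $q\to 0$, so before interchanging the limit with the integral (or integrating term by term) you should first deform each contour to a circle whose radius is independent of $\tau$, say between $\tfrac12$ and $1$ while preserving the nesting; this is legitimate because the integrand is meromorphic in each variable with poles only along the diagonals, and it is what makes the uniform convergence you invoke actually available.
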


\begin{proof}
By assumption,  $\Phi$ is a meromorphic elliptic function on $\mathbb{C}^{n}\times \H$ which is holomorphic away from diagonals. The $A$-cycle integral $\int_{A_1}dz_{\sigma(1)}\cdots \int_{A_n}dz_{\sigma(n)} \Phi$  is obviously holomorphic in $\tau$ on $\H$. We only need to show that it is holomorphic at $\tau=i\infty$. 

If $f$ is a constant, then 
$$
\int_{A_1}dz_{\sigma(1)}\cdots \int_{A_n}dz_{\sigma(n)}\, 1=1
$$
as desired. By linearity, we can assume $f$ is a non-constant monomial 
$$
f=\prod_{i>j}x_{ij}^{m_{ij}}y_{ij}^{\epsilon_{ij}}\,.
$$
Let 
 $$
\widetilde{\Phi}=f(P(z_i-z_j),{1\over 2\pi i}P^\prime(z_i-z_j)), \quad \text{where}\quad P(z)=\wp(z)+{\pi^{2}\over 3}E_{2}\,. 
 $$
 Since $\lim\limits_{\tau\to i\infty} E_2=1$, 
 proving the desired claims about the asymptotic at $\tau=i\infty$
 is equivalent to showing that 
 $$
 \lim_{\tau\to i\infty} \int_{A_1}dz_{\sigma(1)}\cdots \int_{A_n}dz_{\sigma(n)} \widetilde{\Phi}=0\,,\quad\forall\,\sigma\in S_{n}\,.
 $$
 
 We follow the approach
 outlined in 
 Remark \ref{remcomputationontatecurve} to evaluate the above integral.
 We first lift the functions $P(z), P'(z)$ along the Picard uniformization 
$$
u=\exp(2\pi i z), \qquad q=e^{2\pi i \tau}\,.
$$
By \eqref{eqnwponC*} in Appendix \ref{secmodularformsellipticfunctions}, we have the absolutely convergent series expansion in $u$
\begin{equation*}
P(u)=(2\pi i)^2\sum_{k\geq 1} {k u^{k}\over 1-q^{k}}
+(2\pi i)^2\sum_{k\geq 1} {k q^{k} u^{-k}\over 1-q^{k}}\,,\quad \text{valid in the region}\quad 
|q|<|u|<1\,. 
\end{equation*}
This can be written as
$$
P(u)=\sum_{k\neq 0}c_{k} u^{k}\,,\quad c_{k}=(2\pi i)^2 {k\over 1-q^{k}}, \quad |q|<|u|<1\,. 
$$

\begin{figure}[H]\centering
	\begin{tikzpicture}[scale=1]

\fill[cyan!23, even odd rule](10,1.5)circle[radius=3] circle[radius=1];
\draw[very thick, cyan!50, ->-=.5,>=stealth](10,1.5)circle(3);

\draw[cyan!23](10,1.5)circle(1); 
\node [right] at (13,1.5) {$1$};
\filldraw[black](13,1.5) circle (2pt);
\node [left] at (10.75,2.2) {$q$};
\filldraw[](10.75,2.2) circle (0.5pt);

\draw[very thick, blue,->-=.5,>=stealth](10,1.5)circle(2.7);
\node at (12.70,1.5) {$A_1$};
\draw[very thick, blue,->-=.5,>=stealth](10,1.5)circle(2.2);
\node at (12.20,1.5) {$A_2$};
\node at (11.75,1.5) {$\cdots$};
\draw[ very thick, blue,->-=.5,>=stealth](10,1.5)circle(1.25);
\node at (11.25,1.5) {$A_n$};
	\end{tikzpicture}
	\caption{$A$-cycle representatives on the $u$-plane.}
	  \label{fig:standardordering}
\end{figure}
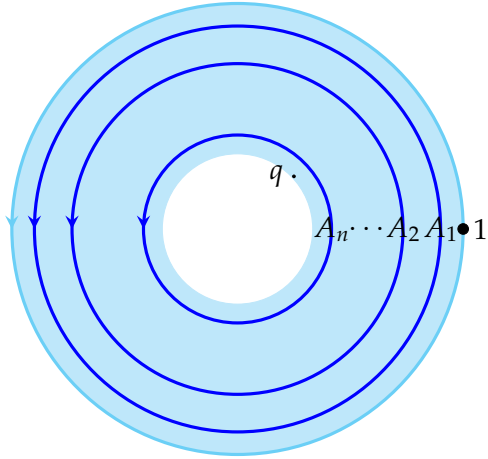

We only consider the case when $\sigma=1\in S_{n}$, and the $A$-cycles are ordered and represented on the $u$-plane within the region $|q|<|u|<1$ as in Fig. \ref{fig:standardordering}.
The same argument
applies to other cases in the way explained in Remark \ref{remotherorderings}.
Then
$$
\int_{A_1}dz_1\cdots \int_{A_n}dz_n \widetilde{\Phi}={1\over (2\pi i)^n}\int_{A_1}{du_1\over u_1} \cdots \int_{A_n}{du_n\over u_n} \prod_{i>j}(P(u_{ij}))^{m_{ij}}(u_{ij}\pa_{u}P(u_{ij}))^{\epsilon_{ij}}\,. 
$$
Here $u_{ij}=u_i/u_j$. Notice that
$$
 |q|<|u_{ij}|<1\quad \text{if}\quad i>j, \quad u_i\in A_i\,. 
$$
So we can use the above power series expression for $P(u_{ij})$. We are interested in the region when $|q|$ is very small ($\tau\to i\infty$). We fix the radius of each $A_i$, say between ${1\over 2}$ and $1$, and assume $|q|<{1\over 4}$.  Then the power series expansion of $ \prod_{i>j}(P(u_{ij}))^{m_{ij}}(u_{ij}\pa_{u}P(u_{ij}))^{\epsilon_{ij}}$ is uniformly absolutely convergent within the integration region and $|q|$ small, so we can integrate term by term and compute the limit $q\to 0$. 

Since
$$
{1\over (2\pi i)^n}\int_{A_1}{du_1\over u_1} \cdots \int_{A_n}{du_n\over u_n}  u_1^{k_1}u_2^{k_2}\cdots u_n^{k_n}=\delta_{k_1,0}\delta_{k_2,0}\cdots \delta_{k_n,0}\,, 
$$
the value of the $A$-cycle integral  is given by 
 the coefficient  of the constant term 
 $$
 u_1^0 u_2^0 \cdots u_n^0
 $$
 of the series expansion of $ \prod_{i>j}(P(u_{ij}))^{m_{ij}}(u_{ij}\pa_{u}P(u_{ij}))^{\epsilon_{ij}}$. 
 
Let us assume $\sum\limits_{i<n}(m_{ni}+ \epsilon_{ni})>0$. Otherwise the integral does not depend on $u_n$, so we can integrate out $u_n$ first and repeat this process to arrive at this situation. Consider the series expansion that involves $u_n$
$$
 \prod_{i<n}(P(u_{ni}))^{m_{ni}}(u_{ij}\pa_{u}P(u_{ij}))^{\epsilon_{ni}}, \quad P(u_{ni})=\sum_{k\neq 0}c_{k} u_{ni}^{k}=\sum_{k\neq 0}c_{k} {u_n^k\over u_i^k}\,. 
$$
Each term that has $u_n^0$-order contains at least one factor of $c_k$ with $k<0$. Since
$$
\lim_{q\to 0}c_k=\begin{cases}(2\pi i)^2 k & k>0\,,\\
 0 & k<0\,. 
\end{cases}
$$
We find the desired vanishing property
$$
\lim_{q\to 0}{1\over (2\pi i)^n}\int_{A_1}{du_1\over u_1} \cdots \int_{A_n}{du_n\over u_n} \prod_{i>j}(P(u_{ij}))^{m_{ij}}(u_{ij}\pa_{u}P(u_{ij}))^{\epsilon_{ij}}=0\,. 
$$

\end{proof}

\begin{rem} The regularity result Lemma
\ref{lem-regularity} actually holds for more general 
meromorphic elliptic functions. For example, one also has the holomorphicity at $\tau=i\infty$ for the $A$-cycle integral of a meromorphic elliptic function of the form
$$\Phi(z_1,\cdots,z_n;\tau)=\prod_{1\leq i<j\leq n}{\theta(z_{i}-z_{j}+c_{ij};\tau)\over \theta(z_{i}-z_{j};\tau)}\,.$$
Here $\theta(z;\tau)$ is the unique theta function with odd characteristic of genus one that 
vanishes at $z=0$, $c_{ij}$ are constants that could depend linearly in $\tau$.
This can be proved in a similar way as Lemma
\ref{lem-regularity} by using the Jacobi triple product formula for $\theta$
on $\mathbb{C}^{*}$.
Regularized integrals of functions of this form
include Feynman graph integrals  that appear in chiral $bc$-systems and chiral $\beta\gamma$-systems. 

\end{rem}

\begin{thm}\label{thm-Feynmangraphintegralsmodular} 
For each oriented graph $\Gamma$ with no self-loops, the regularized Feynman graph integral 
$\widehat{I}_{\Gamma}$
  is an almost-holomorphic modular form
 of weight $2|E(\Gamma)|$.
  Its holomorphic limit is given by 
$$
\lim_{\bar\tau\to \infty}\widehat{I}_{\Gamma}={1\over n!}\sum_{\sigma\in S_n} \int_{A_{1}} dz_{\sigma(1)}\cdots \int_{A_{n}} dz_{\sigma(n)} \,\lim_{\bar{\tau}\rightarrow \infty}
\Phi_{\Gamma}(z_1,\cdots, z_n;\tau,\bar{\tau}) $$
which is a holomorphic quasi-modular form of the same weight. Here $A_1, \cdots, A_n$ are $n$ disjoint representatives of the $A$-cycle class on $E_\tau$. 
\end{thm}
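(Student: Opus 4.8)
The plan is to feed the integrand $\Phi_{\Gamma}$ into the machinery of Section \ref{sec-3} in two complementary ways: once as an element of the ring $\RE_n$ of almost-meromorphic elliptic functions (Definition \ref{dfnalmostmeromorphicelliptic}), which yields the modularity, and once after expanding it in powers of $1/\im\tau$ with genuinely meromorphic coefficients, which yields the almost-holomorphicity and identifies the holomorphic limit.

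First I would record that $\widehat{P}(z_1,z_2;\tau,\bar\tau)=\wp(z_1-z_2;\tau)+\frac{\pi^{2}}{3}\widehat{E}_{2}(\tau,\bar\tau)$ lies in $\RE_2$: it is elliptic, almost-meromorphic, its only component carrying a pole, namely $\wp(z_1-z_2;\tau)+\frac{\pi^{2}}{3}E_{2}(\tau)$, is holomorphic away from the diagonal, and it is modular of weight $2$ since $\wp(z;\tau)$ is homogeneous of weight $2$ in the lattice and $\widehat{E}_{2}$ is the weight-$2$ almost-holomorphic completion of $E_2$ (Appendix \ref{secmodularformsellipticfunctions}). Since $\RE_n$ is a subalgebra of the almost-meromorphic elliptic functions, stable under pullback along the coordinate maps $(z_1,\dots,z_n)\mapsto(z_{t(e)},z_{h(e)})$, the product $\Phi_{\Gamma}=\prod_{e\in E(\Gamma)}\widehat{P}(z_{t(e)},z_{h(e)};\tau,\bar\tau)$ lies in $\RE_n$ and is modular of weight $2|E(\Gamma)|$. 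Using Theorem \ref{thm-integral-product} to write $\widehat{I}_{\Gamma}=\dashint_{E_\tau}\frac{d^2z_1}{\im\tau}\cdots\dashint_{E_\tau}\frac{d^2z_n}{\im\tau}\,\Phi_{\Gamma}$ and applying Proposition \ref{prop-iteration} $n$ times — each application sends $\RE_k$ to $\RE_{k-1}$ and preserves the weight — gives that $\widehat{I}_{\Gamma}\in\RE_0$ and is modular of weight $2|E(\Gamma)|$.

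Next I would extract almost-holomorphicity and the holomorphic limit. Write $\Phi_{\Gamma}=\sum_{m=0}^{|E(\Gamma)|}\Phi_{\Gamma,m}(z_1,\dots,z_n;\tau)\,(\im\tau)^{-m}$ with each $\Phi_{\Gamma,m}$ a meromorphic elliptic function holomorphic away from the diagonals and $\Phi_{\Gamma,0}=\lim_{\bar\tau\to\infty}\Phi_{\Gamma}=\prod_{e}P(z_{t(e)},z_{h(e)};\tau)$. For fixed $\tau$ the regularized integral is $\C$-linear in the integrand and the factors $(\im\tau)^{-m}$ are constants from its point of view, so
\[
\widehat{I}_{\Gamma}=\sum_{m=0}^{|E(\Gamma)|}\frac{1}{(\im\tau)^{m}}\dashint_{E_{\tau}^{n}}\Bigl(\prod_{i=1}^{n}\frac{d^2z_i}{\im\tau}\Bigr)\Phi_{\Gamma,m}.
\]
Theorem \ref{thm-2d}(1), applied to each $\Phi_{\Gamma,m}$, shows every summand lies in $\OO_{\H}[\frac{1}{\im\tau}]$, hence so does $\widehat{I}_{\Gamma}$; and taking the holomorphic limit annihilates all $m\geq 1$ terms, while Theorem \ref{thm-2d}(2) evaluates the surviving term as $\frac{1}{n!}\sum_{\sigma\in S_n}\int_{A_1}dz_{\sigma(1)}\cdots\int_{A_n}dz_{\sigma(n)}\lim_{\bar\tau\to\infty}\Phi_{\Gamma}$. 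It then remains to upgrade ``almost-holomorphic on $\H$'' to ``almost-holomorphic modular form'', i.e.\ to control the behavior at $\tau=i\infty$: expanding $\prod_e P(z_{t(e)},z_{h(e)};\tau)=\prod_e\bigl(\wp(z_{t(e)}-z_{h(e)};\tau)+\frac{\pi^{2}}{3}E_{2}(\tau)\bigr)$ as a polynomial in the $\wp(z_i-z_j;\tau)$ with coefficients in $\C[E_2]$, Lemma \ref{lem-regularity} shows each ordered $A$-cycle integral of a $\wp$-monomial extends holomorphically to $\tau=i\infty$, and $E_2$ is holomorphic there; so $\lim_{\bar\tau\to\infty}\widehat{I}_{\Gamma}$ is holomorphic on $\H$ and at the cusp, and being the holomorphic limit of the modular $\widehat{I}_{\Gamma}$ it is a holomorphic quasi-modular form of weight $2|E(\Gamma)|$ (cf.\ Theorem \ref{thm-modularity} and Appendix \ref{secmodularformsellipticfunctions}). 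Finally, an element of $\OO_{\H}[\frac{1}{\im\tau}]$ that transforms modularly of weight $k$ and whose holomorphic limit is holomorphic at the cusp is an almost-holomorphic modular form of weight $k$ (Appendix \ref{secmodularformsellipticfunctions}); applied to $\widehat{I}_{\Gamma}$ this finishes the argument.

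The bookkeeping between $\RE_n$, $\OO_{\H}[\frac{1}{\im\tau}]$ and the space of almost-holomorphic modular forms is routine, as are the verifications that $\RE_n$ is an algebra and that $\widehat{P}$ is modular of weight $2$. The step that genuinely requires work is the regularity at the cusp: the general results of Section \ref{sec-3} only control things over $\H$, and there is no reason a priori for the $q$-expansion of $\widehat{I}_{\Gamma}$ to be free of negative powers of $q$. This is exactly where the specific form of the chiral boson propagator $\widehat{P}=\wp+\frac{\pi^{2}}{3}\widehat{E}_{2}$ — as opposed to an arbitrary modular elliptic integrand — enters, through the explicit $u$-expansion exploited in the proof of Lemma \ref{lem-regularity}. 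I expect this to be the main obstacle; everything else is an application of Proposition \ref{prop-iteration} and Theorem \ref{thm-2d}.
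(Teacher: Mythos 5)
Your proposal is correct and follows essentially the same route as the paper's proof: decompose $\Phi_{\Gamma}=\sum_{m}\Phi_{\Gamma,m}(\im\tau)^{-m}$, apply Theorem \ref{thm-2d} to each meromorphic component to get almost-holomorphicity and the holomorphic limit, obtain modularity from the weight-$2$ modularity of $\widehat{P}$ (via Proposition \ref{prop-iteration}, which you make explicit where the paper is terse), and invoke Lemma \ref{lem-regularity} for regularity at the cusp. You have also correctly identified the cusp regularity as the one step that uses the specific form of the propagator rather than the general machinery.
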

\begin{proof} $\widehat{I}_{\Gamma}$ is modular of weight $2|E(\Gamma)|$ since $\widehat{P}$ is modular of weight $2$. By Theorem \ref{thm-2d}
$$
\dashint_{E_{\tau}^{n}}  \bracket{\prod_{i=1}^n {d^2z_i\over \im \tau}}
  \Phi_{\Gamma,m}(z_1,\cdots, z_n;\tau)\in  \OO_{\H}[{1\over \im \tau}]
$$
for each $m$. Therefore $\widehat{I}_{\Gamma}\in  \OO_{\H}[{1\over \im \tau}]$ and 
\begin{align*}
\lim_{\bar \tau\to \infty}\widehat{I}_{\Gamma}=&\lim_{\bar \tau\to \infty}\dashint_{E_{\tau}^{n}}  \bracket{\prod_{i=1}^n {d^2z_i\over \im \tau}}
  \Phi_{\Gamma,0}(z_1,\cdots, z_n;\tau)\\
  =& {1\over n!}\sum_{\sigma\in S_n} \int_{A_{1}} dz_{\sigma(1)}\cdots \int_{A_{n}} dz_{\sigma(n)} \,\lim_{\bar{\tau}\rightarrow \infty}
\Phi_{\Gamma}(z_1,\cdots, z_n;\tau,\bar{\tau}). 
\end{align*}
By Lemma \ref{lem-regularity} , $\lim\limits_{\bar \tau\to \infty}\widehat{I}_{\Gamma}$ is holomorphic on $\H$ and holomorphically extended to $\tau=i\infty$. Therefore
$\widehat{I}_{\Gamma}$ is an almost-holomorphic modular form
and $\lim\limits_{\bar \tau\to \infty}\widehat{I}_{\Gamma}$
is a holomorphic quasi-modular form, both of weight $2|E(V)|$.
\end{proof}

We discuss some examples to illustrate how to compute $\widehat{I}_{\Gamma}$ using Theorem \ref{thm-Feynmangraphintegralsmodular}.

\begin{ex}\label{exFeymangraphintegral1}
Consider the following graph $\Gamma_\ell$ in Fig. \ref{fig:Feynmangraph1} with $2$ vertices and $\ell$ edges.

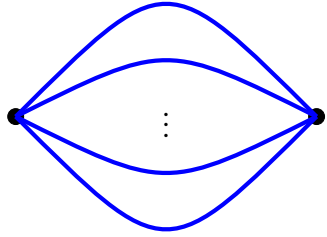
\begin{figure}[h]\centering
	\begin{tikzpicture}[scale=1]

\filldraw[black](0,0) circle (3pt);
\filldraw[black](4,0) circle (3pt);
\draw[ultra thick, blue] (0,0)..controls (2,2)  and (2,2) .. (4,0);
\draw[ultra thick, blue] (0,0)..controls (2,1)  and (2,1) .. (4,0);
\node[] at (2,0){$\vdots$};
\draw[ultra thick, blue] (0,0)..controls (2,-1)  and (2,-1) .. (4,0);
\draw[ultra thick, blue] (0,0)..controls (2,-2)  and (2,-2) .. (4,0);


	\end{tikzpicture}
	\caption{The banana graph $\Gamma_\ell$: $2$ vertices and $\ell$ edges.}
	  \label{fig:Feynmangraph1}
\end{figure}

The regularized Feynman graph integral is
$$\widehat{I}_{\Gamma_\ell}=\dashint_{E_{\tau}^2} \bracket{\prod_{i=1}^2 {d^2z_i\over \im \tau}} \widehat{P}^{\ell}(z_1,z_2;\tau,\bar\tau)=\dashint_{E_{\tau}^2} \bracket{\prod_{i=1}^2 {d^2z_i\over \im \tau}} \bracket{\wp(z_1-z_2;\tau)+{\pi^2\over 3}\widehat{E}_{2}(\tau,\bar{\tau})}^\ell.
$$
Translation symmetry implies that 
\begin{align*}
\lim_{\bar\tau \to \infty}\widehat{I}_{\Gamma_\ell}&=\int_{A_1} dz _1\int_{A_2} dz_2 \bracket{\wp(z_1-z_2;\tau)+{\pi^2\over 3}{E}_{2}(\tau)}^\ell\\
&=\int_{A} dz \bracket{\wp(z;\tau)+{\pi^2\over 3}{E}_{2}(\tau)}^\ell.
\end{align*}
Here $A$ is a representative of the $A$-cycle class away from the origin $O\in E_\tau$ which is the pole of the 2nd kind Abelian differential 
$$
\varphi=\bracket{\wp(z;\tau)+{\pi^2\over 3}{E}_{2}(\tau)}^\ell dz\,. 
$$
By Proposition \ref{prop-modular-completion}, 
$$
\widehat{I}_{\Gamma_\ell}= \int_{A}\varphi -{1\over 2 i \im \tau}\cdot  2\pi i\,\langle \varphi, dz\rangle_{\mathrm{P}}. 
$$
Theorem \ref{thmstructuretheorem} implies that $\widehat{I}_{\Gamma_\ell}$
is recovered from $\lim\limits_{\bar\tau \to \infty}\widehat{I}_{\Gamma_\ell}$
by writing the latter in terms of a polynomial in $E_{2},E_{4},E_{6}$,
then replacing $E_{2}$ by $\widehat{E}_{2}$.
\end{ex}

We list the results for the first few graphs up to $\ell=3$ as follows. 
From Example \ref{expmoments}, we have 
 the following formulae in terms of holomorphic quasi-modular forms
\begin{align*}
  &\int_A dz \bracket{\wp(z;\tau)+{\pi^2\over 3}{E}_{2}(\tau)}=0\,,\\
  &\int_A dz \bracket{\wp(z;\tau)+{\pi^2\over 3}{E}_{2}(\tau)}^2=\pi^{4}{
-E_{2}^2+E_{4} \over 9}\,,\\
  &\int_A dz \bracket{\wp(z;\tau)+{\pi^2\over 3}{E}_{2}(\tau)}^3=\pi^{6}
  {
-10 E_{2}^3 +6 E_{2}E_{4}+4  E_{6}\over  
   5\cdot 27}\,.
\end{align*}
By Theorem \ref{thm-Feynmangraphintegralsmodular}, the regularized graph integrals are given by their modular completions 
\begin{align*}
  \widehat{I}_{\Gamma_1}=0,\quad \widehat{I}_{\Gamma_2}=\pi^{4}{
-\widehat{E}_{2}^2+E_{4} \over 9}, \quad \widehat{I}_{\Gamma_3}=\pi^{6}
  {
-10 \widehat{E}_{2}^3 +6 \widehat{E}_{2}E_{4}+4   E_{6}\over  
   5\cdot 27}\,.
\end{align*}

\begin{ex}\label{exFeymangraphintegral2}
Consider the following graph $\Gamma_{\Delta}$ with 3 vertices and 3 edges (Fig.   \ref{fig:Feynmangraph2}).

\begin{figure}[H]\centering
	\begin{tikzpicture}[scale=1]
%

\filldraw[black](0,0) circle (3pt);
\filldraw[black](4,0) circle (3pt);
\filldraw[black](2,3) circle (3pt);
\draw[ultra thick, blue] (0,0)to(4,0)to(2,3)to(0,0);

	\end{tikzpicture}
	\caption{The triangle graph $\Gamma_\Delta$:  $3$ vertices and  $3$ edges.}
	  \label{fig:Feynmangraph2}
\end{figure}
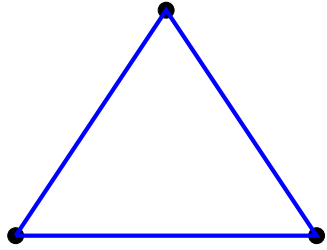

The regularized Feynman graph integral is
\begin{eqnarray*}
\widehat{I}_{\Gamma_{\Delta}}&=&\dashint_{E_{\tau}^3} \bracket{\prod_{i=1}^3 {d^2z_i\over \im \tau}} \Phi_{\Gamma_{\Delta}}
\,,\quad
\Phi_{\Gamma_{\Delta}}=\widehat{P}(z_1,z_2) \widehat{P}(z_2,z_3)
\widehat{P}(z_3,z_1)
\,.
\end{eqnarray*}

By Theorem \ref{thm-Feynmangraphintegralsmodular}  and Theorem \ref{thmstructuretheorem}, it suffices to compute 
the iterated $A$-cycle integrals.
In the current case there is a permutation symmetry on the graph, hence iterated $A$-cycle integrals with different orderings give rise to the same result.
Therefore we have
\begin{equation*}
\lim_{\bar \tau\to \infty}\widehat{I}_{\Gamma_{\Delta}}=
\int_{A} dz_3 \int_{A} dz_2   \int_{A} dz_1\,
P(z_{1}-z_{2})P(z_{2}-z_{3})P(z_{3}-z_{1})\,,\quad
P(z)=\wp(z)+{\pi^2\over 3}E_{2}
\,.
\end{equation*}
Following the approach outlined in Remark
\ref{remcomputationontatecurve}, we obtain
\begin{eqnarray*}
 \int_{A} dz_1\,
P(z_{1}-z_{2})P(z_{2}-z_{3})P(z_{3}-z_{1})
&=&(2\pi i)^{4} P(z_{2}-z_{3})
\sum_{k\geq 1} {k^{2}q^{k}\over (1-q^{k})^2}({u_{3}\over u_{2}})^{k}\,,\\
  \int_{A} dz_2\int_{A} dz_1\,
P(z_{1}-z_{2})P(z_{2}-z_{3})P(z_{3}-z_{1})
&=&(2\pi i)^{6} 
\sum_{k\geq 1} {k^{3}(q^{k}+q^{2k})\over (1-q^{k})^3}\,.
\end{eqnarray*}
Using  \eqref{eqn-FourierexpansionsE2k}, \eqref{eqn-Ramanujanidentities}, we obtain 
$$(2\pi i)^6 \sum_{k\geq 1} {k^3 (q^{k}+ q^{2k})\over (1-q^{k})^3 }
=-{1\over 24 } \cdot (2\pi i)^6  (q{ d\over dq})^2 E_{2}={1\over 12^3} (2\pi i)^6 (-E_{2}^3+3E_{2}E_{4}-2E_{6})
\,.
$$
Using Theorem \ref{thm-Feynmangraphintegralsmodular} and Theorem \ref{thmstructuretheorem}  we then have
$$\widehat{I}_{\Gamma_{\Delta}}= {1\over 12^3} (2\pi i)^6 (-\widehat{E}_{2}^3+3\widehat{E}_{2}E_{4}-2E_{6})\,.$$
As a comparison, 
a straightforward way of evaluating the iterated regularized 2d integrals 
is presented in Appendix \ref{secstraightforwardevaluation}.
\end {ex}

\begin{ex}\label{exFeymangraphintegral3}
Consider the graphs
 $\Gamma_{1}, \Gamma_{2}$
in Fig.  \ref{fig:Feynmangraph3} below. These are the only two trivalent graphs with 4 vertices and no self-loops.

\begin{figure}[H]\centering

	\begin{tikzpicture}[scale=1]

\filldraw[black](0,0) circle (3pt);
\filldraw[black](4,0) circle (3pt);
\filldraw[black](2,3) circle (3pt);
\filldraw[black](2,1) circle (3pt);
\draw[ultra thick, blue] (0,0)to(4,0)to(2,3)to(0,0);
\draw[ultra thick, blue] (0,0)to(2,1);
\draw[ultra thick, blue] (4,0)to(2,1);
\draw[ultra thick, blue] (2,3)to(2,1);

\node [below] at (0,-0.25) {$1$};
\node [below] at (4, -0.25) {$2$};
\node [below] at (2, 3.65) {$3$};
\node [below] at (2, 0.85) {$4$};
\node [below] at (2, -1) {$\Gamma_1$};

\filldraw[black](7,0) circle (3pt);
\filldraw[black](11,0) circle (3pt);
\filldraw[black](11,3) circle (3pt);
\filldraw[black](7,3) circle (3pt);
\draw[ultra thick, blue] (7,0)..controls (6.25,1.5)  and (6.25,1.5) .. (7,3);
\draw[ultra thick, blue] (7,0)..controls (7.75,1.5)  and (7.75,1.5) .. (7,3);
\draw[ultra thick, blue] (11,0)..controls (10.25,1.5)  and (10.25,1.5) .. (11,3);
\draw[ultra thick, blue] (11,0)..controls (11.75,1.5)  and (11.75,1.5) .. (11,3);
\draw[ultra thick, blue] (7,0)to(11,0);
\draw[ultra thick, blue] (7,3)to(11,3);

\node [below] at (7,-0.25) {$1$};
\node [below] at (11, -0.25) {$2$};
\node [below] at (7, 3.65) {$4$};
\node [below] at (11, 3.65) {$3$};

\node [below] at (9, -1) {$\Gamma_2$};

	\end{tikzpicture}
	\caption{Two trivalent graphs $\Gamma_{1}, \Gamma_{2}$ with $4$ vertices each.}
	  \label{fig:Feynmangraph3}
\end{figure}
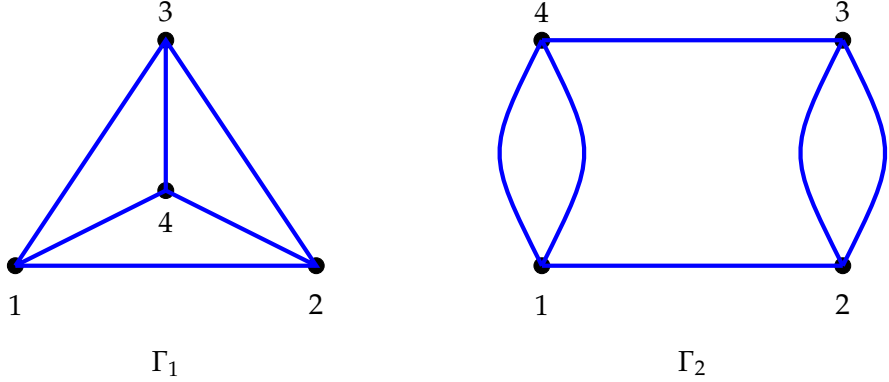

The ordered $A$-cycle integrals for these graphs are studied in e.g.,
 \cite[Section 6]{Roth:2009mirror}, \cite[Example 3.5]{Boehm:2014tropical}, \cite[Section 5.4, Section 9.3]{Goujard:2016counting}.
 In particular, the mixed-weight phenomenon for the ordered $A$-cycle integrals 
discovered 
 in \cite{Goujard:2016counting,Oberdieck:2018} are demonstrated on these examples. 
 
 As pointed out in the above-cited works, changing the labeling of the vertices
 is equivalent to changing the ordering for the iterated $A$-cycle integrals. This is also evident from our definition of ordered $A$-cycle integrals.
 Hence we can stick to a particularly chosen labeling for the graph as
 we are going to consider all possible orderings. 
 
Consider first the Feynman graph integral associated to $\Gamma_{1}$.
We fix the labeling of vertices to be the one indicated in Fig.  \ref{fig:Feynmangraph3}.
The function $\Phi_{\Gamma_1}(z_1,z_2,z_3,z_4;\tau)$ associated to this 
labeled graph is 
$$\Phi_{\Gamma_1}(z_1,z_2,z_3,z_4;\tau)=\widehat{P}(z_1-z_2)\widehat{P}(z_2-z_3)\widehat{P}(z_3-z_1)\widehat{P}(z_1-z_4)
\widehat{P}(z_2-z_4)\widehat{P}(z_3-z_4)\,.$$
It is invariant under the action of $S_4$ which permutes the vertices labeled by $1,2,3,4$. 
Since all these ordered $A$-cycles integrals are equal, according to Theorem \ref{thm-2d} they have pure modular weight $12$.
Indeed,  their holomorphic limits are given by  (see e.g., \cite[Section 9.3]{Goujard:2016counting}) the following quasi-modular form of pure weight $12$
\begin{eqnarray*}
\bold{C}:={(2\pi i)^{12}\over 2^{11}\cdot 3^{5}}(-E_{2}^6+3E_{2}^4 E_{4}-3E_{2}^2 E_{4}^2+E_{4}^{3})\,.
\end{eqnarray*}
Note that the results in  \cite{Goujard:2016counting} are expressed in terms of the basis $G_{2k}=-{B_{2k}\over 4k} E_{2k},k\geq 1$, here we use the $E_{2k}$'s whose normalizations are more convenient in consideration of Lemma \ref{lem-regularity}. The average of such quantities is thus $\bold{C}$ itself
\begin{eqnarray*}
{1\over 4!}\sum_{\sigma\in S_4} \int_{A}dz_{\sigma(1)}
\cdots \int_{A} dz_{\sigma(4)}\lim_{\bar{\tau}\rightarrow \infty} \Phi_{\Gamma_1}
&=& {(2\pi i)^{12}\over 2^{11}\cdot 3^{5}}(-E_{2}^6+3E_{2}^4 E_{4}-3E_{2}^2 E_{4}^2+E_{4}^{3})
\,.
\end{eqnarray*}
Applying Theorem \ref{thm-2d}, we obtain the following result for the regularized integral 
$$\widehat{I}_{\Gamma_1}=
\dashint_{E_{\tau}^4} \bracket{\prod_{i=1}^4 {d^2z_i\over \im \tau}} \Phi_{\Gamma_{1}}
={(2\pi i)^{12}\over 2^{11}\cdot 3^{5}}(-\widehat{E}_{2}^6+3\widehat{E}_{2}^4 E_{4}-3\widehat{E}_{2}^2 E_{4}^2+E_{4}^{3})\,.
$$

For the regularized Feynman graph integral associated to $\Gamma_{2}$,
we fix the labeling of vertices to be the one indicated in Fig.  \ref{fig:Feynmangraph3}.
The associated function $\Phi_{\Gamma_2}(z_1,z_2,z_3,z_4;\tau)$ is
$$\Phi_{\Gamma_2}(z_1,z_2,z_3,z_4;\tau)=\widehat{P}(z_1-z_2)\widehat{P}^2(z_2-z_3)\widehat{P}(z_3-z_4)\widehat{P}^2(z_4-z_1)\,.$$
It is invariant under the automorphism group $G$ of the labeled graph which is generated by
horizontal and vertical flips.
Among the $4!=24$
ordered $A$-cycles integrals it suffices to consider
$4!/|G|=6$ of them.
The same reasoning as in the $\Gamma_1$ case tells that the holomorphic limits of these ordered $A$-cycle
integrals are linear combinations of quasi-modular forms of weight $12$ and $10$.
The results for these integrals, which we quote from
\cite[Section 5.4, Section 9.3]{Goujard:2016counting}, are as follows.
Let
\begin{eqnarray*}
\bold{a}:&=&{(2\pi i)^{12}\over 2^{10}\cdot 3^{7}}(-3E_{2}^6+6E_{2}^4 E_{4}+4 E_{2}^3 E_{6}-3 E_{2}^2 E_{4}^2-12 E_{2}E_{4}E_{6}+4E_{4}^{3}+4E_{6}^2)\,,\\
\bold{b}:&=&{(2\pi i)^{12}\over 2^{6} \cdot 3^{5}\cdot 5\cdot 7}(-7E_{2}^3 E_{4}-3E_{2}^4 E_{6}+3E_{2} E_{4}^2+7 E_{4}E_{6})\,.
\end{eqnarray*}
Then $2$ out of the $6$ inequivalent ordered $A$-cycle integrals are equal to
$$\bold{A}:=\int_{A}dz_{4}\int_{A}dz_{3}\int_{A}dz_{2}\int_{A}dz_{1}\,\lim_{\bar{\tau}\rightarrow \infty}\Phi_{\Gamma_1}=\bold{a}-2\bold{b}\,.$$
The other $4$ are equal to
$$\bold{B}:=\int_{A}dz_{2}\int_{A}dz_{3}\int_{A}dz_{4}\int_{A}dz_{1}\,\lim_{\bar{\tau}\rightarrow \infty}\Phi_{\Gamma_1}=\bold{a}+\bold{b}\,.$$
The average of the holomorphic limits of ordered $A$-cycle integrals is then
\begin{eqnarray*}
{1\over 4!}\sum_{\sigma\in S_4} \int_{A}dz_{\sigma(1)}
\cdots \int_{A} dz_{\sigma(4)}\lim_{\bar{\tau}\rightarrow \infty} \Phi_{\Gamma_2}
&=& {1\over 4!} \cdot |G| \cdot (\bold{A}\cdot 2+ \bold{B}\cdot 4)=\bold{a}
\,.
\end{eqnarray*}
Applying Theorem \ref{thm-2d}, we obtain the following result for the regularized integral 
$$\widehat{I}_{\Gamma_2}=
\dashint_{E_{\tau}^4} \bracket{\prod_{i=1}^4 {d^2z_i\over \im \tau}} \Phi_{\Gamma_{2}}
={(2\pi i)^{12}\over 2^{10}\cdot 3^{7}}(-3\widehat{E}_{2}^6+6\widehat{E}_{2}^4 E_{4}+4 \widehat{E}_{2}^3 E_{6}-3 \widehat{E}_{2}^2 E_{4}^2-12 \widehat{E}_{2}E_{4}E_{6}+4E_{4}^{3}+4E_{6}^2)\,.
$$

\end{ex}

\subsection{Proof of Theorem \ref{thm-2d}} \label{sec-proof}

In this subsection, we complete the proof of Theorem \ref{thm-2d}. The statements (1) and (3) of Theorem \ref{thm-2d} follow from Theorem \ref{thm-modularity}. We next compute the limit 
$$
\lim_{\bar\tau \to \infty} \dashint_{E_\tau^n} \bracket{\prod_{i=1}^n {d^2z_i\over \im \tau}} \Phi(z_1,\cdots, z_n;\tau). 
$$

Let $\square_c$ ($c\in \C$) denote the parallelogram in $\C$ with vertices $\{c,c+1,c+1+\tau, c+\tau\}$. 
Let $A_c^+, A_c^-, B_c^+, B_c^-$ denote the intervals as illustrated in 
Fig. \ref{fig:parallelogramsquarecwithcycles}.

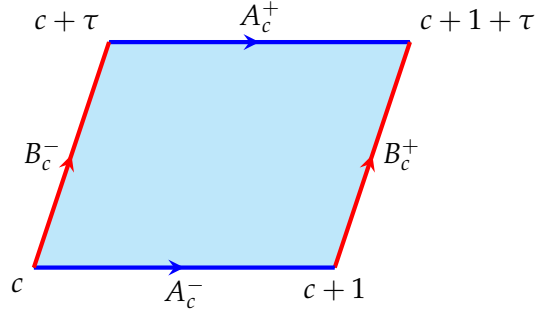
\begin{figure}[H]\centering
	\begin{tikzpicture}[scale=1]

\draw[cyan!23,fill=cyan!23](0,0)to(4,0)to(5,3)to(1,3)to(0,0);
\draw (0,0) node [below left] {$c$} to (4,0) node [below] {$c+1$} to (5,3) node [above right] {$c+1+\tau$} to (1,3) node [above left] {$c+\tau$} to (0,0);

\draw[ultra thick,blue,->-=.5,>=stealth](1,3)to(5,3);

\node [above] at (3,3) {$A_c^+$};

\draw[ultra thick,red,->-=.5,>=stealth](4,0)to(5,3);

\node [right] at (4.5,1.5) {$B_c^+$};

\node [below] at (2,0) {$A_c^-$};

\draw[ultra thick,blue,->-=.5,>=stealth](0,0)to(4,0);

\draw[ultra thick,red,->-=.5,>=stealth](0,0)to(1,3);

\node [left] at (0.5,1.5) {$B_c^-$};

	\end{tikzpicture}
	\caption{Intervals on the parallelogram $\square_c$.}\label{fig:parallelogramsquarecwithcycles}
\end{figure}

\begin{lem}\label{lem-integral}Let $\Psi$ be an almost-meromorphic elliptic function on $\C\times \H$. Let us write
$$
\Psi=\sum_{k} \Psi_k \bracket{\im z\over \im \tau}^k, \quad \Psi_k=\sum_{m=0}^{n_k}{\Psi_{k,m} \over (\im \tau)^m}, \quad \text{where $\Psi_{k,m}$ is meromorphic on $\C\times \H$}\,.  
$$
Let $\square_c$ be a parallelogram whose boundary does not meet poles of $\Psi$. Then 
\begin{align*}
  \dashint_{E_\tau} {d^2z \over \im \tau} \Psi &= \sum_k {1\over k+1} \int_{A_c^+} dz\,{\Psi_k(z)}+\sum_{k} {1\over k+1} \sum_{w\in D}\bracket{{\im w\over \im \tau}}^{k+1}\oint_w dz\,  {\Psi_k(z) } \\
   &\quad + \sum_{k} {1\over k+1} \sum_{j=1}^{k+1} {1\over (\im \tau)^j}  \binom{k+1}{j}  \sum_{w\in D} \bracket{ {\im w\over \im \tau}}^{k+1-j} \oint_w dz \,\Psi_k(z)\bracket{{z-w\over 2i}}^j  .
\end{align*}
Here $D$ consists of the poles of $\Psi$ inside $\square_c$. 
\end{lem}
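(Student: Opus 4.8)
The plan is to reduce the statement to the Stokes formula for regularized integrals (Theorem \ref{thm-de-Rham}) on a fundamental parallelogram, and then to repackage the resulting residues by a binomial expansion. Since $\Psi$ is elliptic and $\pa\square_c$ avoids its poles, one has $\dashint_{E_\tau}\frac{d^2z}{\im \tau}\Psi = \dashint_{\square_c}\frac{d^2z}{\im \tau}\Psi$, exactly as in the proof of Proposition \ref{propmodularityofregularizedintegral} (or via Theorem \ref{thm-PV}); we take $c\in\R$, so that $\im c=0$ (the poles arising in our applications lie off the real axis, so this is harmless). Writing $d^2z = dz\wedge d(\im z)$ and using linearity of $\dashint$ together with the finiteness of the sum $\Psi=\sum_k\Psi_k\bigl(\frac{\im z}{\im\tau}\bigr)^k$, it suffices to compute $\dashint_{\square_c}\frac{d^2z}{\im\tau}\Psi_k\bigl(\frac{\im z}{\im\tau}\bigr)^k$ for each fixed $k$, where $\Psi_k$ is meromorphic in $z$.

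For fixed $k$ I would introduce $\eta_k:=dz\cdot\frac{\Psi_k(z)}{k+1}\bigl(\frac{\im z}{\im\tau}\bigr)^{k+1}\in\A^1(\square_c,\star D)$. Since $\Psi_k$ is meromorphic in $z$, only the differential of the factor $\bigl(\frac{\im z}{\im\tau}\bigr)^{k+1}$ contributes to $d\eta_k$, and a short computation gives $d\eta_k = -\frac{d^2z}{\im\tau}\Psi_k\bigl(\frac{\im z}{\im\tau}\bigr)^k$. Theorem \ref{thm-de-Rham} then yields $\dashint_{\square_c}\frac{d^2z}{\im\tau}\Psi_k\bigl(\frac{\im z}{\im\tau}\bigr)^k = -\dashint_{\square_c}d\eta_k = 2\pi i\,\Res_{\square_c}(\eta_k)-\int_{\pa\square_c}\eta_k$. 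For the boundary term, decompose $\pa\square_c = A_c^- + B_c^+ - A_c^+ - B_c^-$. Ellipticity of $\Psi$ together with the invariance of $\im z$ under $z\mapsto z+1$ gives $\Psi_k(z+1)=\Psi_k(z)$, so $\eta_k$ is invariant under $z\mapsto z+1$ and the two $B$-contributions cancel; on $A_c^-$ one has $\im z=\im c=0$, so $\eta_k|_{A_c^-}=0$ (here $k\geq 0$); and on $A_c^+$ one has $\im z=\im\tau$. Hence $-\int_{\pa\square_c}\eta_k = \int_{A_c^+}\eta_k = \frac{1}{k+1}\int_{A_c^+}dz\,\Psi_k(z)$, which, summed over $k$, is the first sum in the claimed formula.

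It remains to unwind $2\pi i\,\Res_{\square_c}(\eta_k)=\sum_{w\in D}\oint_w\eta_k$. For each pole $w$ I would write $\im z = \im w + \im(z-w)$, expand $\bigl(\frac{\im z}{\im\tau}\bigr)^{k+1}$ by the binomial theorem, and then write $\im(z-w)=\frac{1}{2i}\bigl((z-w)-\overline{(z-w)}\bigr)$. By Proposition \ref{prop-residue}, applied to the holomorphic function $z-w$ which vanishes at $w$, every factor of $\overline{(z-w)}$ annihilates $\oint_w$; hence only the purely holomorphic pieces $\bigl(\frac{z-w}{2i}\bigr)^j$ survive, giving
\[
\oint_w\eta_k=\frac{1}{k+1}\sum_{j=0}^{k+1}\binom{k+1}{j}\Bigl(\frac{\im w}{\im\tau}\Bigr)^{k+1-j}\frac{1}{(\im\tau)^j}\oint_w dz\,\Psi_k(z)\Bigl(\frac{z-w}{2i}\Bigr)^j.
\]
Summing over $w\in D$ and over $k$, the $j=0$ contribution is the second sum in the statement and the $j\geq 1$ contribution is the third sum, which completes the proof.

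The step I expect to be the main obstacle is the residue manipulation in the last paragraph: one must justify carefully that the factor $\bigl(\frac{\im z}{\im\tau}\bigr)^{k+1}$, which is neither holomorphic nor antiholomorphic, may — inside $\oint_w$ — be replaced by the holomorphic expression obtained after discarding every $\overline{(z-w)}$, and that the (finite) binomial expansion commutes with the local residue operator $\oint_w$. This is precisely where Proposition \ref{prop-residue} and the explicit local formula of Lemma \ref{lem-limit} are needed; everything else is Stokes' theorem plus bookkeeping.
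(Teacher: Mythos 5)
Your overall route is the paper's own: reduce to Stokes' theorem (Theorem \ref{thm-de-Rham}) on the parallelogram and unwind the residues with Proposition \ref{prop-residue}; in particular your residue computation is exactly the one in the paper (the paper first replaces $\bar z$ by $\bar w$ inside $\oint_w$ and then binomially expands, you expand first and discard the $\overline{(z-w)}$ factors afterwards --- the same manipulation). The genuine gap is in the boundary term. You normalize $c\in\R$ so that $\eta_k$ vanishes on $A_c^-$ and restricts to $\tfrac{1}{k+1}\Psi_k\,dz$ on $A_c^+$, which is what allows you to argue separately for each fixed $k$. But the lemma is stated, and is later applied in the proof of Theorem \ref{thm-2d}, for parallelograms $\square_c$ with $c$ \emph{not} real: there $c_1=-c_0$ with $c_0$ an interior point of $\square_0$ (so $\im c_0\neq 0$ in general), and $c_i=-c_0+\epsilon_1 w_1+\cdots+\epsilon_{i-1}w_{i-1}$ depends on the earlier integration variables. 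Your justification (``the poles lie off the real axis'') addresses the wrong issue: the right-hand side of the lemma genuinely depends on $c$ --- through the representative $A_c^+$, through which lattice translates of the poles lie inside $\square_c$, and through the values $\im w$ --- so proving the identity for one $c$ does not yield it for another, and no translation $z\mapsto z+c$ is available since $\Psi$ is invariant only under lattice translations.

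For $\im c\neq 0$ the term-by-term argument fails because the individual coefficients $\Psi_k$ are \emph{not} elliptic: ellipticity of $\Psi$ gives $\Psi_k(z+1)=\Psi_k(z)$, but under $z\mapsto z+\tau$ one only gets the mixed relation $\sum_k\Psi_k(z+\tau)\bigl(\tfrac{\im z}{\im\tau}+1\bigr)^k=\sum_k\Psi_k(z)\bigl(\tfrac{\im z}{\im\tau}\bigr)^k$, so $\int_{A_c^+}\eta_k-\int_{A_c^-}\eta_k$ is not $\tfrac{1}{k+1}\int_{A_c^+}\Psi_k\,dz$ for each fixed $k$. The paper's fix is to sum over $k$ before touching the boundary: with $\Upsilon=\sum_k\tfrac{\Psi_k}{k+1}\bigl(\tfrac{\im z}{\im\tau}\bigr)^{k+1}$ one has $\pa_{\bar z}\bigl(\Upsilon(z+\tau)-\Upsilon(z)\bigr)=\tfrac{i}{2\im\tau}\bigl(\Psi(z+\tau)-\Psi(z)\bigr)=0$, so $\Upsilon(z+\tau)-\Upsilon(z)$ is meromorphic and hence equals the degree-zero part $\sum_k\tfrac{\Psi_k(z+\tau)}{k+1}$ of the corresponding polynomial in $\tfrac{\im z}{\im\tau}$; integrating this over $A_c^-$ and shifting by $\tau$ produces the first sum for arbitrary admissible $c$. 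With that replacement the rest of your argument goes through verbatim.
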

\begin{proof} Let 
$$
\Upsilon=\sum_k {\Psi_k\over k+1}\bracket{\im z\over \im \tau}^{k+1},\quad \pa_{\bar z} \Upsilon={i\over 2\im \tau} \Psi\,. 
$$
The relation
$\Psi(z+1)=\Psi(z)$ implies $\Upsilon(z+1)=\Upsilon(z)$. The relation $\Psi(z+\tau)=\Psi(z)$ implies
$$
\pa_{\bar z} \bracket{\Upsilon(z+\tau) -\Upsilon(z)}={i\over 2 \im\tau}\bracket{\Psi(z+\tau)-\Psi(z)}=0\,. 
$$
This further implies 
$$
   \Upsilon(z+\tau) -\Upsilon(z)=\sum_k {\Psi_k(z+\tau)\over k+1}\,.
$$
By Theorem \ref{thm-de-Rham} and Proposition \ref{prop-residue}
\begin{align*}
  \dashint_{E_\tau} {d^2z\over \im \tau} \Psi &=  \dashint_{\square_c} {d^2z\over \im \tau} \Psi =   \dashint_{\square_c} \Psi  dz \wedge {d \im z\over \im \tau}=-\dashint_{\square_c} d\bracket{\Upsilon dz}\\
   &= \int_{A_c^-} dz \bracket{\Upsilon(z+\tau) -\Upsilon(z)}-\int_{B_c^-}dz \bracket{\Upsilon(z+1) -\Upsilon(z)}+\sum_{w\in D}\oint_w \Upsilon dz\\
   &=\int_{A_c^-} dz \sum_k {\Psi_k(z+\tau)\over k+1}+\sum_{k} {1\over k+1} \sum_{w\in D}\oint_w dz  \bracket{\Psi_k(z) \bracket{{z-\bar z\over 2i \im \tau}}^{k+1}} \\
      &= \sum_k {1\over k+1} \int_{A_c^+} dz\, {\Psi_k(z)}+\sum_{k} {1\over k+1} \sum_{w\in D}\oint_w dz  \bracket{\Psi_k(z) \bracket{{z-\bar w\over 2i \im \tau}}^{k+1}} \\
   &= \sum_k {1\over k+1} \int_{A_c^+} dz \,{\Psi_k(z)}+\sum_{k} {1\over k+1} \sum_{w\in D}\oint_w dz  \bracket{\Psi_k(z) \bracket{{z-w\over 2i \im \tau}+{\im w\over \im \tau}}^{k+1}}\\
   &= \sum_k {1\over k+1} \int_{A_c^+} dz\, {\Psi_k(z)}+\sum_{k} {1\over k+1} \sum_{w\in D}\bracket{{\im w\over \im \tau}}^{k+1}\oint_w dz \, {\Psi_k(z) } \\
   &\quad + \sum_{k} {1\over k+1} \sum_{j=1}^{k+1} {1\over (\im \tau)^j}  \binom{k+1}{j}  \sum_{w\in D} \bracket{ {\im w\over \im \tau}}^{k+1-j} \oint_w dz\, \Psi_k(z)\bracket{{z-w\over 2i}}^j  \,.
\end{align*}

\end{proof}

Our strategy to prove Theorem \ref{thm-2d} is to apply Lemma \ref{lem-integral} $n$ times to

 $$
 \dashint_{E_\tau^n} \bracket{\prod_{i=1}^n {d^2z_i}} \Phi(z_1,\cdots, z_n;\tau)=\dashint_{E_\tau} {d^2z_{1}} \cdots \dashint_{E_\tau}  {d^2z_{n}} \Phi(z_1,\cdots, z_n;\tau)
 $$
 as an iterated integral over parallelogram $\square_c$'s 
 $$
 \dashint_{\square_{c_1}} {d^2z_{1}} \cdots \dashint_{\square_{c_n}}  {d^2z_{n}} \Phi(z_1,\cdots, z_n;\tau)\,. 
 $$
One immediate difficulty is that we need to ensure all poles lie in the interior of parallelograms in each step of  integration.  Therefore we have to choose the shift $c_i$'s suitably.  
 
Let $c_0\in \square_0$ be a chosen point such that
 $$
z\in \square_{-c_0}\Rightarrow -{1\over 2}z\in \text{interior of}\  \square_{-c_0}\,. 
$$
Such $c_0$ always exists and can be chosen to be independent of $\tau$ under a small perturbation of $\tau$. Let $\epsilon_1, \cdots, \epsilon_{n-1}\in (0,1)$ be small enough positive real numbers satisfying 
$$
 \epsilon_{i+1}+\cdots +\epsilon_{n-1}<{1\over 2}\epsilon_i\,, \quad \forall 1\leq i\leq n-2\,. 
$$

Consider the following linear change of variables $z_i\mapsto w_i$
$$
\begin{pmatrix}
z_1\\
z_2\\
\vdots\\
\vdots\\
z_n
\end{pmatrix}
=\begin{pmatrix}
1 & 0 &  0 & \cdots & 0\\
\epsilon_1 & 1 & 0&  \cdots & 0\\
\epsilon_1 & \epsilon_2 & 1 & \cdots & 0\\
\vdots & \vdots & \vdots & \vdots & \vdots\\
\epsilon_1 & \epsilon_2 & \cdots & \epsilon_{n-1} & 1
\end{pmatrix}
\begin{pmatrix}
w_1\\
w_2\\
\vdots\\
\vdots\\
w_n
\end{pmatrix}\,.
$$

 Let 
$$
\begin{cases}
c_1=-c_0 \\
c_2=c_2(z_1)=-c_0+ \epsilon_1 w_1 \\
 c_3=c_3(z_1,z_2)= -c_0+\epsilon_1 w_1+\epsilon_2 w_2\\
 \vdots \\
 c_{n}=c_n(z_1,\cdots,z_{n-1})= -c_0+ \epsilon_1 w_1 + \epsilon_2w_2+\cdots+ \epsilon_{n-1} w_{n-1}\,. 
\end{cases}
$$
Here $c_i$ is viewed as a function of $z_1,\cdots, z_{i-1}$ under the inverse of the above transformation. 
  
\begin{lem}\label{Claim 1}  For any $2\leq i\leq n$, if $z_1\in \square_{c_1},\cdots, z_{i-1}\in \square_{c_{i-1}}$, then 
$$
\text{$z_1, \cdots, z_{i-1}$ lie in the interior of $\square_{c_i}$\,.} 
$$
\end{lem}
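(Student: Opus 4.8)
The plan is to pass to the coordinates $w=(w_1,\dots,w_n)$ introduced above, in which the whole system of shifts $c_i$ becomes adapted to the single parallelogram $\square_{-c_0}$. Since the matrix relating $z$ and $w$ is lower-triangular with unit diagonal, $w_j$ depends only on $z_1,\dots,z_j$, and from $z_j=w_j+\sum_{l<j}\epsilon_l w_l$ and $c_j=-c_0+\sum_{l<j}\epsilon_l w_l$ one reads off $z_j-c_j=w_j+c_0$ for every $j$. Using $\square_{c_j}=c_j+\square_0$ and $\square_{-c_0}=\square_0-c_0$, this shows that the hypothesis ``$z_1\in\square_{c_1},\dots,z_{i-1}\in\square_{c_{i-1}}$'' is \emph{equivalent} to ``$w_1,\dots,w_{i-1}\in\square_{-c_0}$''. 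Thus no induction on $i$ is needed: the first step is to record this equivalence, which reduces the statement to proving that if $w_1,\dots,w_{i-1}\in\square_{-c_0}$ then for each $k$ with $1\le k\le i-1$ the point $z_k-c_i-c_0$ lies in the interior of $\square_{-c_0}$, equivalently $z_k$ lies in the interior of $\square_{c_i}$.

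The second step is the substitution
\[
z_k-c_i-c_0=\Big(w_k+\sum_{l<k}\epsilon_l w_l\Big)-\sum_{l<i}\epsilon_l w_l=(1-\epsilon_k)\,w_k-\sum_{l=k+1}^{i-1}\epsilon_l\,w_l ,
\]
valid since $k\le i-1$. So the claim becomes: a combination of the points $w_l\in\square_{-c_0}$ with coefficient $1-\epsilon_k$ on $w_k$ and coefficients $-\epsilon_l$ on $w_l$ for $k<l\le i-1$ lands strictly inside $\square_{-c_0}$.

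The third step proves this by a coordinate estimate in the real basis $\{1,\tau\}$ of $\C$. Write $\square_{-c_0}=m+Q$ with $Q=\{\,s\cdot 1+t\cdot\tau : |s|\le\tfrac12,\ |t|\le\tfrac12\,\}$ the centered fundamental parallelogram and $m=m_1\cdot 1+m_2\cdot\tau$ its center; a direct check shows that the defining property of $c_0$, namely $-\tfrac12\square_{-c_0}\subset$ interior of $\square_{-c_0}$, is equivalent to $|m_1|<\tfrac16$ and $|m_2|<\tfrac16$. Writing $w_l=m+s_l\cdot1+t_l\cdot\tau$ with $|s_l|,|t_l|\le\tfrac12$ and setting $\delta:=\sum_{l=k+1}^{i-1}\epsilon_l$, the coefficient of $1$ in $\big((1-\epsilon_k)w_k-\sum_{l=k+1}^{i-1}\epsilon_l w_l\big)-m$ equals $-(\epsilon_k+\delta)m_1+\big((1-\epsilon_k)s_k-\sum_{l=k+1}^{i-1}\epsilon_l s_l\big)$. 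The hypothesis on the $\epsilon$'s gives $\delta\le\sum_{l=k+1}^{n-1}\epsilon_l<\tfrac12\epsilon_k$, so this number is bounded in absolute value by
\[
(\epsilon_k+\delta)\,|m_1|+\tfrac12\,(1-\epsilon_k+\delta)<\tfrac32\epsilon_k\cdot\tfrac16+\tfrac12\big(1-\tfrac12\epsilon_k\big)=\tfrac14\epsilon_k+\tfrac12-\tfrac14\epsilon_k=\tfrac12 ,
\]
and the same estimate holds for the coefficient of $\tau$. Hence $\big((1-\epsilon_k)w_k-\sum_{l=k+1}^{i-1}\epsilon_l w_l\big)-m$ lies in the interior of $Q$, i.e.\ $z_k-c_i-c_0$ lies in the interior of $\square_{-c_0}$, which is exactly the assertion $z_k\in$ interior of $\square_{c_i}$.

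There is no genuine obstacle here — the statement is a quantitative bookkeeping lemma — but the point that needs care is the interplay of the two smallness conditions: one must keep the coefficient of $w_k$ as $1-\epsilon_k$ rather than crudely bounding it by $1$, and one must use the \emph{strict} inequalities $\delta<\tfrac12\epsilon_k$ and $|m_1|,|m_2|<\tfrac16$, so that the final estimate comes out strictly below $\tfrac12$ and the point lands in the \emph{interior} of $\square_{c_i}$ (which is what later guarantees that the poles of the partially integrated form avoid the boundary of the next integration domain). The factors $\tfrac12$ in the condition $\epsilon_{i+1}+\cdots+\epsilon_{n-1}<\tfrac12\epsilon_i$ and the $\tfrac16$ secretly encoded in the choice of $c_0$ are precisely calibrated so that these numbers fit.
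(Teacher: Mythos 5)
Your proof is correct, and its skeleton is the same as the paper's: both pass to the $w$-coordinates, observe that the hypothesis $z_j\in\square_{c_j}$ for $j<i$ is equivalent to $w_j\in\square_{-c_0}$, and reduce the claim to the identity $z_k-c_i-c_0=(1-\epsilon_k)w_k-\sum_{l=k+1}^{i-1}\epsilon_l w_l$. The only divergence is how the final containment in the interior of $\square_{-c_0}$ is established. The paper rewrites the right-hand side as $(1-\epsilon_k)w_k+\sum_{l}(2\epsilon_l)\bigl(-\tfrac12 w_l\bigr)$, notes that $w_k$ and each $-\tfrac12 w_l$ lie in $\square_{-c_0}$ by the defining property of $c_0$, and concludes by convexity, since the total weight $(1-\epsilon_k)+2\sum_l\epsilon_l<1$ and $0$ lies in the interior of $\square_{-c_0}$. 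You instead unpack the defining property of $c_0$ into the explicit bound $|m_1|,|m_2|<\tfrac16$ on the center of $\square_{-c_0}$ and run a coordinate estimate in the real basis $\{1,\tau\}$; I checked both your claimed equivalence and the estimate $(\epsilon_k+\delta)|m_1|+\tfrac12(1-\epsilon_k+\delta)<\tfrac12$, and they are correct (including the edge case $k=i-1$, where $\delta=0$). The paper's convexity argument is shorter and treats the property of $c_0$ as a black box, while your computation makes transparent exactly how the constants $\tfrac12$ in the $\epsilon$-condition and the implicit $\tfrac16$ in the choice of $c_0$ are calibrated; either version is a complete proof.
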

\begin{proof} Observe that
$$
z_j=w_j+c_0+c_j\,.
$$
The condition $z_j\in \square_{c_j}$ is the same as 
$$
w_j\in \square_{-c_0}\,. 
$$
For any $j<i$, 
\begin{align*}
z_j - (\epsilon_1 w_1+\cdots + \epsilon_{i-1}w_{i-1})&=(1-\epsilon_j)w_j- \epsilon_{j+1}w_{j+1}-\cdots- \epsilon_{i-1}w_{i-1}\\
&=(1-\epsilon_j)w_j+ (2\epsilon_{j+1})(-{1\over 2}w_{j+1})+\cdots+ (2\epsilon_{i-1})(-{1\over 2}w_{i-1})\,. 
\end{align*}
By our choice of $c_0$, all points $w_j, (-{1\over 2}w_{j+1}), \cdots, (-{1\over 2}w_{i-1})$ lie in $\square_{-c_0}$. Since 
$$
(1-\epsilon_{j} ) +(2\epsilon_{j+1})+\cdots+  (2\epsilon_{i-1}) <1
$$
and $\square_{-c_0}$ is a convex set containing the origin, the value $z_j - (\epsilon_1 w_1+\cdots + \epsilon_{i-1}w_{i-1})$ lies in the interior of $\square_{-c_0}$. So $z_j$ lies in the interior of $\square_{c_i}$ for any $j<i$. This proves the lemma. 
\end{proof}
 
Now we can write
$$
 \dashint_{E_\tau^n} \bracket{\prod_{i=1}^n {d^2z_i}} \Phi= \dashint_{\square_{c_1}} {d^2z_{1}} \cdots \dashint_{\square_{c_n}}  {d^2z_{n}} \Phi
$$
as an ordered regularized integral where $c_i$ depends on $z_1, \cdots, z_{i-1}$ as chosen above. It ensures that $z_1,\cdots, z_{i-1}$ lies in the interior of the integration parallelogram of $z_j$ for any $j\geq i$. The value of this iterated integral does not depend on the choice of $\epsilon_i$'s. 

We now apply Lemma \ref{lem-integral} $n$ times to this integral and keep the leading term in the ${1\over \im \tau}$-expansion in order to compute the limit $\bar\tau\to \infty$.  It is not hard to see that for an almost-meromorphic elliptic function $\Psi$ 
$$
  \dashint_{E_\tau} {d^2z \over \im \tau} \Psi = \sum_k {1\over k+1} \int_{A_c^+} dz \,{\Psi_k(z)}+\sum_{k} {1\over k+1} \sum_{w\in D}\bracket{{\im w\over \im \tau}}^{k+1}\oint_w dz\,  {\Psi_k(z) } + \mathcal{O}({1\over \im \tau})\,. 
$$ 
and hence we only need to keep the first two terms on the right hand side at each step of integration in the limit $\bar\tau\to \infty$.

The answer will become a combinatorial expression in terms of $A$-cycle integrals and residues. Let us first introduce some notations in order to describe this combinatorial result. 
\begin{dfn}
A tree is a connected undirected graph with no simple circuits. A rooted tree is a tree in which one vertex has been designated as the root. 
\end{dfn}
\begin{ex}
Here is an example of rooted tree. 
\begin{figure}[H]\centering
\begin{tikzpicture}[level distance=1.5cm,
  level 1/.style={sibling distance=2cm},
  level 2/.style={sibling distance=1 cm}]

  \node at (0,0) {$\bullet$}
    child {node {$\bullet$}
      child {node {$\bullet$}}
      child {node {$\bullet$}}
    }
    child {node {$\bullet$}
      child {node {$\bullet$}}
    };
    \node at (0,0.5) {root};
\end{tikzpicture}

\end{figure}
\end{ex}

Let $T$ be a rooted tree. Let
$$
V(T)=\text{vertices of $T$}, \quad  rt(T)\in V(T)\ \text{the root vertex}\,. 
$$
The level $l(v)$ of a vertex $v\in V(T)$ is the length of the unique path from the root to $v$.  The level of the root vertex is $0$. A vertex $v^\prime$ is called a child of $v$ if there is an edge from $v$ to $v^\prime$ and $l(v^\prime)=l(v)+1$. In this case, $v$ is called the parent of $v^\prime$.   A vertex $v^\prime$ is called a descendant of $v$ if there is a path from $v$ to $v^\prime$, with strictly increasing levels along the consecutive vertices lying on the path.
In the above example, the root has two children and five descendants.

\begin{dfn}
A rooted forest $F=\{T_1, \cdots, T_m\}$ is a disjoint union of rooted trees $T_1, \cdots, T_m$. Let 
$
V(F)$ denote the disjoint union of vertices of rooted trees of $F$. 
\end{dfn}

\begin{ex}
Here is an example of a rooted forest consisting of two rooted trees
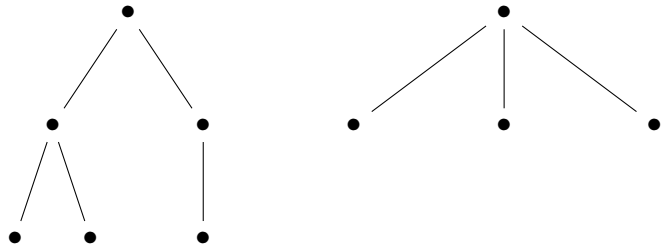
\begin{figure}[H]\centering
\begin{tikzpicture}[level distance=1.5cm,
  level 1/.style={sibling distance=2cm},
  level 2/.style={sibling distance=1 cm}]

  \node at (0,0) {$\bullet$}
    child {node {$\bullet$}
      child {node {$\bullet$}}
      child {node {$\bullet$}}
    }
    child {node {$\bullet$}
      child {node {$\bullet$}}
    };
    
    \node at (5,0) {$\bullet$}
     child {node {$\bullet$}
    }
    child{node{$\bullet$}}
    child {node {$\bullet$}
    };
\end{tikzpicture}
\caption{A rooted forest with two trees.}
\end{figure}
\end{ex}

\begin{dfn}
Let $F$ be a rooted forest with $n$ vertices. We define a normal marking of $F$ to be an one-to-one map
$$
\chi: V(F)\to \{1,2,\cdots, n\}
$$
such that 
$$
    \chi(v)< \chi(v^\prime)\quad \text{if $v^\prime$ is a child of $v$}\,. 
$$
An isomorphism between $(F_1, \chi_1)$ and $(F_2, \chi_2)$ is a graph isomorphism 
$$
g: F_1\to F_2
$$
such that the following diagram commutes 
$$
   \xymatrix{
      F_1 \ar[rr]^{g}\ar[dr]_{\chi_1}&& F_2 \ar[dl]^{\chi_2} \\
      &\{1,\cdots,n\}&
   }
$$
\end{dfn}

\begin{dfn}Let $\Gamma_n$ denote the isomorphism classes of pairs $(F, \chi)$ where
\begin{itemize}
\item $F$ is a rooted forest with $n$ vertices.
\item $\chi$ is a normal marking of $F$. 
\end{itemize}
\end{dfn}

Given $(F, \chi)\in \Gamma_n$, we order the rooted trees $T_1, \cdots, T_m$ of $F$ such that
$$
 \chi(rt(T_1))<\chi(rt(T_2))<\cdots< \chi(rt(T_m))\,. 
$$

\begin{ex}\label{exruningexample}
Fig. \ref{figure-forest} below gives an example of an element in $\Gamma_{10}$. 
The left tree is $T_1$ and the right tree is $T_2$. 
\begin{figure}[H]\centering 
\begin{tikzpicture}[level distance=1.5cm,
  level 1/.style={sibling distance=2cm},
  level 2/.style={sibling distance=1 cm}]
   
   \node at (0,-4) {$T_1$}; 
  
  \node at (0,0) {1}
    child {node {2}
      child {node {5}}
      child {node {10}}
    }
    child {node {6}
      child {node {8}}
    };
    
       \node at (5,-4) {$T_2$}; 
    
    \node at (5,0) {3}
     child {node {4}
    }
    child{node{7}}
    child {node {9}
    };
\end{tikzpicture}
\caption{A  forest with normal marking.}\label{figure-forest}
\end{figure}
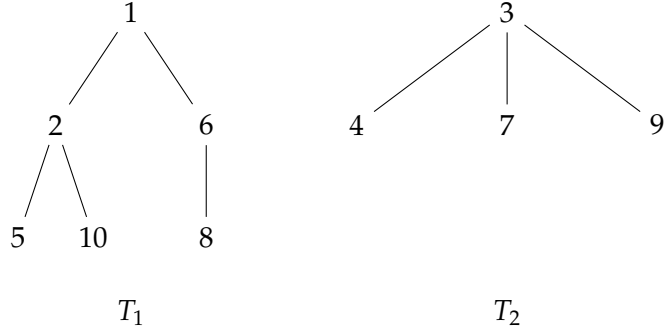
\end{ex}

For each vertex $v_0$ in $F$, it defines a rooted tree $T_{v_0}$ consisting of $v_0$ and all its descendants. Then $T_{v_0}$ is rooted at $v_0$. We define a residue operation 
$$
\oint_{T_{v_0}}
$$ 
as follows. Let $v_1,\cdots, v_k$ be all the children of $v_0$ ordered by
$$
\chi(v_1)<\chi(v_2)<\cdots< \chi(v_k)\,. 
$$
Then $\oint_{T_{v_0}}$ is recursively defined by
$$
 \oint_{T_{v_0}}:= 
     \bracket{\oint_{z_{\chi(v_0)}}dz_{\chi(v_1)}\oint_{T_{v_1}}} \cdots \bracket{\oint_{z_{\chi(v_0)}} dz_{\chi(v_k)}\oint_{T_{v_k}}}.
$$

For a normally marked forest $(F, \chi)$, we denote the following operation
$$
\int_{(F,\chi)}:= \bracket{\int_{A^+_{c_{i_1}}} dz_{{i_1}} \oint_{T_1}} \cdots \bracket{\int_{A^+_{c_{i_m}}} dz_{{i_m}} \oint_{T_m}}, \quad \text{where}\quad i_k=\chi(rt(T_k))\,.  
$$

Given $(F, \chi)$, we let $v\in V(F)$ with marking $\chi(v)=i$. We denote
$$
p_\chi(i)=\begin{cases}
\text{the marking of the parent of $v$} & \text{if $v$ is not a root}\,,\\
0 & \text{if $v$ is a root}\,. 
\end{cases}
$$ 
We assign the following rational number to $(F, \chi)$
$$
p(F, \chi):=\int_0^1 dx_1 \int_0^{x_{p_\chi(2)}}dx_2 \cdots\int_0^{x_{p_\chi(i)}}dx_i \cdots \int_0^{x_{p_\chi(n)}}dx_n\,, \quad x_0\equiv 1\,. 
$$

\begin{ex}
For Example \ref{exruningexample} in Fig. \ref{figure-forest}, one has
$$
\oint_{T_1}=\bracket{\oint_{z_1}dz_2 \oint_{z_2}dz_5 \oint_{z_2}dz_{10}} \bracket{\oint_{z_1}dz_6\oint_{z_6}dz_8}, \quad \oint_{T_2}=\oint_{z_3} dz_4 \oint_{z_3}dz_7 \oint_{z_3}dz_9\,. 
$$
$$
\int_{(F,\chi)}=\bracket{\int_{A^+_{c_1}}dz_1 {\oint_{z_1}dz_2 \oint_{z_2}dz_5 \oint_{z_2}dz_{10}} 
{\oint_{z_1}dz_6\oint_{z_6}dz_8} } \bracket{ \int_{A^+_{c_3}}dz_3 \oint_{z_3} dz_4 \oint_{z_3}dz_7 \oint_{z_3}dz_9}\,.
$$
$$
p(F,\chi)=\int_0^1 dx_1 \int_0^{x_1}dx_2 \int_0^1 dx_3 \int_0^{x_3}dx_4 \int_0^{x_2}dx_5 \int_0^{x_1}dx_6 \int_0^{x_3}dx_7\int_0^{x_6}dx_8\int_0^{x_3}dx_9 \int_0^{x_2} dx_{10}={1\over 144}\,. 
$$
\end{ex}

It is easy to see that $p(F,\chi)$ does not depend on the choice of $\chi$. Therefore we write
$$
p(F)=p(F, \chi)\,. 
$$
In particular, we can define $p(T)$ for any rooted tree.\footnote{The quantity $1/p(T)$ also appears as the tree factorial in \cite{Kreimer:2000}.
We thank the anonymous referee for pointing this out.} 
If $F=\{T_1,\cdots, T_m\}$, then
$$
p(F)=p(T_1)\cdots p(T_m)\,. 
$$
The following lemma gives a useful recursive formula for $p(T)$. 

\begin{lem}\label{lem-p}
Let $T$ be a rooted tree. Let $v_1,\cdots,v_m$ be all the children of the root vertex. Then
$$
p(T)={1\over |V(T)|} p(T_{v_1})\cdots p(T_{v_m})\,.
$$
Recall  $T_{v_i}$ is the rooted tree consisting of $v_i$ and all its descendants in $T$.
\end{lem}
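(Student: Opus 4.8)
The plan is to prove the recursion directly from the integral definition of $p(T,\chi)$, using the fact --- already recorded in the text, and which we invoke freely --- that the value is independent of the chosen normal marking $\chi$, so we may fix one. Because $\chi$ is normal, every non-root vertex has a parent of strictly smaller marking; hence the root of $T$ carries marking $1$, and with the convention $x_0\equiv 1$ the iterated integral defining $p(T)$ begins with the outermost integration $\int_0^1 dx_1$. First I would isolate this outermost integration and analyze the remaining nested integral over $x_2,\dots,x_n$.

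The key structural observation is a factorization of that remaining integral. The non-root vertices of $T$ partition into the vertex sets $V(T_{v_1}),\dots,V(T_{v_m})$ of the subtrees rooted at the children $v_1,\dots,v_m$ of the root. For a vertex $v\in V(T_{v_j})$ the parent of $v$ in $T$ again lies in $V(T_{v_j})$, unless $v=v_j$, in which case the parent is the root. Therefore, once $x_1$ is fixed, neither the integrand nor the integration limits couple variables indexed by distinct subtrees, so by Fubini the integral over $x_2,\dots,x_n$ factors as a product over $j$ of nested integrals, where the $j$-th factor has exactly the shape that defines $p(T_{v_j})$ except that the outermost bound (on the variable attached to $v_j$) is $x_1$ rather than $1$. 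Verifying this bookkeeping carefully --- that the parent map respects the subtree partition, so that no variable of one child-subtree occurs in a bound of another --- is the one place the argument needs attention, and is the step I expect to be the main (albeit modest) obstacle.

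Finally I would use the elementary scaling identity: for any rooted tree $S$, the integral obtained from the one defining $p(S)$ by replacing its outermost range $[0,1]$ with $[0,t]$ equals $t^{|V(S)|}p(S)$, which follows from the substitution $x_i=ty_i$ applied to all $|V(S)|$ variables. Writing $n_j:=|V(T_{v_j})|$ and applying this with $t=x_1$ and $S=T_{v_j}$, the factorization gives
\[
p(T)=\int_0^1\! dx_1\prod_{j=1}^{m}\bigl(x_1^{\,n_j}\,p(T_{v_j})\bigr)
=\Bigl(\prod_{j=1}^{m}p(T_{v_j})\Bigr)\int_0^1 x_1^{\,n_1+\cdots+n_m}\,dx_1
=\frac{1}{n_1+\cdots+n_m+1}\prod_{j=1}^{m}p(T_{v_j}).
\]
Since the root together with the subtrees $T_{v_j}$ exhausts $V(T)$, one has $n_1+\cdots+n_m+1=|V(T)|$, which is the asserted formula; the degenerate case $m=0$ (a one-vertex tree) reads $p(T)=\int_0^1 dx_1=1=1/|V(T)|$ with the empty-product convention, so no separate base case is needed.
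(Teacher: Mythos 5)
Your argument is correct and is essentially the paper's own proof: the paper's one-line justification "$p(T)=\int_0^1 dx \prod_{i=1}^m\bigl(x^{n_i} p(T_{v_i})\bigr)$" is exactly the factorization-plus-scaling step you spell out in detail. The extra care you take with the subtree partition and the $m=0$ case is sound but does not change the route.
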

\begin{proof} Let $n_i=|V(T_{v_i})|$. From the integration formula, it is easy to see that
$$
p(T)=\int_0^1 dx \prod_{i=1}^m\bracket{x^{n_i} p(T_{v_i})}={1\over 1+\sum\limits_{i=1}^m n_i}p(T_{v_1})\cdots p(T_{v_m})={1\over |V(T)|} p(T_{v_1})\cdots p(T_{v_m})\,.
$$

\end{proof}

The next lemma is the key combinatorial formula for computing the holomorphic limit. 
\begin{lem} \label{Claim 2}
Let $\Phi(z_1,\cdots,z_n;\tau)$ be a meromorphic elliptic function on $\mathbb{C}^{n}\times \H$. Then
$$
\lim_{\bar\tau\to \infty}\dashint_{\square_{c_1}} {d^2z_{1}} \cdots \dashint_{\square_{c_n}}  {d^2z_{n}} \Phi=\sum_{(F,\chi)\in \Gamma_n} p(F) \int_{(F,\chi)}\Phi\,. 
$$
\end{lem}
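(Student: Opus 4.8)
The plan is to evaluate the iterated integral $\dashint_{\square_{c_1}}d^2z_1\cdots\dashint_{\square_{c_n}}d^2z_n\,\Phi$ by applying Lemma \ref{lem-integral} repeatedly, integrating out the variables in the order $z_n,z_{n-1},\ldots,z_1$, and organizing the resulting expansion by the rooted forest that records the choices made along the way. First I would note that at the step where $z_i$ is integrated, the variables $z_{i+1},\ldots,z_n$ have already been eliminated, so the integrand is an almost-meromorphic elliptic function of $z_1,\ldots,z_i$ which is holomorphic away from the diagonals (by iterating Proposition \ref{prop-iteration} and Proposition \ref{prop-push-poles}), and whose poles in $z_i$ lying inside $\square_{c_i}$ are among the points $z_1,\ldots,z_{i-1}$ --- this is exactly the content of Lemma \ref{Claim 1}, which also ensures no lattice translate of such a point intrudes --- so Lemma \ref{lem-integral} applies with pole set $D\subseteq\{z_1,\ldots,z_{i-1}\}$. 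Applying it, the $i$-th integration splits into a term coming from the $\int_{A^+_{c_i}}$-integral (declare $i$ a root), a term for each $j\in\{1,\ldots,i-1\}$ carrying the residue operation $\oint_{z_j}dz_i$ and a factor $(\im z_j/\im\tau)^{k+1}$ (declare $i$ a child of $j$), plus the remaining ``third sum'' terms, each carrying an extra $(\im\tau)^{-j'}$ with $j'\ge 1$. Over all $n$ steps these choices build a rooted forest on $\{1,\ldots,n\}$ in which the parent of every non-root vertex has strictly smaller label; such a forest is precisely a normally marked rooted forest with identity marking, and the collection of these is in bijection with $\Gamma_n$ (terms attached to an absent pole contribute zero).

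The main work, and the step I expect to be the real obstacle, is a careful accounting of the powers of $1/\im\tau$. The claim to establish is that at every intermediate stage each factor $1/\im\tau$ in a given summand is either \emph{paired} with a factor $\im z_j$ for a not-yet-integrated $z_j$, or else \emph{bare}: inspecting Lemma \ref{lem-integral}, the $A$-cycle term turns the coefficient of $(\im z/\im\tau)^k$ into that coefficient times the pure constant $\tfrac{1}{k+1}$ (consuming paired powers), the residue term turns $(\im z_i/\im\tau)^k$ into $\tfrac{1}{k+1}(\im z_j/\im\tau)^{k+1}$ (transporting the paired powers up to the parent variable, with one extra), and only the third sum creates $j'\ge 1$ bare factors. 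Bare factors are never removed, and all paired factors disappear once every variable has been integrated; hence the $1/\im\tau$-degree of the final answer equals the total number of bare factors ever produced. Therefore $\lim_{\bar\tau\to\infty}$ annihilates every term that ever used a third-sum contribution, so we may drop the third sum at every step; the surviving terms --- those using only the first two sums --- have $1/\im\tau$-degree exactly $0$ and thus pass unchanged through the holomorphic limit.

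It then remains to assemble the surviving contributions. Fix $(F,\chi)\in\Gamma_n$ (with $\chi$ the identity), write $T_v$ for the rooted subtree at a vertex $v$, and set $d(v):=|V(T_v)|-1$. One checks inductively, working from the leaves of each tree up to its root, that when $z_{\chi(v)}$ is integrated the $\im z_{\chi(v)}$-degree of the relevant summand equals $d(v)$: each child $v_l$, integrated earlier since $\chi(v_l)>\chi(v)$, contributed one factor $(\im z_{\chi(v)}/\im\tau)^{d(v_l)+1}$, giving degree $\sum_l(d(v_l)+1)=|V(T_v)|-1$. Consequently integrating $z_{\chi(v)}$ produces the numerical coefficient $\tfrac{1}{d(v)+1}=\tfrac{1}{|V(T_v)|}$ together with $\oint_{A^+_{c_{\chi(v)}}}dz_{\chi(v)}$ if $v$ is a root, and $\oint_{z_{j}}dz_{\chi(v)}$ --- with $j$ the mark of the parent of $v$ --- otherwise. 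Taking the product over all vertices, the total coefficient attached to $(F,\chi)$ is $\prod_{v\in V(F)}\tfrac{1}{|V(T_v)|}$, which equals $p(F)$ by the recursion of Lemma \ref{lem-p}, while the nested residue and $A^+$-operations assemble exactly into $\int_{(F,\chi)}\Phi$; here one uses that residues in distinct variables commute, which is what makes $\oint_{T_{v_0}}$ well defined independently of the ordering of the children, and one matches the recursive structure of $\oint_{T_{v_0}}$ and $\int_{(F,\chi)}$ with the nesting generated by the iterated application of Lemma \ref{lem-integral}. Summing over $(F,\chi)\in\Gamma_n$ gives the stated identity. Apart from the $1/\im\tau$-bookkeeping, everything else is routine once the forest dictionary is in place.
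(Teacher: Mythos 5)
Your proof is correct and takes essentially the same route as the paper's: iterate Lemma \ref{lem-integral} with the pole locations controlled by Lemma \ref{Claim 1}, discard the third sum in the holomorphic limit, and encode the surviving residue/$A$-cycle choices as normally marked forests whose $\frac{1}{k+1}$-factors assemble into $p(F)$ via the recursion of Lemma \ref{lem-p}. The paper's own proof is only a two-line sketch of exactly this argument, so your explicit bookkeeping of paired versus bare powers of $\frac{1}{\im\tau}$ is an elaboration of the same method rather than a different one.
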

\begin{proof} This follows by applying Lemma \ref{lem-integral} $n$ times. In each step, we only need to keep the first two terms on the right hand side in
$$
  \dashint_{E_\tau} {d^2z \over \im \tau} \Psi = \sum_k {1\over k+1} \int_{A_c^+} dz \,{\Psi_k(z)}+\sum_{k} {1\over k+1} \sum_{w\in D}\bracket{{\im w\over \im \tau}}^{k+1}\oint_w dz\,  {\Psi_k(z) } + \mathcal{O}({1\over \im \tau})\,. 
$$ 
The operation $\int_{(F,\chi)}\Phi$ keeps track of the residues and $A$-cycle integrals. The combinatorial factor $p(F)$ keeps track of those ${1\over k+1}$-factors that appear at each step of integration. 
\end{proof}

We can further express this combinatorial formula in terms of the ordered $A$-cycle integrals as defined in Definition \ref{dfnorderedAcycleintegral}.
Recall from Lemma
\ref{lem-A-commutator} that

$$
 \int_A dz_1 \oint_{z_1}dz_2 \oint_{z_1} dz_3 \cdots \oint_{z_1}dz_k
 =\bbracket{\int_A dz_2, \bbracket{\int_A dz_{3},\cdots, \bbracket{\int_A dz_k, \int_A dz_1}}}\,.
$$

Let $R_n=\C\abracket{x_1,\cdots,x_n}$ be the tensor algebra in $n$ variables $x_1,\cdots, x_n$.  The generators $x_i$'s do not commute with each other, and each element in $R_n$ is expressed in terms of a linear combination of words in $x_i$'s. For any $a, b\in R_n$, we denote
$$
[a,b]:= a b- ba\,. 
$$

Let $(F, \chi)\in \Gamma_n$, $v_0\in V(F)$ and $T_{v_0}$ be the tree rooted at $v_0$ as defined above. We define 
$$
x_{T_{v_0}}\in R_n
$$ 
as follows. Let $v_1,\cdots, v_k$ be all the children of $v_0$ such that
$$
\chi(v_1)<\chi(v_2)<\cdots< \chi(v_k)\,. 
$$
Then $x_{T_{v_0}}$ is recursively defined by
$$
 x_{T_{v_0}}:= {\bbracket{x_{T_{v_1}},\cdots,\bbracket{x_{T_{v_k}},x_{\chi(v_0)}}}}\,. 
$$

Let $T_1, \cdots, T_m$ be rooted trees of $F$ ordered by the marking as  above. We define
$$
x_{(F, \chi)}:= x_{T_1}\cdots x_{T_m}\,. 
$$
\begin{ex}
For Example \ref{exruningexample} in Fig. \ref{figure-forest}, one has
$$
x_{T_1}=[ [x_5,[x_{10},x_2]], [[x_8,x_6],x_1]], \quad x_{T_2}=[x_4,[x_7,[x_9,x_3]]]\,,
$$
and
$$
x_{(F, \chi)}=x_{T_1}x_{T_2}\,. 
$$
\end{ex}

Given such $x_{(F, \chi)}$, we associate an ordered $A$-cycle integral 
$$
x_{(F, \chi)}(\oint_A)
$$
by the substitution
$$
x_i\mapsto \oint_A dz_i\,. 
$$
 Lemma \ref{Claim 3} below is basically a reformulation of Lemma \ref{Claim 2} by the above discussion. The ordering of $A$-cycles comes from
 the ordering in the iterated regularized integrals adapted to our choice of $c_i$'s. 

\begin{lem}\label{Claim 3}
Let $\Phi(z_1,\cdots,z_n;\tau)$ be a meromorphic elliptic function on $\mathbb{C}^{n}\times \H$. Then
one has
$$
\lim_{\bar\tau\to \infty}\dashint_{\square_{c_1}} {d^2z_{1}} \cdots \dashint_{\square_{c_n}}  {d^2z_{n}} \Phi=\sum_{(F,\chi)\in \Gamma_n} p(F)  x_{(F, \chi)}(\oint_A) \Phi\,. 
$$
\end{lem}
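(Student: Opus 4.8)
The plan is to derive Lemma \ref{Claim 3} from Lemma \ref{Claim 2} by verifying, for each normally marked forest $(F,\chi)\in\Gamma_n$, the operator identity $\int_{(F,\chi)}=x_{(F,\chi)}(\oint_A)$ on the space of meromorphic elliptic functions holomorphic away from diagonals; multiplying by $p(F)$ and summing over $\Gamma_n$ then reproduces the right-hand side of Lemma \ref{Claim 3}. Both $\int_{(F,\chi)}$ and $x_{(F,\chi)}$ factor as ordered products over the trees $T_1,\dots,T_m$ of $F$ (listed by increasing root marking), and distinct trees involve disjoint sets of variables, so the corresponding operators commute and their composition matches the concatenation of words after the substitution $x_i\mapsto\oint_A dz_i$. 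Hence it suffices to treat a single rooted tree $T$, for which the claim reads $\oint_A dz_{i_0}\,\oint_T=x_T(\oint_A)$ with $i_0=\chi(rt(T))$.

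First I would record a preservation statement: an iterated local residue $\oint_{z_a}dz_b$ sends a meromorphic elliptic function holomorphic away from diagonals to one of the same type in the remaining variables --- ellipticity follows by translating the small residue contour and using periodicity of the integrand simultaneously in $z_a$ and $z_b$, and the location of poles is controlled exactly as in Proposition \ref{prop-push-poles} --- so that Lemma \ref{lem-A-commutator} remains applicable at every node. Then I would prove $\oint_A dz_{i_0}\,\oint_T=x_T(\oint_A)$ by induction on $|V(T)|$, the case $|V(T)|=1$ being trivial. For the inductive step, let $v_1,\dots,v_k$ be the children of the root ordered by $\chi(v_1)<\dots<\chi(v_k)$ and set $i_j=\chi(v_j)$. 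Each $\oint_{T_{v_j}}$ acts only on $z_{i_j}$ and the variables attached to descendants of $v_j$; these sets, together with $\{z_{i_0}\}$, are pairwise disjoint, and all the small contours are fixed independently of the other operators' integration variables, so the $\oint_{T_{v_j}}$ commute among themselves, with every $\oint_{z_{i_0}}dz_{i_l}$, and with $\oint_A dz_{i_0}$. Rearranging,
\[
\oint_A dz_{i_0}\,\oint_T=\Big(\oint_A dz_{i_0}\,\oint_{z_{i_0}}dz_{i_1}\cdots\oint_{z_{i_0}}dz_{i_k}\Big)\,\oint_{T_{v_1}}\cdots\oint_{T_{v_k}}\,.
\]
By the preservation statement, $\oint_{T_{v_1}}\cdots\oint_{T_{v_k}}\Phi$ is meromorphic elliptic and holomorphic away from diagonals in $z_{i_0},z_{i_1},\dots,z_{i_k}$, so Lemma \ref{lem-A-commutator} turns the parenthesized operator into $[\oint_A dz_{i_1},[\dots,[\oint_A dz_{i_k},\oint_A dz_{i_0}]]]$. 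In each monomial of the expanded commutator, $\oint_{T_{v_j}}$ (applied first, on the far right) commutes with every factor except $\oint_A dz_{i_j}$, so it may be slid next to $\oint_A dz_{i_j}$; the inductive hypothesis then replaces $\oint_A dz_{i_j}\,\oint_{T_{v_j}}$ by $x_{T_{v_j}}(\oint_A)$. Since the substitution $x_i\mapsto\oint_A dz_i$ respects commutators, the outcome is $[x_{T_{v_1}}(\oint_A),[\dots,[x_{T_{v_k}}(\oint_A),\oint_A dz_{i_0}]]]=x_T(\oint_A)$, which closes the induction.

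The step I expect to require the most care is the ordering bookkeeping, not any analytic estimate. One has to check that the concrete $A$-cycle representatives $A_c^+$ occurring in $\int_{(F,\chi)}$ --- built from the shifts $c_i$ of Lemma \ref{Claim 1}, chosen so that every already-integrated variable lies in the interior of the next parallelogram --- induce the same \emph{relative} vertical order of cycles as the order prescribed by Definition \ref{dfnorderedAcycleintegral} when reading off each word appearing in $x_{(F,\chi)}$. Here one uses that $\Phi$ is meromorphic, so each ordered $A$-cycle integral depends only on this relative order and not on the precise representatives, while the normal-marking condition $\chi(v)<\chi(v')$ for $v'$ a child of $v$ is exactly what makes the contour deformation behind Lemma \ref{lem-A-commutator} legitimate: the parent variable must sit on a lower cycle so the child's contour can be split into two cycles straddling it. Threading this compatibility through the recursion (and through the multiplicativity over the trees of $F$) is essentially all the work; the rest is the formal manipulation above.
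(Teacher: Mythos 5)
Your proposal is correct and follows essentially the same route as the paper: the authors dispose of this lemma in one sentence, calling it "basically a reformulation of Lemma \ref{Claim 2}" via the commutator identity of Lemma \ref{lem-A-commutator}, with the $A$-cycle ordering "adapted to our choice of $c_i$'s." Your inductive verification of the operator identity $\int_{(F,\chi)}=x_{(F,\chi)}(\oint_A)$ tree by tree, together with the preservation statement and the ordering bookkeeping, is exactly the detail the paper leaves implicit.
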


Lemma \ref{Claim 4}  below together with Lemma \ref{Claim 3} will prove statement (2) of Theorem \ref{thm-2d}. 

\begin{lem}\label{Claim 4}  
One has
$$
\sum_{(F,\chi)\in \Gamma_n}p(F)   x_{(F, \chi)}={1\over n!}\sum_{\sigma\in S_n} x_{\sigma(1)}x_{\sigma(2)}\cdots x_{\sigma(n)}\,. 
$$
\end{lem}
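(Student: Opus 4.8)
The plan is to recast Lemma~\ref{Claim 4} into a form suited to an induction on $n$, and then to reduce the whole statement to two elementary binomial identities. First I would observe that a normally marked rooted forest is the same thing as an \emph{increasing} rooted forest on the vertex set $\{1,\dots ,n\}$ — one in which every edge runs from the smaller label to the larger — the marking being the identity; under this identification $\Gamma_n$ is exactly the set of such forests, and (as already noted in the excerpt) the weight $p(F,\chi)$ depends only on $F$, while $x_{(F,\chi)}=:x_F$. For a finite $S\subset\Z_{>0}$ let $\Gamma_S$ be the increasing rooted forests on $S$, let $R_S=\C\abracket{x_s:s\in S}\subseteq R_n$, and set
$$
P_S:=\sum_{F\in\Gamma_S}p(F)\,x_F\in R_S,\qquad Q_S:=\frac{1}{|S|!}\sum_{\sigma}x_{\sigma(1)}\cdots x_{\sigma(|S|)}\in R_S,
$$
the last sum over all bijections $\sigma\colon\{1,\dots ,|S|\}\to S$; Lemma~\ref{Claim 4} is the equality $P_{\{1,\dots ,n\}}=Q_{\{1,\dots ,n\}}$. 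By Lemma~\ref{lem-p}, $p(F)=\prod_v n_v(F)^{-1}$ is multiplicative over connected components, so peeling off the tree $T_1$ that contains $m:=\min S$ (which is automatically first in the root ordering, whence $x_F=x_{T_1}x_{F'}$) gives the recursion
$$
P_S=\sum_{S_1\subseteq S,\ m\in S_1}Y_{S_1}\,P_{S\setminus S_1},\qquad Y_{S_1}:=\sum_{T\ \mathrm{incr.\ tree\ on}\ S_1}p(T)\,x_T .
$$

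The crucial point will be that $Y_{S_1}$ is itself built out of smaller $P$'s via the adjoint action. If $T$ is an increasing tree on $S_1$ rooted at $m$, whose pendant subtrees $T^{(1)},\dots ,T^{(k)}$ (ordered by root label) form the increasing forest $F$ on $S_1\setminus\{m\}$, then the recursive definition of $x_T$ reads $x_T=\ad_{x_{T^{(1)}}}\!\cdots\ad_{x_{T^{(k)}}}(x_m)=\ad_{x_F}(x_m)$, where $\ad\colon \mathcal U(\Lie(V))\to\op{End}_{\C}\mathcal U(\Lie(V))$ is the standard adjoint action of the Hopf algebra $\mathcal U(\Lie(V))=T(V)$ on itself, an algebra homomorphism with $\ad_u(v)=\sum u_{(1)}\,v\,S(u_{(2)})$. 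Since Lemma~\ref{lem-p} also yields $p(T)=|S_1|^{-1}p(F)$, summing over $F$ gives $Y_{S_1}=|S_1|^{-1}\,\ad_{P_{S_1\setminus\{m\}}}(x_m)$.

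Now I would induct on $|S|$, the cases $|S|\le 1$ being immediate. Assuming $P_T=Q_T$ for all $|T|<|S|$, every $P_{S_1\setminus\{m\}}$ and $P_{S\setminus S_1}$ occurring above may be replaced by the corresponding $Q$. Two facts then close the computation. First, the symmetrization $\omega\colon\mathcal S(\Lie(V))\to\mathcal U(\Lie(V))$ is a morphism of coalgebras, and the antipode acts on the degree-$d$ part of $\mathcal S(\Lie(V))$ by $(-1)^d$; hence $\Delta Q_U=\sum_{U_1\sqcup U_2=U}Q_{U_1}\otimes Q_{U_2}$ and so $\ad_{Q_U}(x_m)=\sum_{U_1\sqcup U_2=U}(-1)^{|U_2|}Q_{U_1}\,x_m\,Q_{U_2}$. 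Second, concatenating a word on $U_2$ with a word on $W$ runs through every word on $U_2\sqcup W=V$ exactly once, giving $\sum_{|U_2|=i}Q_{U_2}Q_{V\setminus U_2}=\binom{|V|}{i}Q_V$. Substituting these into $\sum_{S_1\ni m}Y_{S_1}Q_{S\setminus S_1}$ and carrying out the inner sum by means of $\sum_{i\ge 0}(-1)^i\binom{v}{i}(c+i)^{-1}=v!\,\bigl(c(c+1)\cdots(c+v)\bigr)^{-1}$ (proved, e.g., from $\int_0^1 t^{c-1}(1-t)^v\,dt$), the expression collapses to $\frac{1}{k+1}\sum_{U\subseteq S'}\binom{k}{|U|}^{-1}Q_U\,x_m\,Q_{S'\setminus U}$, with $S'=S\setminus\{m\}$ and $k=|S'|$. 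On the other hand, grouping the symmetrized product $Q_S$ by the position of the letter $x_m$ produces precisely the same expression. Thus $\sum_{S_1\ni m}Y_{S_1}Q_{S\setminus S_1}=Q_S$, and then $P_S=\sum_{S_1\ni m}Y_{S_1}P_{S\setminus S_1}=\sum_{S_1\ni m}Y_{S_1}Q_{S\setminus S_1}=Q_S$, completing the induction.

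I expect the main obstacle to be the bookkeeping in the last paragraph: recognizing that $Y_{S_1}$ is an adjoint action (which is what makes the antipode, and hence the signs $(-1)^{|U_2|}$, enter in the right way), and then verifying that the combinatorial weights $p(F)$ reassemble exactly into the coefficients $\binom{k}{|U|}^{-1}/(k+1)$ once the two binomial sums have been performed. The remaining ingredients — the recursion for $P_S$, the identification of $x_T$ with an iterated $\ad$, and the direct evaluation of $Q_S$ — are formal, and the low-degree cases can be cross-checked against Example~\ref{ex-n=3}.
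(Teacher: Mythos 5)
Your argument is correct, and it reaches the identity by a route that is organized genuinely differently from the paper's. The shared core is the recursion of Lemma \ref{lem-p} together with the observation that the sum over trees rooted at the minimal label is an adjoint action applied to that generator: this is the content of the paper's Lemma \ref{lem-ad}, proved there via the substitution $y_i\mapsto \ad_{x_{i+1}}$ and $[\ad_{x_i},\ad_{x_j}]=\ad_{[x_i,x_j]}$, whereas you phrase it through the Hopf-algebraic adjoint action $\ad_u(v)=\sum u_{(1)}\,v\,S(u_{(2)})$ of $T(V)=\mathcal U(\Lie(V))$ on itself. The divergence comes afterwards: the paper decomposes forests by the induced partition of $\{1,\dots,n\}$, rewrites the left-hand side as $\sum_{\sigma}x_\sigma/|\sigma|$ over ordered cyclic expressions, and then invokes the separate combinatorial identity of Proposition \ref{prop-identity}, proved in Appendix \ref{appendixalgebraicidentity} by passing to the multilinear quotient ring, realizing the symmetrized product as the top-degree part of $e^{\bar x_1+\cdots+\bar x_n}$, and applying Duhamel's formula. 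You instead close a single induction directly; your coproduct/antipode computation of $\ad_{Q_U}(x_m)$ and the alternating sum $\sum_i(-1)^i\binom{v}{i}(c+i)^{-1}=v!/\bigl(c(c+1)\cdots(c+v)\bigr)$ play exactly the role that the Duhamel integral $\int_0^1 s^k\,ds=1/(k+1)$ plays in the appendix, and the regrouping of $Q_S$ by the position of $x_m$ replaces the cyclic-expression bookkeeping entirely. Your version buys a self-contained proof with no detour through $\sum_\sigma x_\sigma/|\sigma|$ and no auxiliary proposition; the paper's version buys that intermediate formula, which is of independent interest (it is the Solomon-type expansion quoted in Section \ref{subsec:A-cycle integral} and is what produces the explicit weight components of the ordered $A$-cycle integrals). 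The only points worth spelling out in a final write-up are the two facts you use implicitly: that in an increasing forest the minimal label is automatically a root, so the tree containing $m$ is indeed the first factor of $x_{(F,\chi)}$, and that the $x_{T^{(i)}}$ are primitive, so the Hopf adjoint action reduces to iterated commutators there.
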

\begin{ex}\label{expermutationsumn=3}
Here is an illustration of Lemma \ref{Claim 4} when $n=3$. There are 6 elements of $\Gamma_3$
$$
\begin{tikzpicture}[level distance=1.5cm,
  level 1/.style={sibling distance=2cm},
  level 2/.style={sibling distance=1 cm}]

\node at (0,0){1};
\node at (0.5,0){2};
\node at (1,0){3};

  \node at (2.5,0) {1}
  child{node{2}};
    \node at (3.5,0) {3};
    
      \node at (5,0) {1}
  child{node{3}};
    \node at (6,0) {2};
    
      \node at (7.5,0) {1};
      \node at (8.5,0){2}
      child{node{3}};
     
     \node at (11,0){1}
     child{node{2}}
     child{node{3}};
     
     \node at (13,0){1}
     child{node{2}
     child{node{3}}
     };
    
\end{tikzpicture}
$$
The sum $\sum_{(F,\chi)\in \Gamma_n} p(F)  x_{(F, \chi)}$ gives 
\begin{align*}
&x_1x_2x_3+{1\over 2} [x_2,x_1]x_3+ {1\over 2}[x_3,x_1]x_2+{1\over 2} x_1[x_3,x_2]+{1\over 3} [x_2,[x_3,x_1]]+{1\over 6}[[x_3,x_2],x_1]\\
=&{1\over 6}\bracket{x_1x_2x_3+ x_1x_3x_2+x_2x_1x_3+x_2x_3x_1+x_3x_1x_2+x_3x_2x_1}.
\end{align*}
\end{ex}

We are only left to prove Lemma \ref{Claim 4}.  

\subsubsection*{Proof of Lemma \ref{Claim 4}}
We prove Lemma \ref{Claim 4} by induction on $n$. We first need a recursion formula. 
\begin{lem}\label{lem-ad} 
Assume Lemma \ref{Claim 4} holds for $n-1$. Then
$$
\sum_{\substack{(F,\chi)\in \Gamma_n\\ F=T\ \text{is a rooted tree}}}p(F) x_{(F,\chi)} ={1\over n !}\sum_{\substack{\sigma: \{2,\cdots,n\}\to \{2,\cdots,n\}\\ \text{$\sigma$ is a permutation}}} [x_{\sigma(2)},[x_{\sigma(3)},\cdots,[x_{\sigma(n)},x_1]]]\,. 
$$
\end{lem}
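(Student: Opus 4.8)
The plan is to realise the left-hand side of Lemma \ref{lem-ad} as the image, under one explicit linear map, of the \emph{full} forest sum in the variables $x_2,\dots,x_n$, and then to feed in Lemma \ref{Claim 4} for $n-1$. The first ingredient is a tree--forest bijection. A normally marked rooted tree $(T,\chi)$ with $n$ vertices necessarily has root marked $1$, since in a normal marking the root of each subtree carries the smallest marking occurring in that subtree. Deleting the root and ordering the children subtrees $T_{v_1},\dots,T_{v_m}$ by $\chi(v_1)<\cdots<\chi(v_m)$ produces a normally marked rooted forest $(F',\chi')$ with vertex set $\{2,\dots,n\}$ (a copy of $\Gamma_{n-1}$), and joining back a new root marked $1$ to the roots of $F'$ inverts this; so it is a bijection between $\{(T,\chi)\in\Gamma_n: T\text{ a tree}\}$ and the normally marked rooted forests on $\{2,\dots,n\}$. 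Under it, Lemma \ref{lem-p} together with $p(F')=p(T_{v_1})\cdots p(T_{v_m})$ gives $p(T)=\tfrac1n\,p(F')$, while by the definitions of $x_{(T,\chi)}$ and $x_{(F,\chi)}$ one has $x_{(T,\chi)}=[x_{T_{v_1}},[x_{T_{v_2}},\cdots,[x_{T_{v_m}},x_1]]]$ and $x_{(F',\chi')}=x_{T_{v_1}}x_{T_{v_2}}\cdots x_{T_{v_m}}$, a product in $R_n$ of elements of the free Lie algebra on $x_2,\dots,x_n$.

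Next I introduce the linear map $\Theta$ on the subalgebra of $R_n$ generated by $x_2,\dots,x_n$, determined on words by $\Theta(x_{i_1}x_{i_2}\cdots x_{i_m})=[x_{i_1},[x_{i_2},\cdots,[x_{i_m},x_1]]]$ and $\Theta(1)=x_1$, extended linearly. The genuinely non-formal ingredient is the identity
$$
\Theta(a_1a_2\cdots a_m)=[a_1,[a_2,\cdots,[a_m,x_1]]],
$$
valid for all $a_1,\dots,a_m$ in the free Lie algebra generated by $x_2,\dots,x_n$: it amounts to the fact that the tensor algebra is the universal enveloping algebra of the free Lie algebra, so that $\ad$ extends to an algebra homomorphism into $\operatorname{End}(R_n)$ with $\ad_a=[a,-]$ on Lie elements and hence $\Theta(a)=\ad_a(x_1)$ is multiplicative in $a$; alternatively one checks it directly by induction on $\sum\deg a_i$ using the Jacobi identity. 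In particular $x_{(T,\chi)}=\Theta\bigl(x_{(F',\chi')}\bigr)$ for the corresponding pair under the bijection above.

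Finally I sum over trees, pass to forests via the bijection, pull $\Theta$ out by linearity, and apply Lemma \ref{Claim 4} for $n-1$ — legitimate since $\Gamma$, $p$ and $x$ depend only on the linear order of the vertex set, so $\{2,\dots,n\}$ may replace $\{1,\dots,n-1\}$:
\begin{align*}
\sum_{\substack{(T,\chi)\in\Gamma_n\\ T\ \text{a tree}}}p(T)\,x_{(T,\chi)}
&=\frac1n\sum_{(F',\chi')}p(F')\,\Theta\bigl(x_{(F',\chi')}\bigr)
=\frac1n\,\Theta\Bigl(\sum_{(F',\chi')}p(F')\,x_{(F',\chi')}\Bigr)\\
&=\frac{1}{n\,(n-1)!}\sum_{\sigma}\Theta\bigl(x_{\sigma(2)}x_{\sigma(3)}\cdots x_{\sigma(n)}\bigr)
=\frac{1}{n!}\sum_{\sigma}[x_{\sigma(2)},[x_{\sigma(3)},\cdots,[x_{\sigma(n)},x_1]]]\,,
\end{align*}
where $\sigma$ runs over the bijections of $\{2,\dots,n\}$; this is the claimed formula. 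The main obstacle is isolating and cleanly justifying the map $\Theta$ and its multiplicativity on Lie elements; the tree--forest bijection and the weight identity $p(T)=\tfrac1n\,p(F')$ are then routine bookkeeping, and the base case $n=1$ is immediate.
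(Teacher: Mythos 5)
Your proposal is correct and takes essentially the same route as the paper's proof: the same tree-to-forest bijection obtained by deleting the root (marked $1$), the same weight identity $p(T)=\tfrac1n\,p(F')$ from Lemma \ref{lem-p}, the same passage through the operator $a\mapsto \mathrm{ad}_a$ applied to $x_1$ (your $\Theta$ is exactly the paper's substitution $\Xi(\cdot)\,x_1$ with $y_i\mapsto \mathrm{ad}_{x_{i+1}}$, justified by $[\mathrm{ad}_a,\mathrm{ad}_b]=\mathrm{ad}_{[a,b]}$), followed by the induction hypothesis for $n-1$. The only difference is that you spell out in full the step the paper dismisses as ``it is not hard to see,'' namely the multiplicativity of the $\mathrm{ad}$-representation on products of Lie elements.
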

\begin{proof} Given $F=T$, let $v_1, \cdots, v_k$ be all the children of the root. By Lemma \ref{lem-p}, 
$$
p(T)={p(T_{v_1})\cdots p(T_{v_k})\over n}\,. 
$$
Introduce new variables $y_1, \cdots, y_{n-1}$. Consider a word in $y$ defined the same as that for $x$
$$
\sum_{(F^\prime,\chi^\prime)\in \Gamma_{n-1}} p(F^\prime)  y_{(F^\prime, \chi^\prime)}\,.
$$
Let $ad_{x_i}$ denote the operator $ad_{x_i}=[x_i,-]$.  Let $\Xi$ denote the substitution
$$
y_1\mapsto ad_{x_2},\quad y_2\mapsto ad_{x_3}, \quad \cdots, y_{n-1}\mapsto ad_{x_n}\,. 
$$
Using $[ad_{x_i}, ad_{x_j}]=ad_{[x_i,x_j]}$, it is not hard to see that 
$$
\sum_{\substack{(F,\chi)\in \Gamma_n\\ F=T\ \text{is a rooted tree}}}p(F) x_{(F,\chi)} ={1\over n} \Xi\bracket{\sum_{(F^\prime,\chi^\prime)\in \Gamma_{n-1}} p(F^\prime)  y_{(F^\prime, \chi^\prime)}}x_1\,.  
$$
Assume Lemma \ref{Claim 4} holds for $n-1$. Then
$$
{\sum_{(F^\prime,\chi^\prime)\in \Gamma_{n-1}} p(F^\prime)  y_{(F^\prime, \chi^\prime)}}={1\over (n-1)!}\sum_{\sigma\in S_{n-1}} y_{\sigma(1)}y_{\sigma(2)}\cdots y_{\sigma(n)}\,. 
$$
This proves the lemma. 
\end{proof}

We can now proceed to prove Lemma \ref{Claim 4}. 

Let $\sigma\in S_n$ be a permutation of $\{1,2,\cdots, n\}$. $\sigma$ can be expressed in cyclic notation by 
$$
\sigma=(a_1 \cdots a_{i_1})  (a_{i_1+1}\cdots a_{i_2})\cdots (a_{i_{k-1}+1}\cdots a_{i_k}), \quad i_k=n\,. 
$$
We say the above cyclic expression is \textbf{ordered} if
$$
a_1=\min\{a_1,\cdots, a_{i_1}\}, \quad a_{i_1+1}=\min\{a_{i_1+1},\cdots, a_{i_2}\}, \quad \cdots \quad, a_{i_{k-1}+1}=\min\{a_{i_{k-1}+1},\cdots, a_n\}
$$
and
$$
a_1< a_{i_1+1}<a_{i_2+1}<\cdots< a_{i_{k-1}+1}\,. 
$$
Each $\sigma$ has a unique ordered cyclic expression.  We denote the following number
$$
 |\sigma|:= { i_1!(i_2-i_1)!\cdots (i_k-i_{k-1})!}\,.
$$
Given  $(j_1 j_2\cdots j_k)$, we denote
$$
x_{(j_1)}:=x_{j_{1}}\,,\quad
x_{(j_1,\cdots,j_k)}:= [x_{j_k},[x_{j_{k-1}},\cdots, [x_{j_2},x_{j_1}]]]\,. 
$$
We also denote
$$
x_{\sigma}:=x_{(a_1 \cdots a_{i_1})}  x_{(a_{i_1+1}\cdots a_{i_2})}\cdots x_{(a_{i_{k-1}+1}\cdots a_{i_k})}\in R_n\,. 
$$

Let $\Omega_k$ denote the set of partitions of $\{1,\cdots,n\}$ into $k$ subsets. We write each $\omega\in\Omega_k$ as
$$
\omega= I_1 \cup I_2 \cup \cdots I_k
$$
where the index is ordered in such a way that 
$$
   \min_{i\in I_1} i < \min_{i\in I_2} i <\cdots < \min_{i \in I_k} i\,.
$$
For such partition $\omega= I_1 \cup I_2 \cup \cdots \cup I_k$, let
$$
\Gamma_n^\omega \subset \Gamma_n
$$
be those normally marked forest $(F,\chi)$ that consists of $k$ rooted trees $T_1, \cdots, T_k$ and
$$
\chi: V(T_1)\mapsto I_1\,, \quad V(T_2)\mapsto I_2\,, \quad \cdots \quad\,, V(T_k)\mapsto I_k\,. 
$$
Then we have 
$$
\sum_{(F,\chi)\in \Gamma_n}p(F)   x_{(F, \chi)}=\sum_{k} \sum_{\omega\in \Omega_k}\sum_{\substack{(F,\chi)\in \Gamma^\omega_n\\ F=\{T_1,\cdots,T_k\}}}  \bracket{p(T_1)x_{T_1}} \bracket{p(T_2)x_{T_2} }\cdots \bracket{p(T_k)x_{T_k}}\,. 
$$

Apply Lemma \ref{lem-p} and Lemma \ref{lem-ad} to each partition in $\omega$, we find
$$
\sum_{k} \sum_{\omega\in \Omega_k}\sum_{\substack{(F,\chi)\in \Gamma^\omega_n\\ F=\{T_1,\cdots,T_k\}}}  \bracket{p(T_1)x_{T_1}} \bracket{p(T_2)x_{T_2} }\cdots \bracket{p(T_k)x_{T_k}}=\sum_{\sigma\in S_n} {1 \over |\sigma|}x_\sigma\,. 
$$
By Proposition \ref{prop-identity} in Appendix \ref{appendixalgebraicidentity},  we have the combinatorial formula
$$
\sum_{\sigma\in S_n} {1 \over |\sigma|}x_\sigma={1\over n!}\sum_{\sigma\in S_n} x_{\sigma(1)}x_{\sigma(2)}\cdots x_{\sigma(n)}\,. 
$$
It follows that 
$$
\sum_{(F,\chi)\in \Gamma_n}p(F)   x_{(F, \chi)}= {1\over n!}\sum_{\sigma\in S_n} x_{\sigma(1)}x_{\sigma(2)}\cdots x_{\sigma(n)}\,. 
$$
This finishes the induction step for Lemma \ref{Claim 4}, hence completes the proof of Theorem \ref{thm-2d}.

\appendix

\section{Modular forms and elliptic functions}
\label{secmodularformsellipticfunctions}

\subsection*{Modular forms}
 
 Modular forms are functions on the upper-half plane $\H$ that enjoy nice transform properties under the action of 
 $\mathrm{SL}_{2}(\mathbb{Z})<\mathrm{SL}_{2}(\mathbb{R})=\mathrm{Aut} \,\H$.
 Quasi-modular forms and almost-holomorphic modular forms are generalizations of modular forms.
 Readers who are not familiar with these notions are referred to \cite{Kaneko:1995, Zagier:2008} for details.
 Here we only collect some basic definitions and facts that are frequently used in this paper.
 
 \begin{dfn}\label{dfnmodularquasimodular}
  Let $f$ be a meromorphic function on $\H$. 
 \begin{itemize}
\item [(1)] $f$  is said to be \textbf{modular} of weight $k$ if
  $$f({a\tau+b\over c\tau+d})=(c\tau+d)^{k}f(\tau)\,,\quad \forall \,
 \begin{pmatrix}
 a & b\\
 c &d
 \end{pmatrix}\in\mathrm{SL}_{2}(\mathbb{Z})\,. $$
 \item [(2)] $f$ is said to be \textbf{quasi-modular} of weight $k$ (and depth $\ell$) if there exist holomorphic functions $f_{1},\cdots ,f_{\ell}$ on $\H$ such that
 $$f({a\tau+b\over c\tau+d})=(c\tau+d)^{k}f(\tau)
 +\sum_{i=1}^{\ell} c^{k-i}(c\tau+d)^{k-i}f_{i}(\tau)\,,\quad \forall \,
 \begin{pmatrix}
 a & b\\
 c &d
 \end{pmatrix}\in\mathrm{SL}_{2}(\mathbb{Z})\,. $$
 \end{itemize}
 \end{dfn}
 
 \begin{dfn}
 A function $f$ on $\H$ is called a \emph{holomorphic modular form} of weight $k$ if 
 \begin{enumerate}[label=(\roman*)]
  \item \label{dfnmodularcondition1}
 $f$ is holomorphic on $\H$.
 \item 
  \label{dfnmodularcondition2}
  $f$ is modular of weight $k$.
 \item \label{dfnmodularcondition3}
 $f$ has sub-exponential growth at $\tau=i\infty$ in the sense that 
 $f(\tau)=\mathcal{O}(e^{C \im \tau})$ as $\im \tau\rightarrow \infty$, for any $C>0$.
 \end{enumerate}
  \end{dfn}
  
 \begin{dfn}\label{dfnquasimodularform}
 If $f$ satisfies  \ref{dfnmodularcondition1}, \ref{dfnmodularcondition3} above, 
and
 \begin{enumerate}
\item[\mylabel{dfnquasimodulartransformationlaw}{(ii')}]
$f$ is quasi-modular of weight $k$ (and depth $\ell$). 
  \end{enumerate}
Then it is called a   \emph{holomorphic quasi-modular form} of weight $k$ (and depth $\ell$).
If $f$ only satisfies \ref{dfnmodularcondition1} and \ref{dfnquasimodulartransformationlaw}, then
it is called a \emph{weakly holomorphic quasi-modular form} of weight $k$
(and depth $\ell$).
  \end{dfn}

 \begin{dfn}
 If $f$ satisfies \ref{dfnmodularcondition3} above, and
  \begin{enumerate}[label=(\roman*)]
\item[\mylabel{dfnalmostholomorphic}{(i'')}]
 $f$ is almost-holomorphic on $\H$: $f\in\OO_{\H}[{1\over \im\tau}]$ (Definition 
\ref{dfn-holomorphiclimit}). 
\item[\mylabel{dfnalmostholomorphic}{(ii'')}]
  $f$ is modular in the sense that
  $$f\left({a\tau+b\over c\tau+d} ,\overline{({a\tau+b\over c\tau+d})}\right)=(c\tau+d)^{k}f(\tau,\bar{\tau})
 \,,\quad \forall \,
 \begin{pmatrix}
 a & b\\
 c &d
 \end{pmatrix}\in\mathrm{SL}_{2}(\mathbb{Z})\,. $$
 \end{enumerate}
Then it is called an   \emph{almost-holomorphic modular form} of weight $k$.
 \end{dfn}
 
Of central importance 
are
the Eisenstein series that are defined by
\begin{eqnarray*}
E_{2k}(\tau)&=&{1\over 2\zeta(2k) }\sum_{(m,n)\in \mathbb{Z}^{2}-\{(0,0)\}}
{1\over (m\tau+n)^{2k}}\,,\quad k\geq 2\,,\\
E_{2}(\tau)&=&{1\over 2\zeta(2) }
\left(
\sum_{n\neq 0}{1\over n^{2}}+
\sum_{m\neq 0}\sum_{n\in\mathbb{Z}}{1\over (m\tau+n)^{2}}\right)\,,
\end{eqnarray*}
where $\zeta(2k),k\geq	 1$ are the zeta-values.
Define also
\begin{equation*}\label{eqn-E2hat}
\widehat{E}_{2}(\tau,\bar{\tau})=E_{2}(\tau)-{3\over \pi }{1\over \im \tau}\,.
\end{equation*}
The Eisenstein series admit Fourier expansions in $q=\exp(2\pi i \tau)$ given by 
\begin{equation}\label{eqn-FourierexpansionsE2k}
E_{2k}(\tau)=
1-{4k\over B_{2k}}
\sum_{d\geq 1}\sigma_{2k-1}(d)q^{d}
=1-{4k\over B_{2k}}
\sum_{m\geq 1}{m^{2k-1}q^{m}\over 1-q^{m}}\,,\quad 
q=e^{2\pi i\tau}\,,\quad
k\geq 1\,.
\end{equation}
They have the following transformations under the action of $\gamma= \begin{pmatrix}
 a & b\\
 c &d
 \end{pmatrix}\in \mathrm{SL}_{2}(\mathbb{Z})$
\begin{eqnarray}\label{eqn-transformationsofmodularforms}
E_{2k}(\gamma\tau)&=&(c\tau+d)^{2k} E_{2k}(\tau)\,, \quad k\geq 2\nonumber\\
E_{2}(\gamma\tau)&=&(c\tau+d)^{2}E_{2}(\tau)+{12\over 2\pi i} c(c\tau+d)\,,\nonumber\\
\widehat{E}_{2}(\gamma\tau, \overline{\gamma\tau})&=&(c\tau+d)^{2}\widehat{E}_{2}(\tau,\bar{\tau})\,,
\quad\quad\quad\quad\quad\quad\,.
\end{eqnarray}

The spaces of holomorphic modular, holomorphic quasi-modular,  almost-holomorphic modular forms for $\mathrm{SL}_{2}(\mathbb{Z})$ form graded rings.
Denote these rings
by $M,\widetilde{M},\widehat{M}$ respectively, then
\begin{equation}\label{eqn-ringpresentations}
M\cong \mathbb{C}[E_4, E_6]\,,\quad
\widetilde{M}\cong \mathbb{C}[E_{2}, E_4, E_6]
\,,\quad
\widehat{M}\cong \mathbb{C}[\widehat{E}_{2}, E_4, E_6]\,.
\end{equation}
The ring $\widetilde{M}=\mathbb{C}[E_{2}, E_{4},E_{6}]$ is furthermore a differential ring
under ${1\over 2\pi i}\partial_{\tau}=q{d\over dq}$.
The following relations are known as the Ramanujan identities
\begin{equation}\label{eqn-Ramanujanidentities}
{1\over 2\pi i}\partial_{\tau} E_{2}={1\over 12}(E_{2}^2-E_{4})\,,\quad
{1\over 2\pi i}\partial_{\tau} E_{4}={1\over 3}(E_{2}E_4-E_{6})\,,\quad
{1\over 2\pi i}\partial_{\tau} E_{6}={1\over 2}(E_{2}E_6-E_{4}^2)\,.
\end{equation}
See \cite{Kaneko:1995} for details on these.

Since the generator $1/\im \tau$ is algebraically independent over
the ring $\OO_\H$, the notion of holomorphic limit  in Definition 
\ref{dfn-holomorphiclimit}
is well-defined.
This notion
plays an important role in discussing the relation between quasi-modular and almost-holomorphic modular forms.
\begin{thm}[Kaneko-Zagier  \cite{Kaneko:1995}]\label{thmstructuretheorem}
The holomorphic limit 
\begin{equation*}\label{thmstructuretheorem}
\lim_{\bar\tau \to \infty}: \widehat{M} \longrightarrow \widetilde{M}
\end{equation*}
induces
a graded ring isomorphism between $\widetilde{M} $ and $ \widehat{M} $.
The inverse is called modular completion.
\end{thm}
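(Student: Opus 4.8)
The plan is to deduce the theorem directly from the ring presentations \eqref{eqn-ringpresentations} together with the elementary fact that $\lim_{\bar\tau\to\infty}$ is a ring homomorphism. First I would check that $\lim_{\bar\tau\to\infty}\colon \OO_{\H}[\tfrac1{\im\tau}]\to\OO_{\H}$ is a well-defined $\mathbb{C}$-algebra homomorphism: well-definedness is exactly the algebraic independence of $1/\im\tau$ over $\OO_{\H}$ noted before Definition \ref{dfn-holomorphiclimit}, so that $\OO_{\H}[\tfrac1{\im\tau}]$ is a genuine polynomial ring in $1/\im\tau$; the homomorphism property then follows since extracting the $(\im\tau)^0$-coefficient is the evaluation-at-zero ring map on that polynomial ring. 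From $\widehat{E}_{2}=E_{2}-\tfrac3\pi\cdot\tfrac1{\im\tau}$ we get $\lim_{\bar\tau\to\infty}\widehat{E}_{2}=E_{2}$, while $\lim_{\bar\tau\to\infty}E_{4}=E_{4}$ and $\lim_{\bar\tau\to\infty}E_{6}=E_{6}$ trivially.

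Next I would show $\lim_{\bar\tau\to\infty}$ maps $\widehat{M}$ into $\widetilde{M}$ and respects the grading. By \eqref{eqn-ringpresentations} every $\widehat{f}\in\widehat{M}$ equals $P(\widehat{E}_{2},E_{4},E_{6})$ for a unique $P\in\mathbb{C}[X,Y,Z]$, and a homogeneous $\widehat f$ of weight $k$ corresponds to an isobaric $P$ with $X,Y,Z$ weighted $2,4,6$. Applying the ring homomorphism $\lim_{\bar\tau\to\infty}$ yields $P(E_{2},E_{4},E_{6})\in\widetilde{M}$, again isobaric of weight $k$; hence $\lim_{\bar\tau\to\infty}$ restricts to a graded $\mathbb{C}$-algebra homomorphism $\widehat{M}\to\widetilde{M}$. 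That $P(E_{2},E_{4},E_{6})$ is genuinely a holomorphic quasi-modular form of the stated weight — the quasi-modular transformation law and the sub-exponential growth at $\tau=i\infty$ — is already part of \eqref{eqn-ringpresentations}, but can also be read off directly from \eqref{eqn-transformationsofmodularforms} and \eqref{eqn-FourierexpansionsE2k}.

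The two presentations \eqref{eqn-ringpresentations} say precisely that the substitution maps $\Psi\colon \mathbb{C}[X,Y,Z]\to\widetilde{M}$, $(X,Y,Z)\mapsto(E_{2},E_{4},E_{6})$, and $\widehat\Psi\colon \mathbb{C}[X,Y,Z]\to\widehat{M}$, $(X,Y,Z)\mapsto(\widehat{E}_{2},E_{4},E_{6})$, are graded ring isomorphisms for the weighting $\deg X=2,\deg Y=4,\deg Z=6$. By the previous paragraph $\lim_{\bar\tau\to\infty}\circ\,\widehat\Psi=\Psi$, since both sides are $\mathbb{C}$-algebra homomorphisms agreeing on the generators $X,Y,Z$. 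Therefore $\lim_{\bar\tau\to\infty}\big|_{\widehat{M}}=\Psi\circ\widehat\Psi^{-1}$ is a composition of graded ring isomorphisms, hence itself a graded ring isomorphism $\widehat{M}\xrightarrow{\ \sim\ }\widetilde{M}$; its inverse is $\widehat\Psi\circ\Psi^{-1}$, which is exactly the substitution $E_{2}\mapsto\widehat{E}_{2}$, $E_{4}\mapsto E_{4}$, $E_{6}\mapsto E_{6}$, i.e.\ the modular completion.

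The argument is thus short once \eqref{eqn-ringpresentations} is granted; all the real content is hidden in the structure theorem $\widehat{M}=\mathbb{C}[\widehat{E}_{2},E_{4},E_{6}]$ for almost-holomorphic modular forms, which I would take as known. If one insisted on a self-contained proof, the main obstacle would be establishing that presentation from scratch: one inclusion is the routine check via \eqref{eqn-transformationsofmodularforms} and \eqref{eqn-FourierexpansionsE2k} that every polynomial in $\widehat{E}_{2},E_{4},E_{6}$ is almost-holomorphic and modular, while the reverse inclusion requires the classical weight-by-weight dimension count for $M$ together with the observation that peeling off the highest power of $1/\im\tau$ in an almost-holomorphic modular form of weight $k$ and subtracting a suitable multiple of $\widehat{E}_{2}$ times a lower-weight such form reduces it, by downward induction on the degree in $1/\im\tau$, to an honest modular form.
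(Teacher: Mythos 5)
Your argument is correct, but note that the paper itself offers no proof of this statement: it is quoted verbatim from Kaneko--Zagier \cite{Kaneko:1995}, so there is no in-paper argument to compare against. What you have written is the standard formal derivation: once the presentations \eqref{eqn-ringpresentations} are granted, the holomorphic limit is the evaluation-at-$0$ homomorphism on the polynomial ring $\OO_{\H}[\tfrac{1}{\im\tau}]$, it sends $\widehat{E}_2\mapsto E_2$ and fixes $E_4,E_6$, and hence equals $\Psi\circ\widehat\Psi^{-1}$, a composition of graded ring isomorphisms with inverse given by the substitution $E_2\mapsto\widehat E_2$. That chain of reasoning is sound, and you are right to flag that all the substance is concentrated in the presentation $\widehat{M}=\C[\widehat E_2,E_4,E_6]$, which the paper also states without proof; your proof is therefore no more (and no less) complete than the paper's treatment. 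Your closing sketch of how one would establish that presentation is essentially the Kaneko--Zagier argument, with two points you would need to make precise in a self-contained account: first, that the top coefficient $f_N$ of an almost-holomorphic modular form $f=\sum_{i=0}^N f_i(\im\tau)^{-i}$ of weight $k$ is itself a holomorphic modular form of weight $k-2N$ (this is where the modular transformation law is actually used); and second, that the correction term in the induction is a \emph{power} $\widehat E_2^{\,N}$ multiplied by $f_N$, not a single factor of $\widehat E_2$ as your phrasing suggests. Neither issue affects the validity of the main derivation you give.
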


It is straightforward to check that Theorem \ref{thmstructuretheorem} can be generalized to give an isomorphism  (again by the holomorphic limit $\lim\limits_{\bar\tau \to \infty}$) between 
the space of modular functions in $\mathfrak{M}_{\H}[{1\over \im \tau}]$
and the space of quasi-modular, meromorphic functions on $\H$.

\subsection*{Elliptic functions}

Elliptic functions with respect to the lattice
$\Lambda_{\tau}=\mathbb{Z}\oplus \mathbb{Z}\tau$ 
 are meromorphic functions on $\mathbb{C}$ that are invariant under the translation by the lattice.
 They are 
  pull-backs of meromorphic functions on 
 $E_{\tau}=\mathbb{C}/\Lambda_\tau$.
 It is a classical fact that the functional field of $E_{\tau}$ is given by
 \begin{equation}\label{eqn-functionalfieldofellipticcurve}
 k(E_{\tau})=\mathbb{C}(\wp(z), \wp'(z))/ 
 \langle (\wp'(z))^2=4\wp(z)^3-g_{2}(\tau)\wp(z)-g_{3}(\tau) \rangle\,,
 \end{equation}
 where $\wp(z)$ is the Weierstrass $\wp$-function, $\wp'=\partial_{z}\wp$, and
 \begin{equation}\label{eqn-g2g3}
g_{2}(\tau)={4\over 3}\pi^4E_{4}(\tau)\,,\quad g_{3}={8\over 27}\pi^6 E_{6}(\tau)
\end{equation}
are holomorphic modular forms of weight $4,6$ respectively.

The elliptic function $\wp(z;\tau)$ is even in $z$, with order 2 poles along $\Lambda_{\tau} $ on the universal cover $\mathbb{C}$.
Explicitly one has
\begin{equation}\label{eqn-wponC}
\wp(z;\tau)={1\over z^2}+\sum_{\lambda\in \Lambda_{\tau}-(0,0)} \bracket{{1\over (z+\lambda)^2} -{1\over \lambda^2}}
=
{1\over z^2}+{g_{2}(\tau)\over 20} z^2+{g_{3}(\tau)\over 28}z^{4}+\cdots
\end{equation}
The  elliptic functions $\wp,\wp'$ are also modular under the action of $\gamma= \begin{pmatrix}
 a & b\\
 c &d
 \end{pmatrix}\in \mathrm{SL}_{2}(\mathbb{Z})$
\begin{eqnarray*}\label{eqn-transformationsofwp}
\wp( {z\over c\tau+d};{a\tau+b\over c\tau+d})
&=&(c\tau+d)^{2}\wp(z;\tau)\,,\nonumber\\
\wp'( {z\over c\tau+d}; {a\tau+b\over c\tau+d})
&=&(c\tau+d)^{3}\wp'(z;\tau)\,.
\end{eqnarray*}
The Fourier expansion of the meromorphic function $\wp(z;\tau)$ is given by 
\begin{equation}\label{eqnwponC*}
\wp(u;q)=(2\pi i)^2\sum_{k\geq 1} {k u^{k}\over 1-q^{k}}
+(2\pi i)^2\sum_{k\geq 1} {k q^{k} u^{-k}\over 1-q^{k}}-{\pi^2\over 3}E_{2}\,,\quad
|q|<|u|<1\,,
\end{equation}
where $u=e^{2\pi i z},q=e^{2\pi i\tau}$.
See the textbooks \cite{Lang:1985, Silverman:2009} for more details.

Throughout this work we often suppress the arguments $z,\tau,u,q$ etc. in the functions when no confusion should arise.

When integrating elliptic functions, one naturally encounters
the so-called
quasi-elliptic functions \cite{Zagier:1991, Libgober:2009, Goujard:2016counting}.
This notion is derived from the notion of quasi-Jacobi forms of index zero \cite{Eichler:1985, Libgober:2009, Dabholkar:2012, Goujard:2016counting}. In this work we only need the special case, namely the Weierstrass zeta function given by
$$\zeta(z)={1\over z}+\sum_{\lambda\in \Lambda_{\tau}-\{(0,0)\}}   \bracket{{1\over z+\lambda}-{1\over \lambda}+{z\over \lambda^2}}\,.
$$
It satisfies $\partial_{z}\zeta=-\wp$.
Under the elliptic transformations $z\mapsto z+1,z\mapsto z+\tau$ one has
\begin{eqnarray*}
\zeta(z+1)-\zeta(z)=\eta_{1}\,,\quad \zeta(z+\tau)-\zeta(z)=\eta_{2}\,,\quad
\forall z\neq 0\,.
\end{eqnarray*}
Here $\eta_{1},\eta_{2}$ are given by 
\begin{equation*}\label{eqn-quasiperiodsasquasimodularforms}
\eta_{1}(\tau)={\pi^2\over 3}E_{2}(\tau)\,,\quad
\eta_{2}(\tau)={\pi^2\over 3}{1\over \tau}E_{2}(-{1\over \tau})\,.
\end{equation*}
From the transformation of $E_{2}$ given in \eqref{eqn-transformationsofmodularforms}, one sees that
 $Z:=\zeta-z\eta_{1}$ satisfies
 \begin{eqnarray*}\label{eqn-quasiellipticityofZ}
Z(z+1)-Z(z)=0\,,\quad Z(z+\tau)-Z(z)=-2\pi i\,.
\end{eqnarray*}

The de Rham cohomology $H^{1}_{dR}(E_{\tau},\mathbb{C})$ is generated by the cohomology classes of the Abelian differentials
$dz,\wp(z)dz$.
With respect to the canonical representatives $\{A,B\}$ mentioned earlier, the integrals of the former are given by the periods
\begin{equation*}\label{eqn-periods}
\int_{A}dz=1\,,\quad \int_{B}dz=\tau\,.
\end{equation*}
While the integrals of the latter are called quasi-periods and are given by
\begin{equation}\label{eqn-quasiperiods}
\int_{A}\wp(z;\tau)dz=-\eta_{1}(\tau)\,,\quad \int_{B}\wp(z;\tau)dz=-\eta_{2}(\tau)\,.
\end{equation}
They satisfy the Legendre period relation (i.e., the 1st Riemann-Hodge bilinear relation)
\begin{equation*}\label{eqn-Legendreperiodrelation}
-\eta_{2}(\tau)+\eta_{1}(\tau)\tau=2\pi i\,. 
\end{equation*}
which is equivalent to the quasi-modularity of $E_{2}(\tau)$ given in \eqref{eqn-transformationsofmodularforms}.
See \cite{Katz:1976p} for a nice account on this.

\section{An algebraic identity}
\label{appendixalgebraicidentity}

In this appendix, we present an algebraic identity that  is used in the proof of our main Theorem \ref{thm-2d}.  We first recall the following notations in the proof of Theorem \ref{thm-2d}. 

Let $\sigma\in S_n$ be a permutation of $\{1,2,\cdots, n\}$. Then $\sigma$ can be expressed in cyclic notation by 
$$
\sigma=(a_1 \cdots a_{i_1})  (a_{i_1+1}\cdots a_{i_2})\cdots (a_{i_{k-1}+1}\cdots a_{i_k}), \quad i_k=n\,. 
$$
We say the above cyclic expression is ordered if
$$
a_1=\min\{a_1,\cdots, a_{i_1}\}, \quad a_{i_1+1}=\min\{a_{i_1+1},\cdots, a_{i_2}\}, \quad \cdots \quad, a_{i_{k-1}+1}=\min\{a_{i_{k-1}+1},\cdots, a_n\}
$$
and
$$
a_1< a_{i_1+1}<a_{i_2+1}<\cdots< a_{i_{k-1}+1}\,. 
$$
Each $\sigma$ has a unique ordered cyclic expression.  We denote the following number
$$
 |\sigma|:= { i_1!(i_2-i_1)!\cdots (i_k-i_{k-1})!}\,.
$$

Let $R_n=\C\abracket{x_1,\cdots,x_n}$ be the tensor algebra in $n$ variables $x_1,\cdots, x_n$.  The generators $x_i$'s do not commute with each other, and each element in $R_n$ is expressed in terms of a linear combination of words in $x_i$'s. For any $a, b\in R_n$, we denote
$$
[a,b]:= a b- ba\,. 
$$

Given  $(j_1 j_2\cdots j_k)$, we denote
$$
x_{(j_1)}:= x_{j_1}\,,\quad 
x_{(j_1,\cdots,j_k)}:= [x_{j_k},[x_{j_{k-1}},\cdots, [x_{j_2},x_{j_1}]]]\,. 
$$
Given $\sigma$ with its ordered cyclic expression as above, we write
$$
x_{\sigma}:=x_{(a_1 \cdots a_{i_1})}  x_{(a_{i_1+1}\cdots a_{i_2})}\cdots x_{(a_{i_{k-1}+1}\cdots a_{i_k})}\in R_n\,. 
$$

\begin{ex} 
Let $n=3$. We are interested in the sum $\sum_{\sigma\in S_n} {x_\sigma\over |\sigma|}$ which is given by
\begin{align*}
\sum_{\sigma\in S_3} {x_\sigma\over |\sigma|}=&x_1x_2x_3+{1\over 2}[x_2,x_1]x_3+{1\over 2}[x_3,x_1]x_2+{1\over 2}x_1[x_3,x_2]+{1\over 6}[x_3,[x_2,x_1]]+{1\over 6}[x_2,[x_3,x_1]]\\
=& {1\over 6}\bracket{x_1x_2x_3+ x_1x_3x_2+x_2x_1x_3+x_2x_3x_1+x_3x_1x_2+x_3x_2x_1}\\
=&{1\over 3!}\sum_{\sigma\in S_3} x_{\sigma(1)}x_{\sigma(2)}x_{\sigma(3)}. 
\end{align*}
\end{ex}

The combinatorial identity that we find in this example actually holds in general. The next proposition might be known to experts. Since we couldn't locate a precise reference, in what follows we
supply a proof for completeness.

\begin{prop}\label{prop-identity}The following identity holds in $R_n$
$$
\sum_{\sigma\in S_n} {x_\sigma\over |\sigma|}={1\over n!} \sum_{\sigma\in S_n} x_{\sigma(1)}x_{\sigma(2)}\cdots x_{\sigma(n)}\,. 
$$

\end{prop}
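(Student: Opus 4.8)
The identity is a statement purely about the free associative algebra $R_n = \C\langle x_1,\dots,x_n\rangle$, and the natural strategy is to recognize both sides as images of a single element under the Poincar\'e--Birkhoff--Witt isomorphism $T(V) = \mathcal S(\Lie(V))$ used in the body of the paper. Concretely, set $V = \C^n$ with basis $x_1,\dots,x_n$, so $R_n = T(V) = \mathcal U(\Lie(V))$. The right-hand side $\tfrac{1}{n!}\sum_{\sigma\in S_n} x_{\sigma(1)}\cdots x_{\sigma(n)}$ is (up to the standard normalization) the symmetrization map applied to the tensor $x_1\otimes\cdots\otimes x_n$; under the PBW identification this is exactly the degree-$(1,1,\dots,1)$ part of the symmetric algebra $\mathcal S(\Lie(V))$ expressed back inside $T(V)$. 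So the plan is to show that $\sum_{\sigma\in S_n} |\sigma|^{-1} x_\sigma$ is the image of the same multilinear symmetric tensor under the same map, i.e.\ that the sum over permutations-organized-by-cycle-type precisely reproduces the PBW expansion of the symmetrized product.

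First I would make the bookkeeping explicit. Each $\sigma \in S_n$, written in its unique ordered cyclic form $(a_1\cdots a_{i_1})(a_{i_1+1}\cdots a_{i_2})\cdots$, determines a set partition of $\{1,\dots,n\}$ into the supports of its cycles, together with a choice of cyclic order on each block; the weight $1/|\sigma| = 1/\prod_j (i_j - i_{j-1})!$ is the reciprocal of the product of block-size factorials. Meanwhile, an ordered bracket monomial $x_{(j_1\cdots j_k)} = [x_{j_k},[x_{j_{k-1}},\dots,[x_{j_2},x_{j_1}]]]$ is a specific element of $\Lie(V)$ supported on $\{j_1,\dots,j_k\}$. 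So $\sum_\sigma |\sigma|^{-1} x_\sigma$ is manifestly an element of the subspace of $T(V)$ spanned by products of Lie elements, one factor per block of a set partition, with each factor a left-nested bracket in cyclic order. The claim is that this expression equals the multilinear component of the symmetric tensor $\frac{1}{n!}\sum_\sigma x_{\sigma(1)}\cdots x_{\sigma(n)}$. One clean way to pin this down is to use Dynkin's classical formula: the Dynkin idempotent (or the Friedrichs/Solomon-type projection) expresses the symmetrized word as a sum over set partitions of products of left-nested Lie brackets, with precisely the coefficients $1/(\text{block-size factorials})$ and a sum over cyclic orderings within each block — which is exactly $\sum_\sigma |\sigma|^{-1} x_\sigma$. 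Invoking this, or the reference \cite{Solomon:1968} the paper already cites, gives the identity.

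Alternatively, and perhaps more self-containedly, I would prove it by induction on $n$ together with a generating-function / exponential identity. The key observation is that $\sum_{k\geq 1} \sum_{\text{$k$-cycles } (a_1\cdots a_k)} \frac{1}{k!} x_{(a_1\cdots a_k)}$, when one sums appropriately over all ways of distributing $\{1,\dots,n\}$ into blocks, is the degree-$n$ multilinear part of $\exp$ of the ``sum of all cyclic Lie brackets'' — and the exponential of a sum of Lie elements, expanded in $T(V)$, reproduces symmetrized products by the very PBW/CBH mechanism. In the paper's own language this is the content of Lemma~\ref{lem-ad} and Lemma~\ref{lem-p}: the factor $1/|V(T)|$ in Lemma~\ref{lem-p} is precisely what assembles the reciprocal-factorial weights, and Lemma~\ref{Claim 4} for $n-1$ feeds the induction. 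So I would: (i) reduce to the single-tree (single-cycle) case via the partition decomposition already set up in the proof of Lemma~\ref{Claim 4}; (ii) show $\sum_{k\text{-cycles}} \frac{1}{k!} x_{(a_1\cdots a_k)}$ over a fixed $k$-element support equals $\frac{1}{k!}\sum_{\sigma\in S_k} x_{\sigma(1)}\cdots x_{\sigma(k)}$, which is the $F = T$ case and follows from Lemma~\ref{lem-ad}; (iii) multiply over blocks and sum over set partitions, matching $\frac{1}{n!}\sum_{\sigma\in S_n}x_{\sigma(1)}\cdots x_{\sigma(n)}$ by a straightforward multinomial count.

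\textbf{Main obstacle.} The delicate point is step (ii) — establishing that the sum of left-nested brackets over all cyclic orderings of a $k$-element set, weighted by $1/k!$, equals the full symmetrization $\frac{1}{k!}\sum_{\sigma\in S_k} x_{\sigma(1)}\cdots x_{\sigma(k)}$. This is exactly the statement that the sum of $\text{ad}$-monomials reconstructs the symmetrized product, i.e.\ an avatar of Dynkin's theorem, and getting the coefficients to match requires either citing \cite{Solomon:1968}/Dynkin or carefully running the $\text{ad}$-operator substitution of Lemma~\ref{lem-ad} and verifying the combinatorial identity $\sum_{\sigma\in S_{k-1}} (\text{ad-word in } x_2,\dots,x_k) x_1 = \sum_{\tau\in S_k} x_{\tau(1)}\cdots x_{\tau(k)}$ up to the factor $1/k$ absorbed by Lemma~\ref{lem-p}. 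Everything else (the partition bookkeeping, the multinomial sum, the induction scaffolding) is routine once this core identity is in hand.
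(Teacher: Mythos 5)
Your proposal has a genuine gap, and it is located exactly where you flagged the ``main obstacle'': step (ii) is false. For a block of size $k=2$ there is a single cyclic ordering, contributing $\tfrac{1}{2!}[x_2,x_1]=\tfrac12(x_2x_1-x_1x_2)$, which is not $\tfrac12(x_1x_2+x_2x_1)$. Structurally, the sum of left-nested brackets over all cyclic orderings of a block is an element of $\Lie(V)$ (it is primitive in $T(V)$), whereas the symmetrization of a word of length $\geq 2$ is never a Lie element; the two sit in different graded pieces of the Poincar\'e--Birkhoff--Witt decomposition, so no normalization can make them equal. The auxiliary identity in your last paragraph, $\sum_{\sigma\in S_{k-1}}(\text{ad-word})\,x_1=\sum_{\tau\in S_k}x_{\tau(1)}\cdots x_{\tau(k)}$ up to a factor $1/k$, fails for the same reason (again check $k=2$). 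Because (ii) is false, the reduction in (iii) --- prove the identity per block and multiply over blocks with a multinomial count --- cannot work: the proposition is not a product of per-block identities. The non-symmetric part of each single-cycle term is cancelled only by contributions from partitions with \emph{other} cycle types, so the equality emerges only after summing over all set partitions simultaneously. Your first approach does not repair this: invoking ``Dynkin's formula'' or \cite{Solomon:1968} with exactly the coefficients $1/(\text{block-size factorials})$ and cyclic orderings is a restatement of the proposition, and the paper explicitly records that it could not locate a precise reference, which is why it supplies a proof.

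The paper's actual argument is global rather than block-by-block, and is worth contrasting. It passes to the quotient $G_n$ of $R_n$ by the ideal of words containing a repeated letter, where the multilinear component of $e^{\bar x_1+\cdots+\bar x_n}$ is exactly $\tfrac{1}{n!}\sum_\sigma \bar x_{\sigma(1)}\cdots\bar x_{\sigma(n)}$. It then differentiates $e^{t\bar x_1+Y}$ (with $Y=\bar x_2+\cdots+\bar x_n$) in $t$ via Duhamel's formula, expands $e^{sY}\bar x_1e^{-sY}$ into iterated $\mathrm{ad}_Y$'s, and integrates $s^k/k!$ over $[0,1]$ to produce the weight $1/(k+1)!$ attached to the cycle containing the letter $1$; the exponential of the unused letters is then handled by induction on $n$. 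This is precisely the mechanism that couples the different cycle types and produces the cancellations your block-by-block scheme is missing. Your instinct that an exponential/PBW mechanism underlies the identity is right, but to turn it into a proof you must run it on the whole sum at once (e.g.\ as above), not on one block at a time.
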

\begin{proof} We prove by induction on $n$. 

Let $I$ be the ideal in $R_n$ generated by elements of the form 
$$
\cdots x_i\cdots x_i\cdots
$$
i.e., those words where some variable $x_i$ has appeared at least twice. Let 
$$
G_n=R_n/I
$$
be the quotient ring. Let $\bar x_i$ be the corresponding generator in $G_n$. We only need to prove
$$
\sum_{\sigma\in S_n} {\bar x_\sigma\over |\sigma|}={1\over n!} \sum_{\sigma\in S_n} \bar x_{\sigma(1)}\bar x_{\sigma(2)}\cdots \bar x_{\sigma(n)}\quad \text{holds in $G_n$\,.}
$$

 Each element $f\in G_n$ can be written as
$$
f=f_{(0)}+f_{(1)}+\cdots f_{(n)}
$$
where $f_{(k)}$ is homogeneous of degree $k$ in $\bar x_i$'s. We first observe that 
$$
{1\over n!} \sum_{\sigma\in S_n} \bar x_{\sigma(1)}\bar x_{\sigma(2)}\cdots \bar x_{\sigma(n)}= \bracket{e^{\bar x_1+\cdots+ \bar x_n}}_{(n)}. 
$$
Introduce a variable $t$. Then in $G_n$, we have
$$
 \bracket{e^{\bar x_1+\cdots+ \bar x_n}}_{(n)}= {\pa\over \pa t}  \bracket{e^{t\bar x_1+\bar x_2+\cdots+ \bar x_n}}_{(n)}\,.
$$
By Duhamel’s formula, we have
$$
{\pa\over \pa t}  \bracket{e^{t\bar x_1+\bar x_2+\cdots+ \bar x_n}}=\int_0^1 ds e^{s(t\bar x_1+\bar x_2+\cdots+ \bar x_n)} \bar x_1 e^{-s(t\bar x_1+\bar x_2+\cdots+ \bar x_n)}e^{t\bar x_1+\bar x_2+\cdots+ \bar x_n}\,.
$$
Let us write $Y=\bar x_2+\cdots \bar x_n$. Using the quotient relation in $G_n$, we find 
\begin{align*}
{\pa\over \pa t}  \bracket{e^{t\bar x_1+\bar x_2+\cdots+ \bar x_n}}&=\int_0^1 ds e^{s Y}\bar x_1 e^{-s Y} e^{Y}=\sum_{k\geq 0}\int_0^1 ds {s^k\over k!} [\underbrace{Y, \cdots, [Y}_{k}, \bar x_1]] e^Y\\
&=\sum_{k\geq 0}{1\over (k+1)!} [\underbrace{Y, \cdots, [Y}_{k}, \bar x_1]] e^Y\\
&=\sum_{k\geq 0}{1\over (k+1)!} \sum_{\substack{i_1,\cdots,i_k\in \{2,\cdots, n\}\\i_1,\cdots,i_k \ \text{distinct}}}[\bar x_{i_k},[\bar x_{i_{k-1}},\cdots,[\bar x_{i_1}, \bar x_1]]] \exp(\sum\limits_{\substack{j\in \{2,\cdots, n\}\\ j \notin\{i_1,\cdots, i_k\}}}\bar x_j)\,. 
\end{align*}
We can view $[\bar x_{i_k},[\bar x_{i_{k-1}},\cdots,[\bar x_{i_1}, \bar x_1]]] $ as coming from an ordered cyclic expression with 
$$
(1 i_1\cdots i_k) (\cdots)\cdots(\cdots)\,. 
$$
Then the equality 
$$
{1\over n!} \sum_{\sigma\in S_n} \bar x_{\sigma(1)}\bar x_{\sigma(2)}\cdots \bar x_{\sigma(n)}=\sum_{\sigma\in S_n} {\bar x_\sigma\over |\sigma|}
$$
follows from the above expression and the induction applied to $\exp(\sum\limits_{\substack{j\in \{2,\cdots, n\}\\ j \notin\{i_1,\cdots, i_k\}}}\bar x_j)$. 
\end{proof}

\section{Examples on evaluation of  integrals}
 \label{secstraightforwardevaluation}

In this part, as a double check we offer an alternative direct computation of
the  Feynman graph integral in Example
\ref{exFeymangraphintegral2}
\begin{eqnarray*}
\widehat{I}_{\Gamma}&=&\dashint_{E_{\tau}^3} \bracket{\prod_{i=1}^3 {d^2z_i\over \im \tau}} \Phi_{\Gamma}
\,,\quad
\Phi_{\Gamma}=\widehat{P}(z_1,z_2) \widehat{P}(z_2,z_3)
\widehat{P}(z_3,z_1)
\,.
\end{eqnarray*}
We apply  Proposition \ref{prop-bilinear} 
successively for the evaluation of the iterated integral.
The details are given as follows.

The first integration on $z_3$ gives
\begin{eqnarray*}
&&\dashint_{E_{\tau}} {d^2z_3\over \im \tau} \Phi_{\Gamma}\\
&=&\int_{A_{3}}\Phi_{\Gamma}dz_{3}- {-\pi\over \im \tau} 
\cdot \Res_{z_{3}=z_{2}} (z_{3}\Phi_{\Gamma}dz_{3})
- {-\pi\over \im \tau} 
\cdot \Res_{z_{3}=z_{1}} (z_{3}\Phi_{\Gamma}dz_{3})\\
&&+
{-\pi\over \im \tau}
\cdot \Res_{z_{3}=z_{2}} ( \Phi_{\Gamma}dz_{3})\cdot \bar{z}_{3}|^{z_{2}}_{p_0}
+
{-\pi\over \im \tau}
\cdot \Res_{z_{3}=z_{1}} ( \Phi_{\Gamma}dz_{3}) \cdot \bar{z}_{3}|^{z_{1}}_{p_0}
\,\\
&=&\int_{A_{3}}\Phi_{\Gamma}dz_{3}+ {-\pi\over \im \tau} 
\cdot (-2)
\widehat{P}^{2}(z_{2}-z_{1})+
{-\pi\over \im \tau}
\cdot \widehat{P}(z_{2}-z_{1}) \widehat{P}'(z_{2}-z_{1})
(\bar{z}_{2}-z_2-\bar{z}_{1}+z_1)\,.
\end{eqnarray*}

The previous computations 
in Example
\ref{exFeymangraphintegral2}
show that
$$
 \int_{A_{3}} \Phi_{\Gamma}dz_3\,
=\widehat{P}(z_{2}-z_{1})
\left( (2\pi i)^{4} 
\sum_{k\neq 0} {k^{2}q^{k}\over (1-q^{k})^2}({u_{1}\over u_{2}})^{k}
+ ({-\pi\over \im \tau})^2
\right)\,.$$
This is not elliptic anymore but only quasi-elliptic.
However, in the integration domain for
the iterated regularized integral we can compute directly
$$\Res_{z_{2}=z_{1}} ( dz_{2}\int_{A_{3}}\Phi_{\Gamma} dz_3 )  =0\,.$$
Similarly, by using the following identity in Remark \ref{remcomputationontatecurve}
\begin{equation*}
(2\pi i)^4 \cdot 2 \sum_{k\geq 1}{k^{2}q^{k}\over (1-q^{k})^2}={1\over 9}\pi^{4} (E_{4}-E_{2}^2)\,,
\end{equation*}
we have
\begin{eqnarray*}
\Res_{z_{2}=z_{1}} (z_{2} dz_{2}\int_{A_{3}} \Phi_{\Gamma} dz_3 )  
&=&    (2\pi i)^{4} \sum_{k\neq 0} {k^{2}q^{k}\over (1-q^{k})^2}+ ({-\pi\over \im \tau})^2 \\
&=&
 {1\over 9}\pi^{4} (E_{4}-E_{2}^2)
+ ({-\pi\over \im \tau})^2 
\,.
\end{eqnarray*}
It follows that
\begin{eqnarray*}
&&\dashint_{E_{\tau}} {d^2z_2\over \im \tau}
  \int_{A_{3}} \Phi_{\Gamma}dz_3\\
&=&\int_{A_{2}}dz_{2}
\int_{A_{3}}\Phi_{\Gamma}dz_{3}-
{-\pi\over \im \tau}
\cdot 
\Res_{z_{2}=z_{1}}
\left( z_{2}dz_{2}\left(\int_{A_{3}}\Phi_{\Gamma}dz_{3}\right) \right)
\\
&&+
{-\pi\over \im \tau}
\Res_{z_{2}=z_{1}}\left(dz_{2}\int_{A_{3}}\Phi_{\Gamma}dz_{3}\right)\cdot  \bar{z}_{2}|^{z_{1}}_{p_0}\\
&=&
\int_{A_{2}}dz_{2}
\int_{A_{3}}\Phi_{\Gamma}dz_{3}
- ({-\pi\over \im \tau}) {1\over 9}\pi^{4} (E_{4}-E_{2}^2)
- ({-\pi\over \im \tau})^3\,\\
&=&
{1\over 12^3} (2\pi i)^6 (-E_{2}^3+3E_{2}E_{4}-2E_{6})- ({-\pi\over \im \tau}) {1\over 9}\pi^{4} (E_{4}-E_{2}^2)\,.
\end{eqnarray*}
We also have
\begin{equation*}
\dashint_{E_{\tau}} {d^2z_2\over \im \tau}
 {-\pi\over \im \tau} 
\cdot (-2)
\widehat{P}^{2}(z_{2}-z_{1})
= (-2) {-\pi\over \im \tau} \cdot
 \dashint_{E_{\tau}} {d^2 z_2\over \im \tau}
\widehat{P}^{2}(z_{2}-z_{1})
= (-2) {-\pi\over \im \tau} \cdot
{1\over 9}\pi^{4} (E_{4}-\widehat{E}_{2}^2)\,.
\end{equation*}
For the last term, using the translation invariance of regularized integrals
we see that
\begin{eqnarray*}
&&\dashint_{E_{\tau}} {d^2z_2\over \im \tau}
 \left(
 {-\pi\over \im \tau}
\cdot \widehat{P}(z_{2}-z_{1}) \widehat{P}'(z_{2}-z_{1})
(\bar{z}_{2}-z_2-\bar{z}_{1}+z_1)
\right)\\
&=&
{1\over 2\pi i}
\cdot 
( {-\pi\over \im \tau} )^{2}\dashint_{E_{\tau}}
\widehat{P}(z)\widehat{P}'(z)( \bar{z}-z)dz \wedge d\bar{z}
\,.
\end{eqnarray*}
Applying Proposition \ref{prop-bilinear} and \eqref{eqn-wponC}, one has
\begin{eqnarray*}
&&{1\over 2\pi i}
\cdot 
( {-\pi\over \im \tau} )^{2}\dashint_{E_{\tau}}
\widehat{P}(z)\widehat{P}'(z)( \bar{z}-z)dz \wedge d\bar{z}\\
&=&{1\over 2\pi i}
\cdot 
( {-\pi\over \im \tau} )^{2}
\left(
{1\over 2}(\bar{\tau}-\tau)^{2}\int_{A} d({1\over 2}\widehat{P}^2(z))-2\pi i \,\mathrm{Res}_{z=0} (\widehat{P}(z)\widehat{P}'(z)\cdot {1\over 2}(\bar{z}-z)^2)
\right)\\
&=&- ( {-\pi\over \im \tau} )^{2}
 {\pi^{2}\over 3}\widehat{E}_{2}\,.
\end{eqnarray*}

Putting all these together, we obtain the desired result in Example
\ref{exFeymangraphintegral2}
$$\widehat{I}_{\Gamma}= 
\dashint_{E_{\tau}^3} \bracket{\prod_{i=1}^3 {d^2z_i\over \im \tau}} \Phi_{\Gamma}
=
\dashint_{E_{\tau}^2} \bracket{\prod_{i=1}^2 {d^2z_i\over \im \tau}} \Phi_{\Gamma}
={1\over 12^3} (2\pi i)^6 (-\widehat{E}_{2}^3+3\widehat{E}_{2}E_{4}-2E_{6})\,.$$

\bibliographystyle{amsalpha}

\providecommand{\bysame}{\leavevmode\hbox to3em{\hrulefill}\thinspace}
\providecommand{\MR}{\relax\ifhmode\unskip\space\fi MR }
\providecommand{\MRhref}[2]{%
  \href{http://www.ams.org/mathscinet-getitem?mr=#1}{#2}
}
\providecommand{\href}[2]{#2}

\bigskip{}

\noindent{\small Yau Mathematical Sciences Center, Tsinghua University, Beijing 100084, P. R. China}

\noindent{\small Email: \tt  sili@mail.tsinghua.edu.cn}

\medskip{}
\noindent{\small Yau Mathematical Sciences Center, Tsinghua University, Beijing 100084, P. R. China}

\noindent{\small Email: \tt jzhou2018@mail.tsinghua.edu.cn}

\end{document}